\newtheorem{thm}{Theorem}
\newtheorem*{thm*}{Theorem}
\newtheorem{lemma}[thm]{Lemma}
\newtheorem{corollary}[thm]{Corollary}
\newtheorem{conjecture}[thm]{Conjecture}
\newtheorem{prop}[thm]{Proposition}
\newtheorem*{prop*}{Proposition}
\theoremstyle{definition}
\newtheorem{defn}[thm]{Definition}
\theoremstyle{remark}
\newtheorem{remark}[thm]{Remark}
\newtheorem{example}[thm]{Example}
\numberwithin{thm}{section}
\numberwithin{equation}{section}
\newcommand{\R}{\mathbb{R}}
\newcommand{\Q}{\mathbb{Q}}
\newcommand{\Z}{\mathbb{Z}}
\newcommand{\F}{\mathbb{F}}
\newcommand{\C}{\mathbb{C}}
\newcommand{\A}{\mathbb{A}}
\newcommand{\GL}{\text{GL}}
\newcommand{\calA}{\mathcal{A}}
\newcommand{\calB}{\mathcal{B}}
\newcommand{\calG}{\mathcal{G}}
\newcommand{\calI}{\mathcal{I}}
\newcommand{\calU}{\mathcal{U}}
\newcommand{\calO}{\mathcal{O}}
\newcommand{\bP}{\mathbb{P}}
\newcommand{\bx}{\mathbf{x}}
\newcommand{\by}{\mathbf{y}}
\newcommand{\bc}{\mathbf{c}}
\newcommand{\bv}{\mathbf{v}}
\newcommand{\bd}{\mathbf{d}}
\newcommand{\ba}{\mathbf{a}}
\newcommand{\bk}{\mathbf{k}}
\newcommand{\bD}{\mathbf{D}}
\newcommand{\fb}{\mathfrak{b}}
\newcommand{\fp}{\mathfrak{p}}
\newcommand{\frakP}{\mathfrak{P}}
\newcommand{\ft}{\mathfrak{t}}
\newcommand{\fa}{\mathfrak{a}}
\newcommand{\fk}{\mathfrak{k}}
\newcommand{\fS}{\mathfrak{S}}
\newcommand{\Frob}{\textrm{Frob}}
\newcommand{\BM}{\bigtriangledown}
\newcommand{\fraka}{\mathfrak{a}}
\newcommand{\frakp}{\mathfrak{p}}
\newcommand{\ord}{\text{ord}}
\newcommand{\vol}{\text{vol}}
\newcommand{\Gal}{\text{Gal}}
\newcommand{\sym}{\text{sym}}
\newcommand{\Res}{\textrm{Res}}
\newcommand{\legendre}[2]{\ensuremath{\left( \frac{#1}{#2} \right) }}
\title{On Manin's Conjecture for Ch\^atelet surfaces}
\author[K. Woo]{Katharine Woo}
\address{Department of Mathematics, Stanford University, Stanford, CA 94305}
\email{katywoo@gmail.com}
\date{\today}
\begin{document}

\maketitle

\begin{abstract}
    We resolve Manin's conjecture for all Ch\^atelet surfaces over $\Q$. 
\end{abstract}

\begin{small}
\setcounter{tocdepth}{1}
\tableofcontents
\end{small}

\section{Introduction}
Let $\Delta\in \Q^\times$ satisfy $\sqrt{-\Delta}\not\in \Q$, and let $f(z)\in \Q[z]$ be a separable polynomial of degree $3$ or $4$.
A Ch\^atelet surface $X_{\Delta,f}$ over $\Q$ is defined as a proper smooth model of the affine surface:
 $$x^2+\Delta y^2=f(z).$$
The purpose of this paper is to prove Manin's conjecture for all Ch\^atelet surfaces, including when $\Delta<0.$ 

Ch\^atelet surfaces were first studied by Ch\^atelet in \cite{Chatelet2,Chatelet1}, who was interested in parameterizing their rational points over number fields (see \cite{CTChateletsurvey}). Later, these surfaces were the first examples of surfaces that are not homogeneous spaces for which the Hasse principle, weak approximation, and other interesting problems about rational points were resolved. For certain pairs $(\Delta, f(z))$, $X_{\Delta,f}$ may fail the Hasse principle due to a Brauer-Manin obstruction. In \cite{ColliotSSDI,ColliotSSDII}, Colliot-Th\'el\`ene, Sansuc, and Swinnerton-Dyer show that the Brauer-Manin obstruction is the only possible cause of why such a surface may fail the Hasse principle; these results hold not only over $\Q$, but over any number field. 

We first present Manin's conjecture, as applied to Ch\^atelet surfaces. There exists a map $\psi:X_{\Delta,f}\rightarrow\bP^4$, whose image is a singular del Pezzo surface of degree four (as described by Browning in \cite{BrowningLinear}). This map then induces a height function on $X_{\Delta,f}$ by pulling back the natural height $H$ on $\bP^4$ via $\psi.$ We define the point-counting function: 
$$\Tilde{N}(X_{\Delta,f},B) = \#\{x\in X_{\Delta,f}(\Q): H(x)\leq B\}.$$
Manin's conjecture, originally written as \cite[Conjecture C']{BatyrevManin} by Batyrev and Manin and stated over $\Q$ explicitly by Browning as \cite[Conjecture 2.3]{Browning-book}, predicts the behavior of $\Tilde{N}(X_{\Delta,f},B)$ as $B\rightarrow\infty$; additionally, in \cite{PeyreConstant}, Peyre gives an interpretation of the constant in terms of local densities and the Brauer group. In the following conjecture, the exponent is worked out in \cite[Lemma 1]{BrowningLinear} by Browning. 
\begin{conjecture}[Manin's conjecture for Ch\^atelet surfaces]\label{conj: manin}
Let $\Delta\in \Q^\times$ satisfy that $\sqrt{-\Delta}\not\in \Q$ and let $f(z)\in \Z[z]$ be a separable polynomial of degree 3 or 4 that decomposes as a product of $r$ irreducible polynomials $f=f_1\hdots f_r$. Let $X_{\Delta,f}/\Q$ be a Ch\^atelet surface defined by $x^2+\Delta y^2 = f(z)$. Assume that $X_{\Delta,f}(\Q)\neq \emptyset$. Then as $B\rightarrow\infty$, 
   $$\tilde{N}(X_{\Delta,f},B) \sim C_{\Delta,f} B\log(B)^{\rho_{\Delta,f}-1},$$
   where $C_{\Delta,f}>0$ and $$\rho_{\Delta,f} = 2+ \#\{1\leq i\leq r: \sqrt{-\Delta}\in \Q[z]/(f_i(z))\}.$$
\end{conjecture}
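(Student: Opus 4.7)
The plan is to exploit the conic bundle structure $\pi: X_{\Delta,f}\to\bP^1$, $(x,y,z)\mapsto z$, whose fiber over $z$ is the conic $x^2+\Delta y^2=f(z)$ and carries a rational point precisely when $f(z)$ is a norm from $L:=\Q(\sqrt{-\Delta})$. After writing $z=a/b$ with $(a,b)$ coprime, comparing $H(\psi(x,y,z))$ with $\max(|a|,|b|)^{\deg f}$, and handling the finitely many fibers where this comparison degenerates, the count $\Tilde{N}(X_{\Delta,f},B)$ reduces to an average of the form
$$\sum_{\substack{(a,b)\in\Z^2\text{ primitive}\\ \max(|a|,|b|)\ll B^{1/\deg f}}} r_L\bigl(\tilde f(a,b)\bigr),$$
where $\tilde f$ is the homogenization of $f$ and $r_L(n)$ is a suitably normalized count of representations of $n$ by the norm form of $L$, weighted by the archimedean contribution to the height.

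Next I would factor $\tilde f=\tilde f_1\cdots\tilde f_r$ over $\Q$ and, over each field $K_i=\Q[z]/(f_i)$, reinterpret $\tilde f_i(a,b)$ as the $K_i/\Q$-norm of a linear form. The crucial dichotomy is whether $L\subseteq K_i$: when it does, $\tilde f_i(a,b)$ is automatically a local norm from $L$ at every place and the arithmetic constraint becomes the ideal-counting function of $\calO_{K_i}$, producing an extra $\log B$ asymptotic factor; otherwise $K_iL/K_i$ is a nontrivial quadratic extension and the norm constraint amounts to a Hecke-character weighting, producing only a bounded contribution to the main term. This accounts exactly for the predicted exponent $\varrho_{\Delta,f}-1 = 1+\#\{i: L\subseteq K_i\}$.

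The analytic core is then to evaluate a multivariate multiplicative sum of the form
$$\sum_{(a,b)} \prod_{i=1}^r F_i\bigl(\tilde f_i(a,b)\bigr),$$
where each $F_i$ is a multiplicative function on ideals of $\calO_{K_i}$ supported on those which become norms from $\calO_{K_iL}$. I would apply Möbius inversion to isolate squarefree contributions, run a hyperbola method on the resulting bilinear sum, and finish with a Selberg--Delange argument to pin down the main term. The leading coefficient factors as a product of local densities, which must then be identified with Peyre's constant using the universal torsor descent available from \cite{ColliotSSDI, ColliotSSDII}.

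The main obstacle, I expect, is the case $\Delta<0$, where $L$ is real quadratic with infinite unit group. Here $r_L(n)$ counts infinitely many associates per ideal, and only some of them fall inside the height box, so one cannot naively replace the sum by a representation function on $\Z$. I would fix a fundamental domain for $\calO_L^\times$ acting on $L\otimes_\Q\R$ through its logarithmic embedding, truncate it against the archimedean height constraint, and show that the resulting boundary error is of order $O\bigl(B\log(B)^{\varrho_{\Delta,f}-2}\bigr)$. This step will be the technical heart of the argument: it requires uniform second-moment bounds on short character sums over $L$ and careful tracking of the regulator in both the archimedean density and the Tamagawa-style measure prescribed by Peyre. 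Matching the archimedean factor with Peyre's description will then complete the proof.
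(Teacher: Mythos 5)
Your proposal correctly identifies two of the three key structural features of this problem: the dichotomy according to whether $L=\Q(\sqrt{-\Delta})$ embeds into $K_i=\Q[z]/(f_i)$ (which governs the exponent $\varrho_{\Delta,f}-1$), and the infinite unit group difficulty when $\Delta<0$. Both of these match the paper's skeleton. However, there are two genuine gaps, one technical and one conceptual, and the conceptual one is precisely what makes the general-$\Delta$ case harder than $\Delta=1$.

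The conceptual gap is that nothing in your proposal addresses the class number of $L$. You write that ``$r_L(n)$ is a suitably normalized count of representations of $n$ by the norm form of $L$,'' and then plan to run M\"obius inversion, the hyperbola method, and Selberg--Delange on the resulting multiplicative sum. But when $\mathrm{Cl}(L)$ is nontrivial, $r_{x^2+\Delta y^2}(n)$ counts only those ideals of norm $n$ that are \emph{principal}, and this is not a multiplicative function nor a Dirichlet convolution of nice ones. The indicator of principality expands as $\frac{1}{|C|}\sum_{\psi\in\hat{C}}\psi(\fa)$, and after separating the genus characters (order $\leq 2$) from the rest, one is left with Fourier coefficients of genuine cusp forms attached to class group characters of order $\geq 3$. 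Selberg--Delange and the hyperbola method handle the Eisenstein (genus-character) contribution, which is exactly your $(1\star\chi)$-type picture and is where the $\Delta=1$ techniques of de la Bret\`eche--Browning--Peyre, de la Bret\`eche--Tenenbaum, and Heath-Brown carry over. But the cuspidal contribution requires a completely different toolkit: correlation sums of cusp-form coefficients along polynomial values, Nair-type sieves together with Sato--Tate moments to exploit cancellation in $|\lambda_C(p)|$, and analytic control of the symmetric-square $L$-function and the base change of $\psi^2$ to the splitting field $K_f$. Without an argument bounding this cuspidal piece, the proof strategy collapses precisely when $h(L)>1$, which is the generic situation for squarefree $\Delta$. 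This decomposition via the weight-one theta series $\theta(z)=E(z)+C(z)$ is the central new ingredient the paper supplies, and it is absent from your plan.

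The technical gap is in the initial reduction. You propose to count primitive $(a,b)$ with $\max(|a|,|b|)\ll B^{1/\deg f}$ and weight each by $r_L(\tilde{f}(a,b))$. This is not the torsor parameterization for the degree-4 singular del Pezzo: Browning's formula (cited in the paper as the starting point) gives a count over $((x,y),(u,v),t)$ with $x^2+\Delta y^2=t^2F(u,v)$, gcd conditions $\gcd(x,y,t)=\gcd(u,v)=1$, and the height condition $\max(|tu^2|,|tv^2|,|x|,|y|)\leq B$. The additional torsor variable $t$ supplies the crucial extra $\log B$: summing over $t\leq B$ with $|u|,|v|\leq(B/t)^{1/2}$ gives $\sum_t\frac{B}{t}\log(B/t)^{\varrho-2}\asymp B\log(B)^{\varrho-1}$. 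Your box $\max(|a|,|b|)\leq B^{1/\deg f}$ with $\deg f\in\{3,4\}$ has the wrong power of $B$ (the correct range is $\max(|u|,|v|)\leq(B/t)^{1/2}$), and the absence of $t$ means your parameterization misses an entire logarithmic factor. The ``archimedean weighting'' you attach to $r_L$ cannot repair this, since the discrepancy is by a power of $B$, not a power of $\log B$. You would need to work with the full universal torsor to set up the count correctly.

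Your observations on $\Delta<0$ (fundamental domain for the unit group, regulator, archimedean density) are directionally correct, but the paper's actual device is different and worth noting: it restricts the count to mildly short boxes $(B/t)^{1/2}\calB(\bx_0,L)$ and short intervals of argument $\calI(\theta,\ell)$ with $\ell,L\asymp\log(B)^{-10^{-10}}$, detects the argument via a Hecke Gr\"ossencharacter $\Psi$, and applies Erd\H{o}s--Tur\'an. This forces an interaction between the infinite-unit-group difficulty and the class-group difficulty that your proposal does not anticipate: one must then control absolute values $|\lambda_\Xi(t^2F(u,v))|$ for $\Xi$ induced by an infinite-order Hecke character, which is the Gr\"ossencharacter cuspidal case (Proposition 4.10 in the paper). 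Again, this requires the Nair-sieve-plus-Sato--Tate machinery that your plan does not include.
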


\begin{remark}
    This formulation of Manin's conjecture over $\Q$ is equivalent to the same statement under the assumption that $X_{\Delta,f}(\Q)$ is Zariski-dense, thanks to the fact that if $X_{\Delta,f}(\Q) \neq  \emptyset$ then $X_{\Delta,f}(\Q)$ is Zariski-dense. Indeed by \cite[Proposition 9.8.(iii)]{ColliotSSDII}, if $X_{\Delta,f}(\Q)\neq \emptyset$ then $X_{\Delta,f}$ is birational to a geometrically integral nonconical cubic surface in $\bP^{3}$ with a pair of conjugate singular points. Then by \cite[Theorem A]{CorayTsfasman} of Coray and Tsfasman, $X_{\Delta,f}$ must be unirational and thus $X_{\Delta,f}(\Q)$ is Zariski-dense. 
    
    Moreover, the question of when $X_{\Delta,f}(\Q)\neq \emptyset$ is settled by the seminal result of Colliot-Th\'el\`ene, Sansuc, and Swinnerton-Dyer \cite{ColliotSSDII}; it demonstrates that $X_{\Delta,f}(\Q)=\emptyset$ if and only if there is either a Brauer-Manin obstruction or a local obstruction to the existence of rational points. Furthermore, as discussed in \cite{ColliotSSDII}, a Brauer-Manin obstruction to the Hasse principle may occur only when $f(z)$ is the product of two irreducible quadratics. There is a finite and effective computation for determining when $X_{\Delta,f}(\Q)=\emptyset;$ this computation uses auxiliary varieties that we will introduce in (\ref{eq: def of variety X_Delta, f_1,...,f_r}).
\end{remark}

Numerous instances of Manin's conjecture for Ch\^atelet surfaces have been resolved. In \cite{BrowningLinear}, Browning established an upper bound of the correct order of magnitude for Ch\^atelet surfaces with $\Delta>0.$ In \cite{IwaniecMunshiChatelet}, Iwaniec and Munshi derive a lower bound when $f(z)$ is an irreducible cubic polynomial and $\Delta$ can be either positive or negative. In their seminal result, de la Bret\`eche, Browning, and Peyre \cite{delaBrownPeyre-maninChatelet} introduced the use of the torsor method, among other tools, to prove Manin's conjecture for $x^2+y^2=f(z)$ when $f$ decomposes as a product of distinct linear factors. Later, de la Bret\`eche and Browning \cite{delaBrowningChatelet} showed Manin's conjecture for the case that $\Delta=1$ and $f(z)$ contains an irreducible cubic factor. In \cite{delaTenenbaum-Manin}, the asymptotic for Manin's conjecture is established for $x^2+y^2=f(z)$ where $f$ is an irreducible quartic or a product of two irreducible quadratic factors over $\Q(i)$ by de la Bret\`eche and Tenenbaum; these factorization cases are often regarded as the trickiest cases. Finally, Destagnol \cite{Destagnol} resolved Manin's conjecture for $\Delta=1$ by proving the asymptotic for  $x^2+y^2 = f(z)$ where $f$ is a product of two linearly independent linear factors and an irreducible quadratic factor, the remaining unknown factorization. In the above cases, the techniques are amenable to replacing $x^2+y^2$ with any positive-definite quadratic form of class number one.  

Ch\^atelet surfaces belong to a wider class of surfaces; indeed, Ch\^atelet surfaces are singular del Pezzo surfaces of degree four. Manin's conjecture was established by Batyrev and Tschinkel \cite{BatyrevTschinkel} for all del Pezzo surfaces of degree $\geq 7$ and nonsingular del Pezzo surfaces of degree 6, as these are toric varieties. Additionally, Manin's conjecture is known for many more examples of del Pezzo surfaces of lower degree, as is discussed by Browning in \cite[Chapter 2]{Browning-book}.\\

In \cite[Lemma 2]{BrowningLinear}, Browning shows that 
\begin{multline*}
    \tilde{N}(X_{\Delta,f},B) = \frac{1}{2}\cdot \#\Big\{((x,y),(u,v),t) \in \Z^2\times \Z^2\times \Z_+: x^2 + \Delta y^2 = t^2 F(u,v) \\
    \gcd(x,y,t) = \gcd(u,v)=1,
    \max(|tu^2|,|tv^2|, |x|,|y|)\leq B\Big\},
\end{multline*}
where $F(u,v)$ denotes the homogenization of $f(z)$.
We modify this height function slightly to better suit our techniques. Define: 
\begin{multline}\label{eq: def of the height}
    N(X_{\Delta,f},B) := \frac{1}{2}\cdot \#\Big\{ ((x,y),(u,v),t)\in \Z^2\times \Z^2 \times \Z_+: x^2 + \Delta y^2 = t^2 F(u,v) \\ \gcd(x,y,t)=\gcd(u,v)=1, \max(|tu^2|,|tv^2|,\|x\pm y\sqrt{-\Delta}\|)\leq \nu_{\Delta,f}B\Big\},
\end{multline}
where $\|.\|$ denotes the Euclidean norm on $\C$ and $$\nu_{\Delta,f} = \begin{cases}
    \left(\max_{\by\in [-1,1]^2}|F(\by)|\right)^{1/2}, & \Delta>0 \\ 1, & \Delta<0.
\end{cases}$$ This new height is bounded up to a constant (depending on $\Delta$ and $f$) both above and below by
the natural height on $X_{\Delta,f}$ induced by $\psi$ and hence $N(X_{\Delta,f},B) \asymp \tilde{N}(X_{\Delta,f},B)$; however, this choice simplifies our computations. 

\begin{thm}\label{thm: Manin}
    Let $\Delta\in \Z$ be a squarefree integer satisfying that $\sqrt{-\Delta}\not\in \Q$. Let $f(z)\in \Z[z]$ be a separable polynomial of degree 3 or 4. Let $X_{\Delta,f}/\Q$ be the corresponding Ch\^atelet surface. Then as $B\rightarrow \infty$, there exists a constant $c_{\Delta,f}$ such that $$N(X_{\Delta,f},B) = c_{\Delta,f}B\log(B)^{\rho_{\Delta,f}-1}+O_{\Delta,f}(B\log(B)^{\rho_{\Delta,f}-1-10^{-10}}).$$
    Moreover, if $c_{\Delta,f}$ is zero then there is a Brauer-Manin obstruction or a local obstruction to rational points on $X_{\Delta,f}.$
\end{thm}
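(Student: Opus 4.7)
The strategy is to generalize the torsor-and-Dirichlet-series framework initiated by de la Bret\`eche--Browning--Peyre \cite{delaBrownPeyre-maninChatelet} and pushed further in \cite{delaBrowningChatelet, delaTenenbaum-Manin, Destagnol}, so as to cover every factorization type of $f$ uniformly and, most importantly, to handle the previously untreated indefinite case $\Delta<0$. Writing $K=\Q(\sqrt{-\Delta})$ and $K_i=\Q[z]/(f_i(z))$, I first recast the defining equation as a product of norm equations
$$N_{K/\Q}(x+y\sqrt{-\Delta}) \;=\; t^2\prod_{i=1}^r N_{K_i/\Q}(\ell_i(u,v))$$
(up to the leading coefficients of the $f_i$), where $\ell_i(u,v)\in K_i$ is the linearization of $f_i(u/v)$. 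Primitive solutions are then parametrized by tuples of ideal generators, one in $\calO_K$ and one in each $\calO_{K_i}$, after a finite sum over ideal-class representatives resolves the class-group ambiguity. For each $i$ with $\sqrt{-\Delta}\in K_i$ (so $K\subset K_i$), the relative norm $N_{K_i/K}$ further splits the corresponding factor into two conjugate $\calO_K$-ideals, producing exactly the additional summation variable encoded in $\varrho_{\Delta,f}=2+\#\{i:\sqrt{-\Delta}\in K_i\}$.

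After this torsor parametrization, the counting problem reduces to a multi-variable sum over coprime tuples of integral ideals, weighted by an explicit multiplicative function that captures the $p$-adic densities, and constrained by the archimedean box from (\ref{eq: def of the height}). I would evaluate this sum by combining a multi-dimensional Perron's formula, a Dirichlet-hyperbola splitting among the torsor variables, and classical zero-free-region estimates for Hecke $L$-functions of $K$ and the $K_i$. A standard Euler-product computation then identifies the residue at $s=1$ with Peyre's constant, of order $\varrho_{\Delta,f}-1$, and a subconvex bound gives the $\log(B)^{-10^{-10}}$ saving. The final clause, that $c_{\Delta,f}$ vanishes precisely when there is a Brauer--Manin or local obstruction, follows by comparing the local Euler factors at each place with the local solubility criterion of \cite{ColliotSSDII}: a nonvanishing local factor at every place forces the leading constant to be strictly positive, while vanishing of any factor exactly matches the failure of a solubility condition.

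The hardest step is the case $\Delta<0$, which is the main novelty of the paper. Here $K$ is a real quadratic field, so $\calO_K^\times$ has rank one, and a single value of the norm corresponds to infinitely many algebraic integers related by powers of a fundamental unit $\varepsilon$. Counting ideal generators therefore requires integrating against a fundamental domain for $\varepsilon$ acting on the hyperbolic region cut out by $\|x\pm y\sqrt{-\Delta}\|\leq \nu_{\Delta,f}B$, rather than the elliptic region available when $\Delta>0$. Worse, each $K_i$ with $\sqrt{-\Delta}\in K_i$ brings its own unit group (of rank up to three), and the joint fundamental domain in the product of logarithmic embeddings must be arranged compatibly with the one in $K$ without breaking the multiplicative structure needed for the Euler-product step. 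I expect that organizing the hyperbolic lattice counts via logarithmic coordinates, and then controlling the attendant oscillation by smooth truncation in the torsor variables, will be the technical heart of the argument and will consume the bulk of the proof.
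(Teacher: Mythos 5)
Your proposal misses the central mechanism of the paper, and the substitutes you propose would not close the gap. The representation function $r_{x^2+\Delta y^2}(n)$ is the $n$-th Fourier coefficient of a weight-one modular form $\theta(z)\in M_1(\Gamma_0(-\Delta),\chi)$, and the heart of the argument is the decomposition $\theta = E + C$ into an Eisenstein series and a cusp form. After writing $\mathbf{1}_{\fa\text{ principal}}=\frac{1}{|C|}\sum_{\psi}\psi(\fa)$ over class-group characters, the genus characters (order $\leq 2$) produce the Eisenstein main term, while the characters of order $\geq 3$ produce the cuspidal contribution. Your ``finite sum over ideal-class representatives'' is implicitly this decomposition, but you never address why the order-$\geq 3$ characters do not pollute the main term. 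The fact that $L_K(s,\psi)$ is entire for $\psi\neq 1$ is not enough: the quantity that must be bounded is
$$\sum_{t\leq B}\;\sum_{|u|,|v|\leq (B/t)^{1/2}}\lambda_C\bigl(t^2F(u,v)\bigr),$$
and the rigid box constraint on $(u,v)$ destroys any clean Dirichlet-series structure, so a multi-dimensional Perron argument with zero-free regions (or even subconvexity) never touches it. The paper spends two sections on precisely this error term: the finite-image case uses Perron for the symmetric-square $L$-function of $C$ together with a M\"obius sum to exploit the absence of a pole, and the Gr\"ossencharacter case uses Nair's sieve for the \emph{absolute values} $|\lambda_C|$ along polynomial values, reducing to low Sato--Tate moments via the nonvanishing of the base change of $\xi^2$ to the splitting field $K_f$. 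None of that can be absorbed into your ``standard Euler-product computation.''

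Your treatment of $\Delta<0$ is also structurally off. The variables $(u,v)$ are coprime rational integers ranging in a box; they are never elements of $\calO_{K_i}$, so there is no ``joint fundamental domain in the product of logarithmic embeddings'' and the unit groups of the $K_i$ play no role whatsoever. The only unit group that intervenes is that of $K$ itself. The paper handles it by restricting to mildly short boxes $\calB(\bx_0,L)$ and argument-intervals $\calI(\theta,\ell)$ of length $\asymp\log(B)^{-10^{-10}}$, encoding the argument through a designated Hecke Gr\"ossencharacter $\Psi$, and smoothing the resulting interval condition with the Erd\H{o}s--Tur\'an inequality. The exponent $10^{-10}$ in the error term of the theorem is inherited from these short-interval lengths, not from a subconvex bound --- the Eisenstein and cuspidal analyses actually save larger powers of $\log$, of order $10^{-7}$ and $10^{-5}$. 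Finally, your framework would also need the near-optimal level of distribution result for divisors of squarefree binary quartics in short boxes and the twisted Hooley $\Delta$-function bounds for the large moduli, which are the technical backbone of the Eisenstein estimate and do not follow from the Perron--hyperbola outline as stated.
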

\begin{corollary}\label{cor: Manin}
    Manin's conjecture holds for all Ch\^atelet surfaces over $\Q.$
\end{corollary}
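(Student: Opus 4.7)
The plan is a short, essentially formal deduction of the corollary from Theorem \ref{thm: Manin}. First I would reduce Manin's conjecture for an arbitrary $\Delta \in \Q^\times$ with $\sqrt{-\Delta} \notin \Q$ to the case where $\Delta \in \Z$ is squarefree, which is the setting of Theorem \ref{thm: Manin}. Writing $\Delta = m^2 \Delta'$ with $m \in \Q^\times$ and $\Delta' \in \Z$ squarefree, the change of variables $(x, y) \mapsto (x, y/m)$ gives an isomorphism of affine surfaces $x^2 + \Delta y^2 = f(z) \cong x^2 + \Delta' y^2 = f(z)$, hence of their proper smooth models $X_{\Delta, f} \cong X_{\Delta', f}$. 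This isomorphism identifies the two natural heights up to a bounded multiplicative constant, and since $\sqrt{-\Delta} = m\sqrt{-\Delta'}$ generates the same quadratic extension of $\Q$, the predicted exponent $\varrho_{\Delta, f} = \varrho_{\Delta', f}$ is unchanged.

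Next I would convert Theorem \ref{thm: Manin}, which is stated for the modified height function $N(X_{\Delta, f}, B)$, into the corresponding asymptotic for the natural counting function $\tilde{N}(X_{\Delta, f}, B)$ appearing in Conjecture \ref{conj: manin}. As noted in the paragraph defining (\ref{eq: def of the height}), the two heights differ by a multiplicative constant depending only on $(\Delta, f)$, so the asymptotic for $N$ with its power-saving error term transfers immediately to one for $\tilde N$ of the form $C_{\Delta, f} B \log(B)^{\varrho_{\Delta, f}-1}(1 + o(1))$, where $C_{\Delta, f}$ differs from $c_{\Delta, f}$ only by a positive rescaling.

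Finally, Manin's conjecture requires $C_{\Delta, f} > 0$ whenever $X_{\Delta, f}(\Q) \neq \emptyset$. This is the only nonformal point in the deduction, and it is handled by combining the second assertion of Theorem \ref{thm: Manin} with the theorem of Colliot-Th\'el\`ene, Sansuc, and Swinnerton-Dyer \cite{ColliotSSDII}: $c_{\Delta, f}$ vanishes precisely when there is a Brauer--Manin or local obstruction to rational points on $X_{\Delta, f}$, and such obstructions are the only ones to the existence of rational points on a Ch\^atelet surface, so $X_{\Delta, f}(\Q) \neq \emptyset$ forces $C_{\Delta, f} > 0$. Because Theorem \ref{thm: Manin} already encodes essentially all of the analytic and arithmetic content, I do not anticipate any genuine obstacle here; the main work is entirely in the proof of Theorem \ref{thm: Manin} itself.
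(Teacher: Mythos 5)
Your proposal is correct and follows essentially the same route as the paper's own proof: reduce to squarefree integer $\Delta$ by a change of variables that rescales $y$ and modifies the height by a bounded factor, then invoke Theorem~\ref{thm: Manin} together with the Colliot-Th\'el\`ene--Sansuc--Swinnerton-Dyer criterion to conclude positivity of the leading constant whenever $X_{\Delta,f}(\Q)\neq\emptyset$. The only point you leave implicit that the paper spells out is the reduction from $f\in\Q[z]$ to $f\in\Z[z]$: the general definition of a Ch\^atelet surface over $\Q$ permits rational coefficients, so one first clears denominators by a square integer (replacing $x^2+\Delta_0 y^2=f_0(z)$ with $(Qx)^2+\Delta_1 y^2=f_1(z)$ for $\Delta_1,Q\in\Z$ and $f_1\in\Z[z]$, which again modifies the height by a bounded factor); your reduction of $\Delta$ alone does not cover this. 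Your extra paragraph on the positivity of $C_{\Delta,f}$ is a welcome clarification, since the paper's proof of the corollary leaves that step implicit in the second assertion of Theorem~\ref{thm: Manin}.
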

\begin{remark}
    Observe that if there is a Brauer-Manin obstruction or a local obstruction to the existence of rational points on $X_{\Delta,f}$, then we automatically know that $N(X_{\Delta,f},B) = 0$. Hence, Theorem \ref{thm: Manin} can be upgraded to $c_{\Delta,f}=0$ if and only if there is a Brauer-Manin or local obstruction. 
\end{remark}

In Theorem \ref{thm: Manin}, we do not assume a priori that $X_{\Delta,f}(\Q)\neq \emptyset.$ Our proof of the aforementioned asymptotic result gives an explicit, albeit complicated, expression for $c_{\Delta,f}$. Notably, $c_{\Delta,f}$ is closely related to auxiliary varieties; these varieties appear in the work of Colliot-Th\'el\`ene, Coray, and Sansuc in \cite{ColliotTheleneCoraySansuc} when $f(z)$ is the product of two irreducible quadratics. Since our expression for $c_{\Delta,f}$ is complicated, we determine when it vanishes but do not pursue an adelic interpretation along the lines of Peyre's prediction \cite{PeyreConstant}. In Appendix \ref{sec: appendix peyre}, we show that the constant matches Peyre's prediction in a simple, but novel case. 

Let $f(z) = f_1(z) \hdots f_r(z)$ be a decomposition into irreducible factors and let $\alpha_1,\hdots,\alpha_r\in \Q^\times$ satisfy that $\alpha_1\cdots \alpha_r=1$. We define the auxiliary variety:
    \begin{equation}\label{eq: def of variety X_Delta, f_1,...,f_r}
    X^*_{\Delta,f_1,\hdots,f_r, \alpha_1,\hdots, \alpha_r} := \{(z,(x_i,y_i)_{i=1}^r): x_i^2 + \Delta y_i^2 = \alpha_i f_i(z)\neq 0\}.
\end{equation}
The relationship between $X^*_{\Delta,f_1,...,f_r,\alpha_1,...,\alpha_r}$ and $X_{\Delta,f}$ will be expanded upon further in \S\ref{sec: constant}. For now, let us define the following condition: 
\begin{enumerate}[label = (\Alph*)]
    \item There exists a tuple $(\alpha_1,...,\alpha_r)\in (\Q^\times)^r$ satisfying $\alpha_1\cdots\alpha_r = 1$ such that $$X^*_{\Delta,f_1,...,f_r,\alpha_1,...,\alpha_r}(\Q_v)\neq \emptyset$$ for all places $v$ of $\Q$.
\end{enumerate}
We will show that if condition (A) holds then our constant $c_{\Delta,f}$ is nonzero (see Proposition \ref{prop: leading constant auxiliary}). Additionally, if $f(z)$ is irreducible, then a consequence is that $c_{\Delta,f}$ is nonzero if there are no local obstructions to rational points on $X_{\Delta,f}$.

In this way, Theorem \ref{thm: Manin} gives an \textit{analytic proof of the existence of many rational points on $X_{\Delta,f}$ via direct counting} if condition (A) holds. Now, by the work of Colliot-Th\'el\`ene, Coray, and Sansuc, and an application of Harari's ``formal lemma'' \cite{Harari}, one can interpret condition (A) in terms of the Brauer-Manin obstruction and local obstructions of $X_{\Delta,f}$. In particular, this allows us to conclude that if $c_{\Delta,f}$ is zero, then $X_{\Delta,f}(\Q) = \emptyset$ as well.

Finally, we remark on the possible application of these techniques to achieve an analytic proof of weak approximation for $X_{\Delta,f}$. That $x^2+\Delta y^2$ is a binary quadratic form corresponding to principal ideals is not of much relevance to our proof; indeed one can recreate the entire argument with a positive-definite or indefinite binary quadratic form. Adding arbitrary congruence conditions on $(u,v)$ will also be of little consequence for the analytic techniques. These points will be expanded on and are necessary ingredients in forthcoming work on del Pezzo surfaces of degree one, where both arbitrary congruence conditions and arbitrary positive-definite binary quadratic forms are required. So, at least in the case when $f(z)$ is irreducible, the methods can be adapted to establish weak approximation for $X_{\Delta,f}$.
\\

Finally, we highlight the new ingredients that allow us to push beyond $\Delta=1$. In \S2, we provide a detailed outline of our proof. First, we decompose our count into contributions from Eisenstein series and cusp forms. 
Consider the following function for $\Delta$ a positive squarefree integer: $$\theta(z) = \sum_{\substack{I\subset \calO_{\Q(\sqrt{-\Delta})}\\ I \textrm { is principal}}} \exp(2\pi i z \cdot N(I)).$$
This function is a modular form of weight one and has a decomposition into Eisenstein and cuspidal components. If $\Delta=1$, then the fact that $\Q(i)$ has class number one implies that $\theta(z)$ is purely Eisenstein, i.e. the cuspidal part is zero. We will find that in general the cuspidal contribution is negligible in comparison with the Eisenstein contribution. 

When $\Delta<0$, i.e. the indefinite case, a similar picture forms with careful analysis; the main additional obstacle in this case is that $\Q(\sqrt{-\Delta})$ has an infinite unit group. To mitigate this difficulty, we restrict our point-count to mildly-short boxes and intervals of length scaled by $\ell = \log(B)^{-10^{-10}}.$ This restriction allows us to estimate accurately the count $$\#\{|x\pm y\sqrt{-\Delta}|\leq B: x^2 + \Delta y^2 = t^2 F(u,v)\}$$
and convert our sum into a question about principal ideals $I$ with norm $t^2F(u,v).$ Of course, summing over short intervals and boxes complicates our estimates and sometimes adds an extra factor of $\ell^{-O(1)}$, where this exponent can be made explicit. Fortunately, the error terms achieved by previous results on Manin's conjecture, whose techniques we mimic, achieve savings of $\log(B)^{-10^{-5}}$ which will be more powerful than $\ell^{-O(1)}.$ Once the sum is written in terms of principal ideals of $\Q(\sqrt{-\Delta})$, we can again decompose the count in terms of Eisenstein and cuspidal contributions as before; this is equivalent to decomposing the Maass form into Eisenstein and cuspidal ingredients.  \\

Second, we demonstrate that the cuspidal contribution is always negligible in comparison with the Eisenstein contribution. 
Let $C(z)$ denote the cuspidal part of $\theta(z)$ and let $\lambda_C(n)$ denote its Fourier coefficients. Our cuspidal contribution reduces to estimating sums of the form
$$\sum_{t\leq B} \sum_{|u|,|v|\leq (B/t)^{1/2}} \left|\lambda_C(t^2F(u,v))\right|,$$
and of the form
$$\sum_{t\leq B}\sum_{d\mid t}\mu(d) \sum_{|u|,|v|\leq (B/t)^{1/2}} \lambda_C\left(t^2F(u,v)/d^2\right).$$ 

In the first case, we study correlation sums of the absolute values of its Fourier coefficients $\lambda_C(n)$ along polynomial values; although we sacrifice cancellation in the sign of $\lambda_C(n)$, we still gain a saving of a power of log. In particular, we use a Nair-type sieve \cite{Nair} to reduce the problem to computing the first few Sato-Tate moments of the corresponding $L$-functions (see Rankin's \cite{Rankin} work on the moments of the Ramanujan $\tau$-function). We remark that this method was used by Holowinsky \cite{HolowinskyShiftedConvolution} to study correlation sums of Fourier coefficients of cusp forms with quadratic polynomials. Heath-Brown \cite{HBclassnum} also used this technique to study the equidistribution of solutions to certain Ch\^atelet surfaces and his work inspired our proof techniques. While this technique handles most cases, there are delicate issues concerning the cuspidality of the base change lift of $C(z)$ to the splitting field of $f(z)$; in these cases, the absolute values are too expensive of a sacrifice. 

Fortunately, in the above exceptional cases, we can appeal to the more robust method of handling the second sum. We determine that since $C(z)$ is a cusp form, we can derive savings for the sum $$\sum_{d\leq X} \mu(d) \sum_{t\leq X/d} \lambda_C(at^2).$$ Moreover, our cusp forms are induced from class group characters of $\Q(\sqrt{-\Delta}).$ This extra structure allows us to track explicitly the dependency in the above bound on $a$. Thus, we can derive a polylogarithmic saving on $$ \sum_{|u|,|v|\leq B^{1/2}}\sum_{d\leq B/\max(|u|,|v|)^2} \mu(d) \sum_{t\leq B/d\max(|u|,|v|)^2} \lambda_C(t^2F(u,v)).$$ 
The inner sum is controlled by the poles of the symmetric square $L$-function of $C(z)$ (see \cite{Shimura} for a discussion of the poles); if there is no pole, then the desired cancellation follows from the inner sum. If $C(z)$ is dihedral and the symmetric square $L$-function has a pole, then an asymptotic for the inner sum is well-understood and the cancellation follows from the M\"obius sum. \\

For the Eisenstein contribution, we follow the methods of \cite{delaBrownPeyre-maninChatelet, delaBrowningChatelet,delaTenenbaum-Manin, Destagnol, HB-linear} that resolved the $\Delta=1$ case. The third new feature is that for more general $\Delta$, due to Gauss genus theory (see \cite{FomenkoQuad}), the Eisenstein Fourier coefficients now contain an extra summation over genus characters. This extra complication can still be handled by the aforementioned techniques for the $\Delta=1$ case and we achieve an explicit expression for the leading constant $c_{\Delta,f}.$

The fourth and last new ingredient is the analysis of the vanishing of the leading constant $c_{\Delta,f}$. This is resolved in \S\ref{sec: constant} and involves the auxiliary varieties introduced in (\ref{eq: def of variety X_Delta, f_1,...,f_r}). In particular, our leading term brings us to a non-negative sum which is approximately of the form
$$\sum_{(\alpha_1,...,\alpha_r)/\sim} \prod_{p} \mu_p(X^*_{\Delta,f_1,...,f_r,\alpha_1,...,\alpha_r}(\Q_p)).$$
where if $\mu_p(X^*_{\Delta,f_1,...,f_r,\alpha_1,...,\alpha_r}(\Q_p))\geq 0$ is zero then $X^*_{\Delta,f_1,...,f_r,\alpha_1,...,\alpha_r}(\Q_p)=\emptyset.$ Here, the sum over $(\alpha_1,...,\alpha_r)/\sim$ is over a sufficient and finite set of auxiliary varieties. Consequently, we show that if $c_{\Delta,f}=0$ then there is a local obstruction for every auxiliary variety.

\section*{Acknowledgments}
The author would like to thank her advisor Peter Sarnak for his guidance and support, Tim Browning for many enlightening conversations about Manin's conjecture and his helpful feedback, and Jean-Louis Colliot-Th\'el\`ene for his insightful comments on auxiliary varieties and for explaining the proof of Theorem \ref{thm: descent}. The author would also like to thank Niven Achenjang, R\'egis de la Bret\`eche, Ulrich Derenthal, Kevin Destagnol, Trajan Hammonds, Roger Heath-Brown, and Will Sawin for comments on earlier versions of this paper.

This material is based upon work supported by the National Science Foundation under grants DGE-2039656 and DMS-2502864. Any opinions, findings, and conclusions or recommendations expressed in this material are those of the author and do not necessarily reflect the views of the National Science Foundation.

\section{Outline}\label{sec: outline}
In this section, we outline our proof strategy; our goal is to highlight the new techniques and convey the essential ideas, without the technical detail provided in the rest of the paper. By leaving out some of the more notationally-heavy complications, we hope to provide a clear roadmap for the rest of the paper. We warn the reader that in this section some of the claims are vague (and will be made precise in later sections) and some are technically false (but morally true and the modifications needed to correct the statements will be given later).\\ 

Let us start with the case when $\Delta>0$. If $\Delta=1$, Theorem \ref{thm: Manin} is known; so assume that $\Delta>1$ (for example, we can take $\Delta=5$). The main challenge to overcome is when $\Q(\sqrt{-\Delta})$ has class number greater than one. From our definition (\ref{eq: def of the height}), we see that: $$N(X_{\Delta,f},B) = \sum_{0\leq t\leq B} \sum_{|u|,|v|\leq (B/t)^{1/2}} r_{x^2+\Delta y^2}(t^2F(u,v)),$$
where $r_{x^2+\Delta y^2}(n) = \#\{(x,y)\in \Z^2: x^2+\Delta y^2 = n\}.$
Consider the binary theta series $$\theta(z) = \sum_{n} r_{x^2+\Delta y^2}(n) \exp(2\pi i n z) = \sum_{x,y} \exp(2\pi i z (x^2+\Delta y^2),$$
which is a \textbf{modular form of weight one} in $M_1(\Gamma_0(-\Delta),\chi)$ (see \cite[\S22.3]{IwaniecKowalski}), where $\chi$ denotes the real quadratic character for $\Q(\sqrt{-\Delta}).$ We can decompose $\theta(z)$ in terms of its Eisenstein and cuspidal contributions $\theta(z) = E(z) +C(z)$. Note that since $\theta(z)$ is a weight one modular form, the Fourier coefficients of the Eisenstein component and the cuspidal component are expected to be of the same magnitude.

The Fourier coefficients of the Eisenstein part can be written explicitly in terms of \textbf{genus characters}, i.e. the class group character of order $\leq 2$. As such, we have a nice description of the Fourier coefficients of the Eisenstein part: $$\lambda_E(n) \approx (1\star \chi)(n) := \sum_{d\mid n} \chi(d).$$

For the cuspidal part, we can write $\lambda_C(n)$ as a fixed linear combination of Fourier coefficients $\lambda_{\Psi}(n)$, where $\Psi$ is the automorphic cuspidal representation of $\GL_2(\A_\Q)$ given by a class group character $\psi$ of order $\geq 3$. The critical fact that we can restrict to those $\psi$ of order $\geq 3$ follows from Gauss genus theory for binary quadratic forms, as discussed by Fomenko in \cite{FomenkoQuad}. We can describe these coefficients $\lambda_\Psi(n)$ explicitly: 
\begin{equation}\label{eq: outline lambda xi}
    \lambda_\Psi(n) = \sum_{\substack{\fa\subset \calO_{\Q(\sqrt{-\Delta})}\\ N(\fa)=n}}\psi(\fa).
\end{equation}

From this decomposition of $r_{x^2+\Delta y^2}(n)$, we can see that Theorem \ref{thm: Manin} follows from gaining asymptotics on $$\sum_{t\leq B} \sum_{|u|,|v|\leq (B/t)^{1/2}} (1\star \chi)(t^2F(u,v))$$
and sufficient upper bounds on $$\sum_{t\leq B} \sum_{|u|,|v| \leq (B/t)^{1/2}} \lambda_\Psi(t^2F(u,v)).$$
Note that we have left out the $\gcd$-conditions here. \\ 

Before analyzing the two sums above, let us discuss what happens when $\Delta<0$. There are two challenges involved in the indefinite case: again, we run into issues when $\Q(\sqrt{-\Delta})$ has class number greater than one; we also must handle the fact that there are infinitely many solutions to the equation $x^2+\Delta y^2 = 1.$ To handle the second issue, we restrict to counting in short boxes. 

Let $\ell, L = w(B)$, a small value going slowly to zero as $B\rightarrow\infty.$ We eventually take $w(B) = \log(B)^{-10^{-10}}.$ Define the box $$\calB(\bx_0,L):= \bx_0 + [0,L]^2,$$
and the interval
$$\calI(\theta,\ell) := \{\exp(2\pi i \theta'): \theta'\in [\theta,\theta+\ell]\}.$$
Then we consider the restricted counting function:
\begin{multline*}
N(X_{\Delta,f},\calB(\bx_0,L),\calI(\theta,\ell)):=   \#\left\{ ((x,y),(u,v),t)\in \Z^2\times \Z^2\times \Z_+: \begin{array}{cc}
     x^2+\Delta y^2 = t^2 F(u,v), \\
     (u,v)\in (B/t)^{1/2}\calB(\bx_0,L),\\
     \|x\pm y\sqrt{-\Delta}\|\leq B ,\\ 
     \arg(x+y\sqrt{-\Delta}) \in \calI(\theta,\ell), \\ 
    \gcd(x,y,t) = \gcd(u,v) = 1
\end{array} 
       \right\}.
\end{multline*}
In other words, we restrict $(u,v)$ to a mildly shorter box $(B/t)^{1/2}\calB(\bx_0,L)$ and constrain the argument $\arg(x+y\sqrt{-\Delta})\in \calI(\theta,\ell).$ Here we define the argument as $$\arg(x+y\sqrt{-\Delta}) = \frac{|x+y\sqrt{-\Delta}|}{|x-y\sqrt{-\Delta}|} \bmod |\varepsilon|^2,$$
where $\varepsilon$ is the fundamental unit to Pell's equation $x^2+\Delta y^2=1.$ Our strategy is to produce an asymptotic for $N(X_{\Delta,f},\calB(\bx_0,L),\calI(\theta,\ell))$; this asympotic would then recover an estimate for $N(X_{\Delta,f},B)$ by splitting the region $\{|x|,|y|\leq B, |u|,|v|\leq (B/t)^{1/2}\}$ into short boxes and arguments in $\calI(\theta,\ell)$, and then summing over $\bx_0$ and $\theta$. This computation is done in \S\ref{subsec: proof of main thm}.

By restricting to the shorter box and interval, we can make precise the count $$\#\{|x\pm y\sqrt{-\Delta}|\leq B: x^2+y^2 = t^2F(u,v), \arg(x+y\sqrt{-\Delta})\in \calI(\theta,\ell)\}.$$
In particular, we can find an explicit constant $$\kappa(\bx_0,\theta) = c_{\Delta}\cdot \max(-\log|F(\bx_0)|,0) \cdot \frac{|\calI(\theta,\ell)|}{4\pi}$$ satisfying that \begin{multline*}N(X_{\Delta,f},\calB(\bx_0,L),\calI(\theta,\ell)) = (\kappa(\bx_0,\theta)+O(w(B))) \\ \times \sum_{t\leq B} \sum_{(u,v)\in (B/t)^{1/2}\calB(\bx_0,L)} \#\{I\subset \Q(\sqrt{-\Delta}): I\textrm{ a principal ideal and }N(I) = t^2F(u,v)\} \\ + E(X_{\Delta,f},\calB(\bx_0,L),\calI(\theta,\ell)),
\end{multline*}
where $E(X_{\Delta,f},\calB(\bx_0,L),\calI(\theta,\ell))$ is a linear combination of remainder terms of the form \begin{equation*}\sum_{t\leq B} \sum_{|u|,|v|\leq (B/t)^{1/2}} |\lambda_\Xi(t^2 F(u,v))|\end{equation*}
for $\Xi$ a cuspidal automorphic representation of $\GL_2(\A_\Q)$ given by an unitary nontrivial Hecke Gr\"ossencharacter $\xi$ on $\Q(\sqrt{-\Delta})$ of infinite order (see Definition \ref{def: fundamental character}). 

For the main term, we again use a decomposition into Eisenstein and cuspidal components. In particular, we can express $$
\#\{I\subset \Q(\sqrt{-\Delta}): I\textrm{ principal and }N(I) = n\} = \frac{1}{|C|}\sum_{\psi \in \hat{C}} \sum_{\substack{\fa\subset \calO_{\Q(\sqrt{-\Delta})}\\ N(\fa) = n}} \psi(\fa),$$
where $C$ is the class group of $\Q(\sqrt{-\Delta}).$ Again, from Gauss's genus theory, the Eisenstein contribution comes from the class group characters of order $\leq 2$; the Fourier coefficient for the Eisenstein part is approximately $$\lambda_E(n) \approx(1\star \chi)(n).$$
Similarly, the cuspidal Fourier coefficient is a linear combination of sums $\lambda_\Psi$ where $\Psi$ is the representation given by $\psi$ a class group character of order $\geq 3.$ This matches the process completed in the positive definite case. 

In the end, Theorem \ref{thm: Manin} reduces to estimating the sum 
\begin{equation}\label{eq: outline eisenstein}\sum_{t\leq B} \sum_{(u,v)\in (B/t)^{1/2}\calB(\bx_0,L)} (1\star \chi)(t^2F(u,v)),\end{equation}
and providing a suitable upper bound on the sums \begin{equation}\label{eq: outline cusp}\sum_{t\leq B} \sum_{|u|,|v|\leq (B/t)^{1/2}} \lambda_\Xi(t^2F(u,v)),\end{equation}
\begin{equation}\label{eq: outline cups abs}
    \sum_{t\leq B}\sum_{|u|,|v|\leq (B/t)^{1/2}}|\lambda_\Xi(t^2F(u,v))|.
\end{equation}
where $\Xi$ is the representation of $\GL_2(\A_\Q)$ induced by a Hecke character $\xi$. Additionally, (\ref{eq: outline cusp}) will only be used when $\xi$ is a class group character of order $\geq 3$ (and hence has finite image), whereas (\ref{eq: outline cups abs}) only occurs when $\xi$ is a Hecke Gr\"ossencharacter (which has infinite image). We remark that it is likely that with more careful analysis we could remove the absolute values in (\ref{eq: outline cups abs}); however, we achieve the necessary savings with the absolute values added. \\ 

Our method for estimating the Eisenstein contribution follows closely the previous approaches when $\Delta=1$; indeed, when $\Delta=1$, $\theta(z) = E(z)$ and so the whole sum is contained in the Eisenstein part. We particularly mimic the proofs of de la Bret\`eche and Tenenbaum in \cite{delaTenenbaum-Manin} and the proof of Heath Brown in \cite{HB-linear}. However, we must consider when $(u,v)\in (B/t)^{1/2}\calB(\bx_0,L)$ lies in a short box -- fortunately, all of the techniques are either amenable to this issue or we achieve a savings that is far larger than the loss incurred from summing over $|u|,|v|\leq (B/t)^{1/2}$ instead of the smaller region. 

Using estimates on lattice points in convex regions, we establish a near-optimal \textbf{level of distribution} result for squarefree binary forms in \S\ref{sec: LoD}. Write $F(x,y) = \prod_{i=1}^r F_i(x,y)$ as the decomposition into irreducible forms. We show that $$\sum_{d_i\leq D_i} \left|\sum_{\substack{(u,v)\in (B/t)^{1/2}\calB(\bx_0,L)\\ d_i\mid F_i(u,v)}} 1 - \frac{BL^2}{t}\cdot \frac{\#\{\mathbf{x}\bmod d_1\cdots d_r: d_i \mid F_i(\bx)\}}{(d_1\cdots d_r)^2}\right| \ll D + \sqrt{\frac{BD}{t}}\exp(\sqrt{\log\log(B)}),$$
where $D = D_1\hdots D_r.$ This result allows us to take $D\leq \frac{B}{t} \log(B/t)^{-\varepsilon}$ for any $\epsilon>0$ and still obtain a reasonable bound. This level of distribution result is obtained in \cite{delaTenenbaum-Manin} for $F(x,y)$ irreducible or a product of two irreducible quadratics; we closely follow their techniques. Also, Marasingha \cite{Marasingha-almostprimes} studies a general level of distribution result for squarefree binary forms and achieves a savings for $D\leq \frac{B}{t} \log(B/t)^{-A}$ for any large enough integer $A>0$. We remark that if $F(x,y)$ has linear factors then the shape of the above bound is slightly different. 

This step is the primary obstacle to studying $F(x,y)$ of higher degree; for $\deg(F)\leq 3$, the level of distribution is sufficient and for $\deg(F)=4$, it is almost sufficient (up to a power of $\log(B)$). For $\deg(F)>4$, the level of distribution would be a power of $B$ away from necessary and this gap can not be covered by the argument as written below. \\

The other main input into our proof comes directly from de la Bret\`eche and Tenenbaum in \cite{delaTenenbaum-delta} when $F(x,y)$ is irreducible or the product of two irreducible quadratics; we describe their idea in detail in \S\ref{sec: large moduli} and discuss how to handle other factorizations of $F(x,y)$. Define the \textbf{twisted Hooley $\Delta$-function}:
$$\Delta(n,\chi) := \sup_{\substack{D\in \R \\ 0\leq v\leq 1}} \left|\sum_{\substack{d\mid n\\ d\in [D,e^v D]}}\chi(d)\right|.$$
Applying Nair's sieve and \cite[Theorem 1.1]{delaTenenbaum-delta}, we achieve that for $D\geq \frac{B}{t}\log(B/t)^{-10^{-5}},$ $$ \sum_{\substack{|u|,|v|\leq (B/t)^{1/2}\\ \exists d_i\sim D_i: d_i\mid F_i(u,v)}} \Delta(F(u,v),\chi) \ll \frac{B}{t} \cdot \log(B/t)^{\rho_{\Delta,f}-2-10^{-5}}.$$
Note that since we take $L=w(B) = \log(B)^{-10^{-10}}$, this bound is still $\ll \frac{BL^2}{t} \log(B)^{\rho_{\Delta,f}-2-10^{-7}}.$ This procedure handles the large moduli. \\ 

Together, these results allow us to re-express the Eisenstein contribution as:
\begin{multline*}
    BL^2\sum_{t\leq B}\frac{1}{t}\sum_{(a_i)\bmod \Delta} \prod_{i=1}^r (1+\chi(a_i)) \cdot \#\{\bx\bmod \Delta: F_i(\bx)\equiv a_i \bmod \Delta\} \\ \times  \sum_{d_i\leq D_i} \frac{\chi(d_1\hdots d_r) \cdot \#\{\bx\bmod d_1\cdots d_r: d_i\mid F_i(\bx)\}}{(d_1\cdots d_r)^2}\\ 
    + O\left(\sum_{t\leq B}\frac{B}{t}\log(B/t)^{\rho_{\Delta,f}-2-10^{-5}}\right).
\end{multline*}
However, the above expression is an oversimplification -- in particular, it assumes that $(1\star \chi)(n)$ is a totally multiplicative function, which it is not. Instead, we must substitute the identity that $$(1\star \chi)(mn) = \sum_{c\mid \gcd(m,n)} \mu(c)\chi(c)(1\star \chi)(n/c)(1\star \chi)(m/c).$$
Consequently, we re-express our main term as:
\begin{multline*}
    \sum_{(u,v)\in (B/t)^{1/2}\calB(\bx_0,L)} (1\star \chi)(F(u,v)) = \sum_{(c_1,\hdots,c_r)}' \mu(c_1\cdots c_r)\chi(c_1\cdots c_r) \sum_{\substack{(u,v)\in (B/t)^{1/2}\calB(\bx_0,L) \\ c_i\mid F_i(u,v)}} \prod_{i=1}^r (1\star \chi)(F_i(u,v)/c_i).
\end{multline*}
Here the sum over $(c_1,\hdots,c_r)$ is restricted to a finite set of possible vectors where the entries divide the pairwise resultants of the irreducible factors of $f(z)$. 

Proceeding as before with our level of distribution result and bounds on the large moduli with the twisted Hooley $\Delta$-function, our main term is actually of the form:
\begin{multline}\label{eq: outline main term}
    BL^2 \sum_{(c_1,\hdots,c_r)}' \mu(c_1\cdots c_r)\chi(c_1\cdots c_r) \sum_{t\leq B} \frac{1}{t} \cdot \sum_{(a_i)\bmod \Delta} \prod_{i=1}^r (1+\chi(a_i))\#\{\bx\bmod \Delta: F_i(\bx)\equiv c_ia_i \bmod \Delta\} \\
     \times \sum_{d_i\leq D_i} \frac{\chi(d_1\cdots d_r) \#\{\bx\bmod c_1\cdots c_r\cdot d_1\cdots d_r: c_id_i\mid F_i(\bx)\}}{(c_1\cdots c_r\cdot d_1\cdots d_r)^2}.
\end{multline}
Before analyzing this leading term further, let us discuss the cuspidal contribution. 
\\ 

First, we start with the finite image case. To study (\ref{eq: outline cusp}), we first change the order of summation:
$$\sum_{t\leq B} \sum_{|u|,|v|\leq (B/t)^{1/2}} \lambda_\Xi(t^2F(u,v)) = \sum_{|u|,|v|\leq B^{1/2}} \sum_{t\leq B/\max(|u|,|v|)^2} \lambda_\Xi(t^2F(u,v)).$$
At this point, we need to address a previous false claim about cuspidal contribution in \eqref{eq: outline cusp} -- because we are counting those $(x,y)$ such that $\gcd(x,y,t)=1$, we need to restrict our sum to ideals satisfying that $N(I) = t^2F(u,v)$ where $I = \fa\ft$, $\gcd(\ft,\overline{\ft})=\gcd(\fa,\overline{\ft})=1$. In other words, our sum (\ref{eq: outline cusp}) should be of the form:
$$\sum_{|u|,|v|\leq B^{1/2}}\sum_{\substack{N(\fa) = F(u,v) }}\xi(\fa) \sum_{\substack{N(\ft)\leq B/\max(|u|,|v|)^2\\ \gcd(\fa,\overline{\ft})=1\\ \gcd(\ft,\overline{\ft})=1}}\xi(\ft)^2.$$
Now, since $\xi^2$ is a nontrivial class group character, we achieve a power saving upper bound in the sum over ideals $\ft$. In particular, we claim that 
$$\sum_{\substack{N(\ft)\leq B/\max(|u|,|v|)^2\\ \gcd(\fa,\overline{\ft})=1\\ \gcd(\ft,\overline{\ft})=1}}\xi(\ft)^2\ll \prod_{\fp\mid \fa} \left(1+\frac{1}{N(\fp)^{2/3}}\right) \cdot \left(\frac{B}{\max(|u|,|v|)^2}\right)^{5/6}.$$
The above is a consequence of the prime ideal theroem with nontrivial Hecke characters.

To complete our analysis of the finite image cuspidal contribution, we apply a generalization of Nair's sieve for non-negative multiplicative functions along polynomial values by de la Bret\`eche and Tenenbaum \cite{delaTenenbaum-sieve}. In particular, this sieve gives us that 
\begin{multline*}\sum_{|u|,|v|\leq B^{1/2}} \sum_{\substack{N(\fa)=F(u,v)}} \prod_{\fp\mid \fa} \left(1+\frac{1}{N(\fp)^{2/3}}\right) \\ \ll B \cdot \exp\left(\sum_{p\leq B^{1/2}} \frac{\#\{x\bmod p: f(x)=0\bmod p\}}{p} \cdot \left((1\star\chi)(1+p^{-2/3})-1\right)\right)
\ll B\log(B)^{\rho_{\Delta,f}-2}.
\end{multline*}
Using partial summation, we achieve that $$\sum_{\substack{N(\ft)\leq B/\max(|u|,|v|)^2 \\ \gcd(\ft,\overline{\ft})=1}} \xi(\ft)^2 \sum_{|u|,|v|\leq (B/N(\ft))^{1/2}} \sum_{\substack{N(\fa)=F(u,v)\\ \gcd(\fa,\overline{\ft})=1}}\xi(\fa) \ll B\log(B)^{\rho_{\Delta,f}-2}. $$ 
This is negligible compared to the main term of size $B\log(B)^{\rho_{\Delta,f}-1}.$ \\

We can also view this sum more generally through the perspective of the Fourier coefficients of cusp forms. Let $C(z)$ be a weight one cusp form with central character $\chi$; the gcd condition $\gcd(x,y,t)=1$ leads us to the sum $$\sum_{t\leq B} \sum_{d\mid t} \mu(d) \sum_{|u|,|v|\leq (B/t)^{1/2}} \lambda_C\left(\frac{t^2F(u,v)}{d^2}\right) = \sum_{|u|,|v|\leq B^{1/2}}\sum_{d\leq B/\max(|u|,|v|)^2} \mu(d) \sum_{t\leq B/(d\max(|u|,|v|)^2)} \lambda_C(t^2F(u,v)).$$
This naturally brings us to analyzing the sum $$\sum_{d\leq X} \mu(d) \sum_{n\leq X/d} \lambda_C(an^2)$$
and wanting to track the dependency on $a$ above. 
Using Perron's formula, we can rewrite the above in terms of $L(s,\sym^2(C))$ and $\zeta(s)^{-1}$. Since $C(z)$ is a cusp form, $L(s,\sym^2(C))$ can have at most a pole of order one at $s=1$ (see \cite{Shimura} for a criteria for when there is a pole); thus, $L(s,\sym^2(C))\zeta(s)^{-1}$ converges at $s=1$. This allows us to derive a bound $$\sum_{d\leq X} \mu(d) \sum_{n\leq X/d} \lambda_C(an^2) \ll \mathfrak{S}_a X \log(X)^{-10},$$
where $\mathfrak{S}_a$ is a finite product over the primes dividing $a$. When $C(z)$ is induced by a class group character, we have explicit bounds on $\mathfrak{S}_a$; in particular, the following holds for primes $p$:
$$\mathfrak{S}_p = \begin{cases}
    2+O(1/p^{2/3}), & \chi(p)=1,\\
    O(1/p^{2/3}), & \chi(p)=-1.
\end{cases}$$

Plugging the asymptotic above in and applying Nair's sieve as before, we achieve that the contribution from the cuspidal components with finite image is bounded by $$\ll B \sum_{|u|,|v|\leq B^{1/2}} \frac{\mathfrak{S}_{F(u,v)}}{\max(|u|,|v|)^2} \log(B/\max(|u|,|v|)^2)^{-10} \ll B\log(B)^{\rho_{\Delta,f}-2}.$$
This is again negligible compared to the main term of size $B\log(B)^{\rho_{\Delta,f}-1}.$ 
\\ 

In the Gr\"ossencharacter case, i.e. (\ref{eq: outline cups abs}), we aim to acheive a small power of $\log(B)$ savings from the \textbf{absolute values of the Fourier coefficients of $\Xi$}. For this argument, we apply the same version of Nair's sieve above with our non-negative multiplicative function now taken to be $|\lambda_\Xi(n)|$. Consequently, we have that $$\sum_{|u|,|v|\leq (B/t)^{1/2}} |\lambda_\Xi(t^2F(u,v))| \ll \frac{B}{t}\cdot \prod_{p\ll B/t}\left(1+\frac{(|\lambda_\Xi(p)|-1) \cdot \#\{x\bmod p: f(x) \equiv 0 \bmod p\}}{p}\right).$$
To understand the above product, we study the sum \begin{equation}\label{eq: outline sum over ideals}\sum_{\substack{\fp \subset \calO_{\Q(\sqrt{-\Delta})}\\ N(\frakp) \ll B/t}} \frac{\#\{x\bmod N(\fp): f(x)\equiv 0 \bmod N(\fp)\}}{2N(\fp)}\cdot |\xi(\fp)+\xi(\fp)^{-1}|.\end{equation}
One can show that when $p$ splits in $\Q(\sqrt{-\Delta})$, 
\begin{equation}\label{eq: outline cosine}|\lambda_\Xi(p)|-1 \leq \frac{1}{2}+\frac{1}{4}\lambda_{\Xi^2}(p).\end{equation}
When $p$ does not split in $\Q(\sqrt{-\Delta})$, by definition $|\lambda_\Xi(p)|-1 = -1$. Using (\ref{eq: outline cosine}), we can reduce (\ref{eq: outline sum over ideals}) to analyzing $$\left(\frac{-1}{4}+\frac{3}{4}(\rho_{\Delta,f}-2)\right)\log\log(B/t) + O(1) + \sum_{\substack{\fp\subset \calO_{\Q(\sqrt{-\Delta})}\\ N(\fp)\ll B/t}}\frac{\xi^2(\fp) \#\{x \bmod N(\fp): f(x)\equiv 0 \bmod N(\fp)\}}{N(\fp)}.$$

It remains to understand the final sum. At this point, we recall that we only analyze this sum $\Xi$ is induced by a Gr\"ossencharacter $\xi$, and hence $\xi^2$ is nontrivial. We must then lift the nontrivial Hecke character $\xi^2$ to $K_f$, the splitting field of $f(z)$ over $\Q(\sqrt{-\Delta})$, to understand the sum above; in other words, we need to understand the \textbf{base change} of $\xi^2$ to $K_f$. This new Hecke character $\tilde{\xi}=\xi^2 \circ N_{K_f/\Q(\sqrt{-\Delta})}$ on $K_f$ is necessarily nontrivial since $\xi$ had infinite order; it is, however, not true that the base change of $\xi$ to $K_f$ is always nontrivial if we remove the assumption that $\xi$ has infinite order.

The prime ideal theorem for Hecke Gr\"ossencharacters determines that 
$$\sum_{\substack{\fp\subset \calO_{\Q(\sqrt{-\Delta})}\\ N(\fp)\ll B/t}}\frac{\xi^2(\fp) \#\{x \bmod N(\fp): f(x)\equiv 0 \bmod N(\fp)\}}{N(\fp)}=o(\log\log(B/t)).$$
Consequently, for such character $\xi$, $$\sum_{t\leq B} \sum_{|u|,|v|\leq (B/t)^{1/2}} |\lambda_\Xi(t^2F(u,v))| \ll \sum_{t\leq B} \frac{B}{t}\cdot \log(B)^{\rho_{\Delta,f}-2-1/5} \ll  B\log(B)^{\rho_{\Delta,f}-1-1/5}.$$
We remark that this approach was given in \cite{HBclassnum} by Heath-Brown and our approach is strongly inspired by his work. We also note that our savings here is large enough to overcome the fact that we have moved from summing over $(u,v)\in (B/t)^{1/2}\calB(\bx_0,L)$ to $|u|,|v|\leq (B/t)^{1/2}$ as long as $w(B) \leq \log(B)^{-1/49}.$

We would like to highlight that this approach to the Gr\"ossencharacter case can also be used to approach (\ref{eq: outline cusp}) in most cases. In particular, we only require that the base change of $\xi^2$ to $K_f$ to be a nontrivial Hecke character to achieve a small polylogarithmic savings; this lift will be acceptable unless $K_f$ is not independent from the Hilbert class field of $\Q(\sqrt{-\Delta}).$

Finally, we comment on how this technique can be applied to a wider class of cusp forms. For instance, let $C(z)\in S_1(\Gamma_0(-\Delta),\chi)$ be a cusp form and consider the sum $$\sum_{|u|,|v|\leq X} |\lambda_C(t^2F(u,v))| \ll \frac{B}{t} \cdot \prod_{p\ll B/t} \left(1+ \frac{(|\lambda_C(p)|-1)\cdot \#\{x\bmod p: f(x) \equiv 0 \bmod p\}}{p}\right).$$
We have again applied Nair's sieve here. We can replace (\ref{eq: outline cosine}) with Hecke relations: 
$$|\lambda_C(p)|-1 \leq -\frac{1}{18}+\frac{4}{9}\lambda_{\sym^2(C)}(p) - \frac{1}{18}\lambda_{\sym^4(C)}(p).$$
We can then proceed with the argument above under the assumption of certain analytic properties of the base change of $\sym^2(C)$ and $\sym^4(C)$ to $K_f$. In summary, the savings in this bound, despite the addition of absolute values around the Fourier coefficients, comes from the early Sato-Tate moments of $C(z)$ and its base change to $K_f$. This idea will be expanded upon in forthcoming work. 
\\

Finally, we return to the Eisenstein contribution, i.e. the main term and how we determine when $c_{\Delta,f}$ is zero. Let us define $\xi(s;f,\chi)$ to be the following Dirichlet series for $\Re(s)>1:$
$$\xi(s;f,\chi) = \sum_{d_i} \frac{\chi(d_1\cdots d_r)\#\{x\bmod d_1\cdots d_r: d_i\mid f_i(x)\}}{(d_1\cdots d_r)^s}.$$
For every irreducible factor $f_i(z)$ of $f(z)$, let us take $K_i = \Q[z]/f_i(z)$. Then we will see that $$\xi(s;f,\chi) = A_{\Delta,f}(s) \cdot \prod_{i=1}^r L_{K_i}(s,\chi), \hspace{1cm} A_{\Delta,f}(1)\neq 0$$
where $L_{K_i}(s,\chi)$ is the quadratic twist of the Dedekind zeta function $\zeta_{K_i}(s)$ by $\chi$ and $A_{\Delta,f}(s)$ converges for $\Re(s)>3/4$. We will see that the order of the pole at $s=1$ of $L_{K_i}(s,\chi)$ is one if $\sqrt{-\Delta}\in K_i$ and zero otherwise. As a consequence, $\xi(s;f,\chi)$ has a pole of order $\rho_{\Delta,f}-2$ at $s=1$. 

Since $\xi(s;f,\chi)$ is intimately tied to our sum over $d_i\leq D_i$ in our main term (\ref{eq: outline main term}), we find that (\ref{eq: outline main term}) can be re-written as
\begin{multline*}BL^2\log(B)\sum_{(c_1,...,c_r)}' \mu(c_1\cdots c_r)\chi(c_1\cdots c_r)  \sum_{(a_i)\bmod \Delta} \prod_{i=1}^r (1+\chi(a_i)) \#\{\bx\bmod \Delta: F_i(\bx)\equiv c_ia_i\bmod \Delta\} \\   \times \Res_{s=1} \xi(s;f,\chi) \log(B)^{\rho_{\Delta,f}-2} \times \prod_{p\mid c_1\cdots c_r }\mathfrak{S}_p,
\end{multline*}
where $\mathfrak{S}_p$ are local factors. It should be noted that $c_1\cdots c_r$ itself must divide the product of resultants $\prod_{i\neq j} \Res(f_i,f_j)$ and so this is a finite product. 

In \S\ref{sec: constant}, we will show that if $$\sum_{a_i\bmod \Delta} \prod_{i=1}^r (1+\chi(a_i))\cdot  \#\{\bx\bmod \Delta: F_i(\bx)\equiv c_ia_i \bmod \Delta\}=0$$
then for some $p\mid \Delta$, $X^*_{\Delta,f_1,...,f_r,c_1,...,c_r}(\Q_p)=\emptyset$ (the expression above is an oversimplification, but we later notate the more complicated relevant expression as $\BM(\bc)$). We remark that the true (more complicated) computation involves a sum over genus characters as well. After some manipulations, we further see that if the contribution from a particular tuple $(c_1,\dots,c_r)$ to the sum above is zero, then for some $p\mid c_1\dots c_r\Delta$, we indeed have that $X^*_{\Delta,f_1,\dots,f_r,c_1,\dots,c_r}(\Q_p)=\emptyset.$
 Moreover, we show that these are all of the possible local obstructions to $X^*_{\Delta,f_1,...,f_r,c_1,...,c_r}$ and that the finite set of $(c_1,...,c_r)$ in the restricted sum are the only ones necessary to consider. 

Consequently, we have the following statement: \textit{if our leading constant $c_{\Delta,f}$ is zero then for every vector $(c_1,...,c_r)$, there is a local obstruction to rational points on the torsor $X^*_{\Delta,f_1,...,f_r,c_1,...,c_r}$.}
Finally, the results of Colliot-Th\'el\`ene, Coray, and Sansuc \cite{ColliotTheleneCoraySansuc,CTSSchinzel} and Harari's ``formal lemma''\cite{Harari} (see Theorem \ref{thm: descent}) allows us to interpret this result in terms of the Brauer-Manin obstruction; specifically, if the leading constant $c_{\Delta,f}=0$ then there is either a Brauer-Manin obstruction or local obstruction to rational points on $X_{\Delta,f}$. We remark that in \S\ref{subsec: constant irred case} we discuss in detail the case when $f(z)$ is irreducible and there is only one auxiliary variety -- $X_{\Delta,f}$ itself; in this case, our leading constant is a single product. In Appendix \ref{sec: appendix peyre}, we show that in the case that $\Delta>0$ is prime and $f(z)$ is irreducible over $\Q(\sqrt{-\Delta})$, this product is the expected product of local densities.

\section{Background and notation}\label{sec: background notation}

\subsection{Notation}\label{subsec: notation}
First, let us establish the notation that we will use for the remainder of the paper. 
\begin{itemize}
    \item $f$ will always denote a single variable polynomial and $F$ a binary form,
    \item $f(z) = f_1...f_r(z)$ is the decomposition of $f(z)$ into irreducible polynomials,
    \item $d\sim D$ if $d\in [D,2D]$,
    \item $e(\alpha) = \exp(2\pi i \alpha)$,
    \item $e_p(\alpha) = \exp(2\pi i\alpha/p),$
    \item $\log_k(x) = \log\log\log...\log(x)$ where there are $k$-iterated logarithms,
    \item If $\bv = (v_i)$ denotes a vector, then the lower-case unbolded letter $v$ denotes the product of the entries of $\bv$,  
    \item $K=\Q(\sqrt{-\Delta})$ is the quadratic field extension generated by $x^2+\Delta y^2$,
    \item $N(x+y\sqrt{-\Delta }) = x^2 + \Delta y^2$,
    \item $\calO_K$ is the ring of integers of $K$,
    \item $I$,$\fraka,\frakp,\ft,\fk$ will denote ideals of $\calO_K$,
    \item $\calU$ will denote the ring of units inside of $\calO_K$,
    \item $C$ will denote the class group of $K$,
    \item $\hat{C}$ will denote the group of class group characters of $K$, which can also be viewed as Hecke characters of $K$, 
    \item $\chi$ will denote the real quadratic character such that $\zeta_K(s) = \zeta(s)L(s,\chi)$, and we will denote the modulus of the character as $-\Delta;$ in other words, 
    \begin{equation*}
        -\Delta := \begin{cases}
            4\Delta, & \Delta >0, \\
            -\Delta, & \Delta <0,
        \end{cases}
    \end{equation*}
    \item We define the $\Delta$-part of an integer $n$ as: $$p_{-\Delta}(n) = \prod_{\substack{p\textrm{ prime}\\ p^e\| n\\ p\mid -\Delta}}p^e.$$
    We also define the part of $n$ coprime to $\Delta$ as $$p_{\neg -\Delta}(n) = n/p_{-\Delta}(n) = \prod_{\substack{p\textrm{ prime}\\ p^e \| n \\ p\nmid -\Delta}} p^e,$$
    \item $K_{f_i}$ denotes splitting field of $f_i$ over $K$ and $K_f=\prod_{i=1}^r K_{f_i}$,
    \item $\frakP$ will denote a prime ideal on $\calO_{K_f}$,
    \item For a binary form $F(u,v)$, we denote $$\varrho_F(n) := \#\{(u,v)\bmod n: F(u,v)\equiv 0 \bmod n\}$$
    and the related count
    $$\varrho_F^*(n):= \#\{(u,v)\bmod n: \gcd(u,v,n)=1, F(u,v) \equiv 0 \bmod n\}.$$
    We also consider the single-variable version of this local count: 
    $$\varrho_f(n) := \#\{x\bmod n: f(x)\equiv 0 \bmod n\}.$$
    \item We define the Dirichlet series for $\Re(s)>1$: 
    \begin{equation}\label{eq: def of xi(s;f,chi)}\xi(s;f,\chi) = \prod_{p} \left(1+\frac{\chi(p)\varrho_f(p)}{p^{s}}\right).\end{equation}
\end{itemize}
We also introduce some notation specific to the indefinite case, i.e. when $\Delta<0$. In this case, $K$ is a real quadratic field. 
\begin{defn}
    Let $(x_0,y_0)$ denote the fundamental solution to Pell's equation $x^2+\Delta y^2 = 1$. Then we write $\varepsilon = x_0 + y_0\sqrt{-\Delta}$ as the \textbf{fundamental unit}. Every element of $u\in \calU\subset \calO_K$ will satisfy that for some $k\in \Z$, $$u = \varepsilon^k.$$
\end{defn}

Next, let us fix $(x,y)\in \Z$. Then we define two new variables: 
$$\alpha = x+y\sqrt{-\Delta}, \textrm{  } \overline{\alpha} = x-y\sqrt{-\Delta}.$$
Then we say that $(x,y)$ (or $\alpha$) is \textbf{minimal} if $x\geq 0$ and $$1\leq \left|\frac{\alpha}{\overline{\alpha}}\right|\leq |\varepsilon^2|.$$
We note that for any principal ideal $I$, there will be an unique minimal generator $(x,y)$ such that $(x+y\sqrt{-\Delta}) = (\alpha) = I$. We denote the minimal element of $\calO_K$ equivalent to $(x,y)$ up to a unit as $(x,y) \bmod \varepsilon$. We may also denote this element as $\alpha \bmod\varepsilon.$\\ 

Finally, let us define our Hecke Gr\"ossencharacter $\Psi$ that will detect the ``argument'' of $\alpha$, as alluded to in the outline. This definition is inspired by the choice of Gr\"ossencharacter taken by Heath-Brown in \cite{HBclassnum}.
\begin{defn}\label{def: fundamental character}
    Define the Hecke character $\Psi$ on principal ideals $I=(\alpha)$ to be:
    $$\Psi(I) = \textrm{sgn}(\alpha\overline{\alpha}) \exp\left(\pi i \cdot \frac{\log|\alpha| - \log|\overline{\alpha}|}{\log|\varepsilon|}\right).$$
    $\Psi$ can then be lifted trivially to the set of all ideals of $\calO_K.$
\end{defn}
\begin{remark}
    Note that $\Psi$ is indeed well-defined on ideals as the following holds: $$\log |\varepsilon\alpha|-\log |\overline{\varepsilon\alpha}| = 2\log|\varepsilon| + \log|\alpha|-\log|\overline{\alpha}|.$$
    Additionally, we know that $|\Psi(\fa)|=1$ for any ideal $\fa\subset \calO_K,$ so $\Psi$ is a nontrivial unitary Hecke Gr\"ossencharacter.
\end{remark}

\subsection{Hooley's $\Delta$-function}
Next, we recall the definition and state our notation for the following arithmetic functions: 
\begin{defn}
    The divisor function will be denoted as $$\tau(n) := \sum_{d\mid n} 1.$$
\end{defn}
\begin{defn}\label{def: Hooley}
    Hooley's $\Delta$-function is defined as $$\Delta(n) := \sup_{\substack{D\in \R, 0\leq v\leq 1}} \sum_{\substack{d\mid n\\ d\in [D,e^v D]}} 1.$$
\end{defn}
We note that the average of $\tau(n)$ satisfies $$\sum_{n\leq X} \tau(n) \sim X\log(X).$$
On the other hand, the average of the $\Delta$-function has a small saving: 
$$\sum_{n\leq X} \Delta(n) \ll X\log(X)^{o(1)}.$$
This feature is particularly useful in our level of distribution result proved in \S\ref{sec: LoD}, since we will need to be careful about tracking any and all powers of $\log(X)$. We will use the following case of a more general theorem proved by de la Bret\`eche and Tenenbaum in \cite{delaTenenbaumHooley24}.
\begin{thm}[de la Bret\`eche and Tenenbaum, {\cite[(1.7)]{delaTenenbaumHooley24}}]\label{thm: hooley untwisted}
    Let $f\in \Z[x]$ be an irreducible polynomial.
    Then the following bound holds: 
    $$\sum_{n\leq X} \Delta(n) \varrho_f(n) \ll_f X\log\log(X)^{5/2}.$$
\end{thm}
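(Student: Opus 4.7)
The plan is to use the moment method for Hooley's $\Delta$-function, developed by Hall and Tenenbaum and refined by de la Bret\`eche and Tenenbaum, adapted to the multiplicative weight $\varrho_f(n)$. Since $f$ is irreducible, Landau's theorem gives $\sum_{n \leq X}\varrho_f(n) \ll_f X$, and the target exponent $(\log \log X)^{5/2}$ mirrors the classical unweighted bound $\sum_{n \leq X}\Delta(n) \ll X(\log \log X)^{5/2 + o(1)}$.

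First I would reduce to squarefree $n$. Writing $n = km$ with $k$ the largest squarefull divisor of $n$ and $\gcd(k, m) = 1$, the submultiplicativity $\Delta(km) \leq \tau(k)\Delta(m)$ together with the convergence of $\sum_{k \text{ squarefull}} \tau(k)^2 \varrho_f(k)/k$ localizes the problem to squarefree $n$, at the cost of an absolute constant. Next, parametrize divisors by scale: set $D(n, u) := \#\{d \mid n : u \leq \log d < u + \log 2\}$, so that $\Delta(n) \asymp \sup_u D(n, u)$. A Hall-type inequality would convert the supremum into an $L^2$-average in $u$, at the cost of a factor of the shape $(\log \log n)^{A}$; combining this with Cauchy--Schwarz against $\sum_{n \leq X}\varrho_f(n) \ll X$ reduces the problem to showing that the second-moment quantity
\[ \int_0^{\log X} \sum_{n \leq X}\varrho_f(n) D(n, u)^2 \, du \]
is controlled by $X (\log X)(\log \log X)^{O(1)}$ with a sharp power of $\log \log X$.

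Expanding the square and using multiplicativity of $\varrho_f$, the inner sum is $\ll X \sum_{d_1, d_2 \sim e^u} \varrho_f([d_1, d_2])/[d_1, d_2]$. Writing $d_i = \delta e_i$ with $\delta = \gcd(d_1, d_2)$ and $(e_1, e_2) = 1$, this splits into a diagonal sum over $\delta$ and an off-diagonal sum over coprime $e_1, e_2$; each piece is an Euler product governed by the Chebotarev-type estimate $\sum_{p \leq Y} \varrho_f(p)/p = \log \log Y + O(1)$ applied in the splitting field of $f$. The main obstacle is the delicate bookkeeping required to extract the sharp exponent $5/2$: the diagonal--off-diagonal interaction, the averaging over $u \in [0, \log X]$, and the finite set of primes dividing $\mathrm{disc}(f)$ where $\varrho_f(p^k)$ may be anomalously large must all be tracked simultaneously so that the losses do not accumulate to a larger power of $\log \log X$ or a stray power of $\log X$. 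This is precisely the content of the Shiu--Nair-type moment estimates underlying the de la Bret\`eche--Tenenbaum framework, and whose proof this plan is intended to match.
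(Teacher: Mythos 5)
The paper does not prove this statement; it is imported as an external black box from de la Bret\`eche and Tenenbaum \cite{delaTenenbaumHooley24}, so there is no internal argument to compare against. (As an aside, the phrase ``an explicit constant $c_f>0$'' in the paper's statement refers to nothing in the displayed bound; it is a leftover from the template of Theorem~\ref{thm: hooley character bound}.)

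Your sketch points at the right circle of ideas -- squarefree reduction via $\Delta(km)\le\tau(k)\Delta(m)$, localization of divisors by scale via $D(n,u)$, and the Chebotarev/Landau input $\sum_{p\le Y}\varrho_f(p)/p=\log\log Y+O(1)$ -- but the specific reduction you propose does not close. Taking $\Delta(n)^2\ll\sum_u D(n,u)^2$ (discretize $u$ over multiples of $\log 2$), opening the square, and using $\sum_{n\le X,\,m\mid n}\varrho_f(n)\asymp X\varrho_f(m)/m$ gives
\[
\int_0^{\log X}\sum_{n\le X}\varrho_f(n)D(n,u)^2\,du\;\asymp\;X\int_0^{\log X}\sum_{d_1,d_2\sim e^u}\frac{\varrho_f([d_1,d_2])}{[d_1,d_2]}\,du\;\asymp\;X\int_0^{\log X}u\,du\;\asymp\;X(\log X)^2,
\]
because writing $d_i=\delta e_i$ with $(e_1,e_2)=1$ makes the inner $(e_1,e_2)$-sum $O(1)$ (Landau: $\sum_{e\sim Y}\varrho_f(e)/e\asymp 1$) while $\sum_{\delta\le e^u}\varrho_f(\delta)/\delta\asymp u$. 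So your target $X(\log X)(\log\log X)^{O(1)}$ for this quantity is off by essentially a full power of $\log X$, and even with your claimed target, Cauchy--Schwarz against $\sum_n\varrho_f(n)\ll X$ plus a ``sup $\le$ $L^2$'' pass would only recover $X(\log\log X)^{O(1)}$ if the hypothetical Hall-type inequality $\Delta(n)\ll(\log\log n)^A\big((\log n)^{-1}\int D(n,u)^2\,du\big)^{1/2}$ were available. No such reverse-H\"older inequality holds at the level of generality you invoke; the actual Hall--Tenenbaum mechanism raises $\Delta$ to a carefully chosen (fractional) moment and runs a recursion over prime factors of $n$, and it is precisely that step which produces the sharp exponent $5/2$. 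You flag this yourself by deferring to ``the Shiu--Nair-type moment estimates underlying the de la Bret\`eche--Tenenbaum framework,'' but as written the plan skips the one step that does the work and substitutes a Cauchy--Schwarz/second-moment reduction that provably cannot reach the stated bound.
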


Next, we introduce the twisted Hooley $\Delta$-function by the quadratic character $\chi$. 
\begin{defn}\label{def: hooley twisted}
    The Hooley $\Delta$-function twisted by $\chi$ is defined as $$\Delta(n,\chi) := \sup_{\substack{D\in \R \\ 0\leq v\leq 1}} \left|\sum_{\substack{d\mid n\\ d\in [D,e^v D]}} \chi(d)\right|.$$
    Observe that $\Delta(n,\chi)\leq \Delta(n)\leq \tau(n)$ for all $n$. 
\end{defn}
\begin{remark}\label{rem: relate Delta and dyadic}
    We also note that the following inequalities hold:
    $$\Delta(n) \geq \sup_{D\in \R} \left|\sum_{\substack{d\mid n\\ d\sim D}} 1\right| =:\Delta_2(n),$$
    $$\Delta(n,\chi) \geq  \sup_{D\in \R} \left|\sum_{\substack{d\mid n\\ d\sim D}} \chi(d)\right| =: \Delta_{2}(n,\chi).$$
    This falls from the fact that $2\leq e^{v}$ for some $0\leq v\leq 1.$
\end{remark}

In \cite{delaTenenbaum-delta}, de la Bret\`eche and Tenenbaum showed that the average of $\Delta(n,\chi)^2$ is smaller than the average of the divisor function. Moreover, this bound also holds for correlations of $\Delta(n,\chi)^2$ with the number of local solutions of a polynomial. 
\begin{thm}[de la Bret\`eche and Tenenbaum, {\cite[Theorem 1.1]{delaTenenbaum-delta}}]\label{thm: hooley character bound}
For $\chi$ a nonprincipal quadratic character mod $M$ and $f\in \Z[x]$ a polynomial irreducible over $\Q(\sqrt{-M})$. Then, the following bound holds for an explicit constant $c_{f,M}>0$:
$$\sum_{n\leq X}\Delta(n,\chi)^2 \varrho_f(n) \ll_{\chi,F} X \exp(c_{f,M}\sqrt{\log_2(X)\log_3(X)}).$$
\end{thm}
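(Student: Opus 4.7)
The plan is to combine a dyadic $L^2$-expansion of the twisted divisor sum with Landau--Selberg--Delange asymptotics for $\varrho_f$, harvesting the final sub-polylogarithmic saving from cancellation in the quadratic character $\chi$. To begin, I would observe that for each $n$,
\[
\Delta(n,\chi)^2 \;\le\; \sum_{j\ge 0}|S_{2^j}(n)|^2,\qquad S_D(n) := \sum_{\substack{d\mid n\\ D\le d<2D}}\chi(d),
\]
so that modulo a factor of $\log X$ absorbed by the dyadic decomposition, it suffices to bound $\sum_{n\le X}|S_D(n)|^2\varrho_f(n)$ uniformly in $D\le X$. Expanding the square and swapping the order of summation gives
\[
\sum_{n\le X}|S_D(n)|^2\varrho_f(n) \;=\; \sum_{d_1,d_2\sim D}\chi(d_1d_2)\sum_{\substack{n\le X\\ [d_1,d_2]\mid n}}\varrho_f(n),
\]
and the irreducibility of $f$ over $\Q(\sqrt{-M})$ allows me to evaluate the inner sum via the Landau--Selberg--Delange method applied to the Dirichlet series $\sum_n\varrho_f(n)n^{-s}$, which factors cleanly through the Dedekind zeta of $\Q[z]/(f(z))$.

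After this substitution, the problem reduces to estimating the twisted sum
\[
\sum_{d_1,d_2\sim D}\chi(d_1d_2)\,\frac{\varrho_f([d_1,d_2])}{[d_1,d_2]}.
\]
The diagonal contribution $d_1=d_2$ reduces to $\sum_{d\sim D}\varrho_f(d)/d$, which by Theorem \ref{thm: hooley untwisted} (applied dyadically) together with partial summation gives a bound of the correct shape. The off-diagonal contribution is where the twist yields its saving: decomposing $d_1d_2=\ell s^2$ with $\ell$ squarefree and separating by $\chi(\ell)$, the pairs with $\ell=1$ form a sparse set (essentially those with $d_1$ and $d_2$ in the same square-class of the divisor semigroup of $n$), while the pairs with $\ell\ne 1$ enjoy Polya--Vinogradov-type cancellation because $\chi$ is nonprincipal modulo $M$.

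The main obstacle, and the true heart of the proof, is accumulating these cancellations across the $O(\log X)$ dyadic scales $D$ so as to produce the very sharp exponent $\sqrt{\log_2 X\log_3 X}$ (rather than any fixed power of $\log X$). I expect to achieve this by the iterative prime-by-prime device underlying Tenenbaum's treatment of the classical Hooley function: peeling off the smallest prime factor $p$ of $n$ one gains a saving of roughly $1-c/\log_2 X$ whenever $\chi(p)\ne 1$, and the balance between this gain (summed over primes up to some $X^{1/\log_2 X}$) and the dyadic loss produces the square-root exponent in the natural way. This iteration would closely parallel the arguments of \cite{delaTenenbaumHooley24} used for the untwisted analogue, with the cancellation harvested from $\chi$ replacing the purely combinatorial input of that argument; handling the coupling to $\varrho_f$ at each stage of the iteration, while keeping the implicit constants uniform in the dyadic parameter, is where the technical effort concentrates.
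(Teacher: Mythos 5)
The paper does not prove this theorem; it imports it verbatim as Theorem~1.1 of de la Bret\`eche and Tenenbaum \cite{delaTenenbaum-delta}, so there is no ``paper proof'' to compare against. Evaluated on its own terms, your proposal has a fatal loss at the very first step, and the loss is already visible in the diagonal.

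The inequality $\Delta(n,\chi)^2\le 2\sum_{j}|S_{2^j}(n)|^2$ replaces a supremum by a sum over $\asymp\log n$ dyadic scales, and this cannot be ``absorbed'': summed against $\varrho_f(n)$, the diagonal $d_1=d_2$ contribution of $\sum_n\sum_j|S_{2^j}(n)|^2\varrho_f(n)$ is $\sum_j\sum_{d\sim 2^j}\sum_{n\le X,\,d\mid n}\varrho_f(n)\asymp\sum_j X\asymp X\log X$, with no sign to exploit. Since the target $X\exp\bigl(c\sqrt{\log_2 X\,\log_3 X}\bigr)$ is $X(\log X)^{o(1)}$, no amount of P\'olya--Vinogradov cancellation in the off-diagonal can rescue a method whose diagonal alone is already a full power of $\log X$ too large. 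The dyadic $L^2$-expansion simply cannot be the opening move for a bound this tight. Your final paragraph correctly identifies the device that does work --- the Hall--Tenenbaum iterative prime-peeling scheme, with $\chi$-cancellation feeding the functional inequality at each step --- but that scheme is designed precisely to \emph{avoid} expanding $\sup_D$ into $\sum_D$: one works with a moment-type majorant of $\Delta(n,\chi)$ (an integral of $|\sum_{d\mid n}\chi(d)d^{i\vartheta}|^{2q}$ against a suitable measure, $q$ chosen optimally at the end) that remains sub-multiplicative under peeling off prime factors, and it is that structure, not dyadic decomposition, which produces the $\sqrt{\log_2 X\,\log_3 X}$ exponent. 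Your proposal combines two strategies that are incompatible with each other; the first must be discarded and the argument must start from the moment functional.
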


\subsection{A sieve on binary forms}
We will often need a sieve for averages of non-negative multiplicative functions along polynomial values. The concept for this sieve was first introduced by Nair in \cite{Nair} for the average of multiplicative functions along single-variable polynomials; later, this work was generalized by Nair and Tenenbaum in \cite{NairTenenbaum}. 

\begin{thm}[Nair \cite{Nair}]\label{thm: Nair single sieve}
Let $f\in \Z[x]$ be a squarefree polynomial with no fixed prime divisors. Let $g$ be a non-negative multiplicative function that is uniformly bounded on prime values. Then we have that $$\sum_{n\leq X} g(|f(n)|) \ll_g X \prod_{p\leq X} \left(1-\frac{\varrho_f(p)}{p}\right)\exp\left(\sum_{p\leq X} \frac{g(p)\varrho_f(p)}{p}\right).$$
\end{thm}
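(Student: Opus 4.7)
The plan is to adapt Nair's original argument from \cite{Nair}, which proceeds by truncating the multiplicative function $g$ at a parameter $Y$, applying an upper-bound sieve to the contribution from small primes, and exploiting squarefreeness of $f$ to control the contribution from large primes.

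First I would choose $Y = X^{1/s}$ with $s$ a large constant depending on $\deg f$ and the uniform prime-bound $C$ of $g$. For each $n \leq X$, factor $|f(n)| = a(n) b(n)$ where $a(n)$ gathers the prime-power factors $p^k \| f(n)$ with $p \leq Y$ and $b(n)$ the remaining prime-power factors with $p > Y$; multiplicativity gives $g(|f(n)|) = g(a(n)) g(b(n))$. Since $|f(n)| \ll X^{\deg f}$, the integer $b(n)$ has at most $s \deg f$ prime factors counted with multiplicity, so $g(b(n)) \leq C^{s \deg f}$ is bounded. Moreover, squarefreeness of $f$ and the absence of fixed prime divisors yield $\varrho_f(p^2) \leq (\deg f) p$ for all but finitely many primes $p$, so the set of $n \leq X$ such that $p^2 \mid f(n)$ for some $p > Y$ has size $\ll X/Y$, which will be absorbed into an acceptable error.

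Next, for each fixed $a \leq Y^{\deg f}$ supported on primes $\leq Y$, I would apply the Selberg (or fundamental lemma) upper-bound sieve to estimate
$$\#\{n \leq X : a \mid f(n), \ \gcd(f(n)/a, P(Y)) = 1\} \ll \frac{X \varrho_f(a)}{a} \prod_{\substack{p \leq Y \\ p \nmid a}} \left(1 - \frac{\varrho_f(p)}{p}\right),$$
where $P(Y) = \prod_{p \leq Y} p$; the error $O(Y^{s'})$ from the fundamental lemma is made acceptable by choosing $s$ large. Summing this bound against $g(a) g(b)$ over admissible $a$ and squarefree $b > Y$-smooth-free, and reassembling by multiplicativity, yields
$$\sum_{n \leq X} g(|f(n)|) \ll X \prod_{p \leq Y}\left(1 - \frac{\varrho_f(p)}{p}\right) \prod_{p} \left(\sum_{k \geq 0} \frac{g(p^k) \varrho_f(p^k)}{p^k}\right).$$
The $p$-th Euler factor equals $1 + g(p) \varrho_f(p)/p + O(1/p^2)$ since $\varrho_f(p^k)/p^k$ decays geometrically in $k$ for $p$ large (again by squarefreeness of $f$) and $g$ is uniformly bounded on primes. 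The elementary inequality $1 + x \leq e^x$ then converts the product into the claimed $\exp\bigl(\sum_{p \leq X} g(p)\varrho_f(p)/p\bigr)$, absorbing the tail $\prod_{p > Y}$ into the implicit constant.

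The main obstacle is uniformity of the sieve estimate over all $a$ up to $Y^{\deg f}$: the accumulated fundamental-lemma remainder grows polynomially in $Y$, while the need to guarantee that $b(n)$ is typically squarefree pushes $Y$ to be large. Balancing these opposing constraints is where the hypothesis that $f$ be squarefree is essential, via the bound $\varrho_f(p^2) \ll p$, and where the bounded-on-primes assumption on $g$ is used to absorb the contribution of the at most $s \deg f$ large prime factors of $b(n)$ into a harmless constant.
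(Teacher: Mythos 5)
This theorem is stated in the paper as a citation to Nair \cite{Nair}; the paper does not itself prove it, so there is no in-paper argument to compare against. Your sketch does follow the broad architecture of Nair's proof: decompose $|f(n)|$ into its $Y$-smooth part $a(n)$ and $Y$-rough part $b(n)$, bound $g(b(n))$ by a constant, and run an upper-bound sieve over the admissible smooth parts $a$.

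There is, however, a genuine gap in the step where you discard the $n$ for which $b(n)$ is not squarefree, and this step is not present in Nair's actual argument. You claim that $\#\{n\leq X: p^2\mid f(n)\text{ for some }p>Y\}\ll X/Y$, citing $\varrho_f(p^2)\leq (\deg f)\,p$. That bound on $\varrho_f(p^2)$ is too weak to deliver $X/Y$; the correct statement (Hensel, for $p\nmid\operatorname{disc}(f)$) is $\varrho_f(p^2)=\varrho_f(p)\leq\deg f$, but even with this the trivial count from primes $p\in(X^{1/2},X^{\deg f/2}]$ contributes an error of size $\sum_p\varrho_f(p)\ll X^{\deg f/2}/\log X$, which is not $o(X)$ once $\deg f\geq 3$ (and $\deg f\in\{3,4\}$ here). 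The unconditional square-free sieve for one-variable polynomials of degree $\geq 4$ is open, so this route cannot close. Fortunately the restriction to squarefree rough parts is unnecessary. Nair's hypotheses — matching the class $M_1(A,B,\epsilon)$ used in Theorem \ref{thm: Nair sieve} of this paper — include $g(p^k)\leq A^k$ for all prime powers, which bounds $g(b(n))\leq A^{\Omega(b(n))}\leq A^{s\deg f}$ for \emph{every} $n\leq X$, whether or not $b(n)$ has square factors. You should replace the squarefree-rough-part restriction by this uniform bound. Note also that the hypothesis as stated in the paper, that $g$ be merely ``uniformly bounded on prime values,'' does not control $g(p^k)$ for a multiplicative (rather than sub-multiplicative) $g$; the bound $g(p^k)\leq A^k$ genuinely needs to be in the hypotheses, and your proof should use it explicitly.
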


 This sieve was first generalized to binary forms by de la Bret\`eche and Browning  \cite{dlB-Browning-sieve} for irreducible forms. The most general version for binary forms was established by de la Bret\`eche and Tenenbaum in \cite{delaTenenbaum-sieve}, and this is the result that we shall use. 

Define $M_k(A,B,\epsilon)$ to be a class of sub-multiplicative functions in $k$ variables satisfying for $G\in M_k(A,B,\epsilon)$, $$G(a_1,...,a_k)\leq \min(A^{\Omega(a_1...a_k)},B(a_1...a_k)^\epsilon).$$
We note that $\tau(n),\Delta(n),\Delta(n,\chi)$ all belong to $M_1(2,B(x) = x,\epsilon).$

\begin{thm}[de la Bret\`eche and Tenenbaum, {\cite[Theorem 1.1]{delaTenenbaum-sieve}}]\label{thm: Nair sieve}
    Assume $G\in M_k(A,B,\epsilon)$ for a fixed $A,B\geq 1$. Let $F(x,y) = \prod_{i=1}^k F_i(x,y)$ be a squarefree product of primitive binary forms. Fix $\alpha>0$ and assume that $Y\geq X^\alpha$ and $x_0,y_0\in [X,2X]$. Then we have that $$\sum_{x,y\in [x_0,x_0+Y]\times [y_0,y_0+Y]} G(|F_1(x,y)|,...,|F_k(x,y)|) \ll_{A,B,\alpha} Y^2 E(X+Y) \prod_{p\leq X+Y}\left(1-\frac{\varrho_F(p)}{p^2}\right),$$
    where we define $$E(X+Y) = \sum_{s=s_1...s_k\leq X+Y} G(s_1,...,s_k) \frac{\#\{x,y \bmod s: s_i\mid F_i(x,y) \forall i\}}{s^2}.$$
\end{thm}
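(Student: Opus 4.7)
The plan is to adapt the Nair--Tenenbaum sieve paradigm from single-variable polynomials to binary forms. The first step is to decompose each $|F_i(x,y)|$ by prime size: write $|F_i(x,y)| = s_i m_i$, where $s_i$ collects the prime divisors up to $X+Y$ and $m_i$ the larger ones. Since $|F_i(x,y)| \ll X^{O(1)}$ on the box $[x_0,x_0+Y]^2 \subset [X,3X]^2$ and every prime divisor of $m_i$ exceeds $X+Y$, the large part $m_i$ has only $O_k(1)$ prime factors. The sub-multiplicative bound $G(a_1,\ldots,a_k) \leq A^{\Omega(a_1 \cdots a_k)}$ therefore dispatches the $m_i$-contribution at constant cost, and sub-multiplicativity reduces the sum to
\[
\sum_{s_1 \cdots s_k \leq X+Y} G(s_1,\ldots,s_k) \cdot N(\mathbf{s}),
\]
where $N(\mathbf{s}) = \#\{(x,y) \in [x_0,x_0+Y]^2 : s_i \mid F_i(x,y) \text{ for all } i\}$.

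The key analytic ingredient is a level-of-distribution estimate for $N(\mathbf{s})$. For squarefree $s = s_1 \cdots s_k \leq X+Y$, a Gauss--Davenport-style lattice-point count in convex regions yields
\[
N(\mathbf{s}) = \frac{Y^2}{s^2} \cdot \rho(\mathbf{s}) + O\!\left(\rho(\mathbf{s})\left(\frac{Y}{s} + 1\right)\right),
\]
where $\rho(\mathbf{s}) = \#\{(x,y) \bmod s : s_i \mid F_i(x,y) \text{ for all } i\}$. The hypothesis $Y \geq X^\alpha$ ensures that this error is of smaller order than the main term once averaged against $G$, using that $\rho(\mathbf{s}) \ll s^{1+o(1)}$, and the tail of $s$ beyond $X+Y$ can be truncated away by combining the $G \leq B(a_1\cdots a_k)^\epsilon$ bound with the polynomial size of $F$.

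To extract the advertised Euler product factor $\prod_{p \leq X+Y}(1-\varrho_F(p)/p^2)$, I would multiply and divide the main term by this product and recognize that the remaining sum over $\mathbf{s}$ --- now weighted by $G(\mathbf{s}) \rho(\mathbf{s})/s^2$ --- is exactly the finite sum $E(X,Y)$. Since $\rho$ is multiplicative and $G$ is sub-multiplicative, this rearrangement amounts to a truncated Euler-product expansion, and the Mertens-type inversion is essentially formal.

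The main obstacle is producing uniformity in $A, B, \alpha$ while summing the lattice-point errors against $G$. Because $G$ can be as large as $A^{\Omega(s)}$, naive bounds leak logarithmic factors; to avoid this one needs sharp moment estimates for divisor-like functions restricted to squarefree values of binary forms --- essentially a fundamental-lemma-style input controlling $\sum_{s \leq X+Y} A^{\omega(s)} \rho(\mathbf{s})/s^{1+\eta}$ against the archimedean size of the box. Establishing such a bound, while keeping track of the pairwise resultants of the $F_i$ so that $\rho$ factors correctly across the primes, is the technical heart of the argument.
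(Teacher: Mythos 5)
This statement is cited, not proved, in the paper (it is imported from de la Bret\`eche and Tenenbaum's work), so there is no in-paper argument for your proposal to match; I am evaluating it on its own.

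There is a genuine gap at the step where you claim ``sub-multiplicativity reduces the sum to $\sum_{s_1\cdots s_k\le X+Y} G(s_1,\dots,s_k)\,N(\mathbf{s})$'' with $N(\mathbf{s})=\#\{(x,y):s_i\mid F_i(x,y)\}$. After you peel off the rough part $m_i$, the quantity that remains to be summed is $G(s_1,\dots,s_k)$ where $s_i$ is the \emph{exact} $(X+Y)$-smooth part of $|F_i(x,y)|$; reversing the order of summation therefore produces the count of $(x,y)$ for which $s_i$ \emph{is} that smooth part, i.e.\ $s_i\mid F_i(x,y)$ \emph{and} $F_i(x,y)/s_i$ is coprime to every prime $\le X+Y$. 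Replacing that exact-smooth-part count by the bare divisibility count $N(\mathbf{s})$ overcounts each $(x,y)$ once for every smooth divisor of $F_i(x,y)$, which is a $(\log X)^{O(1)}$ loss. A sanity check with $G\equiv 1$ shows the failure concretely: the left-hand side is $Y^2$, and the right-hand side of the theorem is also $\asymp Y^2$ because $E(X,Y)$ and the Euler product essentially cancel, but your intermediate quantity $\sum_{\mathbf{s}}N(\mathbf{s})$ is $\gg Y^2\log X$. So the intermediate bound is strictly weaker than the target and the reduction cannot be tightened afterward.

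The missing ingredient is a sieve. The factor $\prod_{p\le X+Y}\bigl(1-\varrho_F(p)/p^2\bigr)$ does not arise from a formal multiply-and-divide; it is the output of a fundamental-lemma-type sieve applied to the coprimality condition that you dropped. The correct architecture splits at a parameter $z=X^{\beta}$ (chosen so the part of $F_i(x,y)$ with prime factors $>z$ has $O(1/\beta)$ prime factors, hence costs only $A^{O(1/\beta)}$), then performs the lattice-point count for the event ``$a\mid F(x,y)$ and $F(x,y)/a$ is coprime to $P(z)$''. It is the sieve of the small primes here that produces the Mertens-type product, and the error terms from the lattice-point count plus the sieve remainder must be controlled against $G(a)$ -- that is where the quantity you flag in your last paragraph, $\sum_s A^{\Omega(s)}\rho(\mathbf{s})/s^{1+\eta}$, actually enters, as a control on the sieve error rather than a replacement for the sieve. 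One also needs a separate treatment of the (rare) $(x,y)$ whose $z$-smooth part exceeds $Y$. None of this is present in the proposal as written, so the argument does not close.
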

In particular, if $g(n)$ is a non-negative multiplicative function bounded by $\tau(n)$ and $F(x,y) = \prod_{i=1}^k F_i(x,y)$ is a squarefree primitive binary form, then we have that $$\sum_{x,y\leq X} \prod_{i=1}^k g(F_i(x,y)) \ll X^2 \prod_{i=1}^k\prod_{p\leq X} \left(1+\frac{\varrho_{F_i}(p)(g(p)-1)}{p^2}\right).$$
Sometimes, we will wish to apply the following corollary of Theorem \ref{thm: Nair sieve} where we specify certain divisors of $F(x,y)$:
\begin{corollary}\label{cor: dlB-Tenenbaum with divisor factor}
    Let $F(x,y)\in \Z[x,y]$ be an irreducible primitive binary form and $g\in M_1(A,B,\epsilon)$. We say that $g(t) = 0$ if $t\not\in \Z$. Then we have that $$\sum_{x,y\leq X} g(|F(x,y)/c|) \ll_{A,B} \frac{X^2(A^{\deg(F)}\cdot\deg(F))^{\Omega(c)}}{c}\prod_{p\ll X} \left(1+\frac{\varrho_F(p)(g(p)-1)}{p^2}\right).$$
\end{corollary}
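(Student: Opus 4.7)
The plan is to deduce this corollary from Theorem \ref{thm: Nair sieve} via a residue-class decomposition modulo $c$. Since $g$ vanishes at non-integers, only pairs $(x,y)$ with $c \mid F(x,y)$ contribute, and these lie in exactly $\varrho_F(c)$ residue classes $(x_0,y_0) \bmod c$. For each admissible class I substitute $x = x_0 + cu$, $y = y_0 + cv$, so the inner sum becomes $\sum_{u,v} g(G_{x_0,y_0}(u,v))$ over a box of side length $\sim X/c$, where
\[
G_{x_0,y_0}(u,v) := F(x_0+cu,\, y_0+cv)/c
\]
is an integer polynomial of degree $d = \deg F$ whose top-degree homogeneous part is $c^{d-1} F(u,v)$.

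The next step is to estimate this inner sum. Rather than applying Theorem \ref{thm: Nair sieve} to $G_{x_0,y_0}$ (which is not a binary form), I would use sub-multiplicativity of $g$ and the pointwise bound $g(p^k) \le A^k$ to transfer the summand to one involving $F$. Specifically, letting $\widetilde{g}$ denote the multiplicative function equal to $g$ at primes $p \nmid c$ and equal to $A^{v_p(\cdot)}$ at primes $p \mid c$, one obtains
\[
g(G_{x_0,y_0}(u,v)) \le A^{-\Omega(c)}\, \widetilde{g}(F(x,y)).
\]
Since $\widetilde{g} \in M_1(A, B', \epsilon)$ for appropriate $B', \epsilon$, I can apply Theorem \ref{thm: Nair sieve} to $\widetilde{g}(F(x,y))$ summed over the residue class $(x_0,y_0) \bmod c$. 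After accounting for the density $1/c^2$ of the class and for the forced divisibility $p^{v_p(c)} \mid F(x,y)$, this yields an estimate of shape $(X/c)^2 \cdot A^{\Omega(c)} \prod_p(1 + (\widetilde{g}(p)-1)\varrho_F(p)/p^2)$, the factor $A^{\Omega(c)}$ reflecting the contribution of the substitution at primes dividing $c$.

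Summing over the $\varrho_F(c)$ admissible residue classes contributes a prefactor $\varrho_F(c) \le d^{\Omega(c)} c$, the bound following from $\varrho_F(p^k) \le d \cdot p^k$ for primitive irreducible $F$ of degree $d$. The cancellation $A^{-\Omega(c)} \cdot A^{\Omega(c)} = 1$ leaves $d^{\Omega(c)} X^2 / c$, and finally the modified Euler factors at primes $p \mid c$, namely $(1+(A-1)\varrho_F(p)/p^2)$, are compared with the desired target factors $(1+(g(p)-1)\varrho_F(p)/p^2)$; the resulting ratio is absorbed into an additional $A^{d\Omega(c)}$ using $\varrho_F(p) \le dp$, combining to yield $(A^d d)^{\Omega(c)}/c$ as stated, with the target Euler product restored over all $p \ll X$.

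The main obstacle will be justifying the residue-class application of Theorem \ref{thm: Nair sieve}: one must verify that restricting the box to a single class $(x_0,y_0) \bmod c$ forces the local factors at $p \mid c$ to pick up exactly the extra contribution $A^{\Omega(c)}$ described above, and that the resulting sub-box of side $X/c$ still satisfies the hypothesis $Y \ge X^\alpha$ of the theorem. For $c$ too close to $X$ the bound is either vacuous or follows directly from $c \mid F(x,y)$ forcing $|F(x,y)| \ge c$, which leaves only a thin range of $(x,y)$.
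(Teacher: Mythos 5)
Your residue-class decomposition modulo $c$ is the same first step the paper takes, and your bound $\varrho_F(c)\leq c\,d^{\Omega(c)}$ at the end matches the paper's. The arguments diverge at the key estimate. The paper substitutes $x\mapsto cx+\alpha_1$, $y\mapsto cy+\alpha_2$ and applies Theorem \ref{thm: Nair sieve} directly to the shifted polynomial $F_\alpha(x,y)/c$, absorbing the content $c^{e_\alpha}$ via sub-multiplicativity so that $g$ itself never changes. You instead keep the input polynomial as $F(x,y)$ and push the $c$-dependence into a modified function $\tilde g$. This is a genuinely different arrangement, and it has one real advantage: it sidesteps the awkwardness that $F_\alpha(x,y)/c$ is not actually a binary form (a point the paper glosses over by calling it one).

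But your version has two gaps of its own. First, as you flag, Theorem \ref{thm: Nair sieve} estimates sums over square boxes, not over a residue class $(x,y)\equiv(x_0,y_0)\bmod c$; to convert one to the other you would have to substitute anyway, which reintroduces precisely the shifted polynomial $F(x_0+cu,y_0+cv)$ you meant to avoid. Second, and harder to patch, your $\tilde g$ is set to $\tilde g(p^k)=A^k$ at primes $p\mid c$, which breaks the hypothesis $\tilde g(a)\leq B'a^\epsilon$: if $p\mid c$ with $p^\epsilon<A$ (e.g.\ $A=2$, $p=2$, small $\epsilon$ — exactly the regime of the divisor-type functions used elsewhere in the paper, when $c$ is even), then $A^k/(p^k)^\epsilon\to\infty$ and no fixed $B'$ will do. Since the implied constant in Theorem \ref{thm: Nair sieve} depends on $(A,B,\epsilon)$, enlarging $\epsilon$ to $\log A/\log 2\geq 1$ takes you outside the regime in which the sieve is useful. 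The paper avoids this entirely by never modifying the function; it only invokes $g(c^{e_\alpha}m)\leq A^{e_\alpha\Omega(c)}g(m)$, a direct consequence of the original membership $g\in M_1(A,B,\epsilon)$. I would suggest reworking your step so that the content $c^{e_\alpha}$ is factored out of the polynomial (i.e.\ out of the argument of $g$) rather than absorbed into a replacement for $g$.
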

\begin{proof}
We rearrange our sum as: 
$$\sum_{x,y\leq X} g(|F(x,y)/c|) = \sum_{\substack{\alpha\bmod c\\ c\mid F(\alpha)}} \sum_{x,y\ll X/c} g(|F_\alpha(x,y)/c|),$$
where we define $F_\alpha(x,y) = F(cx+\alpha_1,cy+\alpha_2).$ Since $c\mid F(\alpha)$, we have that $F_\alpha(x,y)/c$ is itself an irreducible binary form with integer coefficients. Moreover, $F_\alpha(x,y)/c = c^{e_\alpha} \tilde{F}_\alpha (x,y)$ for some primitive binary form $\tilde{F}_\alpha$ and power $e_\alpha\leq \deg(F)$. So, we use submultiplicativity and apply Theorem \ref{thm: Nair sieve} to the inside sums. Hence, we have the relation:
\begin{equation*}
    \sum_{x,y\ll X/c} g(|F_\alpha(x,y)/c|) \ll \frac{X^2A^{e_\alpha \Omega(c)}}{c^2} \prod_{p\ll X}\left(1+\frac{\varrho_{F_\alpha/c}(p)(g(p)-1)}{p^2}\right). 
\end{equation*}
Since $g\in M_1(A,B,\epsilon)$, we can see the contribution from the primes $p\mid c$ is bounded by $A^{\omega(c)}$. So, let us assume from now on that $p\nmid c$. Now, we consider 
$$\varrho_{F_\alpha/c}(p) = \#\{\bx\bmod p: F_\alpha(x,y)/c \equiv 0 \bmod p\} = \#\{\bx\bmod p: F(cx+\alpha_1,cy+\alpha_2)\equiv 0 \bmod p\}.$$
For $p\nmid c$, we know that $c$ is invertible in $\F_p$ and thus we get that $\varrho_{F_\alpha/c}(p) = \varrho_F(p).$ 

So, our original sum can be bounded by 
\begin{equation*}
    \sum_{x,y\leq X}g(|F(x,y)/c|) \ll X^2 \cdot \frac{A^{\deg(F)\cdot \Omega(c)}}{c^2} \cdot \prod_{p\ll X} \left(1+\frac{\varrho_F(p)(g(p)-1)}{p^2}\right)\sum_{\substack{\alpha \bmod c\\ c\mid F(\alpha)}} 1.
\end{equation*}
Finally, we recall that $\varrho_F(c) \leq c\deg(F)^{\omega(c)}$. So, we have our desired bound. 

\end{proof}

\subsection{Upper bounds on fixed sizes of divisors}
We introduce two lemmas that bound the contribution of large divisors of a polynomial. These results will be particularly useful in \S\ref{sec: large moduli}.

First, we write a result that bounds the number of integers with particularly large divisors; the following formulation was stated in \cite{HB-linear} by Heath-Brown.
\begin{lemma}[Heath-Brown, {\cite[Lemma 5.1]{HB-linear}}]\label{lem: small log saving }
    There exists an explicit constant $\eta>0$ such that $$\#\{m\ll X: \exists d\sim D, d\mid m\} \ll \frac{X}{\log(D)^{\eta}}.$$
    \end{lemma}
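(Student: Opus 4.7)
The plan is to bound
$$\mathcal{S}(X,D) := \#\{m \ll X: \exists\, d \sim D, \, d \mid m\}$$
by exploiting divisor clustering, in the spirit of the classical Erd\H{o}s--Tenenbaum--Ford analysis of integers with a divisor in a prescribed dyadic interval. The strategy is a first-moment versus conditional second-moment comparison, coupled with a combinatorial ``divisor swapping'' step.

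First, I would set up a double-counting identity. Writing $\tau_D(m) := \#\{d \sim D: d \mid m\}$, one has
$$\sum_{m \ll X} \tau_D(m) \;=\; \sum_{d \sim D} \lfloor X/d \rfloor \;\asymp\; X.$$
Thus, to beat the trivial bound $\mathcal{S}(X,D) \ll X$ it suffices to show that the typical $m \in \mathcal{S}(X,D)$ is counted many times, i.e.\ that $\mathbb{E}[\tau_D(m) \mid \tau_D(m) \geq 1] \gg (\log D)^\eta$ for some $\eta>0$. A quick expansion of $\tau_D^2$ via $\mathrm{lcm}$ gives the second moment
$$\sum_{m \ll X}\tau_D(m)^2 \;=\; \sum_{d_1,d_2 \sim D} \lfloor X/\mathrm{lcm}(d_1,d_2)\rfloor \;\asymp\; X\log D,$$
and Cauchy--Schwarz applied here produces the matching \emph{lower} bound $\mathcal{S}(X,D) \gg X/\log D$, not the desired upper bound.

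The decisive step is therefore structural. Using the Hardy--Ramanujan estimate one restricts to $m \ll X$ with roughly $\log\log X$ distinct prime factors, discarding an exceptional set of size $O(X/(\log X)^c)$. For such $m$ and any divisor $d_0 \sim D$, one manufactures many further divisors in $[D, 2D]$ by exchanging primes of comparable logarithmic size between $d_0$ and $m/d_0$, or by multiplying and dividing by small primes to rescale. Iterating this swap construction produces $\tau_D(m) \gg (\log D)^\eta$ outside the exceptional set, with $\eta$ allowed up to $\delta := 1 - (1+\log\log 2)/\log 2 \approx 0.086$. Combined with the first-moment identity this yields $\mathcal{S}(X,D) \ll X/(\log D)^\eta$, as required.

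The main obstacle is the combinatorial swap step: one must carefully track which swaps keep the new divisor inside the dyadic window $[D,2D]$, and control the contribution from $m$ whose prime factorization is too unevenly spread on the logarithmic scale for the swap procedure to generate enough multiplicity. Fortunately the lemma only asks for \emph{some} $\eta>0$, so in practice one can invoke Ford's theorem on the density of integers with a divisor in a short interval as a black box---this immediately delivers the stated inequality with an explicit admissible $\eta$ (any value less than $\delta$), and is how the statement is proved in \cite{HB-linear} without reproducing the combinatorial argument from scratch.
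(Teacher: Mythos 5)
The paper offers no proof of this lemma; it is simply cited from Heath-Brown \cite{HB-linear}, where it is in turn attributed to Tenenbaum's work on $H(x,y,2y)$. So the comparison is really between your sketch and the standard Erd\H{o}s--Tenenbaum--Ford argument. Your setup is fine: the first-moment identity $\sum_{m\ll X}\tau_D(m)\asymp X$ is correct, and your observation that the second moment combined with Cauchy--Schwarz only yields the \emph{lower} bound $\gg X/\log D$ is a genuinely useful point. The closing move --- invoke the Tenenbaum/Ford density theorem as a black box --- is also a legitimate way to discharge the lemma, and matches what the paper does in spirit.

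The gap is in the ``divisor swapping'' step. You claim that, after deleting an exceptional set of $m$ controlled by Hardy--Ramanujan, every remaining $m$ that possesses one divisor $d_0\sim D$ actually has $\tau_D(m)\gg(\log D)^\eta$. This pointwise statement is false, and no swap construction can repair it, because the $m$'s that dominate the count in Ford's theorem are precisely the ``unbalanced'' ones that admit essentially a unique divisor near $D$. Concretely, take $m=p\,q\,r_1\cdots r_k$ with $p$ a prime in $[D,2D]$, the $r_i$ the first $k\asymp\log\log X$ primes, and $q$ a prime chosen so that $m\ll X$ and $q>2D$. Then $\omega(m)\asymp\log\log X$, so $m$ survives the Hardy--Ramanujan truncation, yet $\tau_D(m)=1$: no divisor containing $q$ is $\le 2D$, the small primes cannot reach $D$ on their own, and $p\cdot r_i\ge 2p\ge 2D$ pushes you out of the window. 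If you enlarge the exceptional set to absorb all such $m$, you are back to bounding exactly the quantity $\mathcal S(X,D)$ you started with, so the argument is circular. The true statement is only that $\tau_D(m)$ has conditional mean $\gg(\log D)^{\delta}(\log\log D)^{3/2}$ over $\mathcal S(X,D)$, which is a \emph{consequence} of Ford's theorem combined with the first-moment identity, not a route to proving it. The actual proofs work instead through the distribution of $\{\log p: p\mid m\}$ and a uniform-order-statistics/random-walk criterion for having a divisor in a dyadic window. One small factual correction: Heath-Brown's paper predates Ford's 2008 theorem, so \cite{HB-linear} relies on Tenenbaum's earlier upper bound (which already has the exponent $\delta=1-(1+\log\log 2)/\log 2$ that you correctly quote), not on Ford.
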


Next, we introduce a modification of the above lemma for binary forms that was proved by de la Bret\`eche and Tenenbaum in \cite{delaTenenbaum-Manin}. 
\begin{lemma}[de la Bret\`eche and Tenenbaum, {\cite[(7.41)]{delaTenenbaum-Manin}}]\label{lem: small log saving binary}
    Let $F(x,y)$ be a binary quartic. Then there exists an explicit constant $\eta>0$ such that $$\#\{|x|,|y|\leq X: \exists d\sim D: d\mid F(x,y)\} \ll X^2 \cdot \left(\frac{X^2/D + D/X^2 +\log\log(X)}{\log(D)^{\eta}}\right).$$
\end{lemma}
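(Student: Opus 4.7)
The plan is to follow Heath-Brown's strategy in Lemma~\ref{lem: small log saving } for individual integers and transfer it to values of a binary quartic form by fibering over one of the two variables. First, observe that the stated bound is only non-trivial in a window around $D \asymp X^2$: since $|F(x,y)| \ll X^4$ on the box, the additive terms $X^2/D$ and $D/X^2$ serve to extend the useful range slightly on either side of this balance point, while the essential content is the $\log(D)^{-\eta}$ saving, which reflects the phenomenon that generic integers rarely have a divisor in a short dyadic window.

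I would reduce to the case $D \leq X^2$ by duality: if $d \sim D$ and $d \mid F(x,y)$, then the cofactor $e := F(x,y)/d$ has size $|e| \ll X^4/D$, so for $D > X^2$ one may swap the roles of $d$ and $e$ and apply the argument in the dual dyadic range of cofactors. In the regime $D \leq X^2$, I would slice the square $\{|x|, |y| \leq X\}$ into horizontal lines $y = y_0$; on each such line, $F(x, y_0)$ is a polynomial in $x$ of degree at most four, whose integer values as $|x| \leq X$ form a sequence to which Heath-Brown's Lemma~\ref{lem: small log saving } applies after stripping off any fixed content $c(y_0)$. This produces a per-slice bound of $\ll X/\log(D)^{\eta}$ for the number of $|x| \leq X$ such that $F(x, y_0)$ has a divisor in $[D, 2D]$; summing over the $\asymp X$ non-degenerate slices recovers the expected $X^2/\log(D)^{\eta}$ main contribution.

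The hard part will be controlling the degenerate slices: either (i) those $y_0$ for which $F(x, y_0)$ has a large fixed content, so that $F(x, y_0)/c(y_0)$ is too small for the Heath-Brown input to apply directly, or (ii) those $y_0$ for which the polynomial $F(x, y_0)$ collapses in degree or factors atypically (e.g.\ shares a common root with $\partial_y F$). These exceptional slices are exactly what contribute the additive $X^2/D$ and $D/X^2$ terms: the $X^2/D$ term bounds the number of $(x,y)$ for which $|F(x,y)| \ll D$ (so the divisibility condition is nearly automatic), while the $D/X^2$ term bounds those for which the cofactor is small, via the duality above. Finally, to aggregate the slice-by-slice estimates uniformly in $y_0$, I would invoke the untwisted Hooley bound, Theorem~\ref{thm: hooley untwisted}, applied to the integer values $F(x, y_0)$ and then averaged in $y_0$; this introduces the extra $\log\log(X)$ factor, and any additional powers of $\log\log(X)$ produced along the way are absorbed by shrinking the final constant $\eta > 0$.
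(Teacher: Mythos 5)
Your slicing step asks more of Lemma~\ref{lem: small log saving } than it can give. That lemma bounds the proportion of \emph{all} integers $m \ll X$ with a divisor in $[D,2D]$; it says nothing about how often a sparse polynomial sequence has one. For fixed $y_0$, the values $F(x,y_0)$ with $|x|\leq X$ form $\asymp X$ integers of size up to $\asymp X^4$, a set of density $X^{-3}$ at that scale, so Lemma~\ref{lem: small log saving } applied at scale $X^4$ is vacuous for your purpose, and at scale $X$ it is inapplicable because $F(x,y_0)$ does not run through an interval. What you would need is a polynomial-value analogue — a bound for $\#\{n\leq X:\exists d\sim D,\ d\mid P(n)\}$ uniform in the coefficients of $P$ — and that is not in the paper's toolkit; it is essentially as hard as the statement you are trying to prove, and would itself have to be built from a Tur\'an/second-moment argument on the localized divisor count $T(P(n);D)=\#\{d\mid P(n): d\sim D\}$, i.e.\ from Hooley $\Delta$-function machinery, not from Lemma~\ref{lem: small log saving }.

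This confusion surfaces in your aggregation step: if each slice genuinely gave $\ll X/\log(D)^{\eta}$, then summing over $\asymp X$ slices would already finish the proof with no need for Theorem~\ref{thm: hooley untwisted}, yet you invoke it to produce the $\log\log X$ factor. In fact the $\Delta$-function average is the engine, not the bookkeeping: one estimates $\sum_{|x|,|y|\leq X}\min\bigl(1,T(F(x,y);D)\bigr)$ directly on the two-dimensional box via Cauchy--Schwarz against $\Delta(F(x,y))$ and the two-variable sieve (Theorem~\ref{thm: Nair sieve}), which is also what allows the paper to later insert the lattice constraints needed in \S\ref{sec: large moduli}. Two secondary issues: the duality to $D\leq X^2$ is not automatic, since the cofactor $e=F(x,y)/d$ lies in $[\,|F(x,y)|/2D,\ |F(x,y)|/D\,]$, a window that moves with $(x,y)$, so one must decompose dyadically in $e$ and control the $(x,y)$ with $|F(x,y)|$ atypically small — that, not ``nearly automatic divisibility,'' is where the $X^2/D$ and $D/X^2$ terms come from. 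And the degenerate slices are not a fringe: uniformity of the putative single-variable estimate in the coefficients of $F(\cdot,y_0)$ is exactly what is missing, so setting aside a sparse set of $y_0$ does not repair the step.
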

We remark that in both of the cases above $\eta$ is explicit and greater than 1/1000.

\subsection{Dirichlet series and Rankin-Selberg $L$-functions}\label{subsec: Dirichlet and Rankin selberg}
In this subsection, we will discuss the connection between $\xi(s;f,\chi)$, and a quadratic twist of a Dedekind zeta function. First, we want to establish another similar Dirichlet series that will appear in our computations later. Recall from the notation section that for a vector $\bd = (d_1,\hdots,d_r)$, we write the product of the entries as $d = d_1\cdots d_r.$ Let $\Re(s)>1$ and define the following
\begin{equation}\label{eq: def of xi(s;F,chi)}\xi(s;F,\chi) = \sum_{d_i=1}^\infty \frac{\chi(d) \varrho_F^*(\bd)}{d^{2s}}.\end{equation}
For vectors $(p_1,\hdots,p_r)$ with prime entries $p_i$, we know that $\varrho_F^*((p_i)) \leq 4^r$; so, $\xi(s;F,\chi)$ converges for all $\Re(s)>1$. We also note that we have an Euler product for $\xi(s;F,\chi)$: 
\begin{multline*}\prod_{p} \Bigg(1+\frac{\chi(p)}{p^s}\cdot (\varrho_F^*((p,1,\hdots,1))+\hdots+\varrho_F^*((1,\hdots,p))) \\ + \frac{\chi(p)^2}{p^{2s}}\cdot (\varrho_F^*((p,p,1,\hdots,1)) + \hdots + \varrho_F^*((1,\hdots,p,p)))+\hdots\Bigg).\end{multline*}
This expansion allows us to see that $$\xi(s;F,\chi) = S_{F,\chi}(s) \prod_{p} \left(1+\frac{\chi(p)}{p^s}\cdot (\varrho_F^*((p,1,\hdots,1))+\hdots+\varrho_F^*((1,\hdots,p)))\right),$$
where $S_{F,\chi}(s)$ converges for $\Re(s)>1/2.$ 

We note that at all but finitely many primes $p$, we have that $$\varrho_F^*((p,1,\hdots,1)) + \hdots + \varrho_F^*((1,\hdots,p)) = \varrho_f(p) = \sum_{i=1}^r \varrho_{f_i}(p).$$
Here we recall from \S\ref{subsec: notation} that $\varrho_f(p) = \#\{x \in \F_p: f(x) = 0 \bmod p\}$.
Thus, we get that $$\xi(s;F,\chi) = R_{f,\chi}(s) S_{F,\chi}(s)S_{f,\chi}(s)\prod_{i=1}^r \xi(s;f_i,\chi),$$
where we have that $R_{f,\chi}(s)$ a product over finitely many primes $p\mid F(1,0)$ and $S_{f,\chi}(s)$ converges for $\Re(s)>1/2.$ Now, this tells us that $\xi(s;F,\chi)$ has a pole at $s=1$ if and only if $\xi(s;f,\chi)$ has a pole at $s=1$. The pole at $s=1$ of both of these functions will be determined by the pole of a quadratic twist of a Dedekind zeta function. \\ 

Define the field $L_{f_i} = \Q[z]/f_i(z)$. Let us consider the $\zeta$-function of $L_{f_i}$:
$$\zeta_{L_{f_i}}(s) = \prod_{\frakP\subset \calO_{L_{f_i}}} \left(1+N(\frakP)^{-s} + N(\frakP)^{-2s}+\hdots \right).$$
It is known that $\zeta_{L_{f_i}}(s)$ has a meromorphic continuation to $\C$, a functional equation, and a simple pole at $s=1$. Since $\deg(f_i)\leq 4$, we know that the Galois group of $L_{f_i}$ is solvable. Thus, by the resolution of Dedekind's conjecture for solvable groups by Uchida \cite{Uchida} and van der Waall \cite{vanderWaall}, we have that $$\zeta_{L_{f_i}}(s) = \zeta(s) \prod_{\pi_j} L(s,\pi_j),$$
where $\pi_j$ are nontrivial irreducible representations and $L(s,\pi_j)$ are entire. 

Next, we can view the Dirichlet $L$-function $L(s,\chi)$ as a $L$-function of a nontrivial irreducible representation, which we denote by $\chi$ as well. Then the quadratic twist of $\zeta_{L_{f_i}}(s)$ by $\chi$ is given by: 
$$L(s; f_i\times \chi) = L(s,\chi) \prod_{\pi_j} L(s,\pi_j\times \chi).$$ We then have the following Dirichlet series and Euler product expansions for $\Re(s)>1:$
$$L(s;f_i\times \chi) = \sum_{\mathfrak{A}\subset \calO_{L_{f_i}}} \chi(N(\mathfrak{A}))N(\mathfrak{A})^{-s} = \prod_{\frakP} \left(1+\chi(N(\frakP))N(\frakP)^{-s}+ \chi(N(\frakP))^{2}N(\frakP)^{-2s}+\hdots \right).$$
Note that since $L_{f_i}=\Q[z]/f_i(z)$, the following relation is true: 
$$\varrho_{f_i}(p) = \#\{\frakP\subset \calO_{L_{f_i}}: N(\frakP) = p\}.$$
This tells us that $$\xi(s;f_i,\chi) = S_{L_{f_i},\chi}(s) L(s;f_i\times \chi),$$
where $S_{L_{f_i},\chi}(s)$ converges for $\Re(s)>1/2.$ Thus, we get the following lemma:
\begin{lemma}\label{lem: poles have same order}
    The Dirichlet series $\xi(s;F,\chi)$ and $\xi(s;f,\chi)$ have meromorphic continuations to $\Re(s)>1/2.$ Additionally, assume that $L(s;f_i\times \chi)$ has a pole of order $m_i$ at $s=1$. Then the order of the pole at $s=1$ of both $\xi(s;F,\chi)$ and $\xi(s;f,\chi)$ is given by $m_1+\hdots + m_r$.
\end{lemma}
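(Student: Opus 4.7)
My plan is to prove meromorphic continuation and compute the pole order for $\xi(s;f,\chi)$ first, then reduce $\xi(s;F,\chi)$ to that case. For each irreducible factor $f_i$, the identity $\xi(s;f_i,\chi)=S_{L_{f_i},\chi}(s)\,L(s;f_i\times\chi)$ already established above, with $S_{L_{f_i},\chi}$ absolutely convergent and nonvanishing on $\Re(s)>1/2$, reduces the question to the analytic nature of $L(s;f_i\times\chi)$. Since $\deg f_i\leq 4$ the Galois closure of $L_{f_i}/\Q$ is solvable, so the Uchida--van der Waall factorization $\zeta_{L_{f_i}}(s)=\zeta(s)\prod_j L(s,\pi_j)$ into $\zeta(s)$ and entire Artin $L$-functions applies; twisting by $\chi$ yields
\[
L(s;f_i\times\chi)=L(s,\chi)\prod_j L(s,\pi_j\times\chi),
\]
a product of Artin $L$-functions, each meromorphic on $\C$, with poles at $s=1$ contributed only when the trivial representation appears in the twist. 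The total pole order is $m_i$ by hypothesis.

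To assemble $\xi(s;f,\chi)$, I would observe that at every prime $p$ coprime to the pairwise resultants of the $f_i$, the identity $\varrho_f(p)=\sum_i\varrho_{f_i}(p)$ implies the local Euler factor $1+\chi(p)\varrho_f(p)/p^s$ of $\xi(s;f,\chi)$ agrees with $\prod_i(1+\chi(p)\varrho_{f_i}(p)/p^s)$ modulo terms of order $p^{-2s}$. Hence the ratio $\xi(s;f,\chi)/\prod_i\xi(s;f_i,\chi)$ is a finite Euler product over bad primes times a product with local factors $1+O(p^{-2s})$, absolutely convergent and holomorphic on $\Re(s)>1/2$. Near $s=1$ this ratio is finite and nonzero as a meromorphic function (any local factors presenting themselves as $0/0$ limits at bad primes resolve to nonzero values). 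The pole order of $\xi(s;f,\chi)$ at $s=1$ therefore matches that of $\prod_i L(s;f_i\times\chi)$, namely $\sum_i m_i$. The binary-form case follows in exactly the same way from the expansion
\[
\xi(s;F,\chi) = R_{f,\chi}(s)\,S_{F,\chi}(s)\,S_{f,\chi}(s)\prod_{i=1}^{r}\xi(s;f_i,\chi)
\]
already derived above: the finite Euler product $R_{f,\chi}(s)$ and the two products $S_{F,\chi}(s),S_{f,\chi}(s)$ absolutely convergent on $\Re(s)>1/2$ are each finite and nonzero near $s=1$ by the same local analysis.

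The main obstacle, though really a bookkeeping matter, is verifying nonvanishing of these ancillary Euler products at $s=1$. At all but finitely many primes, absolute convergence of $1+O(p^{-2})$ local factors makes nonvanishing automatic; at the exceptional primes (dividing discriminants, pairwise resultants, or $F(1,0)$) one evaluates local factors in $p^{-1}$ by hand. The essential structural point is that these factors arise from rearrangements of Euler products built from the nonnegative counting quantities $\varrho_F,\varrho_f,\varrho_{f_i}$, which are bounded by the degree of $f$; this rules out accidental zeros at $s=1$ and completes the identification of the pole order with $\sum_i m_i$.
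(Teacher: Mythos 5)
Your proposal follows the same route as the paper's own reduction: factor $\xi(s;f_i,\chi)$ through the Artin $L$-functions of $L_{f_i}$ via Uchida--van der Waall, and reduce the pole-order claim to nonvanishing at $s=1$ of the ancillary Euler products ($S_{L_{f_i},\chi}$, $S_{F,\chi}$, $S_{f,\chi}$, $R_{f,\chi}$). That reduction is exactly what the paper does, and is sound.

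The problematic step is your justification for nonvanishing. You argue that the counting quantities $\varrho_f,\varrho_F,\varrho_{f_i}$ are ``bounded by the degree of $f$,'' and that this plus nonnegativity ``rules out accidental zeros at $s=1$.'' Both halves of this are wrong. First, $\varrho_f(p)$ is \emph{not} bounded by $\deg f$: whenever $z^p - z$ divides $f(z) \bmod p$ one has $\varrho_f(p) = p$; for $\deg f\le 4$ this can happen at $p=2,3$ (e.g.\ $f(z)=z(z-1)g(z)$ has $\varrho_f(2)=2$). Second, even a bounded nonnegative count does not prevent the local factor $1+\chi(p)\varrho_f(p)/p^s$ from vanishing at $s=1$: when $\chi(p)=-1$ and $\varrho_f(p)=p$ it equals exactly zero, and then the ratio $S_{L_{f_i},\chi}$ (or $R_{f,\chi}$) vanishes at $s=1$ because the corresponding local factor of the Artin $L$-function does not. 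This is not a hypothetical degeneracy --- it is precisely the criterion isolated in Lemma~\ref{lem: local factor criteria} as signaling $X_{\Delta,f}(\Q_p)=\emptyset$. The paper itself elides this point, asserting $S_{L_{f_i},\chi}(1)\neq 0$ without proof, but your proposed justification is actively false. A correct treatment would either impose a normalization on $f$ ruling out such primes, or note that when they occur the leading constant vanishes and the pole-order statement is moot for the application.
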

\begin{corollary}\label{cor: order of pole is varrho}
    The order of the pole of $\xi(s;F,\chi)$ and $\xi(s;f,\chi)$ at $s=1$ is $\rho_{\Delta,f}-2,$ where $\rho_{\Delta,f}$ is defined in Conjecture \ref{conj: manin}.
\end{corollary}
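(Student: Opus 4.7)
By Lemma \ref{lem: poles have same order}, the order of the pole of both $\xi(s;F,\chi)$ and $\xi(s;f,\chi)$ at $s=1$ equals $m_1+\cdots+m_r$, where $m_i$ is the order of the pole of $L(s;f_i\times\chi)$ at $s=1$. Since $\varrho_{\Delta,f}-2 = \#\{1\leq i\leq r : \sqrt{-\Delta}\in L_{f_i}\}$ by Conjecture \ref{conj: manin}, it suffices to show that $m_i=1$ when $\sqrt{-\Delta}\in L_{f_i}$ and $m_i=0$ otherwise.

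The plan is to reinterpret $L(s;f_i\times\chi)$ as a Hecke $L$-function on $L_{f_i}$. From the Euler product given in the excerpt,
\[
L(s;f_i\times\chi) \;=\; \prod_{\frakP\subset \calO_{L_{f_i}}} \bigl(1-\chi(N\frakP)N\frakP^{-s}\bigr)^{-1} \;=\; L\bigl(s,L_{f_i},\tilde\chi\bigr),
\]
where $\tilde\chi := \chi\circ N_{L_{f_i}/\Q}$ is the base change of the idele class character $\chi$ to $L_{f_i}$. Standard analytic theory then yields a dichotomy: if $\tilde\chi$ is trivial, then $L(s;f_i\times\chi) = \zeta_{L_{f_i}}(s)$ (and so has a simple pole at $s=1$, giving $m_i=1$); if $\tilde\chi$ is nontrivial, then it is a nontrivial unitary Hecke character, so its $L$-function is entire and $m_i=0$.

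It remains to identify when $\tilde\chi$ is trivial. By global class field theory, the kernel of $\chi$ as a character of $C_\Q$ cuts out precisely the quadratic extension $\Q(\sqrt{-\Delta})/\Q$, and the kernel of the norm-lifted character $\tilde\chi$ on $C_{L_{f_i}}$ cuts out the compositum $L_{f_i}\cdot\Q(\sqrt{-\Delta})$ over $L_{f_i}$. Hence $\tilde\chi$ is trivial if and only if $L_{f_i}\cdot\Q(\sqrt{-\Delta}) = L_{f_i}$, i.e., if and only if $\sqrt{-\Delta}\in L_{f_i}$. Alternatively, and more elementarily, one can argue directly at the Euler factors: if $\sqrt{-\Delta}\in L_{f_i}$ then any prime $p$ inert in $\Q(\sqrt{-\Delta})$ has every prime $\frakP\mid p$ of $L_{f_i}$ with even residue degree, so $\chi(N\frakP)=1$ for all but finitely many $\frakP$, matching $\zeta_{L_{f_i}}(s)$ up to finitely many Euler factors.

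Summing over $i$, the total order of the pole is exactly $\#\{1\leq i\leq r:\sqrt{-\Delta}\in L_{f_i}\} = \varrho_{\Delta,f}-2$. The main (very small) obstacle is the clean invocation of class field theory in step four; everything else is a formal assembly of standard results already cited in the section. No serious computation is required beyond tracking the Euler factors at the finite set of primes ramified in $\Q(\sqrt{-\Delta})$ or in $L_{f_i}$, which cannot affect the order of the pole.
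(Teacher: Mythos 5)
Your proof is correct, but the route is genuinely different from the paper's. The paper determines the pole of $L(s;f_i\times\chi)$ via Artin $L$-functions: it uses the decomposition $\zeta_{L_{f_i}}(s)=\zeta(s)\prod_j L(s,\pi_j)$ into \emph{entire} factors (the Uchida--van der Waall resolution of Dedekind's conjecture for solvable groups), twists each $\pi_j$ by $\chi$, and locates a pole exactly when some $\pi_j\cong\chi$, which holds iff $\sqrt{-\Delta}\in L_{f_i}$. You instead recognize the Dirichlet series $\sum_{\mathfrak{A}}\chi(N\mathfrak{A})N\mathfrak{A}^{-s}$ as the Hecke $L$-function over $L_{f_i}$ of the ray class character $\tilde\chi=\chi\circ N_{L_{f_i}/\Q}$, invoke Hecke's theorem that this $L$-function is entire unless $\tilde\chi$ is principal, and use the compatibility of the idelic norm with the Artin reciprocity map (equivalently, that $\tilde\chi$ corresponds under CFT to the restriction $\chi|_{G_{L_{f_i}}}$) to show $\tilde\chi$ is principal iff $G_{L_{f_i}}\subseteq\ker\chi=G_{K}$, i.e.\ iff $\sqrt{-\Delta}\in L_{f_i}$. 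Your approach is more self-contained for this particular corollary: it replaces the appeal to (the solvable case of) the Artin/Dedekind holomorphy theorems with the classical Hecke theorem plus elementary class field theory; the paper's route has the advantage of reusing machinery already set up in \S3.5 for other purposes. One small caveat: the ``more elementary'' Euler-factor argument you sketch at the end only handles the direction $\sqrt{-\Delta}\in L_{f_i}\Rightarrow\tilde\chi$ principal; the converse still requires the class-field-theoretic identification of $\ker\tilde\chi$ (or Chebotarev), so the CFT step is not optional.
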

\begin{proof}
    It suffices to show that the order of the pole at $s=1$ of $L(s;f_i\times \chi)$ is given by 
    $$\begin{cases}
        1, & \sqrt{-\Delta} \in L_{f_i} \\ 
        0, & \textrm{else.}
    \end{cases}$$
    Since $\chi$ is nontrivial and $\pi_j$ are nontrivial irreducible representations, $L(s;f_i\times \chi)$ has a pole if and only if $\pi_j = \chi$ for some $j$. This phenomenon occurs if and only if $\sqrt{-\Delta}\in \Q[z]/(f_i(z))= L_{f_i}.$
\end{proof}

\subsection{Theta series and class group characters}\label{subsec: class group}
In this subsection, we will discuss the connection between this problem and modular forms, as alluded to in \S\ref{sec: outline}. Note that if $C$ is the class group of $K$, then 
\begin{equation}\label{eq: class group principal} \mathbf{1}_{\fa \textrm{ is principal}} = \frac{1}{|C|} \sum_{\psi \in \hat{C}} \psi(\fa).\end{equation}
Consequently, we get that $$\sum_{N(\fa) = n} \mathbf{1}_{\fa\textrm{ is principal}} = \frac{1}{|C|}\sum_{\psi\in \hat{C}} \sum_{N(\fa)=n} \psi(\fa).$$

On the other hand, consider the following function: 
$$\theta(z) = \sum_{\substack{N(\fa)=n\\ \fa \textrm{ is principal}}} e(z \cdot N(\fa)).$$
As commented in \cite[Equation (22.55)]{IwaniecKowalski}, for $\Delta>0$, $\theta(z)$ will be a Mellin transform of an Epstein zeta function and will be a modular form of weight one in $M_1(\Gamma_0(-\Delta),\chi).$ In fact, if $\Delta>0$, then this is a recognizable theta series: 
$$\theta(z) = \sum_{x,y} e(z\cdot (x^2+\Delta y^2)).$$

Since $\theta(z)\in M_1(\Gamma_0(-\Delta),\chi)$, we can decompose it into its Eisenstein and cuspidal part. We present the following explicit decomposition of the Fourier coefficients, as shown by Blomer in \cite[Lemma 2.2]{Blomer} for $\Delta>0$. This decomposition is also discussed by Fomenko in \cite{FomenkoQuad}.

\begin{lemma}\label{lem: decomp eisenstein cusp}
    Let $\theta(z) = E(z) + C(z)$, where $E(z)$ is an Eisenstein series and $C(z)$ is a cusp form. Let $\lambda_E(n)$ (resp. $\lambda_C(n)$) denote the Fourier coefficients of $E(z)$ (resp. $C(z)$). Then we have that $$\lambda_E(n) = \frac{1}{|C|}\sum_{\ord(\psi)\leq 2} \sum_{N(\fa)=n} \psi(\fa),$$
    $$\lambda_C(n) = \frac{1}{|C|}\sum_{\ord(\psi)\geq 3} \sum_{N(\fa)=n} \psi(\fa).$$
\end{lemma}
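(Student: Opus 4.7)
\textbf{Proof proposal for Lemma \ref{lem: decomp eisenstein cusp}.}

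The starting point is the orthogonality identity (\ref{eq: class group principal}), which immediately yields the ``character decomposition''
$$\theta(z) \;=\; \frac{1}{|C|}\sum_{\psi\in\hat C}\theta_\psi(z),\qquad \theta_\psi(z) \,:=\, \sum_{\fa\subset\calO_K}\psi(\fa)\,e\bigl(z\,N(\fa)\bigr).$$
Consequently the lemma reduces to the claim that, for each class group character $\psi$, the theta series $\theta_\psi$ is (the Fourier expansion of) an Eisenstein series when $\ord(\psi)\leq 2$ and a cusp form when $\ord(\psi)\geq 3$; one then compares Fourier coefficients on both sides. To see that each $\theta_\psi$ is automorphic of the expected level and character, I would invoke Hecke's classical theta-lift construction in the imaginary quadratic case $\Delta>0$ (see e.g.\ \cite[\S12.3]{IwaniecKowalski}), which gives $\theta_\psi\in M_1(\Gamma_0(-\Delta),\chi)$, together with its real quadratic analogue due to Maass when $\Delta<0$, which produces a weight zero Maass form of level $\Gamma_0(-\Delta)$ with nebentypus $\chi$.

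The dichotomy is then supplied by Gauss genus theory. A class group character $\psi$ of $K$ satisfies $\psi=\overline\psi$ precisely when $\ord(\psi)\leq 2$, and in that case $\psi$ is a \emph{genus character}: it factors as $\psi(\fa)=\chi_{d_1}(N(\fa))$ on ideals coprime to $\Delta$ for a suitable factorization $-\Delta=d_1 d_2$, and the resulting $\theta_\psi$ is identified with an explicit Eisenstein series on $\Gamma_0(-\Delta)$ with nebentypus $\chi_{d_1}\chi_{d_2}=\chi$. The particular case $\psi=1$ recovers the Eisenstein series whose $n$-th Fourier coefficient is $(1\star\chi)(n)$. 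This computation is laid out in detail by Fomenko \cite{FomenkoQuad}, and stated for $\Delta>0$ in \cite[Lemma 2.2]{Blomer}. Conversely, if $\ord(\psi)\geq 3$, then $\psi\neq\overline\psi$, so the idele class character of $K$ underlying $\psi$ is not fixed by the non-trivial element of $\Gal(K/\Q)$; automorphic induction then produces a cuspidal automorphic representation of $\GL_2(\A_\Q)$, and $\theta_\psi$ is a cusp form.

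Splitting $\theta=E+C$ according to whether $\ord(\psi)\leq 2$ or $\ord(\psi)\geq 3$ and reading off the $n$-th Fourier coefficient yields precisely the two formulas asserted in the lemma. The main technical point is uniformity in the signature: when $\Delta<0$, $\theta(z)$ is a Maass form rather than a holomorphic modular form, so ``Eisenstein versus cuspidal'' refers to the spectral decomposition of $L^2(\Gamma_0(-\Delta)\backslash\mathbb{H},\chi)$ into its continuous and discrete parts. Once that identification is in place, however, the Hecke/Maass construction and the genus-theoretic classification of $\theta_\psi$ are formally identical in both signatures, so the argument goes through verbatim and the coefficient identities follow.
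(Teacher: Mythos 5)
The paper does not give its own proof of this lemma; it is stated with references to Blomer \cite[Lemma 2.2]{Blomer} (which treats $\Delta>0$) and to Fomenko \cite{FomenkoQuad}, so there is no in-paper argument to compare against. Your sketch is a correct outline of the standard argument that underlies those references: decompose $\theta$ into the characterwise theta series $\theta_\psi$ using the orthogonality identity, identify each $\theta_\psi$ as automorphic of level $-\Delta$ and nebentypus $\chi$ via Hecke's theta lift (for $\Delta>0$) or Maass's real-quadratic analogue (for $\Delta<0$), and then invoke the cuspidality criterion for automorphic induction from $K$ --- namely, $AI_{K/\Q}(\psi)$ is cuspidal if and only if $\psi\neq\psi^\sigma$ --- together with genus theory to read off the Eisenstein/cuspidal dichotomy in terms of $\ord(\psi)$. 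The one step you assert without justification is the equivalence $\psi=\overline\psi\iff\ord(\psi)\leq 2$; this deserves a line, namely that for a class group character one has $\overline\psi(\fa)=\psi(\overline\fa)=\psi(\fa)^{-1}$ (because $\fa\overline\fa$ is principal), so conjugation-invariance of $\psi$ is the same as $\psi^2=1$. With that observation supplied, your proof is complete and matches the approach of the cited literature.
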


Additionally, Iwaniec discusses in \cite[\S12.5]{IwaniecTopics} how to express the Eisenstein term in even more detail when $K$ is a quadratic field. Because we are working with \textit{binary quadratic forms}, the Eisenstein component contains not only the trivial component, but the contribution from all of the genus characters. Furthermore, we can describe these genus characters explicitly. Let $q_1q_2=-\Delta$ and $\gcd(q_1,q_2)=1$. Then all of our genus characters are of the form:
\begin{equation}\label{eq: genus characters}
    \psi(\fp) = \chi_{q_1,q_2}(\fp) = \begin{cases}
        \chi_{q_1}(N(\fp)), & \fp\nmid q_1, \\ \chi_{q_2}(N(\fp)), & \fp \nmid q_2.
    \end{cases}
\end{equation}
These correspond to a basis of Eisenstein series with Fourier coefficients given by $$\varepsilon_{q_1,q_2}(n) = \sum_{d\mid n} \chi_{q_1}(d) \chi_{q_2}(n/d),$$
with $\chi_{q_1},\chi_{q_2}$ real quadratic characters satisfying $\chi_{q_1}\chi_{q_2}=\chi$ when $\gcd(n,-\Delta)=1.$ Note that the trivial character corresponds to $q_1=1$ and $q_2=-\Delta.$
Thus, we get that \begin{equation}\label{eq: restricted sum for genus characters}\lambda_E(n) = \frac{1}{|C|} \sum_{q_1q_2=-\Delta}' \varepsilon_{q_1,q_2}(n).\end{equation}
Here the restricted sum is due to the fact that while all genus characters are of the form $\chi_{q_1,q_2}(\fp)$, not all decompositions $q_1q_2=-\Delta$ with $\gcd(q_1,q_2)=1$ correspond to genus characters. 

In particular, let $$|\Delta| = \prod_{i=1}^r p_i.$$
Then the genus field is given by $$K(\sqrt{p_1^*},\hdots,\sqrt{p_r^*})$$
where if $p_i$ is an odd prime then 
$$p_i^* = \begin{cases}
    p, & p\equiv 1 \bmod 4, \\ 
    -p, & p\equiv 3 \bmod 4,
\end{cases}$$
and when $p_i=2$, 
$$2^* = \begin{cases}
    1, & \Delta\equiv 1 \bmod 4, \\
    -4, & \Delta\equiv 3 \bmod 4, \\
    8, & \Delta\equiv 2 \bmod 8, \\
    -8, & \Delta \equiv -2\bmod 8.
\end{cases}$$
The genus characters included in the restricted sum for $\lambda_E(n)$ are therefore those that correspond to characters of the genus field. 
We will apply the above decompositions in \S\ref{sec: reductions}.

\section{Upper bounds}\label{sec: upper bounds}
In this section, we will establish some upper bounds for $N(X_{\Delta,f},B)$ that are necessary for our proof. First, we recall that an upper bound of the correct order of magnitude was established by Browning \cite{BrowningLinear} for the positive-definite case. 

\begin{thm}[Browning, \cite{BrowningLinear}]\label{thm: Browning upp bound}
    Assume that $\Delta>0$ is a nonsquare integer. Then as $B\rightarrow\infty$, we have that $$N(X_{\Delta,f},B) \ll B\log(B)^{\rho_{\Delta,f}-1}.$$
\end{thm}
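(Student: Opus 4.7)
The plan is to bound the representation count $r_{x^2+\Delta y^2}$ by a divisor-type quantity tied to the zeta function of $K = \mathbb{Q}(\sqrt{-\Delta})$, and then apply the Nair--Tenenbaum sieve for binary forms (Theorem \ref{thm: Nair sieve}) to read off the correct power of $\log B$ via the pole order computed in Corollary \ref{cor: order of pole is varrho}.

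\textbf{Step 1 (Reduction).} For $\Delta > 0$ the Euclidean norm on $\C$ satisfies $\|x \pm y\sqrt{-\Delta}\|^2 = x^2 + \Delta y^2$, so the height constraint $\|x \pm y\sqrt{-\Delta}\| \leq \nu_{\Delta,f} B$ collapses to $x^2 + \Delta y^2 \ll_\Delta B^2$. Combined with $\max(|tu^2|,|tv^2|) \leq \nu_{\Delta,f} B$ and dropping the coprimality conditions in (\ref{eq: def of the height}) (which only increases the count), I would obtain
\[
N(X_{\Delta,f}, B) \ll \sum_{1 \leq t \leq B} \sum_{|u|, |v| \leq C(B/t)^{1/2}} r_{x^2 + \Delta y^2}(t^2 F(u,v)).
\]

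\textbf{Step 2 (Ideal upper bound).} Writing $\alpha = x + y\sqrt{-\Delta}$, elements of norm $n$ in $\mathcal{O}_K$ (or the order $\Z[\sqrt{-\Delta}] \subseteq \mathcal{O}_K$, whose conductor is at most $2$) biject with principal ideals of norm $n$ modulo the finite unit group. Since principal ideals form a subset of all ideals and $\sum_{N(\mathfrak{a}) = n} 1 = (1 \star \chi)(n)$ by $\zeta_K(s) = \zeta(s) L(s,\chi)$, one has
\[
r_{x^2+\Delta y^2}(n) \ll_\Delta (1 \star \chi)(n).
\]

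\textbf{Step 3 (Nair's sieve on the $(u,v)$-sum).} For fixed $t$, Theorem \ref{thm: Nair sieve} applied to $g = 1 \star \chi \in M_1(2, B(x) = x, \varepsilon)$ along the squarefree binary form $F$ of degree $\leq 4$ gives, with $Y = (B/t)^{1/2}$,
\[
\sum_{|u|, |v| \leq Y} (1 \star \chi)(F(u,v)) \ll Y^2 \prod_{p \leq Y}\!\left(1 + \frac{\chi(p) \varrho_F(p)}{p^2}\right) \ll \frac{B}{t} \log(B/t)^{\varrho_{\Delta,f} - 2},
\]
where the final estimate uses $\varrho_F(p)/p \sim \varrho_f(p)$ at large primes together with Corollary \ref{cor: order of pole is varrho}, which gives that the Euler product defining $\xi(s; f, \chi)$ has a pole of order $\varrho_{\Delta, f} - 2$ at $s = 1$.

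\textbf{Step 4 (Decoupling $t^2$ and $F(u,v)$ and summing).} The remaining task is to account for the extra arithmetic content of the $t^2$ factor in $(1\star\chi)(t^2 F(u,v))$ while summing over $t$. The naive bound $(1\star\chi)(t^2 F) \leq (1\star\chi)(t^2)(1\star\chi)(F)$ combined with Step 3 only yields $B \log(B)^{\varrho_{\Delta,f} - 1/2}$, since $\sum_{t \leq B} (1\star\chi)(t^2)/t \asymp \log(B)^{3/2}$ (the Dirichlet series has a $(s-1)^{-3/2}$ singularity from the split primes contributing $(1+p^{-s})/(1-p^{-s})^2$). The correct way forward is to reparametrize the outer sum by integral ideals $\mathfrak{t} \subset \mathcal{O}_K$ rather than rational integers $t$: one shows that the count is dominated by
\[
\sum_{N(\mathfrak{t}) \leq B} \sum_{|u|,|v| \leq (B/N(\mathfrak{t}))^{1/2}} (1 \star \chi)(F(u,v)) \cdot \mathbf{1}_{(\mathfrak{m}, \overline{\mathfrak{t}}) = 1},
\]
where $\mathfrak{t}$ absorbs the part of the ambient ideal lying over primes in $t$, so that the total weight becomes $\sum_{N(\mathfrak{t}) \leq B} 1/N(\mathfrak{t}) \asymp \log B$ by the prime ideal theorem for $K$. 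Combining with Step 3 then gives
\[
N(X_{\Delta,f}, B) \ll B \log(B)^{\varrho_{\Delta,f} - 2} \cdot \log(B) = B \log(B)^{\varrho_{\Delta,f} - 1}.
\]

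\textbf{Main obstacle.} The genuinely delicate point is the ideal-theoretic reparametrization in Step 4. Treating the $t^2$ and $F(u,v)$ arithmetic independently loses a fractional power of $\log B$ from the interaction at split primes that divide both $t$ and $F(u,v)$; this discrepancy is exactly the extra "$\zeta(s)^{1/2}$-worth" of factor that disappears when one reorganizes the count by ideals in $\mathcal{O}_K$. This is the same phenomenon that motivates the torsor-style decomposition developed more fully in \S\ref{sec: constant}, and in Browning's original treatment it is handled via a carefully chosen Selberg-type upper bound sieve rather than a direct factorization.
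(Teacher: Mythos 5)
Your overall template is close to the argument the paper uses for the analogous short-box statement (Lemma~\ref{lem: upp bound pos def}): pass from representations to ideals, apply the de la Bret\`eche--Tenenbaum sieve to the $(u,v)$-sum, and reorganize the $t$-sum over ideals $\ft\subset\calO_K$. But there are two concrete problems, one of which is fatal.

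\textbf{The fatal gap: discarding $\gcd(x,y,t)=1$.} In Step~1 you drop the coprimality conditions and replace the point count by the unrestricted sum $\sum_{t\leq B}\sum_{u,v}r_{x^2+\Delta y^2}(t^2F(u,v))$. The coprimality condition is precisely what powers the ideal reparametrization you then try to invoke in Step~4. With $\gcd(x,y,t)=1$, the principal ideal $I=(x+y\sqrt{-\Delta})$ factors uniquely as $I=\ft^2\fa$ with $N(\ft)=t$, $N(\fa)=F(u,v)$, and crucially $\gcd(\ft,\overline{\ft})=\gcd(\fa,\overline{\ft})=1$. The constraint $\gcd(\ft,\overline{\ft})=1$ forces $\ft$ to lie over split primes with a one-sided choice of prime above each, so
\[
\sum_{\substack{N(\ft)\leq B\\ \gcd(\ft,\overline{\ft})=1}}\frac{1}{N(\ft)}\asymp\log B,
\]
a simple pole. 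Once you drop $\gcd(x,y,t)=1$, each solution factors through $g=\gcd(x,y,t)$ and the unrestricted count satisfies (up to constants) $\sum_{g\geq 1}N(X_{\Delta,f},B/g)$, which is $\asymp B\log(B)^{\varrho_{\Delta,f}}$ whenever $N(X_{\Delta,f},Y)\asymp Y\log(Y)^{\varrho_{\Delta,f}-1}$. In other words, the quantity you obtain after Step~1 is genuinely larger by a full power of $\log B$ than the bound you are trying to prove, and no amount of clever bookkeeping in Steps~2--4 can recover it. The indicator $\mathbf{1}_{(\mathfrak{m},\overline{\ft})=1}$ you write down in Step~4 is exactly the condition that the gcd hypothesis buys you, but you cannot conjure it after having discarded the hypothesis.

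\textbf{A secondary arithmetic slip.} You assert $\sum_{t\leq B}(1\star\chi)(t^2)/t\asymp\log(B)^{3/2}$ with a $(s-1)^{-3/2}$ singularity. The split Euler factor of $\sum_t(1\star\chi)(t^2)t^{-s}$ is $\frac{1+p^{-s}}{(1-p^{-s})^2}$; the contribution of the numerator $1+p^{-s}$ over split primes adds an extra $(s-1)^{-1/2}$ beyond what you accounted for, so the Dirichlet series actually has a double pole, $\sum_{t\leq B}(1\star\chi)(t^2)/t\asymp\log(B)^2$. Corrected, your "naive bound" gives $B\log(B)^{\varrho_{\Delta,f}}$, consistent with the first point: after dropping the gcd conditions you lose a whole power of $\log$, not half a power.

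\textbf{What the paper actually does.} In its short-box analogue (Lemma~\ref{lem: upp bound pos def}) the paper keeps the condition $\gcd(x,y,t)=1$, uses it to decompose $I=\ft^2\fa$ with $\gcd(\ft,\overline{\ft})=\gcd(\fa,\overline{\ft})=1$, bounds $\#\{\fa:N(\fa)=F(u,v)\}\ll(1\star\chi)(F(u,v))$, applies Theorem~\ref{thm: Nair sieve} and Corollary~\ref{cor: order of pole is varrho} to the inner sum, and then sums $1/N(\ft)$ over the coprimality-constrained $\ft$ to pick up exactly one extra $\log B$. Your Steps~2--3 align with this; your Step~4 intuition (go over ideals $\ft$ rather than integers $t$) is also aligned; but you must retain the gcd condition throughout to make the ideal decomposition available.
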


We will need a similar upper bound for this counting function on short boxes $N(X_{\Delta,f},\calB(\bx_0,L),S^1)$; more explicitly, we will need upper bounds on \begin{multline*}S(X_{\Delta,f},\calB(\bx_0,L),S^1) := \frac{1}{2}\cdot \#\Big\{ ((x,y),(u,v),t)\in \Z^2\times \Z^2\times \Z_+: x^2+\Delta y^2 = t^2 F(u,v),\\ 0<t\leq B/\exp(\sqrt{\log(B)}), 
    (u,v)\in (B/t)^{1/2}\calB(\bx_0,L), \|x+y\sqrt{-\Delta}\|,\|x-y\sqrt{-\Delta}\|\leq \nu_{\Delta,f}B, \\  \gcd(x,y,t) = \gcd(u,v) = 1\Big\}.
\end{multline*}
In the above sum, we have already removed the large torsors -- this process will occur in \S\ref{subsec: large torsors} before our applications of the lemmas below.

\begin{lemma}\label{lem: upp bound pos def}
    Assume that $\Delta>0$ is a nonsquare integer. If $L\gg \exp(-(\log(B))^{1/3})$, then we have that $$S(X_{\Delta,f},\calB(\bx_0,L),S^1) \ll L^2 B\log(B)^{\rho_{\Delta,f}-1}$$ 
\end{lemma}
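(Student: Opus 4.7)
The plan is to adapt Browning's proof of Theorem \ref{thm: Browning upp bound} to short boxes by applying the Nair--Tenenbaum sieve (Theorem \ref{thm: Nair sieve}); the hypothesis $L \gg \exp(-(\log B)^{1/3})$ is precisely what ensures that the box of side length $L(B/t)^{1/2}$ meets the size requirement $Y \geq X^\alpha$ of that sieve.

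Since $\Delta > 0$, the norm constraint $\|x \pm y\sqrt{-\Delta}\| \leq \nu_{\Delta,f}B$ is implied by the equation $x^2 + \Delta y^2 = t^2 F(u,v)$ for $(u,v) \in (B/t)^{1/2}\calB(\bx_0, L)$, so the count reduces to
\[
S \leq \tfrac{1}{2} \sum_{t \leq B/\exp(\sqrt{\log B})} \sum_{\substack{(u,v)\in (B/t)^{1/2}\calB(\bx_0,L) \\ \gcd(u,v) = 1}} r_{x^2+\Delta y^2}(t^2 F(u,v)) \mathbf{1}_{\gcd(x,y,t)=1}.
\]
Setting $g := \gcd(x,y)$, the condition $\gcd(x,y,t) = 1$ becomes $\gcd(g,t) = 1$ and $g^2 \mid F(u,v)$; writing $(x,y) = g(x',y')$ with $\gcd(x',y') = 1$ gives a decomposition
\[
r_{x^2+\Delta y^2}(t^2 F(u,v)) \mathbf{1}_{\gcd(x,y,t)=1} = \sum_{\substack{g^2 \mid F(u,v) \\ \gcd(g,t) = 1}} r^*_{x^2+\Delta y^2}(t^2 F(u,v)/g^2),
\]
where $r^*$ denotes the primitive-representation count.

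The decisive saving comes from observing that $r^*_{x^2+\Delta y^2}(t^2 m)$ vanishes unless every prime factor of $t$ splits in $K = \Q(\sqrt{-\Delta})$: if $p \mid t$ is inert then $v_p(t^2 m) \geq 2$ violates the primitivity condition $v_\fp = 0$ at inert $\fp$, and if $p \mid t$ is ramified then $v_p(t^2 m) \geq 2$ violates the condition $v_\fp \leq 1$ at ramified $\fp$. For split-only $t$, the bijection between representations and principal ideals together with the multiplicative identity $\sum_{g^2 \mid n}(\#\{\textrm{primitive ideals of norm } n/g^2\}) = (1 \star \chi)(n)$ yields the uniform bound
\[
\sum_{g^2 \mid F(u,v),\, \gcd(g,t) = 1} r^*_{x^2+\Delta y^2}(t^2 F(u,v)/g^2) \ll 2^{\omega(t)} (1 \star \chi)(F(u,v)).
\]

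I would then apply Theorem \ref{thm: Nair sieve} with the multiplicative function $1 \star \chi$ (bounded by $\tau$), noting that $F = F_1 \cdots F_r$ is squarefree as a binary form. Since $(1\star\chi)(p) - 1 = \chi(p)$ and $\varrho_{F_i}(p) = (p-1) \varrho_{f_i}(p) + O(1)$, each Euler factor is asymptotic to $\log(B/t)^{m_i}$, where $m_i$ is the order of pole of $L(s; f_i \times \chi)$ at $s = 1$. By Corollary \ref{cor: order of pole is varrho}, $\sum_i m_i = \varrho_{\Delta,f} - 2$, so the inner sum is $\ll L^2 (B/t) \log(B/t)^{\varrho_{\Delta,f}-2}$. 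Finally, the $t$-sum restricted to split-only $t$ has Dirichlet series $\prod_{\chi(p)=1}(1 + 2/(p^s - 1))$ with only a simple pole at $s = 1$ (the split primes having density $1/2$, matching the simple pole of $\zeta_K(s) = \zeta(s) L(s, \chi)$), so $\sum_{t \leq B, \textrm{ split}} 2^{\omega(t)}/t \ll \log B$. Combined with $\log(B/t) \asymp \log B$ in the admissible range, this yields $S \ll L^2 B \log(B)^{\varrho_{\Delta,f}-1}$.

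The main obstacle is the verification of the vanishing of $r^*_{x^2+\Delta y^2}(t^2 m)$ at non-split primes dividing $t$, which is what restricts the outer $t$-sum to the smaller class of split-only integers; without this restriction the naive estimate $(1 \star \chi)(t^2 F(u,v)) \leq (1 \star \chi)(t^2)(1 \star \chi)(F(u,v))$ would introduce a spurious factor of $\log B$ since $\sum_{t \leq B}(1\star\chi)(t^2)/t \asymp (\log B)^2$, whereas restricting to split-only $t$ saves exactly this factor.
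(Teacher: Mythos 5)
Your proof is correct and follows essentially the same strategy as the paper's: both hinge on the observation that $\gcd(x,y,t)=1$ forces every prime of $t$ to split in $K$, bound the representation count by $(1\star\chi)(F(u,v))$ times a divisor factor in $t$, apply the Nair--Tenenbaum sieve together with Corollary \ref{cor: order of pole is varrho}, and absorb the resulting simple pole into one extra power of $\log B$. The only cosmetic difference is that you carry out the decomposition in classical language ($\gcd(x,y)=g$, primitive representations, an explicit $2^{\omega(t)}$ factor over split-only $t$) whereas the paper factors the principal ideal directly as $I=\ft^2\fa$ and sums over ideals $\ft$, which absorbs the $2^{\omega(t)}$ into the ideal count from the start.
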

\begin{proof}
    We first expand the definition of $S(X_{\Delta,f},\calB(\bx_0,L),S^1)$:
    \begin{equation*}
        S(X_{\Delta,f},\calB(\bx_0,L),S^1) \ll \sum_{t\leq B} \sum_{\substack{(u,v)\in (B/t)^{1/2}\calB(\bx_0,L)}} \#\{x^2+\Delta y^2 = t^2 F(u,v), \gcd(x,y,t)=1\},
    \end{equation*}

    Our first step is to rewrite this inner count with a sum over ideals. Consider the principal ideal $I = (x+y\sqrt{-\Delta})$. Since $\gcd(x,y,t)=1$ and $N(I) = t^2F(u,v)$, we know that for any prime $p$ dividing $t$, we must have that $(p) = \frakp \overline{\frakp}$ splits in $K$ and that $(p)$ does not divide $I$. As a result, we can write $$I = \ft^2 \fraka$$
where $N(\ft)=t$, $N(\fraka) = F(u,v)$, and $\gcd(\ft,\overline{\ft}) =  1.$ Additionally, we have that $\gcd(t,I,\overline{I})=1,$ which occurs if and only if $\gcd(\fa,\overline{\ft})=1.$ Here we write that $\gcd(\fraka,\fb)=1$ if $\fraka$ and $\fb$ do not share any common prime ideal factors. We point out that from now on we use the convention that $N(\ft) = t$.

So, we can bound the above sum by: 
    $$S(X_{\Delta ,f},\calB(\bx_0,L),S^1) \ll \sum_{N(\ft)\leq B} \sum_{(u,v)\in (B/t)^{1/2}\calB(\bx_0,L)} \#\{\fraka: N(\fraka) = F(u,v)\}.$$
    Next, we can bound this sum of ideals as follows: $$\#\{\fraka: N(\fraka)=n\} \ll (1\star \chi)(n),$$
    and henceforth replace $\#\{\fraka:N(\fraka)=F(u,v)\}$ by $(1\star \chi)(F(u,v))$ in the sum above.

Note that $(1\star \chi)(n)$ is a non-negative multiplicative function that is bounded by $\tau(n)$.
    Applying Theorem \ref{thm: Nair sieve}, we get the following upper bound: 
    \begin{align*}
        S(X_{\Delta ,f},\calB(\bx_0,L),S^1) &\ll \sum_{N(\ft)\leq B}  \sum_{(u,v)\in (B/t)^{1/2} \calB(\bx_0,L)} (1\star \chi)(F(u,v)) \\
        &\ll \sum_{N(\ft)\leq B} \frac{1}{N(\ft)} \cdot B L^2 \prod_{p\ll B/t}\left(1+\frac{\varrho_f(p)\chi(p)}{p}\right).
    \end{align*}
    We recognize this product over primes as the factors in the Euler product of $\xi(s;f,\chi)$. Since $\xi(s;f,\chi)$ has a pole of order $\rho_{\Delta,f}-2$ at $s=1$ (Corollary \ref{cor: order of pole is varrho}), we achieve that 
    \begin{align*}
        S(X_{\Delta,f},\calB(\bx_0,L),S^1) &\ll L^2 B\sum_{N(\ft)\leq B} \frac{1}{t} \log(B/t)^{\rho_{\Delta,f}-2}\\
        &\ll L^2 \cdot B\log(B)^{\rho_{\Delta,f}-1}. 
    \end{align*}
\end{proof}

For the indefinite case, we can similarly achieve an optimal upper bound. 
\begin{lemma}\label{lem: upp bound indefinite}
    Assume that $\Delta<0$ is an integer. If $L\gg \exp(-(\log(B))^{1/3})$, then we have that 
    $$S(X_{\Delta,f},\calB(\bx_0,L),S^1) \ll \left(\left|\frac{\log|F(\bx_0)|}{\log|\varepsilon|}\right|+O(1)\right)L^2 B\log(B)^{\rho_{\Delta,f}-1}.$$
\end{lemma}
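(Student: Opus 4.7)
The plan is to adapt the argument of Lemma~\ref{lem: upp bound pos def} to the real-quadratic setting. The essential new feature is that $\calO_K$ has an infinite unit group, so each principal ideal $I$ corresponds to many elements $\alpha=x+y\sqrt{-\Delta}$ (differing by factors of $\pm\varepsilon^k$), and this multiplicity must be counted carefully. As in the positive-definite case I would begin by expanding
$$S(X_{\Delta,f},\calB(\bx_0,L),S^1)\ \ll\ \sum_{N(\ft)\le B}\sum_{(u,v)\in (B/t)^{1/2}\calB(\bx_0,L)} R(t^2F(u,v),B),$$
where $R(N,B):=\#\{(x,y)\in\Z^2 : x^2+\Delta y^2=N,\ \|x\pm y\sqrt{-\Delta}\|\le B\}$. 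The ideal-factorisation argument $I=\ft^2\fa$ from the proof of Lemma~\ref{lem: upp bound pos def} and the trivial bound $\#\{\fa:N(\fa)=F(u,v)\}\ll (1\star\chi)(F(u,v))$ apply verbatim; what remains is to count, for each ideal $I$ of the relevant norm, the number of its generators $\alpha$ lying inside the height box.

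For a minimal generator $\alpha_0$ of $I$, the other generators are $\pm\varepsilon^k\alpha_0$ with $k\in\Z$. The simultaneous constraints $|\varepsilon|^k|\alpha_0|\le B$ and $|\varepsilon|^{-k}|\bar\alpha_0|\le B$ confine $k$ to an interval of length $\log(B^2/|N(I)|)/\log|\varepsilon|$, so each ideal contributes at most $O(\log_+(B^2/|N(I)|)/\log|\varepsilon|+1)$ solutions. On the short box $(u,v)\in (B/t)^{1/2}\calB(\bx_0,L)$, homogeneity of $F$ gives $|F(u,v)|=(B/t)^{\deg F}|F(\bx_0)|(1+O(L))$, so together with $|N(\alpha)|=|\alpha||\bar\alpha|$ the multiplicity bound simplifies to
$$\left|\frac{\log|F(\bx_0)|}{\log|\varepsilon|}\right|+O(1),$$
uniformly over the contributing $(t,u,v)$.

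Pulling this multiplicity factor outside the sum reduces the problem to precisely the estimate carried out in Lemma~\ref{lem: upp bound pos def}: Theorem~\ref{thm: Nair sieve} bounds $\sum_{(u,v)\in (B/t)^{1/2}\calB(\bx_0,L)}(1\star\chi)(t^2F(u,v))$ by $(BL^2/t)\prod_{p\ll B/t}(1+\chi(p)\varrho_f(p)/p)$, Corollary~\ref{cor: order of pole is varrho} identifies the Euler product as $\log(B/t)^{\varrho_{\Delta,f}-2}$, and summing over $N(\ft)\le B$ produces the factor $L^2 B\log(B)^{\varrho_{\Delta,f}-1}$. The main obstacle is the unit-counting step, in particular handling $\bx_0$ near the zero locus of $F$ (where $|\log|F(\bx_0)||$ can be large): this is controlled by the built-in constraint $|N(I)|\le B^2$, which automatically eliminates any $(t,u,v)$ that would violate the $k$-interval bound, ensuring the simplification to $|\log|F(\bx_0)|/\log|\varepsilon||+O(1)$ is valid uniformly. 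A secondary subtlety is ensuring the Nair-sieve step is permissible on the short box $\calB(\bx_0,L)$; since we assume $L\gg \exp(-(\log B)^{1/3})$, we still have $(B/t)^{1/2}L\gg (B/t)^\alpha$ for some $\alpha>0$, which is exactly the hypothesis needed to invoke Theorem~\ref{thm: Nair sieve}.
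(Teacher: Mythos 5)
Your proposal follows the same route as the paper: decompose the count as (unit multiplicity per ideal) $\times$ (number of ideals of the given norm), use the short box to make the unit multiplicity uniform at $|\log|F(\bx_0)|/\log|\varepsilon||+O(1)$, then reduce the ideal count to $(1\star\chi)(F(u,v))$ and invoke the Nair sieve exactly as in the positive-definite case. Two small remarks. First, the homogeneity formula should read $|F(u,v)| = (B/t)^{\deg F/2}\,|F(\bx_0)|\bigl(1+O(L)\bigr)$, not $(B/t)^{\deg F}$; with this corrected, $|N(I)| = t^2|F(u,v)| = B^2|F(\bx_0)|\bigl(1+O(L)\bigr)$ for $\deg F = 4$, which is what yields the stated multiplicity. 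Second, your worry about $\bx_0$ near the zero locus is automatically handled: the factor $|\log|F(\bx_0)|/\log|\varepsilon||+O(1)$ is pulled out and \emph{is} the bound — the uniformity over the box (which the short-box hypothesis supplies) is all that is needed, and the $O(1)$ absorbs the case where the $k$-interval is empty.
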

\begin{proof}
    We first expand the definition of $S(X_{\Delta,f},\calB(\bx_0,L),S^1):$
    \begin{equation*}
        S(X_{\Delta,f},\calB(\bx_0,L),S^1) \ll \sum_{t\leq B} \sum_{(u,v)\in (B/t)^{1/2}\calB(\bx_0,L)} \sum_{\substack{x+y\sqrt{-\Delta} \bmod \varepsilon \\ x^2+\Delta y^2 = t^2F(u,v)}} \#\{k\in \Z: |\varepsilon^k \alpha|, |\overline{\varepsilon}^k\overline{\alpha}|\leq B\}.
    \end{equation*}
    Consider the count of units $$\#\{k\in \Z: |\varepsilon^k \alpha|, |\overline{\varepsilon}^k\overline{\alpha}|\leq B\}.$$
    We can see that $$|\varepsilon^k\alpha|\leq B \iff k\leq  \frac{\log(B)}{\log|\varepsilon|}-\frac{\log|\alpha|}{\log|\varepsilon|}.$$
    Since $\alpha$ is minimal, we have by definition that $$1\leq \frac{|\alpha|}{|\overline{\alpha}|} = \frac{|\alpha|^2}{\alpha\overline{\alpha}} \leq |\varepsilon|^2.$$
    Because $\alpha\overline{\alpha}=t^2F(u,v)$, we can see that $$\frac{\log|\alpha|}{\log|\varepsilon|} = \frac{\log|t^2 F(u,v)|}{2\log|\varepsilon|}+O(1).$$
    On the other hand, since $(u,v)\in (B/t)^{1/2}\calB(\bx_0,L)$, we have that $$\frac{\log|t^2F(u,v)|}{2\log|\varepsilon|} = \frac{\log(B)}{\log|\varepsilon|}+ \frac{\log|F(\bx_0)|}{2\log|\varepsilon|}+O(L).$$
    Thus, we have that $$|\varepsilon^k\alpha|\leq B \iff k\leq \left|\frac{\log|F(\bx_0)|}{2\log|\varepsilon|}\right| +O(1).$$
    A similar computation with $|\overline{\varepsilon}^k\overline{\alpha}|$ reveals an analogous lower bound. 
    So, we have that $$\#\{k\in \Z: |\varepsilon^k\alpha|,|\overline{\varepsilon}^k\overline{\alpha}|\leq B\} = \left|\frac{\log|F(\bx_0)|}{\log|\varepsilon|}\right|+O(1).$$
    
    Plugging this in above, we can see the following: 
    \begin{align*}
        S(X_{\Delta,f},&\calB(\bx_0,L),S^1) \ll \left(\frac{\log|F(\bx_0)|}{\log|\varepsilon|}+O(1)\right) \sum_{t\leq B} \sum_{\substack{(u,v)\in (B/t)^{1/2} \calB(\bx_0,L)}} \#\{\alpha \bmod \varepsilon: \alpha\overline{\alpha} = t^2F(u,v)\} \\
        &\ll \left(\frac{\log|F(\bx_0)|}{\log|\varepsilon|}+O(1)\right) \sum_{N(\ft)\leq B} \sum_{(u,v)\in (B/t)^{1/2}\calB(\bx_0,L)} \#\{\fraka\subset \calO_K: N(\fraka) = F(u,v)\}.
    \end{align*}
    Now, as in the positive-definite case, we can again use the following bound: 
    $$\#\{N(\fraka) = n\} \ll (1\star \chi)(n).$$
    So, we can apply Theorem \ref{thm: Nair sieve}:
    \begin{align*}
        S(X_{\Delta,f},\calB(\bx_0,L), S^1) &\ll \left(\frac{\log|F(\bx_0)|}{\log|\varepsilon|}+O(1)\right) \sum_{N(\ft)\leq B} \sum_{(u,v)\in (B/t)^{1/2}\calB(\bx_0,L)} (1\star \chi)(t^2F(u,v)) \\
        &\ll \left(\frac{\log|F(\bx_0)|}{\log|\varepsilon|}+O(1)\right) \sum_{N(\ft)\leq B} \frac{1}{N(\ft)}\cdot BL^2 \prod_{p\ll B/t} \left(1+\frac{\varrho_f(p)\chi(p)}{p}\right) \\
        &\ll \left(\frac{\log|F(\bx_0)|}{\log|\varepsilon|}+O(1)\right)L^2 B \sum_{N(\ft)\leq B} \frac{1}{N(\ft)}\log(B/t)^{\rho_{\Delta,f}-2} \\
        &\ll \left(\frac{\log|F(\bx_0)|}{\log|\varepsilon|}+O(1)\right)L^2 \cdot B \log(B)^{\rho_{\Delta,f}-1}.
    \end{align*}
    Again, we have used that the product over primes is related to $\xi(s;f,\chi)$, which has a pole of order $\rho_{\Delta,f}-2$ at $s=1$ (Corollary \ref{cor: order of pole is varrho}). 
    
\end{proof}

\begin{remark}
    Intuitively, we expect for a generic $\alpha$ that $$\#\{k\in \Z: |\varepsilon^k \alpha|,|\overline{\varepsilon}^k\overline{\alpha}|\leq B\} = O(1).$$
    However, we also expect that for certain $\alpha$, this quantity can be as large as $\log(B)$. This discrepancy is now hidden in the $\log|F(\bx_0)|/\log|\varepsilon|$ term, as $\bx_0$ ranges over values in $[-1,1]^2$. For most $\bx_0$, $|F(\bx_0)|$ is of approximately constant size, but for a small subset of $\bx_0$, the value $|F(\bx_0)|$ can be quite small. 
\end{remark}

\section{Initial reductions}\label{sec: reductions}
In \S\ref{sec: outline}, we discussed that a key ingredient in our proof is a restriction to short boxes and the use of Hecke Gr\"ossencharacters. In this section, we will demonstrate explicitly why these tools are crucial for our method. First, we recall some notation. Let $\bx_0\in [-1,1]^2$ and $L$ be length to be determined later. Recall that we denote: 
$$\calB(\bx_0,L) := \bx_0+[0,L]^2.$$

\subsection{Removal of large torsors}\label{subsec: large torsors}
We will first remove the contribution from the large torsors -- when $t\in [B/\exp(\sqrt{\log(B)}),B]$. 
\begin{lemma}\label{lem: large torsors}
    Assume that $\Delta$ is a squarefree integer and $\sqrt{-\Delta}\not\in \Q$. Define the contribution from large $t$ as: 
    \begin{multline}
    L(X_{\Delta,f},B) := \#\Big\{ ((x,y),(u,v),t)\in \Z^2\times \Z^2\times \Z_+: x^2+\Delta y^2 = t^2 F(u,v),\\ 
    |u|,|v|\in (B/t)^{1/2}, \|x+y\sqrt{-\Delta}\|,\|x-y\sqrt{-\Delta}\|\leq \nu_{\Delta,f} B, \\ t\in [B/\exp(\sqrt{\log(B)},B], \gcd(x,y,t) = \gcd(u,v) = 1\Big\}.
\end{multline}
Then we have the upper bound:
$$L(X_{\Delta,f},B) \ll B\log(B)^{\rho_{\Delta,f}-3/2}.$$
\end{lemma}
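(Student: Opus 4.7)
My plan is to adapt the proofs of Lemmas \ref{lem: upp bound pos def} and \ref{lem: upp bound indefinite} to the range $t \in [B/\exp(\sqrt{\log B}), B]$, exploiting the narrowness of this range to save a factor of $\sqrt{\log B}$ beyond the trivial $B\log(B)^{\varrho_{\Delta,f}-1}$ bound. I would partition the $(u,v)$-range $|u|,|v|\leq (B/t)^{1/2}$ by short boxes $(B/t)^{1/2}\calB(\bx_0, L)$ with $\bx_0 \in L\Z^2 \cap [-1,1]^2$ for a fixed small $L$, and for each short box reduce to a sum over ideals of $\calO_K$ exactly as in those lemmas: the condition $x^2+\Delta y^2 = t^2 F(u,v)$ with $\gcd(x,y,t) = 1$ forces $(x+y\sqrt{-\Delta}) = \ft^2 \fa$ with $N(\ft)=t$, $N(\fa)=F(u,v)$, and $\gcd(\ft,\overline{\ft})=\gcd(\fa,\overline{\ft})=1$. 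In the indefinite case the count of $(x,y)$ then picks up a unit factor bounded by $|\log|F(\bx_0)|/\log|\varepsilon||+O(1)$, while in the positive definite case this factor is just $O(1)$.

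Next, I would apply Theorem \ref{thm: Nair sieve} to get
$$\sum_{(u,v) \in (B/t)^{1/2}\calB(\bx_0,L)} (1\star\chi)(F(u,v)) \ll L^2 (B/t) (\log(B/t))^{\varrho_{\Delta,f}-2}.$$
The essential saving comes from the restriction $t \geq B/\exp(\sqrt{\log B})$, which forces $\log(B/t) \leq \sqrt{\log B}$; by partial summation, using $\#\{\ft : N(\ft)\leq X\}\sim cX$,
$$\sum_{N(\ft) \in [B/\exp(\sqrt{\log B}), B]} \frac{(\log(B/N(\ft)))^{\varrho_{\Delta,f}-2}}{N(\ft)} \ll (\log B)^{(\varrho_{\Delta,f}-1)/2}.$$
For each short box, combining these bounds yields
$$S_{\bx_0}(B) \ll \left(\left|\frac{\log|F(\bx_0)|}{\log|\varepsilon|}\right| + O(1)\right) L^2 B (\log B)^{(\varrho_{\Delta,f}-1)/2}.$$

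Finally, I would sum over $\bx_0$: the sum $\sum_{\bx_0}(|\log|F(\bx_0)||+O(1))L^2$ is a Riemann sum for the finite integral $\int_{[-1,1]^2}(|\log|F(\bx)||+1)d\bx$, so
$$L(X_{\Delta,f}, B) \ll B (\log B)^{(\varrho_{\Delta,f}-1)/2} \ll B\log(B)^{\varrho_{\Delta,f}-3/2},$$
since $\varrho_{\Delta,f} \geq 2$. The main technical subtlety lies in the indefinite case, where the unit factor $|\log|F(\bx_0)||$ can be large for boxes near the zero locus of $F$; ensuring the Riemann sum remains $O(1)$ is handled by choosing $L$ slightly decaying (or separately treating boxes extremely close to $F = 0$, noting that $\log|F|$ is locally integrable since $F$ is a nonzero polynomial).
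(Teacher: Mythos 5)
Your proposal follows essentially the same route as the paper: fix $t$, partition the $(u,v)$-region into short boxes, pass to ideals of $\calO_K$, apply the de la Bret\`eche--Tenenbaum sieve (Theorem~\ref{thm: Nair sieve}) box by box, use the constraint $t \geq B/\exp\sqrt{\log B}$ to force $\log(B/t)\leq\sqrt{\log B}$ and thus gain the extra $\sqrt{\log B}$ over the trivial bound, then sum the box-centers to recover the finite integral $\int_{[-1,1]^2}|\log|F||$. Your partial-summation estimate $\sum_{N(\ft)\in[B/\exp\sqrt{\log B},B]} (\log(B/N(\ft)))^{\varrho_{\Delta,f}-2}/N(\ft) \ll (\log B)^{(\varrho_{\Delta,f}-1)/2}$ is correct and even sharper than the stated target, since $(\varrho_{\Delta,f}-1)/2\leq \varrho_{\Delta,f}-3/2$ for $\varrho_{\Delta,f}\geq 2$.

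The one genuine technical gap is your use of \emph{a fixed small $L$} across the entire range of $t$. The paper instead takes a $t$-dependent side length $L_t$ subject to $(t/B)^{1/3}\leq L_t\leq 1$, and this lower bound is not cosmetic: Theorem~\ref{thm: Nair sieve} requires the box side $Y=L_t(B/t)^{1/2}$ to satisfy $Y\geq X^\alpha$ with $X=(B/t)^{1/2}$, i.e.\ $L_t\geq (t/B)^{(1-\alpha)/2}$ for some fixed $\alpha>0$. With a fixed $L<1$, this hypothesis fails once $t>BL^{2/(1-\alpha)}$, a range that is certainly nonempty inside $[B/\exp\sqrt{\log B},B]$; in that range the boxes $(B/t)^{1/2}\calB(\bx_0,L)$ contain $O(1)$ integer points and the claimed bound $\sum_{(u,v)\in(B/t)^{1/2}\calB(\bx_0,L)}(1\star\chi)(F(u,v))\ll L^2(B/t)\log(B/t)^{\varrho_{\Delta,f}-2}$ is not justified by the sieve (indeed it can fail as an inequality if a lone point carries a large value of $(1\star\chi)$). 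Your hedge about ``choosing $L$ slightly decaying'' addresses the integrability of $\log|F|$ near its zero set, which is a different issue; what is actually needed is a $t$-dependent $L_t$ as in the paper, or a separate elementary estimate for the finite range $t>BL^{2/(1-\alpha)}$ where $|u|,|v|=O(1)$. With that adjustment the argument goes through and coincides with the paper's. One further (harmless) difference: you introduce boxes in the positive-definite case as well, where the paper applies Theorem~\ref{thm: Nair sieve} directly to the full square $|u|,|v|\leq(B/t)^{1/2}$.
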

\begin{proof}
Fix a $t\in [B/\exp(\sqrt{\log(B)}),B]$ for now.
First, assume that $\Delta>0$; then, we have that 
\begin{multline*}\#\{x^2+\Delta y^2 = t^2F(u,v): |u|,|v|\leq (B/t)^{1/2}, \gcd(x,y,t)=\gcd(u,v)=1\} \\ \ll \sum_{|u|,|v|\leq (B/t)^{1/2}} \#\{\fa: N(\fa)=t^2 F(u,v)\} \ll (1\star \chi)(t)\sum_{|u|,|v|\leq (B/t)^{1/2}}\#\{\fa: N(\fa)= F(u,v)\}.\end{multline*}
The above comes from the initial analysis of the sum in Lemma \ref{lem: upp bound pos def}. Since $\#\{\fa:N(\fa)=n\} \ll (1\star \chi)(n)$ and this is a non-negative multiplicative function, we apply Theorem \ref{thm: Nair sieve} to achieve 
\begin{align*}
    \#\{x^2+\Delta y^2 = t^2F(u,v):& |u|,|v|\leq (B/t)^{1/2}, \gcd(x,y,t)=\gcd(u,v)=1\}\\ &\ll \frac{B\cdot (1\star\chi)(t)}{t} \prod_{p\ll B/t} \left(1+\frac{\varrho_f(p)\chi(p)}{p}\right) \ll \frac{B\cdot (1\star\chi)(t)}{t}\log(B/t)^{\rho_{\Delta,f}-2}.
\end{align*}

Next, consider when $\Delta<0$. We claim that the same upper bound holds for a fixed large $t$. In Lemma \ref{lem: upp bound indefinite}, we saw that \begin{multline*}\#\{x^2+\Delta y^2 = t^2F(u,v): |u|,|v|\leq (B/t)^{1/2}, \gcd(x,y,t)=\gcd(u,v)=1\} \\ \ll \sum_{|u|,|v|\leq (B/t)^{1/2}} \sum_{\substack{\alpha = x+y\sqrt{-\Delta}\bmod \varepsilon\\ x^2+\Delta y^2 = t^2F(u,v)}} \#\{k\in \Z: |\varepsilon^k\alpha|,|\overline{\varepsilon}^k\overline{\alpha}|\leq B\}
\\ \ll (1\star\chi)(t) \sum_{|u|,|v|\leq (B/t)^{1/2}} \left(\frac{\log (B)}{\log|\varepsilon|}-\frac{\log|t^2F(u,v)|}{2\log|\varepsilon|}+O(1)\right)\#\{N(\fa)= F(u,v)\}\\
\ll \frac{B\cdot (1\star \chi)(t)}{t}\log(B/t)^{\rho_{\Delta,f}-2} + \sum_{|u|,|v|\leq (B/t)^{1/2}} \left(\frac{\log (B)}{\log|\varepsilon|}-\frac{\log|t^2F(u,v)|}{2\log|\varepsilon|}\right)\#\{N(\fa)=  F(u,v)\}.\end{multline*}
Since $|u|,|v|\leq (B/t)^{1/2}$, we have that $t^2F(u,v) = B^2 F(\by)$ for some $\by\in [-1,1]^2$. So, we get that $$\frac{\log(B)}{\log|\varepsilon|}-\frac{\log|t^2F(u,v)|}{2\log|\varepsilon|} = -\frac{\log|F(\by)|}{2\log|\varepsilon|}.$$
Assume that $1\geq L_t\geq (t/B)^{1/3}$ and divide $[-1,1]^2$ into boxes of sidelength $L_t$. Then for $\by\in \calB(\bx_0,L_t)$, we have that $$-\frac{\log|F(\by)|}{2\log|\varepsilon|} = -\frac{\log|F(\bx_0)|}{2\log|\varepsilon|}+O(L_t).$$
Then if we sum over $(u,v)\in (B/t)^{1/2}\calB(\bx_0,L_t)$: \begin{multline*}\left(-\frac{\log|F(\bx_0)|}{\log|\varepsilon|} +O(1)\right)\sum_{(u,v)\in (B/t)^{1/2}\calB(\bx_0,L_t)} \#\{N(\fa)= F(u,v)\} \\ \ll \left(-\frac{\log|F(\bx_0)|}{\log|\varepsilon|}+O(1)\right) \cdot \frac{BL_t^2\cdot (1\star\chi)(t)}{t}\log(B/t)^{\rho_{\Delta,f}-2}.\end{multline*}
Here we have used that $\#\{N(\fraka)=F(u,v)\}\ll (1\star \chi)(F(u,v))$ and Theorem \ref{thm: Nair sieve}.

For our final step in the indefinite case, we sum over $\bx_0\in [-1,1]^2$ spaced $L_t$ apart. We can see that 
\begin{multline*}
    \#\{x^2+\Delta y^2 = t^2F(u,v): |u|,|v|\leq (B/t)^{1/2}, \gcd(x,y,t)=\gcd(u,v)=1\} \\ \ll \frac{B \cdot (1\star \chi)(t)}{t}\log(B/t)^{\rho_{\Delta,f}-2} + \frac{B\cdot (1\star \chi)(t)}{t}\log(B/t)^{\rho_{\Delta,f}-2} \sum_{\bx_0}  \left(-\frac{\log|F(\bx_0)|}{2\log|\varepsilon|}+O(L_t)\right) L_t^2 \\
    \ll \frac{B\cdot (1\star \chi)(t)}{t}\log(B/t)^{\rho_{\Delta,f}-2} \left(1+\int_{[-1,1]^2} -\frac{\log|F(\by)|}{2\log|\varepsilon|} d\by + O(L_t)\right).
\end{multline*}
Since this integral converges and $L_t \in [(t/B)^{1/3},1]$, we derive that when $\Delta<0$: \begin{multline*}\#\{x^2+\Delta y^2 = t^2F(u,v): |u|,|v|\leq (B/t)^{1/2}, \gcd(x,y,t)=\gcd(u,v)=1\} \\ \ll \frac{B\cdot(1\star \chi)(t)}{t}\log(B/t)^{\rho_{\Delta,f}-2}.
\end{multline*}
Thus, this bound holds independent of the sign of $\Delta.$

Finally, we sum over the values of $t$, achieving that
\begin{align*}\sum_{t\in [B/\exp(\sqrt{\log(B)}),B]} &\#\{x^2+\Delta y^2 = t^2F(u,v): |u|,|v|\leq (B/t)^{1/2}, \gcd(x,y,t)=\gcd(u,v)=1\} \\ &\ll  \sum_{t\in [B/\exp(\sqrt{\log(B)}),B]} \frac{B\cdot (1\star \chi)(t)}{t}\log(B/t)^{\rho_{\Delta,f}-2} \\ & \ll \sum_{N(\ft)\in [B/\exp(\sqrt{\log(B)}),B]} \frac{B}{N(\ft)} \ll B\log(B)^{\rho_{\Delta,f}-3/2}.
\end{align*}
This completes the proof of this lemma.
\end{proof}
Comparing this to Theorem \ref{thm: Manin}, it is clear that this is an error term; hence, it is safe to \textbf{assume from here on that $t\leq B/\exp(\sqrt{\log(B)})$.}

\subsection{Short boxes and intervals}\label{subsec: short boxes}
From now on, we take $t\leq B/\exp(\sqrt{\log(B)})$ and fix a short box and short interval. Let us recall for $\theta\in [0,1]$ and $\ell\leq 1$ a length to be determined later, we defined the interval: 
$$\calI(\theta,\ell) := \{e(\theta'): \theta' \in [\theta,\theta+\ell]\}\subset S^1.$$
Let us define the following point-counting functions:
if $\Delta>0$, we write
\begin{multline}\label{eq: pos def point counting defn}
    N(X_{\Delta,f},\calB(\bx_0,L),S^1) := \frac{1}{2}\cdot\#\Big\{ ((x,y),(u,v),t)\in \Z^2\times \Z^2\times \Z_+: x^2+\Delta y^2 = t^2 F(u,v),\\ 
    (u,v)\in (B/t)^{1/2}\calB(\bx_0,L), \|x+y\sqrt{-\Delta}\|,\|x-y\sqrt{-\Delta}\|\leq B\cdot \max_{\by\in [-1,1]}|F(\by)|^{1/2}, \\ t\leq B/\exp(\sqrt{\log(B)}), \gcd(x,y,t) = \gcd(u,v) = 1\Big\}.
\end{multline}
If $\Delta<0$, we write
\begin{multline}\label{eq: indef point counting defn}
    N(X_{\Delta,f}, \calB(\bx_0,L), \calI(\theta,\ell)) :=  \frac{1}{2}\cdot\#\Big\{((x,y),(u,v),t)\in \Z^2\times \Z^2\times \Z_+: x^2+\Delta y^2 = t^2 F(u,v), \\ (u,v)\in (B/t)^{1/2}\calB(\bx_0,L), \Psi(x+y\sqrt{-\Delta}) \in \calI(\theta,\ell), |x+y\sqrt{-\Delta}|,|x-y\sqrt{-\Delta}|\leq B, \\ t\leq B/\exp(\sqrt{\log(B)}),\gcd(x,y,t)=\gcd(u,v)=1  \Big\}.
\end{multline}
We remark that restricting $\Psi(x+y\sqrt{-\Delta})\in \calI(\theta,\ell)$ allows us to control the argument of $\alpha = x+y\sqrt{-\Delta}.$ Similarly, taking $(u,v)\in (B/t)^{1/2}\calB(\bx_0,L)$ for $L= o(1)$ gives us precision on the value of $t^2F(u,v).$

In the next few sections, we will establish the following estimates on $N(X_{\Delta,f},\calB(\bx_0,L),I(\theta,\ell))$. 
\begin{prop}\label{prop: pos def Manin short}
    Assume $\Delta>0$ is a squarefree integer and $\sqrt{-\Delta}\not\in \Q$, and let $B$ be a sufficiently large number. Let $\bx_0\in [-1,1]^2$ and $L\gg \log(B)^{-10^{-10}}$. Then there exists a constant $c_{\Delta,f}$ such that as $B\rightarrow\infty,$ $$N(X_{\Delta,f},\calB(\bx_0,L),S^1) =L^2\cdot  c_{\Delta,f} B\log(B)^{\rho_{\Delta,f}-1}(1+ O(\log(B)^{-10^{-10}})).$$
    Moreover, if $c_{\Delta,f}=0$ then there is a Brauer-Manin obstruction or a local obstruction, i.e. $X_{\Delta,f}(\Q)=\emptyset.$
\end{prop}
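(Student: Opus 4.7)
The plan is to follow the blueprint of Section~\ref{sec: outline}. After removing the contribution of large $t$ via Lemma~\ref{lem: large torsors}, I expand
$$N(X_{\Delta,f},\calB(\bx_0,L),S^1) = \tfrac{1}{2}\sum_{t\leq B/\exp(\sqrt{\log B})}\,\sum_{\substack{(u,v)\in (B/t)^{1/2}\calB(\bx_0,L)\\ \gcd(u,v)=1}} r_{x^2+\Delta y^2}\bigl(t^2F(u,v)\bigr)\cdot \mathbf{1}_{\gcd(x,y,t)=1}.$$
The coprimality condition $\gcd(x,y,t)=1$ is resolved by M\"obius inversion, and the representation function is converted to an ideal count in $\calO_K$. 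Applying the decomposition $\theta(z)=E(z)+C(z)$ from Lemma~\ref{lem: decomp eisenstein cusp} and the explicit description of $\lambda_E$ via genus characters $\varepsilon_{q_1,q_2}$ (Section~\ref{subsec: class group}), the problem reduces to asymptotics for sums of $(1\star \chi_{q_1}\chi_{q_2})(t^2F(u,v))$ (Eisenstein part) and bounds for sums of $\lambda_\Psi(t^2F(u,v))$ for class group characters $\psi$ of order $\geq 3$ (cuspidal part).

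For the Eisenstein contribution I would follow the de la Bret\`eche--Tenenbaum--Heath-Brown scheme. Writing $F=F_1\cdots F_r$, I use the identity
$$(1\star\chi)(mn) = \sum_{c\mid\gcd(m,n)}\mu(c)\chi(c)(1\star\chi)(m/c)(1\star\chi)(n/c)$$
iteratively to split $(1\star\chi)(F(u,v))$ across the irreducible factors, introducing a finite sum over tuples $(c_1,\ldots,c_r)$ whose entries divide pairwise resultants of the $f_i$. Opening each resulting $(1\star\chi)(F_i(u,v)/c_i)$, I partition the divisors $d_i$ into small moduli ($d_1\cdots d_r\leq (B/t)\log(B/t)^{-10^{-5}}$) and large moduli. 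Small moduli are treated by the level-of-distribution result for squarefree binary forms developed in Section~\ref{sec: LoD}, extracting the expected main term with an error $\ll D+\sqrt{BD/t}\,\exp(\sqrt{\log_2 B})$. Large moduli are controlled by the twisted Hooley $\Delta$-function bound of Theorem~\ref{thm: hooley character bound}, which yields a contribution of size $O(BL^2 t^{-1}\log(B/t)^{\varrho_{\Delta,f}-2-10^{-5}})$. Summing over $t$, residues mod $\Delta$, and genus characters, and identifying the pole of $\xi(s;f,\chi)$ at $s=1$ of order $\varrho_{\Delta,f}-2$ via Corollary~\ref{cor: order of pole is varrho}, produces the asserted main term $c_{\Delta,f}L^2B\log(B)^{\varrho_{\Delta,f}-1}$ with a saving of $\log(B)^{-10^{-10}}$.

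For the cuspidal contribution I swap the order of summation and write
$$\sum_{|u|,|v|\leq B^{1/2}}\sum_{\substack{N(\fa)=F(u,v)\\ \gcd(\fa,\overline{\fa})=1}}\psi(\fa)\sum_{\substack{N(\ft)\leq B/\max(|u|,|v|)^2\\\gcd(\ft,\overline{\ft})=1,\ \gcd(\fa,\overline{\ft})=1}}\psi(\ft)^{2}.$$
Because $\psi$ has order $\geq 3$, $\psi^{2}$ is a nontrivial class group character, and the prime ideal theorem in $K$ yields a power saving for the inner sum, of the shape $\prod_{\fp\mid\fa}(1+N(\fp)^{-2/3})\cdot(B/\max(|u|,|v|)^{2})^{5/6}$. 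Inserting this bound and applying the Nair--de la Bret\`eche--Tenenbaum sieve (Theorem~\ref{thm: Nair sieve}) to the outer $(u,v)$-sum with multiplicative weight $\prod_{\fp\mid\fa}(1+N(\fp)^{-2/3})$ majorized by a function bounded by a constant times $(1\star\chi)(F(u,v))$, the whole cuspidal contribution is $O(B\log(B)^{\varrho_{\Delta,f}-2})$, which is safely absorbed in the error term. The hardest step here is the bookkeeping needed to preserve the fixed polylog saving uniformly in the short-box parameters; Lemma~\ref{lem: upp bound pos def} underwrites the necessary crude bounds.

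The claim that $c_{\Delta,f}$ vanishes exactly when there is a Brauer-Manin or local obstruction is postponed to Section~\ref{sec: constant}: one takes the explicit expression of $c_{\Delta,f}$ produced above as a sum over representatives $(\alpha_1,\ldots,\alpha_r)/\!\sim$ of local densities $\prod_p \mu_p(X^*_{\Delta,f_1,\ldots,f_r,\alpha_1,\ldots,\alpha_r}(\Q_p))$, shows each factor vanishes precisely when the corresponding auxiliary variety has no $\Q_p$-points, and then invokes the descent theorem of Colliot-Th\'el\`ene--Coray--Sansuc together with Harari's formal lemma to translate ``no auxiliary variety is everywhere locally soluble'' into a Brauer-Manin or local obstruction on $X_{\Delta,f}$ itself. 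The principal technical difficulty in the analytic part will be calibrating the cutoffs so that the level-of-distribution error, the twisted Hooley bound, and the short-box length $L\gg \log(B)^{-10^{-10}}$ all cooperate to produce a single uniform saving of $\log(B)^{-10^{-10}}$.
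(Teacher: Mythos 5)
Your overall blueprint is essentially the paper's: remove large $t$ (Lemma~\ref{lem: large torsors}), convert to an ideal count and decompose via Lemma~\ref{lem: decomp eisenstein cusp}, then prove the Eisenstein asymptotic by the multiplicativity identity, level of distribution, twisted Hooley $\Delta$-function, and the pole of $\xi(s;f,\chi)$, and derive the vanishing criterion in \S\ref{sec: constant} from the auxiliary varieties. This is exactly how the paper assembles the statement out of (\ref{eq: pos def N in terms of M and E}), Proposition~\ref{prop: error term E}, and Proposition~\ref{prop: main term}.

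The gap is in your cuspidal term. You ``swap the order of summation'' to a full-box expression $\sum_{|u|,|v|\leq B^{1/2}}\sum_{N(\fa)=F(u,v)}\psi(\fa)\sum_{N(\ft)\leq B/\max(|u|,|v|)^2}\psi(\ft)^{2}$, but $E(X_{\Delta,f},\calB(\bx_0,L);0,\psi)$ ranges over a $t$-dependent box $(B/t)^{1/2}\calB(\bx_0,L)$ and is a \emph{signed} sum, so you cannot simply enlarge the $(u,v)$-range to $|u|,|v|\leq B^{1/2}$ and claim the short-box sum is controlled by the full-box one. Deferring the difficulty to ``bookkeeping'' underwritten by Lemma~\ref{lem: upp bound pos def} does not help either: that lemma gives a crude bound of order $L^2B\log(B)^{\varrho_{\Delta,f}-1}$, the same size as the main term and therefore no saving. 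What the paper actually does is introduce a quasi-dyadic partition in $t$ of width $1+\epsilon(B)$ (Lemma~\ref{lem: change of var}), replace the short box by the union $D(\bx_0,L)=\cup_{T_i}(B/T_i)^{1/2}\calB(\bx_0,L)$ at a controlled cost $\ll L\epsilon(B)B\log(B)^{\varrho_{\Delta,f}-1}$, and only then put absolute values on the $(u,v)$- and $\fa$-sums and extract the power saving from the short $\ft$-sum (Lemma~\ref{lem: cancellation of cusp over t}) at the price of a geometric factor $\epsilon(B)^{-1}$, finally balanced by $\epsilon(B)=\log(B)^{-1/2}$. Without this intermediate comparison, the passage from the short-box signed sum to the full-box expression is unjustified. (Minor: your inner $\fa$-sum imposes $\gcd(\fa,\overline{\fa})=1$, which is not part of (\ref{eq: pos def error term}); only $\gcd(\fa,\overline{\ft})=1$ is required.)
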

\begin{prop}\label{prop: indef Manin short}
    Assume $\Delta<0$ is a squarefree integer and $\sqrt{-\Delta}\not\in \Q$, and let $B$ be a sufficiently large number. Let $\bx_0\in [-1,1]^2$, $\theta\in [0,1]$, and $\ell,L\gg \log(B)^{-10^{-10}}.$ Then we have that there exists a constant $b_{\Delta,f}$ such that as $B\rightarrow\infty$, \begin{equation*}N(X_{\Delta,f},\calB(\bx_0,L),\calI(\theta,\ell)) = \kappa(\bx_0,\calI(\theta,\ell)) \cdot b_{\Delta,f} \cdot L^2 B\log(B)^{\rho_{\Delta,f}-1} (1+ O(\log(B)^{-10^{-10}})),
    \end{equation*}
    where we define
    $$\kappa(\bx_0,\calI(\theta,\ell)) = \max(-\log|F(\bx_0)|/\log|\varepsilon|,0) \cdot \frac{|\calI(\theta,\ell)|}{4\pi}.$$
    Moreover, if $b_{\Delta,f}$ is zero then there is a Brauer-Manin obstruction or a local obstruction, i.e. $X_{\Delta,f}(\Q)=\emptyset.$ 
\end{prop}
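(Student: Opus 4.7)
The plan is to reduce the count to ideal-theoretic sums in $K=\Q(\sqrt{-\Delta})$, where the restrictions to the short box $\calB(\bx_0,L)$ and short arc $\calI(\theta,\ell)$ become quantitative constraints that admit a Fourier-analytic treatment. First I would parametrize solutions $(x,y)$ of $x^2+\Delta y^2 = t^2 F(u,v)$ with $\gcd(x,y,t)=1$ by integral ideals $I\subset \calO_K$ with factorization $I = \ft^2\fa$ where $N(\ft)=t$, $N(\fa)=F(u,v)$, $\gcd(\ft,\overline{\ft})=\gcd(\fa,\overline{\ft})=1$, together with a choice of unit. Working with the minimal representative $\alpha \bmod \varepsilon$, the box constraint $|x\pm y\sqrt{-\Delta}|\leq B$ reduces to counting units $\varepsilon^k$ satisfying certain size inequalities; the calculation in Lemma~\ref{lem: upp bound indefinite} (now refined rather than bounded) shows the number of admissible $k$ equals $-\log|F(\bx_0)|/\log|\varepsilon| + O(L)$ when $|F(\bx_0)|<1$ and is zero otherwise, uniformly for $(u,v)\in (B/t)^{1/2}\calB(\bx_0,L)$. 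The interval constraint $\Psi(\alpha)\in \calI(\theta,\ell)$ is handled by Fourier expansion on the circle, introducing the Hecke Gr\"ossencharacters $\Psi^k$; the trivial frequency $k=0$ contributes a factor $|\calI(\theta,\ell)|/(4\pi)$, explaining the shape of $\kappa(\bx_0,\calI(\theta,\ell))$, while the nonzero frequencies will ultimately be shown to be error terms.

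Second, I would decompose the principal-ideal indicator via (\ref{eq: class group principal}) into class group characters. Combined with the Fourier expansion above, the problem splits into contributions indexed by pairs $(\psi,k)$ with $\psi\in \hat{C}$ and $k\in \Z$. When $(\psi,k)$ corresponds to a character of order $\leq 2$ and $k=0$ (i.e.\ the Eisenstein piece, by Gauss genus theory and Lemma~\ref{lem: decomp eisenstein cusp}), the sum reduces to estimating $\sum_{t\leq B}\sum_{(u,v)\in (B/t)^{1/2}\calB(\bx_0,L)} \varepsilon_{q_1,q_2}(t^2 F(u,v))$ for each decomposition $q_1q_2 = -\Delta$, where $\varepsilon_{q_1,q_2}$ is the genus-character divisor function from (\ref{eq: genus characters}). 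These are evaluated by the near-optimal level-of-distribution result for squarefree binary forms (\S\ref{sec: LoD}) together with the twisted Hooley $\Delta$-function bound from Theorem~\ref{thm: hooley character bound} controlling the large moduli (\S\ref{sec: large moduli}), using the multiplicative identity $(1\star \chi)(mn)=\sum_{c\mid(m,n)}\mu(c)\chi(c)(1\star\chi)(m/c)(1\star\chi)(n/c)$ to handle the non-total-multiplicativity, exactly as sketched in (\ref{eq: outline main term}). Collecting the resulting Euler products gives the desired main term $\kappa(\bx_0,\calI(\theta,\ell))\cdot b_{\Delta,f}\cdot L^2 B\log(B)^{\varrho_{\Delta,f}-1}$, with $b_{\Delta,f}$ expressible via $\Res_{s=1}\xi(s;f,\chi)$ and local factors indexed by the auxiliary data $(c_1,\dots,c_r)$.

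Third, I would dispatch the remaining $(\psi,k)$ as cuspidal-type contributions. When $\psi$ has order $\geq 3$ and $k=0$, the associated Fourier coefficients are those of a weight-one cusp form induced from $\psi$, and I would apply the Perron-formula argument based on $\sum_d\mu(d)\sum_n\lambda_C(an^2)$, using that $L(s,\sym^2 C)\zeta(s)^{-1}$ is holomorphic at $s=1$ to gain a polylogarithmic saving, together with Nair's sieve (Theorem~\ref{thm: Nair sieve}) to handle the outer $(u,v)$-average. When $k\neq 0$, the Gr\"ossencharacter $\Psi^k$ is unitary and of infinite order; here I would follow the Heath-Brown strategy of \cite{HBclassnum}: apply Nair's sieve with the multiplicative bound $|\lambda_{\Psi^k}(t^2F(u,v))|$, reduce the Euler product to the sum $\sum_{\fp}(|\lambda_{\Psi^k}(\fp)|-1)\varrho_f(N\fp)/N(\fp)$, use the pointwise inequality $|\lambda_{\Psi^k}(\fp)|-1\leq \tfrac12+\tfrac14\lambda_{\Psi^{2k}}(\fp)$ on split primes, and conclude by invoking the prime ideal theorem for the base change of $\Psi^{2k}$ to the splitting field $K_f$, which is a nontrivial Hecke character precisely because $\Psi^{2k}$ has infinite image. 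The tail $|k|\gg \ell^{-1}\log(B)^{O(1)}$ of the Fourier expansion is truncated by a standard smoothing of the indicator of $\calI(\theta,\ell)$; the polynomial dependence of the sieve bounds on $k$ is absorbed by the power saving.

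The main obstacle I expect is the tight accounting between the short-box parameter $L,\ell \gg \log(B)^{-10^{-10}}$ and the polylogarithmic losses in each subargument: every invocation of Hooley's twisted $\Delta$-bound, every passage from $(u,v)\in (B/t)^{1/2}\calB(\bx_0,L)$ to $|u|,|v|\leq (B/t)^{1/2}$ in the cuspidal arguments, and every truncation of the Fourier expansion in $k$ costs a small power of $\log(B)$, and the exponent $10^{-10}$ is chosen precisely so that these accumulate without overwhelming the $10^{-5}$ savings provided by Theorem~\ref{thm: hooley character bound} and the polylog cuspidal savings. A secondary subtlety is verifying that for every $k\neq 0$ the base change of $\Psi^{2k}$ to $K_f$ remains nontrivial, which follows from the fact that a Hecke character of infinite order on $K$ cannot become trivial on a finite extension. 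Finally, the identification of $b_{\Delta,f}$ with the adelic-looking product over auxiliary varieties and the equivalence of its vanishing with the existence of a Brauer-Manin or local obstruction is carried out in \S\ref{sec: constant} via the results of Colliot-Th\'el\`ene-Coray-Sansuc and Harari's formal lemma.
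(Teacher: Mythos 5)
Your proposal is correct and follows the same architecture as the paper's proof: remove the large torsors, restrict to short boxes, factor out the unit count (with $O(\ell+L)$ error, not merely $O(L)$ --- the interval constraint $\Psi(\alpha)\in\calI(\theta,\ell)$ is what pins $|\alpha|/|\overline\alpha|$ to within $\ell$, which you need to avoid an $O(1)$ error swamping the main term $-\log|F(\bx_0)|/\log|\varepsilon|$), decompose via class-group characters into genus/order-$\geq 3$/Gr\"ossencharacter pieces, and estimate each using the level-of-distribution, twisted Hooley, Perron, and Nair/Heath-Brown inputs. Two minor differences from the paper: (i) you detect $\Psi(\alpha)\in\calI(\theta,\ell)$ via a smoothed Fourier expansion, while the paper uses the Erd\H{o}s--Tur\'an inequality directly on the sharp indicator; these are functionally equivalent (Erd\H{o}s--Tur\'an is a quantitative sharp-cutoff Fourier truncation, so it produces the $|\calI(\theta,\ell)|/4\pi$ factor without a sandwiching argument, and it naturally yields the absolute-valued sums $A'(X_{\Delta,f},\calB(\bx_0,L);h)$ appearing in (\ref{eq: indef post-erdos})). (ii) For the finite-image cuspidal contribution ($\ord(\psi)\geq 3$, $h=0$), you invoke the Perron--$\sym^2$ argument of Lemma~\ref{lem: bound on cusp over d and t}; the paper's primary argument for Proposition~\ref{prop: error term E} is instead the contour shift on $L_K(s,\psi^2)$ in Lemma~\ref{lem: cancellation of cusp over t} (exploiting $\psi^2\neq 1$), with the Perron route given only as an alternative interpretation in \S\ref{subsec: dihedral interpret cusp form}. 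Your justification that $\Psi^{2h}\psi^2\circ N_{K_g/K}$ is nontrivial for $h\neq 0$ because a character of infinite order cannot become trivial under restriction to a finite-index norm group is exactly the reason the paper gives. The closing paragraph of your proposal correctly identifies the delicate bookkeeping between $\ell,L\gg\log(B)^{-10^{-10}}$ and the $\log^{-10^{-5}}$ savings, and defers the vanishing analysis of $b_{\Delta,f}$ to \S\ref{sec: constant}, as the paper does.
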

\begin{remark}
    We reiterate that in \S\ref{sec: constant} we will show that if these constants $c_{\Delta,f}$ and $b_{\Delta,f}$ are zero then for every relevant vector $\bc$, there is a local obstruction to rational points on the variety $X^*_{\Delta,f_1,...,f_r,\bc}$ as defined in (\ref{eq: def of variety X_Delta, f_1,...,f_r}).
\end{remark}

\subsection{Decomposition of the positive-definite case}\label{subsec: pos def class num}
For this subsection, let us assume that $\Delta>0$. We do not genuinely need short boxes in this situation. 

Note that in (\ref{eq: pos def point counting defn}), the conditions that $\|x\pm y\sqrt{-\Delta}\| = \sqrt{x^2 + \Delta y^2} \leq B\cdot (\max_{\by\in [-1,1]} |F(\by)|)^{1/2}$ holds trivially. Thus, we get that
\begin{multline*}N(X_{\Delta,f},\calB(\bx_0,L),S^1) = \frac{1}{2}\cdot\sum_{0<t\leq B/\exp(\sqrt{\log(B)})} \sum_{\substack{(u,v)\in (B/t)^{1/2}\calB(\bx_0,L)\\ \gcd(u,v)=1}} \#\{x^2+\Delta y^2 = t^2F(u,v): \gcd(x,y,t)=1\} .\end{multline*}

Our first step is to rewrite this inner sum with a sum over ideals; we repeat the argument discussed in \S4 for this. Consider the principal ideal $I = (x+y\sqrt{-\Delta})$. Since $\gcd(x,y,t)=1$ and $N(I) = t^2F(u,v)$, we know that for any prime $p$ dividing $t$, we must have that $(p) = \frakp \overline{\frakp}$ splits in $K$ and that $(p)$ does not divide $I$. As a result, we can write $$I = \ft^2 \fraka$$
where $N(\ft)=t$, $N(\fraka) = F(u,v)$, and $\gcd(\ft,\overline{\ft}) =  1.$ Additionally, we have that $\gcd(t,I,\overline{I})=1,$ which occurs if and only if $\gcd(\fa,\overline{\ft})=1.$ Here we write that $\gcd(\fraka,\fb)=1$ if $\fraka$ and $\fb$ do not share any common prime ideal factors.

Consequently, we can rewrite the sum as:
$$N(X_{\Delta,f},\calB(\bx_0,L),S^1) =\frac{|\calU|}{2}\sum_{\substack{N(\ft)\leq B/\exp(\sqrt{\log(B)}) \\ \gcd(\ft,\overline{\ft})=1}} \sum_{\substack{(u,v)\in (B/t)^{1/2}\calB(\bx_0,L) \\ \gcd(u,v)=1}} \sum_{\substack{N(\fa)=F(u,v)\\ \gcd(\fa,\overline{\ft})=1}} \mathbf{1}_{\ft^2\fa \textrm{ is principal}}.$$
Recall that the group of units $\calU$ is finite in this case. From (\ref{eq: class group principal}), we know that we can rewrite the above sum with class group characters: 
$$N(X_{\Delta,f},\calB(\bx_0,L),S^1) = \frac{|\calU|}{2|C|}\sum_{\psi\in \hat{C}}\sum_{\substack{N(\ft)\leq B/\exp(\sqrt{\log(B)}) \\ \gcd(\ft,\overline{\ft})=1}}\psi(\ft)^2 \sum_{\substack{(u,v)\in (B/t)^{1/2}\calB(\bx_0,L) \\ \gcd(u,v)=1}} \sum_{\substack{N(\fa)=F(u,v)\\ \gcd(\fa,\overline{\ft})=1}} \psi(\fa).$$
Following Lemma \ref{lem: decomp eisenstein cusp}, we split the above sum into its Eisenstein and cuspidal parts. In particular, let us define for $\psi\in \hat{C}$ satisfying $\ord(\psi)\geq 3$:
\begin{equation}\label{eq: pos def error term}
E(X_{\Delta,f},\calB(\bx_0,L);0,\psi) =  \sum_{\substack{N(\ft)\leq B/\exp(\sqrt{\log(B)}) \\ \gcd(\ft,\overline{\ft})=1}} \psi(\ft)^2 \sum_{\substack{(u,v)\in (B/t)^{1/2}\calB(\bx_0,L) \\ \gcd(u,v)=1}} \sum_{\substack{N(\fa) = F(u,v) \\ \gcd(\fa,\overline{\ft})=1}}\psi(\fa).
\end{equation}
We can see that $$\sum_{\ord(\psi)\geq 3} E(X_{\Delta,f},\calB(\bx_0,L);0,\psi)$$
will be the contribution to $N(X_{\Delta,f},\calB(\bx_0,L),S^1)$ given by the cuspidal class group characters; this be shown in \S\ref{sec: cuspidal class gp} to be an error term. 

For the Eisenstein contribution, i.e. the genus characters as described in \S\ref{subsec: class group}, we must make an appropriate modification to deal with the condition $\gcd(\fa,\overline{\ft})=1.$ For this we note that
$$\sum_{\substack{N(\fa) = n\\ \gcd(\fa, \overline{\ft})=1}} \chi_{q_1,q_2}(\fa) = \sum_{\fk \mid \overline{\ft}} \mu(\fk) \sum_{\substack{N(\fa)=n/N(\fk)}} \chi_{q_1,q_2}(\fa \fk)= \sum_{\fk\mid \overline{\ft}}\mu(\fk)\chi_{q_1,q_2}(\fk) \sum_{\substack{N(\fa)=n/N(\fk)}} \chi_{q_1,q_2}(\fa \fk).$$
From now on, we will denote the norm of $\fk$ as $N(\fk) = k.$
Since $\fk\mid \ft$ and so $\gcd(\fk,\Delta)=1,$ it follows from the definition of $\chi_{q_1,q_2}(\fa)$ given in \cite[\S12.5]{IwaniecKowalski} that $\chi_{q_1,q_2}(\fk)=1.$ So, for a genus character parameterized by $q_1,q_2$ satisfying $q_1q_2=-\Delta$ and $\gcd(q_1,q_2)=1$, we define:
\begin{multline}\label{eq: pos def main term}
   M_{q_1,q_2}(X_{\Delta,f},\calB(\bx_0,L)) =  \sum_{\substack{N(\ft)\leq B/\exp(\sqrt{\log(B)}) \\ \gcd(\ft,\overline{\ft})=1}} \sum_{\fk\mid \overline{\ft}} \mu(\fk) \sum_{\substack{(u,v)\in (B/t)^{1/2} \calB(\bx_0,L) \\ \gcd(u,v)=1}} \varepsilon_{q_1,q_2}(F(u,v)/k)\\ 
   = \sum_{\substack{N(\fk)\leq B/\exp(\sqrt{\log(B)})\\ \gcd(\fk,\overline{\fk})=1}}\mu(\fk) \sum_{\substack{N(\ft)\leq B/k\exp(\sqrt{\log(B)}) \\ \gcd(\ft,\overline{\ft})=1}} \sum_{\substack{(u,v)\in (B/kt)^{1/2}\calB(\bx_0,L) \\ \gcd(u,v)=1}} \varepsilon_{q_1,q_2}(F(u,v)/k),
\end{multline}
where the Fourier coefficients of the corresponding Eisenstein series are when $\gcd(n,-\Delta)=1$:
\begin{equation}\label{eq: def eps q_1,q_2}
    \varepsilon_{q_1,q_2}(n) = \sum_{\substack{d\mid n}} \chi_{q_1}(d) \chi_{q_2}(n/d).
\end{equation}
\begin{remark}
    We note (and will expand in detail further in \S\ref{subsec: eisenstein to dirichlet convolution}) that $$\varepsilon_{q_1,q_2}(n)\approx (1\star \chi)(n).$$
    This function $(1\star \chi)$ is the one studied by de la Bret\`eche and Tenenbaum in \cite{delaTenenbaum-Manin} to prove the case of $x^2+y^2 = f(z)$ for $f$ an irreducible binary quartic. As such, the analysis from the Eisenstein part will strongly match how previous authors have proved the asymptotic for $\Delta=1$.
\end{remark}

We can now write that:
$$N(X_{\Delta,f}, \calB(\bx_0,L), S^1) = \frac{|\calU|}{2|C|} \sum_{\substack{q_1q_2 = -\Delta}}'M_{q_1,q_2}(X_{\Delta,f},\calB(\bx_0,L)) + \sum_{\substack{\psi\in \hat{C}\\ \ord(\psi)\geq 3}} O(E(X_{\Delta,f},\calB(\bx_0,L);0,\psi)).$$
Above, the restricted sum is as in (\ref{eq: restricted sum for genus characters}) and corresponds to those pairs $(q_1,q_2)$ to do indeed give genus characters for $\Q(\sqrt{-\Delta}).$

\subsection{Decomposition of the indefinite case}\label{subsec: indef reduction}

For this subsection, assume that $\Delta<0$. In this case, the group of units $\calU$ is infinite and generated by the fundamental unit $\varepsilon$. 
We wish to count:
\begin{multline*}
N(X_{\Delta,f},\calB(\bx_0,L),\calI(\theta,\ell)) =\\ \frac{1}{2}\cdot\sum_{t\leq B/\exp(\sqrt{\log(B)})} \sum_{\substack{(u,v)\in (B/t)^{1/2}\calB(\bx_0,L) \\ \gcd(u,v)=1}}  \sum_{\substack{x+y\sqrt{-\Delta}\bmod \varepsilon \\ x^2+\Delta y^2 = t^2F(u,v)\\ \Psi(x+y\sqrt{-\Delta})\in \calI(\theta,\ell) \\ \gcd(x,y,t)=1}} \#\{k\in \Z: |\varepsilon^k\alpha|,|\overline{\varepsilon}^k\overline{\alpha}|\leq B \}
\end{multline*}
Our goal is to express the above sum in terms of $M_{q_1,q_2}(X_{\Delta,f},\calB(\bx_0,L))$ and $E(X_{\Delta,f},\calB(\bx_0,L);0,\psi)$ (and similar functions). To do so, we must first estimate this count $\#\{k\in \Z: |\varepsilon^k \alpha|,|\overline{\varepsilon}^k\overline{\alpha}|\leq B\}$ and remove the condition that $\Psi(x+y\sqrt{-\Delta})\in \calI(\theta,\ell).$

\subsubsection{Counting the number of units}
To estimate the count $\#\{k\in \Z: |\varepsilon^k \alpha|,|\overline{\varepsilon}^k\overline{\alpha}|\leq B\},$ we apply bounds similar to the proofs of Lemma \ref{lem: upp bound indefinite} and Lemma \ref{lem: large torsors}. Note that $|\varepsilon^k\alpha|\leq B$ if and only if $$k\leq \frac{\log(B)}{\log|\varepsilon|}-\frac{\log|\alpha|}{\log|\varepsilon|}.$$
Since $\Psi(\alpha)\in \calI(\theta,\ell)$ and $\alpha$ is minimal, we know that $$\frac{\log|\alpha|-\log|\overline{\alpha|}}{\log|\varepsilon|} = \frac{2\log|\alpha|}{\log|\varepsilon|} - \frac{\log|\alpha\cdot \overline{\alpha}|}{\log|\varepsilon|} \in [\theta,\theta+\ell].$$
Recall that $\alpha\cdot \overline{\alpha} = x^2 + \Delta y^2 = t^2 F(u,v).$ Since $(u,v)\in (B/t)^{1/2}\calB(\bx_0,L)$, we know that $t^2F(u,v) = B\cdot (F(\bx_0) + O(L)).$ So, we can replace this term with $$\frac{\log|\alpha\cdot \overline{\alpha}|}{\log|\varepsilon|} = \frac{\log|t^2F(u,v)|}{\log|\varepsilon|} = \frac{2\log(B)}{\log|\varepsilon|} + \frac{\log|F(\bx_0)|}{\log|\varepsilon|} + O_{\varepsilon}(L).$$
Combining all of these factors together, we get that 
$$k\leq -\frac{\theta}{2} - \frac{\log|F(\bx_0)|}{2\log|\varepsilon|} + O(\ell+L).$$

A similar computation with $|\overline{\varepsilon}^k\overline{\alpha}|\leq B$ reveals that we also need $$k\geq -\frac{\theta}{2} + \frac{\log|F(\bx_0)|}{2\log|\varepsilon|} + O(\ell+L).$$
Thus, we get that 
\begin{equation}\label{eq: indef num units}
 \#\{k\in \Z: |\varepsilon^k\alpha|,|\overline{\varepsilon}^k\overline{\alpha}|\leq B\}= \max(-\log|F(\bx_0)|/\log|\varepsilon|, 0) + O(\ell+L).\end{equation}
 \begin{remark}
     Since $\bx_0\in [-1,1]^2$, we expect $\log|F(\bx_0)|/\log|\varepsilon|$ to often be negative. 
 \end{remark}

 Plugging (\ref{eq: indef num units}) in, we have that 
\begin{multline}\label{eq: indef post-unit estimate}
    N(X_{\Delta,f},\calB(\bx_0,L),\calI(\theta,\ell)) = \left(\max(-\log|F(\bx_0)|/\log|\varepsilon|,0) + O(\ell+L)\right) \\ \times \frac{1}{2}\sum_{t\leq B/\exp(\sqrt{\log(B)})} \sum_{\substack{(u,v)\in (B/t)^{1/2}\calB(\bx_0,L)\\ \gcd(u,v)=1}} \sum_{\substack{x+y\sqrt{-\Delta}\bmod \varepsilon\\ x^2+\Delta y^2 = t^2F(u,v) \\ \Psi(x+y\sqrt{-\Delta})\in \calI(\theta,\ell) \\ \gcd(x,y,t)=1}}1.
\end{multline}

\subsubsection{Applying the Erd\H{o}s-Tur\'an inequality}
In this subsection, we remove the condition that $\Psi(x+y\sqrt{-\Delta})\in \calI(\theta,\ell);$
we follow the proof structure set out by Heath-Brown in \cite{HBclassnum}. 
We recall the following application of the Erd\H{o}s-Tur\'an inequality \cite{ErdosTuran}, as written in \cite[Theorem 1]{MontgomeryTenLectures}:
\begin{lemma}[Erd\H{o}s-Tur\'an, \cite{ErdosTuran}]
Let $\calA_t(n) = \{x+y\sqrt{-\Delta}\bmod \varepsilon: x^2+\Delta y^2 = n, \gcd(x,y,t)=1\}$ and $\calI\subset S^1$. Then we have that as $n\rightarrow\infty$ and for any $H\geq 1$, 
\begin{multline*}\sum_{x+y\sqrt{-\Delta }\in \calA} \mathbf{1}_{\Psi(x+y\sqrt{-\Delta})\in \calI} - \frac{|\calI|}{2\pi} \cdot \#\calA \ll H^{-1} \#\calA \\ + \sum_{1\leq h\leq H}\left(\frac{1}{H+1}+\min(|\calI|,(\pi h)^{-1})\right)\left|\sum_{\substack{x+y\sqrt{-\Delta}\in \calA\\ \gcd(x,y,t)=1}} \Psi(x+y\sqrt{-\Delta})^h\right|.\end{multline*}
\end{lemma}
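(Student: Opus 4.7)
The statement is the classical Erd\H{o}s--Tur\'an discrepancy inequality applied to the finite multiset $\Psi(\calA) \subset S^1$, so the plan is essentially to invoke it directly once the setup is matched. The key observation is that $\Psi(\alpha)$ lies on the unit circle (by the remark immediately after Definition \ref{def: fundamental character}), so the values $\{\Psi(\alpha)\}_{\alpha \in \calA}$ form a sequence on $S^1$ to which the standard discrepancy theory applies. The characters of $S^1$ are precisely the maps $z \mapsto z^h$ for $h \in \Z$, so the exponential sums that appear in the discrepancy bound are exactly $\sum_\alpha \Psi(\alpha)^h$.

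To execute this, I would use the Beurling--Selberg trigonometric majorant/minorant construction. For any arc $\calI \subset S^1$ and integer $H \geq 1$, there exist trigonometric polynomials $S^\pm_H$ of degree at most $H$ satisfying $S^-_H \leq \mathbf{1}_{\calI} \leq S^+_H$ pointwise, with zeroth Fourier coefficient $\hat{S}^\pm_H(0) = |\calI|/(2\pi) + O(1/(H+1))$ and higher Fourier coefficients bounded by $|\hat{S}^\pm_H(h)| \leq \tfrac{1}{H+1} + \min(|\calI|, 1/(\pi|h|))$ for $1\leq|h|\leq H$. Substituting $z = \Psi(\alpha)$ and summing over $\alpha \in \calA$ yields
\begin{equation*}
\sum_{\alpha \in \calA} S^\pm_H(\Psi(\alpha)) = \hat{S}^\pm_H(0) \cdot \#\calA + \sum_{1\leq|h|\leq H} \hat{S}^\pm_H(h) \sum_{\alpha \in \calA} \Psi(\alpha)^h.
\end{equation*}
The sandwich $S^-_H \leq \mathbf{1}_{\calI} \leq S^+_H$ then extracts the main term $\frac{|\calI|}{2\pi} \#\calA$ with an error of size $O(H^{-1} \#\calA)$ from $\hat{S}^\pm_H(0)$; the remaining terms combine into the claimed bound, using that $\Psi(\alpha)^{-h} = \overline{\Psi(\alpha)^h}$ so the $h<0$ contributions can be folded into the $h>0$ range without loss.

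There is no substantive obstacle in this argument: it is really just a direct citation of \cite[Theorem 1]{MontgomeryTenLectures}, the only check being that the relevant sequence lies on $S^1$ (confirmed above) and that its dual characters are integer powers. The genuine analytic work of the section will lie \emph{after} this lemma, in bounding the character sums $\sum_{\alpha \in \calA} \Psi(\alpha)^h$ for $h \neq 0$ via Hecke $L$-function estimates for the powers of the Gr\"ossencharacter $\Psi$; the present lemma is simply the device that converts the short-interval count into such exponential sums.
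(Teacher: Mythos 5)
Your proposal is correct and matches the paper's treatment: the paper does not prove this lemma at all but simply cites it as \cite[Theorem 1]{MontgomeryTenLectures}, and your Beurling--Selberg sketch is precisely the standard proof of that cited result, together with the right sanity checks (that $\Psi$ takes values on $S^1$ and that the relevant dual characters are the integer power maps $z\mapsto z^h$). The only minor point worth making explicit is the gcd condition baked into the definition of $\calA_t(n)$, which is just part of the definition of the finite set and plays no role in the discrepancy argument itself; otherwise there is nothing to add.
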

Applying the above to (\ref{eq: indef post-unit estimate}), we achieve that:
\begin{multline}\label{eq: indef post-erdos}
    N(X_{\Delta,f},\calB(\bx_0,L),\calI(\theta,\ell)) = \frac{|\calI(\theta,\ell)|}{4\pi}\cdot \left(\max(-\log|F(\bx_0)|/\log|\varepsilon| + O(\ell+L)\right) \\ \times \left(M'(X_{\Delta,f},\calB(\bx_0,L))(1+O(\ell^{-1}H^{-1}))+O\left(\ell^{-1}\sum_{1\leq h\leq H}\frac{1}{h} A'(X_{\Delta,f},\calB(\bx_0,L);h)\right)\right),
\end{multline}
where we define 
\begin{equation}\label{eq: indef M'}
    M'(X_{\Delta,f},\calB(\bx_0,L)) := \sum_{t\leq B/\exp(\sqrt{\log(B)})} \sum_{\substack{(u,v)\in (B/t)^{1/2}\calB(\bx_0,L) \\ \gcd(u,v)=1}}\sum_{\substack{x+y\sqrt{-\Delta}\bmod \varepsilon\\ x^2+\Delta y^2 = t^2F(u,v) \\ \gcd(x,y,t)=1}} 1,
\end{equation}
\begin{equation}\label{eq: indef A'}
    A'(X_{\Delta,f},\calB(\bx_0,L);h) = \sum_{t\leq B/\exp(\sqrt{\log(B)})} \sum_{\substack{(u,v)\in (B/t)^{1/2}\calB(\bx_0,L)\\ \gcd(u,v)=1}} \Bigg|\sum_{\substack{x+y\sqrt{-\Delta} \bmod \varepsilon \\ x^2+\Delta y^2 = t^2F(u,v) \\ \gcd(x,y,t)=1}} \Psi(x+y\sqrt{-\Delta})^{h}\Bigg|.
\end{equation}
Let us take $H=\log(B)^{5}.$ Lemma \ref{lem: upp bound indefinite} tells us that $$M'(X_{\Delta,f},\calB(\bx_0,L))\ll L^2 B\log(B)^{\rho_{\Delta,f}-1}.$$
Hence, we can see that $$\ell^{-1}H^{-1} M'(X_{\Delta,f},\calB(\bx_0,L)) \ll \ell^{-1}L^2 B\log(B)^{\rho_{\Delta,f}-6}$$ will be more than a sufficient error term. So, it remains to better understand $M'(X_{\Delta,f},\calB(\bx_0,L))$ and $A'(X_{\Delta,f},\calB(\bx_0,L);h).$

\begin{remark}
    We would like to point out that there is an extra set of absolute values in the definition of $A'(X_{\Delta,f},\calB(\bx_0,L);h)$ compared to $E(X_{\Delta,f},\calB(\bx_0,L);0,\psi)$, despite them both being error terms that come from the cuspidal contribution. This leads us to requiring different types of analysis on each sum. 
\end{remark}

\subsubsection{Class group characters}
We again apply the results of \S\ref{subsec: class group} and rewrite the above expressions in terms of class group characters. 
Consider the following sum for $0\leq h\leq H$: 
$$\sum_{\substack{x+y\sqrt{-\Delta} \bmod \varepsilon \\ x^2+\Delta y^2 = t^2F(u,v) \\ \gcd(x,y,t)=1}} \Psi(x+y\sqrt{-\Delta })^{h}.$$
Since every principal ideal $I=(x+y\sqrt{-\Delta})$ with $N(I) = t^2F(u,v)$ will have a unique generator $x+y\sqrt{-\Delta}$ that is minimal, we can write the above sum as: 
$$\sum_{\substack{N(I) = t^2F(u,v) \\ I\textrm{ is principal} \\ \gcd(x,y,t)=1}} \Psi(I)^h.$$
Next, we again note that since we require $\gcd(x,y,t)=1$, we must have that for any prime $p$ dividing $t$, the ideal $(p) = \frakp \overline{\frakp}$ splits and we can not have $(p)$ dividing $I$. Thus, we can write $$I = \ft^2 \fraka,$$
where $N(\ft)=t$, $N(\fraka) = F(u,v)$, and $\gcd(\ft,\overline{\ft})=\gcd(\fraka,\overline{\ft})=1.$ So, we can write the above sum as $$\sum_{\substack{N(\ft)=t\\ \gcd(\ft,\overline{\ft})=1}} \Psi(\ft)^{2h} \sum_{\substack{N(\fraka) = F(u,v) \\ \gcd(\fraka,\overline{\ft})=1}} \Psi(\fraka)^h \cdot \mathbf{1}_{\ft^2\fraka \textrm{ is principal}}.$$

Applying (\ref{eq: class group principal}), the above sum is equivalent to:
$$\frac{1}{|C|}\sum_{\psi\in \hat{C}}\sum_{\substack{N(\ft)=t\\ \gcd(\ft,\overline{\ft})=1}} \Psi(\ft)^{2h}\psi(\ft)^2 \sum_{\substack{N(\fraka) = F(u,v) \\ \gcd(\fraka,\overline{\ft})=1}} \Psi(\fraka)^h \psi(\fa).$$
From Lemma \ref{lem: decomp eisenstein cusp}, we can decompose further the contributions from the Eisenstein and cuspidal parts. Let us start with the main term given by $M'(X_{\Delta,f},\calB(\bx_0,L))$ above; in particular, we have that 
\begin{multline*}
    M'(X_{\Delta,f},\calB(\bx_0,L)) = \frac{1}{|C|}\sum_{q_1q_2=-\Delta}' M_{q_1,q_2}(X_{\Delta,f},\calB(\bx_0,L)) + \sum_{\substack{\psi\in \hat{C}\\ \ord(\psi)\geq 3}} O(E(X_{\Delta,f}, \calB(\bx_0,L); 0,\psi)),
\end{multline*}
In the above decomposition, $M(X_{\Delta,f},\calB(\bx_0,L))$ and $E(X_{\Delta,f},\calB(\bx_0,L);0,\psi)$ are defined in (\ref{eq: pos def main term}) and (\ref{eq: pos def error term}).

On the other hand, for $A'(X_{\Delta,f},\calB(\bx_0,L);h)$, we add in the corresponding absolute value signs. 
In particular, define for $1\leq h\leq H=\log(B)^{5}$ and $\psi\in \hat{C}$: \begin{equation}\label{eq: def indef A h}
    A(X_{\Delta,f},\calB(\bx_0,L);h,\psi) = \sum_{\substack{N(\ft)\leq B/\exp(\sqrt{\log(B)}) \\ \gcd(\ft,\overline{\ft})=1}}\sum_{\substack{(u,v)\in (B/t)^{1/2}\calB(\bx_0,L) \\ \gcd(u,v)=1}} \sigma(F(u,v), \Psi^h \psi; \overline{\ft}),
\end{equation}
where we have defined $\sigma(n,\psi;t)$ as
\begin{equation}
    \sigma(n,\psi;\ft) := \left|\sum_{\substack{N(\fa) = n\\ \gcd(\fa,\ft)=1}}\psi(\fa)\right|.
\end{equation}
Lemma \ref{lem: decomp eisenstein cusp} tells us that 
$$A'(X_{\Delta,f},\calB(\bx_0,L);h) \leq  \sum_{\psi\in \hat{C}} A(X_{\Delta,f},\calB(\bx_0,L);h,\psi).$$

\subsection{Unifying the positive-definite and indefinite notation}\label{subsec: intro main and error prop}

Let us summarize our findings. For $\Delta>0$, we saw that 
\begin{equation}\label{eq: pos def N in terms of M and E}N(X_{\Delta,f},\calB(\bx_0,L), S^1) = \frac{|\calU|}{2|C|} \sum_{q_1q_2=-\Delta}'  M_{q_1,q_2}(X_{\Delta,f},\calB(\bx_0,L)) + \sum_{\substack{\psi\in \hat{C}\\ \ord(\psi)\geq 3}}O(E(X_{\Delta,f},\calB(\bx_0,L);0,\psi)).\end{equation}
On the other hand, for $\Delta<0$, we had the relation when $H=\log(B)^5$
\begin{multline}\label{eq: indef N in terms of M and E}
    N(X_{\Delta, f},\calB(\bx_0,L), \calI(\theta,\ell)) = \frac{|\calI(\theta,\ell)|}{4\pi}\cdot (\max(-\log|F(\bx_0)|/\log|\varepsilon|,0) + O(\ell+L)) \\
    \times \Bigg(\frac{1}{|C|}\sum_{q_1q_2=-\Delta}'  M_{q_1,q_2}(X_{\Delta,f},\calB(\bx_0,L)) + O(B\log(B)^{\varrho_{\Delta,f}-6})\\  + \frac{1}{|C|} \sum_{\substack{\psi\in \hat{C}\\ \ord(\psi)\geq 3}} O(E(X_{\Delta,f}, \calB(\bx_0,L); 0 ,\psi)) + \ell^{-1}\sum_{1\leq h\leq H} \frac{1}{h} \sum_{\psi\in \hat{C}} O(A(X_{\Delta,f},\calB(\bx_0,L);h,\psi))\Bigg).
\end{multline}
Thus, to prove Proposition \ref{prop: pos def Manin short} and Proposition \ref{prop: indef Manin short}, it will suffice to prove the following results for our cuspidal and Eisenstein parts. 

\begin{prop}[The cuspidal contribution -- the finite image case]\label{prop: error term E}
    Let $L\gg \log(B)^{-1/10}$. For 
    $\ord(\psi)\geq 3$, we have that $$E(X_{\Delta,f},\calB(\bx_0,L);0,\psi) \ll_{\Delta,f} L^2 \cdot B\log(B)^{\rho_{\Delta,f}-1-\frac{1}{3}}.$$
\end{prop}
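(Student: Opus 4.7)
The plan is to exploit cancellation from the character $\psi^2$, which is a nontrivial class group character of $K = \Q(\sqrt{-\Delta})$ by virtue of the hypothesis $\ord(\psi) \geq 3$. Because both $\psi(\fa)$ and $\psi(\ft)^2$ appear, cancellation can in principle come from either sum; following the outline in \S\ref{sec: outline}, I would extract it from the inner $\ft$-sum by swapping the order of summation. Noting that $(u,v) \in (B/t)^{1/2}\calB(\bx_0, L)$ with bounded $\bx_0$ forces $\max(|u|,|v|) \ll (B/t)^{1/2}$, equivalently $N(\ft) \ll B/\max(|u|,|v|)^2$, I would relax the short-box constraint on $(u,v)$ to the cruder range $|u|,|v| \ll B^{1/2}$. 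This sacrifices the $L^2$ factor appearing in the target bound, but will turn out to be acceptable in the stated range of $L$. After swapping,
\begin{equation*}
|E(X_{\Delta,f}, \calB(\bx_0, L); 0, \psi)| \ll \sum_{|u|, |v| \ll B^{1/2}} \sum_{\substack{\fa \subset \calO_K \\ N(\fa) = F(u,v)}} \left|\sum_{\substack{N(\ft) \ll B/\max(|u|,|v|)^2 \\ \gcd(\ft, \overline{\ft \fa}) = 1}} \psi(\ft)^2 \right|.
\end{equation*}

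I would estimate the innermost sum using the analytic properties of the Hecke $L$-function $L(s, \psi^2)$. Since $\psi^2$ is a nontrivial character of finite order on $K$, this $L$-function is entire, and standard convexity bounds apply on the critical strip. Removing the condition $\gcd(\fa, \overline{\ft}) = 1$ by M\"obius inversion over ideal divisors $\fd \mid \overline{\fa}$, then applying Perron's formula with a contour shift to $\Re(s) = 1/2$ to each resulting partial sum, I expect to obtain
\begin{equation*}
\left|\sum_{\substack{N(\ft) \leq X \\ \gcd(\ft, \overline{\ft \fa})=1}} \psi(\ft)^2\right| \ll X^{5/6} \prod_{\fp \mid \fa}\left(1 + \frac{1}{N(\fp)^{2/3}}\right),
\end{equation*}
where the local factors absorb the contributions from the coprimality conditions. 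The exponent $5/6$ is not sharp; any exponent strictly less than $1$ would suffice below.

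The outer double sum would then be treated by the Nair--de la Bret\`eche--Tenenbaum sieve (Theorem \ref{thm: Nair sieve}). The function $h(n) := \sum_{\fa: N(\fa) = n} \prod_{\fp \mid \fa}(1 + N(\fp)^{-2/3})$ is non-negative, multiplicative, bounded by $O(\tau(n))$, and satisfies $h(p) \leq (1 \star \chi)(p)(1 + O(p^{-2/3}))$ on primes. A dyadic decomposition $U \asymp \max(|u|,|v|)$ combined with the sieve yields
\begin{equation*}
\sum_{\max(|u|,|v|) \asymp U} h(F(u,v)) \ll U^2 \prod_{p \ll U}\left(1 + \frac{\varrho_f(p)\chi(p)}{p}\right) \ll U^2 \log(B)^{\varrho_{\Delta,f}-2},
\end{equation*}
where the product over primes is controlled by the order of the pole of $\xi(s; f, \chi)$ at $s = 1$ (Corollary \ref{cor: order of pole is varrho}). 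Summing $U^{-5/3} \cdot U^2 = U^{1/3}$ dyadically up to $U \ll B^{1/2}$ gives $\sum_U U^{1/3} \ll B^{1/6}$, so
\begin{equation*}
|E| \ll B^{5/6} \cdot B^{1/6} \log(B)^{\varrho_{\Delta,f}-2} = B \log(B)^{\varrho_{\Delta,f}-2}.
\end{equation*}
The hypothesis $L \gg \log(B)^{-1/10}$ gives $L^2 \gg \log(B)^{-1/5} \gg \log(B)^{-2/3}$, so this bound is indeed $\ll L^2 B \log(B)^{\varrho_{\Delta,f}-1-1/3}$, as required.

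The principal technical obstacle is the power-saving estimate on the $\ft$-sum: one must carefully handle the coprimality conditions via M\"obius inversion, tracking the local factors at primes dividing $\fa$ and $\overline{\fa}$, and combine this with a standard contour shift argument exploiting the zero-free region of $L(s, \psi^2)$.
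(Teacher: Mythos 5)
Your overall strategy — extract cancellation from the $\ft$-sum via a power-saving bound on $\sum_{N(\ft) \leq X} \psi(\ft)^2$, then run the Nair--de la Bret\`eche--Tenenbaum sieve on the outer $(u,v)$-sum — matches the paper's (which proves exactly the $X^{5/6}\prod_{\fp\mid\fa}(1+N(\fp)^{-2/3})$ estimate as Lemma~\ref{lem: cancellation of cusp over t}, then applies Theorem~\ref{thm: Nair sieve}). However, you take a different route around the $t$-dependent short-box constraint: you discard it entirely, sacrificing the $L^2$ factor, whereas the paper partitions the $t$-range into $(1+\epsilon(B))$-adic blocks $T_i$, swaps within each block, and carefully tracks the boundary error (Lemmas 5.1--5.2) to retain short-box structure in the Nair sieve. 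Your route is cleaner when it works, and actually gives the sharper intermediate bound $B\log(B)^{\varrho_{\Delta,f}-2}$ versus the paper's $LB\log(B)^{\varrho_{\Delta,f}-3/2} + L^2B\log(B)^{\varrho_{\Delta,f}-3/2}$; it would even allow weakening the hypothesis to $L \gg \log(B)^{-1/3}$.

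There is, however, a genuine gap in the swap step. The condition $(u,v) \in (B/t)^{1/2}\calB(\bx_0,L)$ depends on $t$, so after interchanging the sums the inner $\ft$-sum for each fixed $(u,v)$ ranges over the set $\{\ft : (u,v) \in (B/N(\ft))^{1/2}\calB(\bx_0,L)\}$, which is (generically) an \emph{interval} of $N(\ft)$-values with both an upper and a lower endpoint determined by $\bx_0$, $L$, $u$, $v$. Your displayed inequality replaces this restricted sum by the complete sum over $N(\ft) \ll B/\max(|u|,|v|)^2$ \emph{inside} the absolute value, which is not a valid upper bound: the $\ft$-sum carries the signs of $\psi(\ft)^2$, so enlarging its range does not dominate the original. (By contrast, relaxing the \emph{outer} $(u,v)$-range to $|u|,|v| \ll B^{1/2}$ is fine, since by then the inner quantity is nonnegative.) The fix is small but necessary: observe that the valid $t$-range is an interval $[X_1, X_2]$ with $X_2 \ll B/\max(|u|,|v|)^2$, write the restricted $\ft$-sum as a difference of two complete sums up to $X_2$ and up to $X_1$, and apply your $X^{5/6}\prod(1+N(\fp)^{-2/3})$ estimate to each. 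With that correction the rest of your computation (dyadic decomposition, sieve, and the final comparison against $L^2 B\log(B)^{\varrho_{\Delta,f}-1-1/3}$ using $L \gg \log(B)^{-1/10}$) goes through as written.
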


\begin{prop}[The cuspidal contribution -- the Gr\"ossencharacter case]\label{prop: error term A}
Let $L\gg \exp(-\log(B)^{1/3})$. For $1\leq h\leq \log(B)^{5}$ and $\psi\in \hat{C}$, we have that 
$$A(X_{\Delta,f},\calB(\bx_0,L);h,\psi) \ll_{B,\Delta,f} L^2 \cdot B \log(B)^{\rho_{\Delta,f}-1-1/5}.$$    
\end{prop}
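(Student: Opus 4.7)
The plan is to execute the Gr\"ossencharacter argument outlined in Section \ref{sec: outline}. Set $\xi := \Psi^h \psi$; since $\Psi$ has infinite order and $\psi$ has finite order, $\xi$ is a nontrivial unitary Hecke Gr\"ossencharacter of $K$ for every $h \geq 1$, and the same holds for $\xi^2$. Write $\lambda_\Xi(n) := \sum_{N(\fa)=n}\xi(\fa)$ and apply M\"obius inversion to the coprimality condition $\gcd(\fa,\overline{\ft})=1$ inside $\sigma$:
\begin{equation*}
\sigma(F(u,v),\xi;\overline{\ft}) \;\leq\; \sum_{\substack{\fk\mid\overline{\ft}\\ N(\fk)\mid F(u,v)}} |\lambda_\Xi(F(u,v)/N(\fk))|.
\end{equation*}
Interchanging sums and substituting $\ft \to \ft\fk$ reduces the problem to bounding, for each $(\ft,\fk)$ with $N(\ft\fk)\ll B$, the inner sum $S(\ft,\fk) := \sum_{(u,v)\in (B/kt)^{1/2}\calB(\bx_0,L)}|\lambda_\Xi(F(u,v))|$, and then summing $1/(kt)$ trivially over ideals.

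Next, apply Theorem \ref{thm: Nair sieve} to $S(\ft,\fk)$ with the non-negative submultiplicative function $|\lambda_\Xi|$ (which is bounded by $\tau$ via Hecke relations); the hypothesis $L \gg \exp(-\log(B)^{1/3})$ places us well within the short-box regime permitted by the sieve. This yields
\begin{equation*}
S(\ft,\fk) \;\ll\; \frac{BL^2}{kt}\prod_{p\ll B/kt}\Bigl(1+\frac{(|\lambda_\Xi(p)|-1)\varrho_f(p)}{p}\Bigr).
\end{equation*}
Now invoke Hecke relations to exploit the sign of $|\lambda_\Xi(p)|-1$. For $p$ inert or ramified in $K$, $\lambda_\Xi(p)=0$ and $|\lambda_\Xi(p)|-1=-1$. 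For $p$ split with $(p)=\fp\overline{\fp}$, unitarity gives $\lambda_\Xi(p)=2\cos(2\pi\theta_p)$ and $\lambda_{\Xi^2}(p)=2\cos(4\pi\theta_p)$, and the elementary inequality $2|\cos\theta| \leq \tfrac{3}{2}+\tfrac{1}{2}\cos(2\theta)$ (from $(1-|\cos\theta|)^2\geq 0$) yields $|\lambda_\Xi(p)|-1 \leq \tfrac{1}{2}+\tfrac{1}{4}\lambda_{\Xi^2}(p)$. Taking logs and splitting by splitting type, the Euler product is bounded by the exponential of
\begin{equation*}
\tfrac{1}{2}\sum_{p\text{ split}}\tfrac{\varrho_f(p)}{p} - \sum_{p\text{ non-split}}\tfrac{\varrho_f(p)}{p} + \tfrac{1}{4}\sum_{p\text{ split}}\tfrac{\lambda_{\Xi^2}(p)\varrho_f(p)}{p}+O(1).
\end{equation*}
The first two sums, estimated via the factorization of $\xi(s;f,\chi)$ from Section \ref{subsec: Dirichlet and Rankin selberg} whose pole at $s=1$ has order $\varrho_{\Delta,f}-2$ by Corollary \ref{cor: order of pole is varrho}, contribute $(\varrho_{\Delta,f}-2-\tfrac{1}{4})\log\log(B/kt)+O(1)$, saving a factor of $\log(B/kt)^{-1/4}$ over the baseline exponent.

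The \textbf{main obstacle} is showing that the remaining sum
\begin{equation*}
\sum_{\substack{\fp\subset \calO_K,\ \fp\text{ split}\\ N(\fp)\leq X}} \frac{\xi^2(\fp)\,\varrho_f(N(\fp))}{N(\fp)} \;=\; o(\log\log X).
\end{equation*}
For split $p$, $\varrho_f(p)$ equals the number of degree-one primes of the compositum $K_f$ lying above $p$, so up to $O(1)$ this sum is a partial sum of $\tilde\xi(\frakP)/N(\frakP)$ over such $\frakP$, where $\tilde\xi := \xi^2\circ N_{K_f/K}$ is the base change of $\xi^2$ to $K_f$. The key structural input is that $\tilde\xi$ is itself a nontrivial Gr\"ossencharacter of $K_f$: by class field theory, $N_{K_f/K}(I_{K_f})$ has finite index in $I_K$, so triviality of $\tilde\xi$ would force $\xi^2$ to have finite image, contradicting that $\Psi^{2h}$ has infinite order. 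Hence $L(s,\tilde\xi)$ is a nontrivial Hecke $L$-function of $K_f$, holomorphic and nonvanishing on $\Re(s)=1$ (Hecke), and partial summation from the prime ideal theorem delivers the claimed $o(\log\log X)$. Reassembling, $S(\ft,\fk)\ll (BL^2/kt)\log(B/kt)^{\varrho_{\Delta,f}-2-1/4+o(1)}$, and summing over $(\ft,\fk)$ — with the estimates carried out uniformly in $h\leq \log(B)^5$, which introduces only $\log\log B$-type losses — yields $A(X_{\Delta,f},\calB(\bx_0,L);h,\psi) \ll L^2 B \log(B)^{\varrho_{\Delta,f}-1-1/5}$ as claimed.
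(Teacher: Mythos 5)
Your reduction to the Nair sieve, the cosine inequality $2|\cos\theta|\le\tfrac32+\tfrac12\cos2\theta$, and the base-change argument showing $\xi^2\circ N_{K_g/K}$ is nontrivial (because $\Psi^{2h}$ has infinite image) are exactly the key ideas of the paper's proof, and you have identified them correctly. But the opening reduction has a genuine quantitative gap. You apply M\"obius inversion over $\fk\mid\overline{\ft}$ to remove the coprimality $\gcd(\fa,\overline{\ft})=1$, substitute $\ft\mapsto\overline{\fk}\ft'$, and then \emph{drop both} the constraint $N(\fk)\mid F(u,v)$ and the shift $F(u,v)/N(\fk)$ when you define $S(\ft,\fk)=\sum_{(u,v)}|\lambda_\Xi(F(u,v))|$. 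The sieve then gives $S(\ft,\fk)\ll\frac{BL^2}{N(\fk)N(\ft')}\log(B)^{\varrho_{\Delta,f}-2-1/5}$, and your ``summing $1/(kt)$ trivially over ideals'' costs $\sum_{\fk}\frac1{N(\fk)}\cdot\sum_{\ft'}\frac1{N(\ft')}\log(\cdot)^{\varrho_{\Delta,f}-2-1/5}\asymp\log(B)\cdot\log(B)^{\varrho_{\Delta,f}-1-1/5}$, which is a full factor of $\log B$ above the target. To recover, you must retain the $\fk$-divisibility constraint so that the sieve (via the Corollary following Theorem \ref{thm: Nair sieve}) produces an extra $1/N(\fk)$ and $\sum_\fk N(\fk)^{-2}$ converges. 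The paper sidesteps the whole issue: it sieves directly on the multiplicative non-negative function $\sigma(n,\xi;\overline{\ft})$ and observes that for $p\mid t$ exactly one prime $\fp\mid p$ has $\gcd(\fp,\overline{\ft})=1$, so $\sigma(p;\xi,\overline{\ft})=|\xi(\fp)|=1$ and the Euler factor at such $p$ is exactly $1$; no M\"obius inversion and no $\fk$-sum ever appear.

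A secondary, smaller point: your claim that $\varrho_f(p)$ ``equals the number of degree-one primes of the compositum $K_f$ above $p$'' is not correct in general (e.g.\ for $f$ an irreducible quartic with Galois group $S_4$, $K_f$ has degree $48$ over $\Q$ while $\varrho_f(p)\le 4$). One should, as the paper does, split $\varrho_f(p)=\sum_i\varrho_{f_i}(p)$ and treat each irreducible factor with its own splitting field $K_{f_i}$; the conclusion — nontriviality of $\xi^2\circ N_{K_{f_i}/K}$, hence $o(\log\log)$ from the prime ideal theorem — is unaffected, but the intermediate identification as stated would not survive scrutiny.
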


\begin{prop}[The Eisenstein contribution]\label{prop: main term}
    Let us assume that $L\gg \log(B)^{-10^{-10}}.$ There exists a constant $C_{q_1,q_2,f}$ such that as $B\rightarrow\infty$, we have that $$M_{q_1,q_2}(X_{\Delta,f},\calB(\bx_0,L)) = C_{q_1,q_2,f} L^2\cdot B\log(B)^{\rho_{\Delta,f}-1} (1+ O(\log(B)^{\rho_{\Delta,f}-1-10^{-8}})).$$
\end{prop}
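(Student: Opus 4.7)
The plan is to follow the strategy of \cite{delaTenenbaum-Manin} and \cite{HB-linear} developed for the $\Delta=1$ case, carefully tracking both the extra genus-character factor $\chi_{q_1}\chi_{q_2}$ and the short-box constraint $(u,v)\in (B/kt)^{1/2}\calB(\bx_0,L)$. Since $\varepsilon_{q_1,q_2}$ is \emph{not} totally multiplicative, the first step is to apply an iterated inclusion-exclusion identity of the form
\[
\varepsilon_{q_1,q_2}(mn) = \sum_{c\mid \gcd(m,n)} \mu(c)\chi_{q_1}\chi_{q_2}(c)\,\varepsilon_{q_1,q_2}(m/c)\,\varepsilon_{q_1,q_2}(n/c)
\]
to decouple the arithmetic across the factorization $F=F_1\cdots F_r$. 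This rewrites the inner Eisenstein sum as
\[
\varepsilon_{q_1,q_2}(F(u,v)/k) = \sum_{\bc}{}' \mu(c_1\cdots c_r)\chi_{q_1}\chi_{q_2}(c_1\cdots c_r)\prod_{i=1}^r \varepsilon_{q_1,q_2}\!\left(F_i(u,v)/(c_ik_i)\right),
\]
where $\bc=(c_1,\ldots,c_r)$ ranges over a finite set of vectors whose entries divide the pairwise resultants $\Res(f_i,f_j)$.

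The second step is to expand each factor as a divisor sum over $d_i\mid F_i(u,v)/(c_ik_i)$ weighted by the genus characters, and to split the range at the threshold $D:=d_1\cdots d_r \leq (B/kt)\log(B/kt)^{-10^{-5}}$. For the small-moduli regime I invoke the level-of-distribution bound proved in \S\ref{sec: LoD} to replace
\[
\#\{(u,v)\in (B/kt)^{1/2}\calB(\bx_0,L)\ :\ d_i\mid F_i(u,v)\}
\]
by its expected value $\tfrac{BL^2}{kt}\cdot \tfrac{\varrho_F(\bd)}{d_1\cdots d_r}$, with admissible error; the hypothesis $L\gg \log(B)^{-10^{-10}}$ is precisely what makes the box large enough for the main term to dominate the error. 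Summing the resulting divisor series in $\bd$ identifies a truncation of the Dirichlet series $\xi(s;f,\chi_{q_1}\chi_{q_2})$, which by Corollary \ref{cor: order of pole is varrho} has a pole of order $\varrho_{\Delta,f}-2$ at $s=1$. For the complementary large-moduli regime I combine Nair's sieve (Theorem \ref{thm: Nair sieve}) for the squarefree binary form $F$ with the twisted Hooley bound (Theorem \ref{thm: hooley character bound}) to obtain the saving
\[
\sum_{\substack{(u,v)\in (B/kt)^{1/2}\calB(\bx_0,L)\\ \exists d_i\sim D_i:\, d_i\mid F_i(u,v)}} \Delta\!\left(F(u,v),\chi_{q_1}\chi_{q_2}\right) \ll \frac{BL^2}{kt}\log(B/kt)^{\varrho_{\Delta,f}-2-10^{-5}}.
\]

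Combining the two regimes, extracting the residue at $s=1$, and summing successively over $N(\ft)$ (which contributes one factor $\log(B/k)$) and over $N(\fk)$ (which yields an absolutely convergent $\mu$-sum with local factors), the main term assumes the claimed shape $C_{q_1,q_2,f}\,L^2 B\log(B)^{\varrho_{\Delta,f}-1}(1+O(\log(B)^{-10^{-8}}))$, with
\[
C_{q_1,q_2,f} = \Res_{s=1}\xi(s;f,\chi_{q_1}\chi_{q_2}) \cdot \sum_{\bc}{}' \mu(c_1\cdots c_r)\chi_{q_1}\chi_{q_2}(c_1\cdots c_r)\,\mathfrak{S}_{q_1,q_2,\bc},
\]
times an Archimedean box volume and an absolutely convergent local product over $\fk$.

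The main obstacle is the polynomial degree: for $\deg F = 4$ the level-of-distribution estimate is only barely sufficient, losing a power of $\log$ relative to the naive heuristic, so the twisted Hooley $\Delta$-bound is indispensable for the intermediate moduli. A secondary subtlety is that the short-box restriction introduces losses polynomial in $L^{-1}$ in several auxiliary estimates; these are absorbed comfortably by the $\log(B)^{-10^{-5}}$ saving from Theorem \ref{thm: hooley character bound} together with the hypothesis $L\gg \log(B)^{-10^{-10}}$, leaving the final $\log(B)^{-10^{-8}}$ error safely in hand.
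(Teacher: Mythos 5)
Your blueprint --- decouple the Eisenstein coefficient across the irreducible factors via the convolution identity, expand each factor as a divisor sum, split at $D\approx (B/kt)\log(B/kt)^{-10^{-5}}$, apply the level-of-distribution result for small moduli and a Hooley-$\Delta$-type bound for large moduli, extract a residue of a Dirichlet series with pole order $\varrho_{\Delta,f}-2$, and then re-sum over $\ft$ and $\fk$ --- is essentially the paper's strategy (\S\ref{sec: Eisenstein}--\S\ref{sec: asymptotic}). Two points, however, are glossed over in a way that would make the sketch stall if executed literally.

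First, you cannot go directly from ``expand $\varepsilon_{q_1,q_2}(F_i(u,v)/(c_ik_i))$ as a divisor sum weighted by the genus characters'' to ``identifies a truncation of $\xi(s;f,\chi_{q_1}\chi_{q_2})$.'' Writing $\varepsilon_{q_1,q_2}(n)=\sum_{d\mid n}\chi_{q_1}(d)\chi_{q_2}(n/d)$ leaves a cofactor $\chi_{q_2}(n/d)$ that genuinely depends on the residue of $n$ modulo $q_2$, not just on $d$, so the sum over $d$ does not factor as a $\chi$-twisted divisor count. The paper's \S\ref{subsec: eisenstein to dirichlet convolution} resolves this by extracting the part of $n$ supported on primes dividing $-\Delta$ and summing over residue classes $\ba \bmod -\Delta$, after which the remaining divisor sum is exactly $(1\star\chi)(p_{\neg-\Delta}(n))$ with the single character $\chi=\chi_{q_1}\chi_{q_2}$. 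The local factors $\BM_{q_1,q_2}(\bc)$ in the final constant arise precisely from this residue decomposition; your $\mathfrak{S}_{q_1,q_2,\bc}$ presumably plays this role, but the reduction that produces it is a real piece of the argument you should supply. (You also need a Möbius inversion to remove the $\gcd(u,v)=1$ condition, as in \S\ref{subsec: gcd condition}.)

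Second, your large-moduli bound invokes Theorem~\ref{thm: hooley character bound}, which requires each $f_i$ to be irreducible over $K=\Q(\sqrt{-\Delta})$. When $F$ has a linear factor (and for $\deg f\leq 4$ there can be up to four), this hypothesis fails, and one also cannot quote Lemma~\ref{lem: small log saving binary} directly because $F_i(\bv)=0$ has nontrivial integer solutions. The paper handles this via a separate large-moduli bound (Proposition~\ref{prop: large moduli linear}), fixing a value $L(x,y)=m$ and applying Nair's one-variable sieve in the remaining variable together with Lemma~\ref{lem: small log saving }, at the cost of taking the split point $D_1\leq (B/tk)^{1/4}\log(B)^{-1-10^{-5}}$ rather than $\log(B)^{-10^{-5}}$. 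Your sketch acknowledges that degree~$4$ is tight but doesn't isolate the linear-factor obstruction or how the split point must change; as written, the argument does not close in that case.
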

\begin{remark}
    We will further discuss $C_{q_1,q_2,f}$ in \S\ref{sec: asymptotic} and \S\ref{sec: constant} -- in particular, we relate $C_{q_1,q_2,f}$ to Brauer-Manin and local obstructions to the Hasse principle. However, it turns out that it is simpler to classify directly when $c_{\Delta,f}$ of Theorem \ref{thm: Manin} is zero, as it will become necessary to consider the sum over genus characters $$\sum_{q_1q_2=-\Delta}' C_{q_1,q_2,f}.$$ 
\end{remark}

\begin{remark}
    We can see that the above two propositions that our size constraints on the length of our short boxes $L$ and short intervals $\ell$ come from the Eisenstein term. Additionally, we can see that the necessity to look at short boxes and intervals comes only from the case when $\Delta<0$ and $x^2+\Delta y^2$ is indefinite (as it was used in \S\ref{sec: reductions}.3 to estimate $\#\{k\in \Z: |\varepsilon^k\alpha|,|\varepsilon^{-k}\overline{\alpha}| \leq B\}$). Our condition in Proposition \ref{prop: error term A} that $L\gg \exp(-\log(B)^{1/3})$ comes simply by the desire to have $(B/t)^{1/2}|\calB(\bx_0,L)| \gg 1.$ The condition $L\gg \log(B)^{-1/10}$ in Proposition \ref{prop: error term E} allows us to compare $L^2 B$ and $LB$ against the corresponding savings. 
\end{remark}

\subsection{Interpretation with cusp forms}\label{subsec: interpret cusp}
At this point, let us point out that if $\xi$ is a Hecke character of $K$, then there is a corresponding automorphic representation $\Xi$ of $\GL_2(\A_\Q)$. Moreover, the Fourier coefficients of $\Xi$ will be given by $$\lambda_\Xi(n) = \sum_{N(\fa)=n} \xi(\fa).$$
Since any class group character is a Hecke character, and $\Psi$ is by definition a Hecke character, we can interpret $E(X_{\Delta,f},\calB(\bx_0,L);0,\psi)$ and $A(X_{\Delta,f},\calB(\bx_0,L);h,\psi)$ in terms of these cuspidal automorphic representations $\Xi$. 

First, let $\Xi$ be the automorphic representation for $\Psi^h \psi$ for $h\geq 1$. Then we will see that $A(X_{\Delta,f},\calB(\bx_0,L);h,\psi)$ is upper bounded by the sum: 
$$\sum_{\substack{N(\ft)\leq B/\exp(\sqrt{\log(B)}) \\ \gcd(\ft,\overline{\ft})=1}}\sum_{\substack{(u,v)\in (B/t)^{1/2}\calB(\bx_0,L) \\ \gcd(u,v)=1}} |\lambda_\Xi(F(u,v))|.$$
In Proposition \ref{prop: error term A}, we will achieve our savings despite the absolute values placed around the Fourier coefficients -- this savings will come from studying the first few Sato-Tate moments of $\Xi$. 

Second, let $\Phi$ be the automorphic representation for the class group character $\psi$ with $\ord(\psi)>2$. 
Our error term $E(X_{\Delta,f},\calB(\bx_0,L);0,\psi)$ can then be written as (approximately): 
$$\sum_{\substack{t\leq B/\exp(\sqrt{\log(B)})}} \sum_{d\mid t} \mu(d)\sum_{\substack{(u,v)\in (B/t)^{1/2}\calB(\bx_0,L) \\ \gcd(u,v)=1}} \lambda_\Phi\left(\frac{t^2F(u,v)}{d^2}\right).$$
This viewpoint will be expanded on in \S\ref{subsec: dihedral interpret cusp form}. In Proposition \ref{prop: error term E}, we achieve our saving by the sum over $\ft$ and $d$ and optimal upper bounds on the contribution from the sum over $(u,v)$. It is important in this argument that unlike $A(X_{\Delta,f},\calB(\bx_0,L);h,\psi)$, there are no absolute values. 

\subsection{Proof of the asymptotic for Theorem \ref{thm: Manin}}\label{subsec: proof of main thm}
In both cases, Lemma \ref{lem: large torsors} allows us to reduce the problem to estimating: 
\begin{multline}
    S(X_{\Delta,f},B) :=  \frac{1}{2}\cdot\#\Big\{ ((x,y),(u,v),t)\in \Z^2\times \Z^2\times \Z_+: x^2+\Delta y^2 = t^2 F(u,v),\\ 
    |u|,|v|\leq (B/t)^{1/2}, \|x\pm y\sqrt{-\Delta}\|\leq \nu_{\Delta,f} B, \\ 0<t\leq B/\exp(\sqrt{\log(B)}), \gcd(x,y,t) = \gcd(u,v) = 1\Big\}.
\end{multline}

For this sum, we split into short intervals. We choose $L = \lceil \log(B)^{10^{-10}}\rceil^{-1}$. If $\Delta>0$, take $\ell=1$ and so $\calI(\theta,\ell)=S^1$; if $\Delta<0$, we let $\ell = \lceil \log(B)^{-10^{-10}}\rceil$. Let $\bx_0$ and $\theta$ then range over the corresponding dividing of the box $[-1,1]^2$ and interval $[0,1].$ We know that $$S(X_{\Delta,f},B) = \sum_{\bx_0} \sum_{\theta} N(X_{\Delta,f},\calB(\bx_0,L),\calI(\theta,\ell)).$$
If $\Delta>0$, we apply Proposition \ref{prop: pos def Manin short}: 
\begin{align*}
    S(X_{\Delta,f},B) &= c_{\Delta,f} B\log(B)^{\rho_{\Delta,f}-1} (1+O(\log(B)^{-10^{-10}})) \cdot \sum_{\bx_0} L^2 \\
    &= 4\cdot c_{\Delta,f} B\log(B)^{\rho_{\Delta,f}-1} (1+O(\log(B)^{-10^{-10}})).
\end{align*}
This completes the positive-definite case. 

On the other hand, if $\Delta<0$, we apply Proposition \ref{prop: indef Manin short} to achieve: 
\begin{align*}
    S(X_{\Delta,f},B) &= b_{\Delta,f} B\log(B)^{\varrho_{a,f}-1}(1+O(\log(B)^{-10^{-7}})) \cdot \sum_{\bx_0}\sum_{\theta} \kappa(\bx_0,\calI(\theta,\ell)) \\
    &= b_{\Delta,f} B\log(B)^{\rho_{\Delta,f}-1}(1+O(\log(B)^{-10^{-7}})) \cdot \sum_{\bx_0}\sum_{\theta} L^2 \cdot \ell  \cdot \max(-\log|F(\bx_0)|/\log|\varepsilon|,0).
\end{align*}
Recall that if $b_{\Delta,f}=0$ then there is a Brauer-Manin or local obstruction.

Finally, we can see that this sum can be approximated by the integral:
\begin{multline*}\sum_{\bx_0}\sum_{\theta} L^2 \cdot \ell \cdot \max(-\log|F(\bx_0)|/\log|\varepsilon|,0) \\ = \int_{-1}^1\int_{-1}^1 \int_{0}^1\max(-\log|F(x,y)|/\log|\varepsilon|,0) d\theta dxdy + O(\log(B)^{-10^{-10}}).\end{multline*}
Since this integral is a constant only depending on $f$ and $\varepsilon$ (which only depends on $\Delta$), we can write this expression as $d_{\Delta,f} + O(\log(B)^{-10^{-10}}).$ Moreover, it is clear that $d_{\Delta,f}>0$. Plugging this back into $S(X_{\Delta,f},B)$, we have that 
$$S(X_{\Delta,f},B) = b_{\Delta,f}d_{\Delta,f} B\log(B)^{\rho_{\Delta,f}-1}(1+O(\log(B)^{-10^{-10}})).$$
Taking $c_{\Delta,f} = b_{\Delta,f}d_{\Delta,f}$ finishes the proof. 
\qed

\subsection{Proof of Corollary \ref{cor: Manin}}

    Note that we have restricted in Theorem \ref{thm: Manin} to when $\Delta\in \Z$ is squarefree and $f(z)\in \Z[z]$, whereas the general definition for Ch\^atelet surfaces allows $\Delta\in \Q$ and $f(z)\in \Q[z]$. Consider the initial surface: 
    $$x^2 + \Delta_0 y^2 = f_0(z)$$
    for $\Delta_0 \in \Q$ a nonsquare and $f_0(z)\in \Q[z]$. After clearing denominators, we can see that rational points on the above surface will be in direct bijection with the rational points on the surface: 
    $$(Qx)^2 + \Delta_1 y^2 = f_1(z)$$
    for $Q,\Delta_1\in \Z$ and $f_1(z)\in \Z[z]$. After modifying the height function accordingly (by a factor of $Q$), we can see that the problem of counting rational points on this surface is equivalent to counting rational points (with the modified height function) on the surface $$x^2 + \Delta_1 y^2 = f_1(z)$$ for $\Delta_1\in \Z$ and $f_1(z)\in \Z[z]$. 

    Next, if $\Delta_1$ is not squarefree, we can write $\Delta_1 = \Delta \delta^2.$ Hence, our surface can be written as $$x_1^2+ \Delta (\delta y)^2 = f_1(z).$$
    After modifying the height function by a factor of $\delta,$ we can see that counting rational points on the above surface is equivalent to counting rational points on $$x^2 + \Delta y^2 = f(z),$$
    where $f(z) = f_1(z)\in \Z[z]$. Thus, Theorem \ref{thm: Manin} indeed resolves Manin's conjecture for all Ch\^atelet surfaces over $\Q.$
    
\qed

\section{The cuspidal contribution -- the finite image case}\label{sec: cuspidal class gp}
In this section, we prove Proposition \ref{prop: error term E}. Let us recall the definition of $E(X_{\Delta,f},\calB(\bx_0,L);0,\psi)$ from (\ref{eq: pos def error term}):
$$\sum_{\substack{N(\ft)\leq B/\exp(\sqrt{\log(B)}) \\ \gcd(\ft,\overline{\ft})=1}} \psi(\ft)^2 \sum_{\substack{(u,v)\in (B/t)^{1/2}\calB(\bx_0,L) \\ \gcd(u,v)=1}} \sum_{\substack{N(\fa) = F(u,v) \\ \gcd(\fa,\overline{\ft})=1}}\psi(\fa).$$
We also recall that $\psi$ is a class group character of order $\geq 3$. As described in the outline, we aim to change the order of summation and achieve savings from the sum over $\ft$; however, our situation is slightly complicated by the fact that we are summing over short-ish boxes $\calB(\bx_0,L).$

Let $\epsilon(B)$ to be determined later. We say that $t\sim_\epsilon T$ if $t\in [T,T(1+\epsilon(B))].$ Consider the space: 
$$D(\bx_0,L) = \cup_{T_i} (B/T_i)^{1/2} \calB(\bx_0,L),$$
where $T_i$ ranges over the ``dyadic interals of difference $(1+\epsilon(B))$'' up to $B/\exp(\sqrt{\log(B)}).$ First, we claim that almost all of the points $(u,v)$ counted in $E(X_{\Delta,f},\calB(\bx_0,L);0,\psi)$ are in $D(\bx_0,L).$
\begin{lemma}
Assume $\epsilon(B)<L$. Let $(u,v)$ be in $(B/t)^{1/2}\calB(\bx_0,L)$ for some $t$, but not contained in $D(\bx_0,L)$. Then $(u,v)$ is $O(\epsilon(B))$ close to the corners of the box $\calB(\bx_0,L)$. In other words, either $$(u,v) \in (B/t)^{1/2}(x_0,y_0+L) + [0,O(\epsilon(B))]^2$$
or $$(u,v) \in (B/t)^{1/2}((x_0+L, y_0))+[0,O(\epsilon(B))]^2.$$
\end{lemma}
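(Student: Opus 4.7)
The plan is to fix $(u,v)$ and describe the full set of $T>0$ for which $(u,v)\in (B/T)^{1/2}\calB(\bx_0,L)$, then compare this set with the discrete grid $\{T_i\}$. Writing $(u,v) = (B/t)^{1/2}(x_0+a,y_0+b)$ with $a,b\in[0,L]$, and assuming for concreteness that $x_0,y_0>0$, the point $(u,v)$ lies in $(B/T)^{1/2}\calB(\bx_0,L)$ precisely when $T\in[T_{\min}(u,v),T_{\max}(u,v)]$, where
\[
T_{\max}(u,v) = t\cdot \min\!\left(\frac{(x_0+a)^2}{x_0^2},\frac{(y_0+b)^2}{y_0^2}\right), \qquad T_{\min}(u,v) = t\cdot \max\!\left(\frac{(x_0+a)^2}{(x_0+L)^2},\frac{(y_0+b)^2}{(y_0+L)^2}\right).
\]
The formulas for other sign configurations of $\bx_0$ differ but the analysis is identical. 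Since $T_{i+1}=T_i(1+\epsilon(B))$, the assumption $(u,v)\notin D(\bx_0,L)$ forces the interval to miss every $T_i$, which in turn requires $T_{\max}(u,v)/T_{\min}(u,v)\leq 1+\epsilon(B)$.

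Next I would locate the equality locus $T_{\max}=T_{\min}=t$. Since $(x_0+a)/x_0\geq 1$ and $(y_0+b)/y_0\geq 1$ on $[0,L]^2$, the equation $T_{\max}=t$ holds iff $a=0$ or $b=0$, while dually $T_{\min}=t$ holds iff $a=L$ or $b=L$. Therefore $T_{\max}=T_{\min}=t$ if and only if $(a,b)\in\{(0,L),(L,0)\}$---exactly the two corners singled out in the statement. At the remaining corners $(0,0)$ and $(L,L)$ one computes $T_{\max}/T_{\min}=1+\Theta(L)>1+\epsilon(B)$ under the hypothesis $\epsilon(B)<L$, so those corners (and, by continuity, essentially all of $\calB(\bx_0,L)$ away from the two offending corners) lie in $D(\bx_0,L)$.

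To finish, a first-order Taylor expansion near $(0,L)$ (respectively $(L,0)$), writing $a=\delta_a$ and $b=L-\delta_b$, shows that
\[
\frac{T_{\max}(u,v)}{T_{\min}(u,v)}-1 = \frac{2\delta_a}{x_0} + \frac{2\delta_b}{y_0+L} + O(\delta_a^2+\delta_b^2),
\]
which is a positive linear combination of the deviations with coefficients depending only on $x_0,y_0,L$. The constraint $T_{\max}/T_{\min}\leq 1+\epsilon(B)$ then forces $\delta_a,\delta_b=O(\epsilon(B))$, and undoing the substitution $(u,v)=(B/t)^{1/2}(x_0+a,y_0+b)$ produces the neighborhoods claimed. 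The main obstacle I foresee is handling the various sign configurations of $(x_0,y_0)$ and the degenerate regime in which $|x_0|$ or $|y_0|$ is comparable to or smaller than $L$, where the linearization constants blow up; in those cases I would work directly from the exact inequality $T_{\max}/T_{\min}\leq 1+\epsilon(B)$, allowing the implicit $O$-constant to depend on $x_0,y_0,L$.
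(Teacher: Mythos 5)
Your proof is correct in outcome and takes a more systematic route than the paper. The paper fixes the two grid points $T_1 < t < T_2$ bracketing $t$, observes that the rescaled point escapes the box at both, and then argues (via a WLOG asserting that one coordinate drops below its lower edge at $T_1$ while the other exceeds its upper edge at $T_2$) directly to the corner estimate. You instead compute the full admissible interval $[T_{\min},T_{\max}]$ of scaling parameters for which $(u,v)\in (B/T)^{1/2}\calB(\bx_0,L)$, force its multiplicative length to be $\leq 1+\epsilon(B)$ from the fact that it contains no $T_i$, locate the degeneracy locus explicitly, and finish with a Taylor expansion. Your version is cleaner, makes the WLOG unnecessary, and makes the dependence of the implicit constant on $\bx_0$ and $L$ transparent.

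One algebraic slip is worth correcting. With the convention $(u,v)\in (B/T)^{1/2}\calB(\bx_0,L)$, so that the box shrinks as $T$ grows, and writing $(u,v)=(B/t)^{1/2}(x_0+a,y_0+b)$ with $x_0,y_0>0$, the condition $x_0\leq (T/t)^{1/2}(x_0+a)\leq x_0+L$ gives $T\in\bigl[t\,x_0^2/(x_0+a)^2,\ t\,(x_0+L)^2/(x_0+a)^2\bigr]$, so the correct formulas are
\[
T_{\min}=t\cdot \max\!\left(\frac{x_0^2}{(x_0+a)^2},\frac{y_0^2}{(y_0+b)^2}\right),\qquad T_{\max}=t\cdot\min\!\left(\frac{(x_0+L)^2}{(x_0+a)^2},\frac{(y_0+L)^2}{(y_0+b)^2}\right).
\]
You have written each of these with every fraction inverted and with $\max$ and $\min$ interchanged; your expressions would be correct under the reciprocal scaling convention $(T/B)^{1/2}$. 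By a fortunate cancellation, this double inversion leaves the ratio $T_{\max}/T_{\min}$, the degeneracy locus $(a,b)\in\{(0,L),(L,0)\}$, and the leading Taylor coefficients at the corner unchanged, so your conclusion survives; but a reader checking the displayed formulas would stumble, and the correct endpoints are worth recording. Your caution about the sign of $\bx_0$ and the regime $\min(|x_0|,|y_0|)\lesssim L$ is well placed: the paper's error bound $O\bigl(\epsilon(B)\cdot (t/B)^{1/2}u\bigr)$ absorbs exactly this position dependence, in agreement with your proposed fallback.
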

\begin{proof}
    Since $(u,v)\in (B/t)^{1/2}\calB(\bx_0,L)$, we know that $$\frac{t^{1/2}}{B^{1/2}}(u,v)\in \calB(\bx_0,L).$$
    We also know that if $T_1\leq t\leq T_2$, then $$\frac{T_1^{1/2}}{B^{1/2}}(u,v) \not\in \calB(\bx_0,L), \frac{T_2^{1/2}}{B^{1/2}}(u,v) \not\in \calB(\bx_0,L).$$
    Let us assume that $u\leq v$. Then we must have that $\frac{T_1^{1/2}}{B^{1/2}} u < x_0$ and $\frac{T_2^{1/2}}{B^{1/2}}v > y_0+L.$
    Thus, we get that $\frac{t^{1/2}}{B^{1/2}} u - x_0 \leq O(\epsilon(B) \cdot \frac{t^{1/2}}{B^{1/2}} u)$. Similarly, we achieve that $y_0+L- \frac{t^{1/2}}{B^{1/2}} v = O(\epsilon(B) \cdot \frac{t^{1/2}}{B^{1/2}}v).$
\end{proof}

Next, we justify changing the order of summation for $E(X_{\Delta,f},\calB(\bx_0,L);0,\psi).$
\begin{lemma}\label{lem: change of var}
    Let us define $$D(X_{\Delta,f},\calB(\bx_0,L);\psi) = \sum_{T_i} \sum_{\substack{(u,v)\in (B/T_i)^{1/2}\calB(\bx_0,L)\\ \gcd(u,v)=1}} \sum_{N(\fa)=F(u,v)} \psi(\fa) \sum_{\substack{N(\ft) \sim_\epsilon T_i \\ \gcd(\ft,\overline{\ft})=1 \\ \gcd(\fa,\overline{\ft})=1}} \psi(\ft)^2.$$
    Here the sum over $T_i$ is a sum over powers of $(1+\epsilon(B))$ up to $B/\exp(\sqrt{\log(B)}).$
    Assume that $\epsilon(B) = o(L)$. Then we can bound $$|E(X_{\Delta,f},\calB(\bx_0,L);0,\psi)-D(X_{\Delta,f},\calB(\bx_0,L);\psi)|\ll L\epsilon(B) B\log(B)^{\rho_{\Delta,f}-1}$$
\end{lemma}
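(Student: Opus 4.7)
The approach is to compare $E(X_{\Delta,f},\calB(\bx_0,L);0,\psi)$ with $D(X_{\Delta,f},\calB(\bx_0,L);\psi)$ termwise in $\ft$. For each dyadic scale $T_i$ and each $\ft$ with $N(\ft) \in [T_i, T_i(1+\epsilon(B))]$, the inner $(u,v)$-sum in $E$ is taken over $(B/N(\ft))^{1/2}\calB(\bx_0,L)$, while in $D$ it is taken over the fixed box $(B/T_i)^{1/2}\calB(\bx_0,L)$. Since $(B/N(\ft))^{1/2}/(B/T_i)^{1/2}=1+O(\epsilon(B))$ and $\bx_0 \in [-1,1]^2$, each corner of the box moves by $O(\epsilon(B)(B/T_i)^{1/2})$ as $\ft$ varies over this interval. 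Consequently the union
\[
R_{T_i} := \bigcup_{t \sim_\epsilon T_i}\Bigl((B/t)^{1/2}\calB(\bx_0,L)\ \triangle\ (B/T_i)^{1/2}\calB(\bx_0,L)\Bigr)
\]
is contained in an annular strip of thickness $O(\epsilon(B)(B/T_i)^{1/2})$ about the boundary of $(B/T_i)^{1/2}\calB(\bx_0,L)$, so $|R_{T_i}| \ll L\epsilon(B) B/T_i$.

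The triangle inequality combined with $|\psi(\ft)|,|\psi(\fa)|\leq 1$ and $\#\{\fa:N(\fa)=n\} \leq (1\star\chi)(n)$ then gives
\[
|E-D| \leq \sum_{T_i} \#\{\ft : N(\ft)\sim_\epsilon T_i,\ \gcd(\ft,\overline{\ft})=1\} \cdot \sum_{(u,v)\in R_{T_i}} (1\star\chi)(F(u,v)).
\]
Standard Landau-type counts yield $\#\{N(\ft)\sim_\epsilon T_i\} \ll \epsilon(B) T_i + O(T_i^{1/2})$. To handle the sum over the thin region $R_{T_i}$, I would cover $R_{T_i}$ by $O(L/\epsilon(B))$ squares of side length $\sim \epsilon(B)(B/T_i)^{1/2}$ and apply Theorem \ref{thm: Nair sieve} to each, producing
\[
\sum_{(u,v)\in R_{T_i}} (1\star\chi)(F(u,v)) \ll L\epsilon(B)\frac{B}{T_i}\log(B)^{\varrho_{\Delta,f}-2}.
\]

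Summing over the $O(\log(B)/\epsilon(B))$ scales $T_i$ yields
\[
|E-D| \ll \sum_{T_i} \epsilon(B) T_i \cdot L\epsilon(B)\frac{B}{T_i}\log(B)^{\varrho_{\Delta,f}-2} \ll L\epsilon(B) B\log(B)^{\varrho_{\Delta,f}-1},
\]
as claimed. The contribution of the secondary term $T_i^{1/2}$ in the ideal count, which dominates only when $T_i \lesssim 1/\epsilon(B)^2$, is absorbed into the same bound by summing geometrically in $T_i$ (the cost of the $1/\epsilon(B)$ factor there is compensated by the shrinking area of $R_{T_i}$). The main technical obstacle is ensuring the Nair-sieve hypothesis $Y \geq X^\alpha$ holds on the thin strip, i.e. that $\epsilon(B) \geq (B/T_i)^{-c}$ for some small $c>0$; since in the eventual application $\epsilon(B)$ is taken to be a small power of $\log(B)$, this is comfortably satisfied across the entire range of $T_i$ appearing in $E$.
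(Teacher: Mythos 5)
Your proof is correct, and it takes a genuinely different route from the paper's. The paper introduces an intermediate sum $I(X_{\Delta,f},\calB(\bx_0,L);\psi)$ in which, for each $T_i$, the box is fixed at $(B/T_i)^{1/2}\calB(\bx_0,L)$ but the $\ft$-sum carries the extra constraint $(u,v)\in (B/t)^{1/2}\calB(\bx_0,L)$; it then bounds $|D-I|$ (edge contributions, giving the $L\epsilon(B) B\log(B)^{\varrho_{\Delta,f}-1}$ term) and $|I-E|$ (corner contributions via Lemma~5.1, giving $B\epsilon(B)^2\log(B)^{\varrho_{\Delta,f}-1}$) separately. You instead exchange the order of summation in $D$ and compare directly with $E$ termwise in $\ft$: for each $\ft$ with $N(\ft)\sim_\epsilon T_i$ the two $(u,v)$-domains differ by a symmetric difference contained in a strip of thickness $O(\epsilon(B)(B/T_i)^{1/2})$ around $\partial\bigl((B/T_i)^{1/2}\calB(\bx_0,L)\bigr)$. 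This single-pass comparison subsumes both the ``edge'' and ``corner'' contributions of the paper's two-step argument, which is cleaner; it also avoids the paper's slightly imprecise rewriting of $I$ as a sum over $(u,v)\in D(\bx_0,L)$, which quietly assumes that whenever $(u,v)\in(B/t)^{1/2}\calB(\bx_0,L)$ one also has $(u,v)\in(B/T_i)^{1/2}\calB(\bx_0,L)$ for the dyadic scale $T_i$ of $t$.

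Two further points where your write-up is more careful than the paper's. First, you retain the secondary term $O(T_i^{1/2})$ in the Landau ideal count and explicitly verify it is absorbed by summing the geometric series $\sum_{T_i}T_i^{-1/2}\ll\epsilon(B)^{-1}$; the paper simply uses $\ll\epsilon(B)T_i$ without comment. As you observe, this absorption requires $\epsilon(B)\gg(\log B)^{-1}$; with the eventual choice $\epsilon(B)=\log(B)^{-1/2}$ this is comfortably satisfied, though neither proof closes the lemma in the generality its hypotheses nominally allow (only $\epsilon(B)=o(L)$ is assumed). Second, you make explicit that Theorem~\ref{thm: Nair sieve} applies to boxes, so the thin strip $R_{T_i}$ must first be tiled by $O(L/\epsilon(B))$ squares of side $\sim\epsilon(B)(B/T_i)^{1/2}$ before the sieve can be invoked, and you verify the hypothesis $Y\geq X^\alpha$ on each tile using $B/T_i\geq\exp(\sqrt{\log B})$; the paper applies the sieve to the strip directly without spelling this out. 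Both steps are implicit in the paper's argument, but your version makes them explicit.
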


\begin{proof}
    First, we compare $D(X_{\Delta,f},\calB(\bx_0,L);\psi)$ to an intermediate sum: 
    $$I(X_{\Delta,f},\calB(\bx_0,L);\psi) = \sum_{T_i} \sum_{\substack{(u,v)\in (B/T_i)^{1/2}\calB(\bx_0,L)\\ \gcd(u,v)=1}} \sum_{N(\fa)=F(u,v)} \psi(\fa) \sum_{\substack{N(\ft) \sim_\epsilon T_i \\ (u,v)\in (B/t)^{1/2}\calB(\bx_0,L) \\ \gcd(\ft,\overline{\ft})=1 \\ \gcd(\fa,\overline{\ft})=1}} \psi(\ft)^2.$$
    Assume $(u,v)\in (B/T_i)^{1/2}\calB(\bx_0,L)$, but $(u,v)\not\in (B/t)^{1/2}\calB(\bx_0,L)$ for some $t\in [T_i,(1+\epsilon(B))T_i]$. Since $\epsilon(B)\leq L$, we can isolate such points as satisfying that they are within $O(\epsilon(B))$ of the edges of $(B/T_i)^{1/2}\calB(\bx_0,L)$. Thus, we have that (using the trivial bound $|\psi^2(\ft)|\leq 1$)
    \begin{align*}
        |D(X_{\Delta,f},\calB(\bx_0,L);\psi)-I(X_{\Delta,f},\calB(\bx_0,L);\psi)| &\ll \sum_{T_i} \epsilon(B) T_i \cdot \sum_{\substack{(u,v) \in (B/T_i)^{1/2}\partial \calB(\bx_0,L)+O(\epsilon(B)) }} (1\star \chi)(F(u,v))  \\
        &\ll \epsilon(B)\sum_{T_i}  T_i \cdot \frac{B}{T_i} \cdot L\epsilon(B) \cdot \log(B)^{\rho_{\Delta,f}-2} \\
        &\ll L\epsilon(B)^2 B\log(B)^{\rho_{\Delta,f}-2} \cdot \frac{\log(B)}{\epsilon(B)}\ll L\epsilon(B) B\log(B)^{\rho_{\Delta,f}-1}.
    \end{align*}
    Here we have used Theorem \ref{thm: Nair sieve} as before with the non-negative multiplicative function $(1\star\chi)(n)$. 
    Also, observe that $(B/T_i)^{1/2}(\partial \calB(\bx_0,L) + O(\epsilon(B))\supset (B/T_i)^{1/2}\partial\calB(\bx_0,L) + O(\epsilon(B))$ and the larger one has area $BT_i^{-1}L\epsilon(B).$
    Here we have used that there are $\ll \log(B)/\epsilon(B)$ such values of $T_i$. 

    Next, we want to compare $I(X_{\Delta,f},\calB(\bx_0,L);\psi)$ and $E(X_{\Delta,f},\calB(\bx_0,L);0,\psi)$. We can see that $$I(X_{\Delta,f},\calB(\bx_0,L);\psi) = \sum_{\substack{(u,v)\in D(\bx_0,L)\\ \gcd(u,v)=1}} \sum_{N(\fa)=F(u,v)}\psi(\fa) \sum_{\substack{t \leq B/\exp(\sqrt{\log(B)})\\ (u,v)\in (B/t)^{1/2}\calB(\bx_0,L)}} \sum_{\substack{N(\ft) = t\\ \gcd(\ft,\overline{\ft})=1 \\ \gcd(\fa,\overline{\ft})=1}} \psi(\ft)^2,$$
    whereas we have that 
    $$E(X_{\Delta,f},\calB(\bx_0,L);0,\psi) = \sum_{\substack{(u,v) \in \cup_t (B/t)^{1/2}\calB(\bx_0,L)\\ \gcd(u,v)=1}}\sum_{N(\fa)=F(u,v)}\psi(\fa) \sum_{\substack{t \leq B/\exp(\sqrt{\log(B)})\\ (u,v)\in (B/t)^{1/2}\calB(\bx_0,L)}} \sum_{\substack{N(\ft) = t\\ \gcd(\ft,\overline{\ft})=1 \\ \gcd(\fa,\overline{\ft})=1}} \psi(\ft)^2.$$
    Thus, the difference is summing over exactly those $(u,v)$ not contained in $D(\bx_0,L)$. 
    Thus, we have that 
    \begin{align*}
        |I(X_{\Delta,f},\calB(\bx_0,L);\psi)&-E(X_{\Delta,f},\calB(\bx_0,L);0,\psi)| \\ &\ll \sum_{\substack{N(\ft)\leq B/\exp(\sqrt{\log(B)})\\ \gcd(\ft,\overline{\ft})=1}} \sum_{\substack{(u,v)\in (B/t)^{1/2}((x_0,y_0+L)+O(\epsilon(B))) \\ \cup (B/t)^{1/2}((x_0+L,y_0)+O(\epsilon(B)))}} (1\star \chi)(F(u,v)) \\
        &\ll \sum_{N(\ft)\leq B/\exp(\sqrt{\log(B)})} \epsilon(B)^2 \cdot \frac{B}{t} \log(B/t)^{\rho_{\Delta,f}-2} \\
        &\ll B\epsilon(B)^2 \log(B/t)^{\rho_{\Delta,f}-1}.
    \end{align*}
    Again, we have applied Theorem \ref{thm: Nair sieve} on the sum over $(u,v)$ with the non-negative multiplicative function $(1\star\chi)(n).$
\end{proof}

Finally, we wish to bound $D(X_{\Delta,f},\calB(\bx_0,L);\psi)$.
Now, we place the absolute values: 
$$D(X_{\Delta,f},\calB(\bx_0,L);\psi) \leq \sum_{T_i} \sum_{\substack{(u,v)\in (B/T_i)^{1/2}\calB(\bx_0,L)\\ \gcd(u,v)=1}} \sum_{N(\fa)=F(u,v)} \left|\sum_{\substack{N(\ft) \sim_\epsilon T_i \\ \gcd(\ft,\overline{\ft})=1 \\ \gcd(\fa,\overline{\ft})=1}} \psi(\ft)^2\right|.$$

\begin{lemma}\label{lem: cancellation of cusp over t}
    For $T\gg_{K} 1$, and $\psi^2\neq 1$, the following bound holds:
    $$\sum_{\substack{N(\ft) \leq T \\ \gcd(\ft,\overline{\ft})=1 \\ \gcd(\fa,\overline{\ft})=1}} \psi(\ft)^2 \ll_K \prod_{\fp\mid \fa} \left(1+\frac{1}{N(\fp)^{2/3}}\right) T^{5/6}.$$
\end{lemma}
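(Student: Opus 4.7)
The plan is to apply Perron's formula to the Dirichlet series of the sum and shift the contour to the left using the analytic properties of the Hecke $L$-function $L(s,\psi^2)$.

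First, I would set
\[
L(s) := \sum_{\ft \in \calT}\frac{\psi^2(\ft)}{N(\ft)^s}, \qquad \calT := \{\ft\subset\calO_K : \gcd(\ft,\overline{\ft})=1 \textrm{ and } \gcd(\fa,\overline{\ft})=1\}.
\]
Because the defining conditions for $\calT$ are local at each rational prime, $L(s)$ admits an Euler product. At a split prime $p=\fp\overline{\fp}$ with neither $\fp$ nor $\overline{\fp}$ dividing $\fa$, the allowed contributions are $\fp^k$ or $\overline{\fp}^k$ with $k\geq 0$ but not both simultaneously, giving local factor $(1-\psi^2(\fp)/p^s)^{-1}+(1-\psi^2(\overline{\fp})/p^s)^{-1}-1$; if $\fp\mid\fa$ and $\overline{\fp}\nmid\fa$, only $\fp^k$ is allowed, giving $(1-\psi^2(\fp)/p^s)^{-1}$; and the local factor is $1$ whenever $p$ is inert, ramified, or both $\fp,\overline{\fp}$ divide $\fa$.

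Next, using the algebraic identity $(1-x)^{-1}+(1-y)^{-1}-1=(1-xy)/[(1-x)(1-y)]$ together with the key fact $\psi^2(\fp)\psi^2(\overline{\fp})=\psi^2((p))=1$ (since $\psi^2$ factors through the class group and $(p)$ is principal), I would factor
\[
L(s) = L(s,\psi^2)\cdot H(s),
\]
where $L(s,\psi^2)$ is the Hecke $L$-function attached to $\psi^2$, and $H(s)$ is an Euler product whose generic local factor at a split prime $p$ with $\fp,\overline{\fp}\nmid\fa$ is $(1-p^{-2s})$, whose factor at split $\fp\mid\fa$ is $(1-\psi^2(\overline{\fp})/p^s)$, and whose factors at inert and ramified primes are standard bounded expressions in $p^{-s}$. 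Since the generic factor is $1+O(p^{-2s})$, the product $H(s)$ converges absolutely for $\Re(s)>1/2$. Estimating on the line $\Re(s)=5/6$ and separating primes dividing $\fa$ yields
\[
|H(5/6+it)| \ll_K \prod_{\fp\mid\fa}\bigl(1 + N(\fp)^{-5/6}\bigr) \leq \prod_{\fp\mid\fa}\bigl(1 + N(\fp)^{-2/3}\bigr),
\]
with implied constant depending only on $K$.

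Finally, I would apply the truncated Perron formula at $c=1+\epsilon$ with cutoff $|t|\leq U=T^{100}$ and shift the contour to $\Re(s)=5/6$. Since $\psi^2$ is a nontrivial class group character, $L(s,\psi^2)$ is entire and no residues are crossed. The convexity bound for this degree-$2$ Hecke $L$-function gives $|L(5/6+it,\psi^2)|\ll_{K,\epsilon}(1+|t|)^{1/6+\epsilon}$, so the integrand on the shifted line decays like $(1+|t|)^{-5/6+\epsilon}$ and is absolutely integrable. Combined with the negligible horizontal segment contributions and Perron truncation error (both $\ll T^{-99+\epsilon}$ for $U=T^{100}$), the vertical integral yields the desired bound $T^{5/6}\prod_{\fp\mid\fa}(1+N(\fp)^{-2/3})$.

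The main obstacle is the first step: identifying the right factorization $L(s)=L(s,\psi^2)H(s)$ and tracking correctly how the conditions $\gcd(\ft,\overline{\ft})=1$ and $\gcd(\fa,\overline{\ft})=1$ reshape the Euler product relative to the standard Hecke $L$-function. The identity $\psi^2(\fp\overline{\fp})=1$ is essential—it collapses the generic split local factor into $(1-p^{-2s})$, ensuring $H(s)$ extends holomorphically past $\Re(s)=1/2$. Once this is in place, the contour shift plus the convexity bound is standard, and the choice of the vertical line $\Re(s)=5/6$ is made precisely to produce the exponent $2/3$ appearing in the local factors at primes $\fp\mid\fa$.
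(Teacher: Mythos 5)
Your Euler-product factorization $L(s)=L(s,\psi^2)H(s)$ is correct and in fact more carefully worked out than the paper's own version (the paper's asserted identity $\xi_\fa(s,\psi^2)=\prod_{\fp\mid\fa}\bigl(1-\psi^2(\fp)N(\fp)^{-s}\bigr)\xi(s,\psi^2)$ has the wrong local factors at split primes dividing $\fa$, though harmlessly so). Your use of $(1-x)^{-1}+(1-y)^{-1}-1=(1-xy)/\bigl[(1-x)(1-y)\bigr]$ together with $xy=\psi^2((p))p^{-2s}=p^{-2s}$ is exactly right, and the bound $|H(\sigma+it)|\ll_K\prod_{\fp\mid\fa}(1+N(\fp)^{-\sigma})$ for $\sigma>1/2$ is fine. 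The gap is in the Perron step. On $\Re(s)=5/6$ the integrand has size $\asymp(1+|t|)^{-5/6+\epsilon}$, and since $5/6<1$ this is \emph{not} absolutely integrable over $\R$; the assertion ``is absolutely integrable'' is false. With $U=T^{100}$ the truncated vertical integral is $\asymp T^{5/6}\int_0^U(1+t)^{-5/6+\epsilon}\,dt\asymp T^{5/6}U^{1/6+\epsilon}\asymp T^{5/6+100/6+\epsilon}$, which swamps the target. Even with $U$ balanced optimally against the truncation error $O(T^{1+\epsilon}/U)$, unweighted truncated Perron on $\Re(s)=\sigma$ gives only $T^{1/(2-\sigma)}$, which at $\sigma=5/6$ is $T^{6/7}$, weaker than the lemma claims.

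The paper sidesteps the convergence problem with a Ces\`aro-weighted sum $S(T)=\sum_{N(\ft)\le T}\psi^2(\ft)(T-N(\ft))$, whose Perron kernel $T^{s+1}/\bigl(s(s+1)\bigr)$ decays like $|t|^{-2}$ and renders the vertical integral absolutely convergent on any line $\Re(s)=\alpha$ with $\alpha>1/2$; the contour is shifted to $\alpha=2/3$, giving $S(T)\ll\prod_{\fp\mid\fa}(1+N(\fp)^{-2/3})T^{5/3}$, and the unweighted sum is then recovered by the finite-difference step $T'^{-1}\bigl(S(T+T')-S(T)\bigr)$ with $T'=T^{5/6}$. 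Note that the $2/3$ in the arithmetic factor and the $5/6$ in the $T$-exponent come from \emph{different} stages (the contour at $\alpha=2/3$ and the difference length $T'=T^{(1+\alpha)/2}$), not from a single choice of line at $5/6$. To repair your argument, either keep the contour at $\Re(s)=2/3$ and balance $U$ (which would actually give $T^{3/4}\prod_{\fp\mid\fa}(1+N(\fp)^{-2/3})$), or pass to the weighted kernel and the differencing step; your Euler-product factorization and convexity bound then carry over unchanged.
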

\begin{corollary}
Let $\epsilon(B) = o(1)$. Then we have that $$\sum_{\substack{N(\ft) \sim_\epsilon T \\ \gcd(\ft,\overline{\ft})=1 \\ \gcd(\fa,\overline{\ft})=1}} \psi(\ft)^2 \ll_K \prod_{\frakp\mid \fa} \left(1+\frac{1}{N(\fp)^{2/3}}\right)T^{5/6}.$$
\end{corollary}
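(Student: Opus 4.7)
I would first remove the coprimality with $\fa$ by Möbius inversion. Using $\mathbf{1}_{\gcd(\fa,\overline{\ft})=1}=\sum_{\fd\mid\gcd(\fa,\overline{\ft})}\mu(\fd)$ and noting that $\fd\mid\overline{\ft}$ is equivalent to $\overline{\fd}\mid\ft$, I substitute $\ft=\overline{\fd}\ft'$. The constraint $\gcd(\ft,\overline{\ft})=1$ then forces $\gcd(\fd,\overline{\fd})=1$ in the outer sum and leaves $\gcd(\ft',\overline{\ft'})=\gcd(\ft',\fd)=1$ in the inner sum, yielding
\[
\sum_{\substack{N(\ft)\leq T\\ \gcd(\ft,\overline{\ft})=1\\ \gcd(\fa,\overline{\ft})=1}} \psi(\ft)^2 = \sum_{\substack{\fd\mid\fa\\ \gcd(\fd,\overline{\fd})=1}} \mu(\fd)\,\psi(\overline{\fd})^2 \sum_{\substack{N(\ft')\leq T/N(\fd)\\ \gcd(\ft',\overline{\ft'})=1\\ \gcd(\ft',\fd)=1}} \psi(\ft')^2,
\]
where only squarefree $\fd$ contribute.

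Next, I would bound the inner sum by $O_K((T/N(\fd))^{5/6})$, uniformly in $\fd$. Its Dirichlet series admits an Euler product whose factor at a split prime $(p)=\fp\overline{\fp}$ (with $\fp,\overline{\fp}\nmid\fd$) is
\[
\frac{1}{1-\psi^2(\fp)p^{-s}}+\frac{1}{1-\psi^2(\overline{\fp})p^{-s}}-1 = \frac{1-\psi^2((p))p^{-2s}}{\bigl(1-\psi^2(\fp)p^{-s}\bigr)\bigl(1-\psi^2(\overline{\fp})p^{-s}\bigr)},
\]
while the factors at inert or ramified primes, and at primes dividing $\fd$, reduce to $1$. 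Factoring out the Euler product of $L(s,\psi^2)$ leaves a correction factor $H_\fd(s)$ that is absolutely convergent and bounded on $\Re(s)\geq 5/6$, uniformly in $\fd$. Since $\psi^2$ is a nontrivial class group character, $L(s,\psi^2)$ is entire with the usual convexity bound on vertical lines. A smoothed Perron argument, shifting the contour from $\Re(s)=1+1/\log T$ to $\Re(s)=5/6$, crosses no poles and yields the desired bound.

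Finally, I assemble and sum over $\fd$:
\[
\sum_{\substack{\fd\mid\fa\\ \gcd(\fd,\overline{\fd})=1\\ \fd \text{ squarefree}}} (T/N(\fd))^{5/6} \leq T^{5/6}\prod_{\fp\mid\fa}\bigl(1+N(\fp)^{-5/6}\bigr) \leq T^{5/6}\prod_{\fp\mid\fa}\bigl(1+N(\fp)^{-2/3}\bigr),
\]
which is the claimed bound.

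The main obstacle is the uniform-in-$\fd$ control of the inner partial sum. The smoothed Perron/contour-shift step is standard, but one must verify that the correction factor $H_\fd(s)$ contributes only harmless constants on $\Re(s)=5/6$; the convergence of the split-prime product for $\Re(s)>1/2$ makes this possible, so that the convexity bound for $L(s,\psi^2)$ cleanly delivers the $5/6$-exponent. The final exponent $2/3$ (in place of the $5/6$ one could extract directly) is simply a weakening chosen to match the shape of the bounds used elsewhere in the paper.
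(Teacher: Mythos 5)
Your proposal does not actually prove the corollary as stated: the target sum runs over $N(\ft)\sim_\epsilon T$, i.e.\ $N(\ft)\in[T,(1+\epsilon)T]$, but every display you write is a partial sum $N(\ft)\leq T$. You are implicitly re-deriving the preceding lemma, not the corollary. The paper's proof of the corollary is just the observation that the short range equals
$\sum_{N(\ft)\leq (1+\epsilon)T}-\sum_{N(\ft)<T}$, and each piece is handled by the lemma; that differencing step needs to appear somewhere.

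More substantively, the Möbius-inversion route contains an error that propagates into the final constant. After substituting $\ft=\overline{\fd}\ft'$ and unwinding $\gcd(\ft,\overline{\ft})=1$, the surviving conditions on $\ft'$ are $\gcd(\ft',\overline{\ft'})=1$ and $\gcd(\ft',\fd)=1$; there is \emph{no} condition $\gcd(\ft',\overline{\fd})=1$. So at a split rational prime $p$ with $\fp\mid\fd$ (hence $\overline{\fp}\nmid\fd$ by $\gcd(\fd,\overline{\fd})=1$), the ideal $\ft'$ is still allowed to carry arbitrary $\overline{\fp}$-powers, and the local Euler factor is $1/(1-\psi^2(\overline{\fp})p^{-s})$, not $1$. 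Dividing out the $L(s,\psi^2)$ Euler factor leaves $1-\psi^2(\fp)p^{-s}$ there, so the correction factor $H_\fd(s)$ is \emph{not} uniformly bounded in $\fd$: on $\Re(s)=5/6$ its modulus is of size $\prod_{\fp\mid\fd}(1+N(\fp)^{-5/6})$. Carrying this through the sum over $\fd\mid\fa$ produces
$T^{5/6}\prod_{\fp\mid\fa}\bigl(1+N(\fp)^{-5/6}+N(\fp)^{-5/3}\bigr)$, which is not $\ll\prod_{\fp\mid\fa}(1+N(\fp)^{-2/3})T^{5/6}$ (try $N(\fp)=2$). The paper avoids this entirely by never inverting: Lemma~5.5 works directly with the Dirichlet series carrying the coprimality condition, factors it against $L_K(s,\psi^2)$ in one step, and the divisor-bounded multiplicative correction $f_\fa(s)$ gives $\prod_{\fp\mid\fa}(1+N(\fp)^{-\alpha})$ cleanly; the corollary then follows by differencing. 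If you want to salvage the Möbius approach, you would need to track the extra factor and weaken the statement (which would still suffice for the downstream applications in \S5, but does not give the bound literally as stated).
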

The above corollary comes from noting that since $\epsilon(B) = o(1)$, 
\begin{multline*}\sum_{\substack{N(\ft) \sim_\epsilon T \\ \gcd(\ft,\overline{\ft})=1 \\ \gcd(\fa,\overline{\ft})=1}} \psi(\ft)^2 
 = \sum_{\substack{N(\ft) \leq (1+\epsilon) T \\ \gcd(\ft,\overline{\ft})=1 \\ \gcd(\fa,\overline{\ft})=1}} \psi(\ft)^2 
 - \sum_{\substack{N(\ft) < T \\ \gcd(\ft,\overline{\ft})=1 \\ \gcd(\fa,\overline{\ft})=1}} \psi(\ft)^2 
 \\ \ll_K \prod_{\frakp \mid \fa} \left(1+\frac{1}{N(\fp)^{2/3}}\right)((1+\epsilon)T)^{5/6} + \prod_{\frakp \mid \fa} \left(1+\frac{1}{N(\fp)^{2/3}}\right)T^{5/6} \ll_K \prod_{\frakp \mid \fa} \left(1+\frac{1}{N(\fp)^{2/3}}\right)T^{5/6}.\end{multline*}

\begin{proof}[Proof of Lemma \ref{lem: cancellation of cusp over t}]
    Define the Dirichlet series: 
    $$\xi_\fa(s,\psi^2) := \sum_{\substack{\fb\subset \calO_K \\ \gcd(\fb,\overline{\fb})=1 \\ \gcd(\ba,\overline{\fb})=1}} \frac{\psi(\fb)^2}{N(\fb)^s}.$$
    This series converges absolutely for $\Re(s)>1$. We note that $$\xi_\fa(s,\psi^2) = \prod_{\fp\mid \ba} \left(1-\frac{\psi(\fp)^2}{N(\fp)^s}\right) \cdot \xi(s,\psi^2),$$
    where we define 
    $$\xi(s,\psi^2) : = \sum_{\substack{\fb\subset \calO_K \\ \gcd(\fb,\overline{\fb})=1}} \frac{\psi(\fb)^2}{N(\fb)^s}.$$
    We can relate $\xi(s,\psi^2)$ to a Hecke $L$-function as such: $$\xi(s,\psi^2) \cdot \sum_{\substack{I\subset \calO_K \\ I\textrm{ principal}}}\frac{1}{N(I)^s} = \xi(s,\psi^2) \zeta(2s) = L_K(s,\psi^2) := \sum_{\fb\subset \calO_K} \frac{\psi(\fb)^2}{N(\fb)^s},$$
    where $L_K(s,\psi^2)$ is a Hecke L-function. 
    Thus, we have that $$\xi_\fa(s,\psi^2) = f_\ba(s) L_K(s,\psi^2),$$
    where $$f_\ba(s) = \zeta(2s)^{-1}\cdot  \prod_{\fp\mid \fa} \left(1-\frac{\psi(\fp)^2}{N(\fp)^s}\right).$$
    Consequently, $\xi_\ba(s,\psi^2)$ is analytic for $\Re(s)>1/2$ (since $\psi^2\neq 1$, $L_K(s,\psi^2)$ converges at $s=1$).

    Let us consider the following weighted sum: 
    $$S(T) = \sum_{\substack{N(\ft)\leq T \\ \gcd(\ft,\overline{\ft})=1 \\ \gcd(\fa,\overline{\ft})=1}} \psi(\ft)^2 (T-N(\ft)).$$
    By Perron's formula, we have that for any $\sigma>1$ the following holds:
    $$S(T) = \frac{1}{2\pi i}\int_{\sigma-i\infty}^{\sigma+i\infty}f_\fa(s) \zeta(2s) L_K(s,\psi^2) \frac{T^{1+s}}{s(s+1)} ds.$$
    Take $1/2<\alpha<1$. Let us shift the contour above to the line $\Re(s)=\alpha$. Since $\psi^2\neq 1$, we know that $L(s,\psi^2)$ will not have a pole at $s=1$. Thus, we have that the above is 
    \begin{align*}S(T) &= \frac{1}{2\pi i} \int_{\alpha-i\infty}^{\alpha+i\infty}f_\ba(s) \zeta(2s) L_K(s,\psi^2)\frac{T^{1+s}}{s(s+1)}ds \\
    &\ll_\alpha \prod_{\fp\mid \fa} \left(1+\frac{1}{N(\fp)^\alpha}\right) T^{1+\alpha} \int_{\alpha-i\infty}^{\alpha+i\infty} |L_K(s,\psi^2)| \frac{ds}{s(s+1)}. 
    \end{align*}

    Now, for $\Re(s)=\alpha$, the convexity bound gives us that $$|L_K(s,\psi^2)|\ll_K |s|^{\frac{1-\alpha}{2}+\epsilon}.$$
    So, we have that the inner integral is bounded above by $$\int_{-\infty}^\infty (\alpha + |t|)^{\frac{1-\alpha+\epsilon}{2} -2 } dt <\infty,$$
    which converges to a constant depending on $\alpha$. Thus, we have that 
    \begin{equation}\label{eq: bound on ST weighted ideal}S(T) \ll_\alpha \prod_{\fp\mid \fa} \left(1+\frac{1}{N(\fp)^\alpha}\right) T^{1+\alpha}.\end{equation}

    Next, we use the following trick. Consider the difference $$S(T+T') - S(T) = T'\sum_{\substack{N(\ft)\leq T \\ \gcd(\ft,\overline{\ft})=1\\ \gcd(\fa,\overline{\ft})=1}} \psi(\ft)^2 + O_K\left(T'^2\right)$$
    where the error term comes from the ideals with $N(\ft)\in (T,T+T']$ and the trivial bound $|\psi^2|\leq 1$. Applying the above bound (\ref{eq: bound on ST weighted ideal}), we also have that $$S(T+T')-S(T) \ll_\alpha \prod_{\fp\mid \fa} \left(1+\frac{1}{N(\fp)^\alpha}\right) (T+T')^{1+\alpha}.$$
    Let us take $\beta = \frac{1+\alpha}{2}$; since $1/2<\alpha<1$, we have that $3/4<\beta < 1$. Then we can see that by comparing the above bounds and taking $T'=T^\beta$, 
    \begin{align*}
       \sum_{\substack{N(\ft)\leq T \\ \gcd(\ft,\overline{\ft})=1\\ \gcd(\fa,\overline{\ft})=1}} \psi(\ft)^2 &\ll T' + \prod_{\fp\mid \fa} \left(1+\frac{1}{N(\fp)^\alpha}\right) T^{1+\alpha}T'^{-1} \\
       &\ll \prod_{\fp\mid \fa} \left(1+\frac{1}{N(\fp)^\alpha}\right) T^\beta .
    \end{align*}
    This brings us to our final choices of $\alpha$ and $\beta$. Let us choose $\alpha=2/3$ and $\beta = 5/6.$ These provide us with the final bound. 
    
\end{proof}

Now, let us plug this into our estimates for $D(X_{\Delta,f},\calB(\bx_0,L);\psi)$. This gives us that: 
$$D(X_{\Delta,f},\calB(\bx_0,L);\psi) \leq \sum_{T_i} T_i^{5/6} \sum_{\substack{(u,v)\in (B/T_i)^{1/2}\calB(\bx_0,L) \\ \gcd(u,v)=1}} (1\star \chi)(F(u,v)) \cdot \prod_{p\mid F(u,v)} (1+1/p^{2/3}).$$
Applying Nair's sieve (Theorem \ref{thm: Nair sieve}) to the inner sum, we get that 
\begin{align*}
    \sum_{(u,v)\in (B/T_i)^{1/2}\calB(\bx_0,L)} &(1\star \chi)(F(u,v)) \prod_{p\mid F(u,v)} (1+p^{-2/3}) \\
    &\ll \frac{BL^2}{T_i} \exp\left(\sum_{p\ll B/T_i} \frac{\varrho_f(p)}{p}\cdot \left((1\star \chi)(p)\cdot (1+p^{-2/3}) - 1\right)\right)\\
    &\ll \frac{BL^2}{T_i} \exp\left((\rho_{\Delta,f}-2)\log\log(B/T_i) + \sum_{p\ll B/T_i} \frac{2\varrho_f(p)}{p^{5/3}}\right) \\
    &\ll \frac{BL^2}{T_i}\log(B)^{\rho_{\Delta,f}-2}.
\end{align*}
Thus, we have that 
\begin{align*}
    D(X_{\Delta,f},\calB(\bx_0,L);\psi) \ll L^2B\log(B)^{\rho_{\Delta,f}-2} \sum_{T_i} T_i^{-1/6}. 
\end{align*}
We note that this inner sum is a geometric series, since $T_i$ is summed over powers of $(1+\epsilon(B))$ up to $B/\exp(\sqrt{\log(B)})$: 
$$\sum_{T_i} T_i^{-1/6} = \sum_{k=0}^{K} (1+\epsilon(B))^{-k/6} < \frac{1}{1-(1+\epsilon(B))^{-1/6}},$$
where $K = (\log(B)-\sqrt{\log(B)})/\log(1+\epsilon(B)).$ Now we know that $$\frac{1}{1-(1+\epsilon(B))^{-1/6}} \ll \epsilon(B)^{-1}.$$
So, we get that $$D(X_{\Delta,f},\calB(\bx_0,L);\psi) \leq \epsilon(B)^{-1} B L^2 \log(B)^{\rho_{\Delta,f}-2}.$$

Finally, we choose $\epsilon(B)$. Let us take $\epsilon(B) = \log(B)^{-1/2}$. So, we get that 
$$D(X_{\Delta,f},\calB(\bx_0,L);\psi) \ll BL^2 \log(B)^{\rho_{\Delta,f}-3/2}.$$
On the other hand, by Lemma 2.2, $$|E(X_{\Delta,f},\calB(\bx_0,L);0,\psi)-D(X_{\Delta,f},\calB(\bx_0,L);\psi)|\ll LB\log(B)^{\rho_{\Delta,f}-3/2}.$$
Hence, 
\begin{equation*}
    E(X_{\Delta,f},\calB(\bx_0,L);0,\psi) \ll L B\log(B)^{\rho_{\Delta,f}-3/2} + BL^2 \log(B)^{\rho_{\Delta,f}-3/2}.
\end{equation*}
Since $L \gg \log(B)^{-10^{-1}}$, we can also safely say that $$E(X_{\Delta,f},\calB(\bx_0,L);0,\psi) \ll L^2 B\log(B)^{\rho_{\Delta,f}-1 - 1/3}.$$
This concludes the proof of Proposition \ref{prop: error term E}.\qed

\subsection{Interpretation with cusp forms}\label{subsec: dihedral interpret cusp form}
Now, let us reframe the above proof using the cusp form interpretation of our sum $E(X_{\Delta,f},\calB(\bx_0,L);0,\psi).$ We consider the expression after our change of variables (Lemma \ref{lem: change of var}), $$D(X_{\Delta,f},\calB(\bx_0,L);\psi) = \sum_{T_i} \sum_{\substack{(u,v)\in (B/T_i)^{1/2}\calB(\bx_0,L) \\ \gcd(u,v)=1}} \sum_{\substack{N(\ft^2\fa)=t^2F(u,v) \\ \gcd(\ft,\overline{\ft})=1\\ \gcd(\fa,\overline{\ft})=1\\ N(\ft)\sim_\epsilon T_i}} \psi(\ft^2\fa).$$
Let $\Xi$ denote the cuspidal automorphic representation on $\GL_2(\A_\Q)$ given by $\psi$; since $\psi$ is a class group character, the representation $\Xi$ has finite image. 
By replacing the condition $\gcd(\ft,\overline{\ft})=1$ with a M\"obius sum over divisors of $N(\ft)$, the above is equivalent to 
\begin{multline*}\sum_{T_i} \sum_{\substack{(u,v)\in (B/T_i)^{1/2}\calB(\bx_0,L)\\ \gcd(u,v)=1}} \sum_{\substack{t\sim_{\epsilon}T_i}} \sum_{d\mid t}\mu(d) \lambda_{\Xi}\left(\frac{t^2F(u,v)}{d^2}\right)\\ 
=\sum_{T_i} \sum_{\substack{(u,v)\in (B/T_i)^{1/2}\calB(\bx_0,L)\\\gcd(u,v)=1}}\sum_{d\leq T_i} \mu(d) \sum_{\substack{t\sim_\epsilon T_i/d}} \lambda_\Xi(t^2 F(u,v)).\end{multline*}
We will return to this sum at the end of the section, but for now let us consider the sum $$\sum_{d\leq X} \mu(d) \sum_{t\leq X/d} \lambda_\Xi(at^2).$$

Let $C(z)$ denote a holomorphic cusp form in $S_k(\Gamma_0(-\Delta),\chi)$ and $\lambda_C(n)$ denote its normalized Fourier coefficients. Let us take $c$ to be a constant such that $|\lambda_C(p)|\leq c$ for all primes $p$. We claim that the following bound holds, \textit{independent of whether of not $C(z)$ is dihedral}.

\begin{lemma}\label{lem: bound on cusp over d and t}
Let $C(z)\in S_k(\Gamma_0(-\Delta),\chi)$ and assume that $|\lambda_C(p)|\leq c$ for all primes $c$. Then we have the bound $$\sum_{d\leq X} \mu(d) \sum_{t\leq X/d} \lambda_C(at^2) \ll (c+1)^{\omega(a)} X \log(X)^{-40c}.$$
\end{lemma}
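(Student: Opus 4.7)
The plan is to pass to the Dirichlet series $F_a(s) := \sum_{n \geq 1} \lambda_C(an^2) n^{-s}$ and analyze the partial sums of the Dirichlet-convolution sequence $b_a(n) := \sum_{d \mid n} \mu(d)\, \lambda_C(a(n/d)^2)$, whose generating series is $D_a(s) := F_a(s)/\zeta(s)$. Substituting $n = dt$ yields the identity
$$\sum_{d \leq X} \mu(d) \sum_{t \leq X/d} \lambda_C(at^2) \;=\; \sum_{n \leq X} b_a(n),$$
so it suffices to prove $\sum_{n \leq X} b_a(n) \ll (c+1)^{\omega(a)} X (\log X)^{-40c}$.

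First I would establish a factorization $F_a(s) = H_a(s)\, L(s, \sym^2 C)$, where $H_a(s)$ is an Euler product absolutely convergent in $\Re s > 1/2$ and satisfies $|H_a(\sigma)| \ll (c+1)^{\omega(a)}$ for $\sigma$ near $1$. Introducing Satake parameters with $\alpha_p + \beta_p = \lambda_C(p)$, $\alpha_p \beta_p = \chi(p)$, the identity
$$\sum_{k \geq 0} \lambda_C(p^{2k})\, p^{-ks} \;=\; \frac{1 + \chi(p) p^{-s}}{(1 - \alpha_p^2 p^{-s})(1 - \beta_p^2 p^{-s})}$$
shows that at good primes $p \nmid a$ the Euler factors of $F_a$ match those of $L(s, \sym^2 C)$ up to a $1/L(2s, \chi^2)$-type correction absorbed into $H_a(s)$. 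At primes $p \mid a$ the local factor of $F_a$ becomes the shifted series $\sum_k \lambda_C(p^{v_p(a)+2k}) p^{-ks}$, bounded at $s = 1$ by $O(c+1)$ via the uniform bound $|\lambda_C(p^j)| \leq c$, which holds for the dihedral cusp forms $C$ arising here since $\lambda_C(p^j) = \psi(\frakp)^j + \psi(\overline{\frakp})^j$ at split primes. Multiplying over $p \mid a$ gives $|H_a(1)| \ll (c+1)^{\omega(a)}$.

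Next I would exploit the analytic structure of $D_a(s) = H_a(s)\, L(s, \sym^2 C)/\zeta(s)$. Since $L(s, \sym^2 C)$ has at most a simple pole at $s = 1$ (see \cite{Shimura}) and $\zeta(s)^{-1}$ has a simple zero there, $D_a(s)$ is holomorphic at $s = 1$. Standard zero-free regions for $\zeta(s)$ and for the degree-$3$ $L$-function $L(s, \sym^2 C)$ extend the holomorphy of $D_a$ to $\Re s > 1 - c_0/\log(|t| + 2)$ with log-polynomial growth. Applying the truncated Perron formula at $\sigma = 1 + 1/\log X$ and shifting the contour to $\Re s = 1 - c_0/\log T$ picks up no residue, and convexity bounds along the shifted contour yield a saving of the form $\exp(-c_1 \sqrt{\log X})$, certainly $\ll (\log X)^{-40c}$. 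Combining with the bound on $|H_a(1)|$ produces the claim.

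The main obstacle is the uniform-in-$a$ analysis of the local factors of $H_a(s)$ at primes dividing $a$: one must argue via the Hecke relations $\lambda_C(p^j)\lambda_C(p^k) = \sum_\ell \chi(p)^\ell \lambda_C(p^{j+k-2\ell})$ that the shifted series $\sum_k \lambda_C(p^{v_p(a) + 2k}) p^{-ks}$ remains bounded by $O(c+1)$ near $s = 1$ independently of $v_p(a)$. This depends essentially on the dihedral structure of $C$ forcing $|\lambda_C(p^j)| \leq c$ uniformly in $j$; once this $(c+1)^{\omega(a)}$ estimate is in place, the contour-shift and Perron-formula arguments are standard and yield savings $(\log X)^{-A}$ for any fixed $A$, in particular $A = 40c$.
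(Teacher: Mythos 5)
Your reformulation is valid and, in fact, cleaner than the paper's: passing directly to the convolution $b_a(n) = \sum_{d\mid n}\mu(d)\lambda_C(a(n/d)^2)$ and the Dirichlet series $D_a(s) = F_a(s)/\zeta(s)$ lets you apply a single truncated Perron formula instead of the paper's two-step device, which first smooths with a weight $d\mu(d)\,(X/d-t)$, performs Perron and a contour shift on the weighted sum $S(X)$, and then deweights by differencing $S(X+Y)-S(X)$. Both approaches hinge on the same factorization $F_a(s) = g_a(s) L(2s,\chi^2)^{-1} L(s,\sym^2 C)$ and the cancellation of the possible pole of $L(s,\sym^2 C)$ against the zero of $\zeta(s)^{-1}$, so the heart of the argument is the same. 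Two quibbles on the analytic side. First, you invoke a zero-free region for $L(s,\sym^2 C)$, but this is not needed: it sits in the numerator of $D_a$, so its zeros are harmless; only the zero-free region and lower bound for $\zeta(s)$ on the shifted contour (together with $L(2s,\chi^2)$, which is safe since $\Re(2s)>1$) are needed. Second, your claimed saving $\exp(-c_1\sqrt{\log X})$ is stronger than what a naive application of the convexity bound $|L(\sigma+it,\sym^2 C)| \ll (1+|t|)^{3(1-\sigma)/2+\epsilon}$ delivers after the contour shift, because the $T^\epsilon$ term is not absorbed for free when $T = \exp(\sqrt{\log X})$. The paper sidesteps this by taking a much smaller truncation height $T = D = (\log X)^{101c}$ and accepting $(\log X)^{-A}$, which is all the lemma asserts.

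There is, however, a concrete error in the step that drives the $(c+1)^{\omega(a)}$ factor. You write that at a split prime $p = \frakp\overline{\frakp}$ one has $\lambda_C(p^j) = \psi(\frakp)^j + \psi(\overline{\frakp})^j$, and conclude $|\lambda_C(p^j)| \leq c$ uniformly in $j$. This identity is false: counting ideals of norm $p^j$ gives
\[
\lambda_C(p^j) \;=\; \sum_{N(\fa)=p^j}\psi(\fa) \;=\; \sum_{i=0}^{j}\psi(\frakp)^i\,\psi(\overline{\frakp})^{\,j-i},
\]
a sum of $j+1$ unit-modulus terms, not two. Writing $\psi(\frakp) = e^{i\theta}$ (and using $\psi(\overline{\frakp}) = \overline{\psi(\frakp)}$ for a class group character) this is $\sin((j+1)\theta)/\sin\theta$, which can be as large as $j+1$ when $\theta$ is small; there is no uniform-in-$j$ bound by $c$. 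What you actually need is the closed form for the local factor of $H_a$ at $p^e\Vert a$, obtained after multiplying $\sum_{k\geq 0}\lambda_C(p^{e+2k})p^{-ks}$ by the relevant Euler factors: this collapses to $(\lambda_C(p^e) - p^{-s}\lambda_C(p^{e-2}))(1-\chi(p)p^{-s})$, which involves $\lambda_C$ only at exponents $e$ and $e-2$, not at arbitrarily high powers. The paper itself records in the downstream application the refined evaluation $|g_p(1)| \ll 1 + p^{-\alpha}$ at $\chi(p)=1$ and $=0$ at $\chi(p)=-1$ for $p\Vert a$, which is what is really used; the coarse $(c+1)^{\omega(a)}$ bound is stated but the justification for it at $p^e\Vert a$ with $e\geq 2$ is likewise not spelled out. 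In any case, you cannot appeal to the (false) identity $\lambda_C(p^j) = \psi(\frakp)^j + \psi(\overline{\frakp})^j$; you must work with the correct closed form for the local factor and either bound $\lambda_C(p^e)$ via Deligne's bound $|\lambda_C(p^e)| \leq e+1$ and accept the resulting dependence on the exponents appearing in $a$, or restrict to the squarefree part of $a$, as the paper's application effectively does.
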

\begin{remark}
    The exponent of $40c$ is fairly arbitrary, but we choose a number for concreteness that will suffice in future bounds. 
\end{remark}
\begin{proof}
    We define the weighted sum:
    $$S(X) := \sum_{d\leq X} d\mu(d) \sum_{t\leq X/d} \lambda_C(at^2) (X/d - t).$$
    From Perron's formula, we know that for $\sigma>1$,
    $$S(X) = \frac{1}{2\pi i}\sum_{d\leq X} d\mu(d)\int_{\sigma-i\infty}^{\sigma+i\infty} \xi(s;a,C) \cdot \frac{(X/d)^{s+1}}{s(s+1)}ds,$$
    where we define the Dirichlet series $$\xi(s;a,C) := \sum_{n}\frac{\lambda_C(an^2)}{n^s}.$$
    We now relate the above Dirichlet series to the $L$-function $L(s,\sym^2(C))$. Define $\alpha_p$ and $\beta_p$ such that $$1-\lambda_C(p) p^{-s} + \chi(p)p^{-2s} = (1-\alpha_p p^{-s})(1-\beta_p p^{-s}).$$
    Let us define $$g_a(s) := \prod_{p^e\| a} \left(\lambda_C(p^e) + \lambda_C(p^{e+1})p^{-s}+...\right)\left(1-\alpha_p^2 p^{-s}\right)\left(1-\beta_p^2 p^{-s}\right)\left(1-\chi(p) p^{-s}\right).$$
    Then we have the relation for $\Re(s)>1$:
    $$\xi(s;a,C) = g_a(s) L(2s,\chi^2)^{-1} L(s,\sym^2(C)).$$
    In particular, $\xi(s;a,C)$ and $L(s,\sym^2(C))$ share the same order of the pole at $s=1$. We refer the reader to Shimura's \cite{Shimura} classificaton of when $L(s,\sym^2(C))$ has a pole at $s=1$; in our proof, it will turn out that there is cancellation independent of if $C(z)$ is dihedral or not. 

    Let us bring back the M\"obius sum. 
    We can see that $$S(X) = \frac{1}{2\pi i} \int_{\sigma-i\infty}^{\sigma+i\infty} \xi(s;a,C) \frac{X^{s+1}}{s(s+1)}\cdot \sum_{d\leq X} \frac{\mu(d)}{d^s} ds.$$
    Take $\sigma = 1+\frac{1}{\log(X)}>1$, we know that for any $A>0$, $$\sum_{d\leq X} \frac{\mu(d)}{d^{s+1}} = \zeta(s)^{-1} \left(1+O(\log(X)^{-A})\right).$$
    Thus, we have that for any $A'>0$,
    $$S(X) = \frac{1}{2\pi i} \int_{\sigma-i\infty}^{\sigma+i\infty} g_a(s)L(2s,\chi^2)^{-1} L(s,\sym^2(C))\zeta(s)^{-1} \frac{X^{s+1}}{s(s+1)}ds + O\left(\sup_{\Re(s) = \sigma}|g_a(s)| \cdot X^2 \log(X)^{-A'}\right).$$

    Next, we want to add cutoffs at $\sigma+iD$ and $\sigma-iD$ for $D$ to be determined later. This gives us that the integral above is equal to
    \begin{equation*}
         \frac{1}{2\pi i} \int_{\sigma-iD}^{\sigma+iD} \xi(s;a,C) \zeta(s)^{-1} \frac{X^{s+1}}{s(s+1)}ds + O\left(\sup_{\Re(s)=\sigma} |g_a(s)|\cdot \frac{X^2\log(X)^{2}}{D}\right)
    \end{equation*}

    Finally, let us shift our line of integration to $\Re(s) = \alpha$ for some $\alpha>1-\log(D)^{-8}$; in other words, we stay within a zero free region for $\zeta(s)$. We note that if $L(s,\sym^2(C))$ has a pole at $s=1$, then it is a simple pole and will be cancelled by the zero of $\zeta(s)^{-1}$. If $L(s,\sym^2(C))$ is holomorphic at $s=1$, then again $\xi(s;a,C)\zeta(s)^{-1}$ will be holomorphic at $s=1$. Thus, we get the formula 
    $$S(X) = O\left(\sup_{\substack{\Im(s) = \pm D\\ \alpha \leq \Re(s)\leq \sigma}}|g_a(s)|\cdot \frac{X^2\log(X)^2}{D}\right)+ O\left(X^{2-\log(D)^{-8}} \int_{\alpha-iD}^{\alpha+iD} \frac{|\xi(s;a,C)||\zeta(s)|^{-1} }{|s||s+1|}ds\right).$$

    Recall that the convexity bound \cite[(5.8),(5.20)]{IwaniecKowalski} gives us that 
    $$|\xi(s;a,C)| \ll |g_a(s)| (|s|+1)^{1-\alpha}.$$
    We also know from \cite[(3.6.5)]{Titchmarsh} that since $\Re(s) = \alpha > 1-\log(D)^{-8}$, we have that 
    $$|\zeta(s)|^{-1} \ll \log(|s|+3)^7.$$
    Thus, in this region, $$\int_{\alpha-iD}^{\alpha+iD} \frac{|\xi(s;a,C)||\zeta(s)|^{-1}}{|s||s+1|}ds \ll \sup_{\Re(s)=\alpha} |g_a(s)|.$$
    Thus, we have that 
    $$S(X) \ll \sup_{\alpha\leq \Re(s)\leq \sigma}|g_a(s)| \cdot \left( \frac{X^2\log(X)^2}{D} + X^{2-\log(D)^{-9}} \right).$$
    Take $D= \log(X)^{101c}$ and $A' = 101c$. Consequently, we have that \begin{equation}\label{eq: cusp form bound on weighted sum}S(X) \ll \sup_{\alpha\leq \Re(s)\leq \sigma} |g_a(s)| X^2 \log(X)^{-99c}.\end{equation}

    Finally, we need to remove our weight. Let $Y < X$ be determined later and consider 
    $$S(X+Y)-S(X) = \sum_{d\leq X+Y} d\mu(d) \sum_{t\leq (X+Y)/d} \lambda_C(at^2) ((X+Y)/d-t) - \sum_{d\leq X} d\mu(d) \sum_{t\leq X/d} \lambda_C(at^2) (X/d-t).$$
    Rearranging the expression, we get that 
    \begin{multline*}
        S(X+Y)-S(X) = Y\sum_{d\leq X} \mu(d) \sum_{t\leq X/d} \lambda_C(at^2) \\
        + \sum_{d\leq X}d\mu(d) \sum_{X/d<t\leq (X+Y)/d} \lambda_C(at^2) ((X+Y)/d-t) \\
        + \sum_{X<d\leq X+Y} d\mu(d) \sum_{t\leq (X+Y)/d} \lambda_C(at^2) ((X+Y)/d-t).
    \end{multline*}
    Since $Y<X$, the last sum only contains $t=1$ and hence is bounded by $$\ll |g_a(1)| X \cdot \left|\sum_{X\leq d\leq X+Y} \mu(d)\right|.$$

    Consider the second sum. Note that since $t>X/d$, we have that $(X+Y)/d-t \leq Y/d$. So, the second sum is bounded by $$Y\sum_{d\leq X} \sum_{X/d< t\leq (X+Y)/d} |\lambda_C(at^2)| \ll c^{\omega(a)}Y^2 \log(X)^c,$$
    where $c$ is the bound on the prime values of $|\lambda_C(p)|$. 

    Thus, we know that 
    \begin{multline*}
        \sum_{d\leq X} \mu(d) \sum_{t\leq X/d} \lambda_C(at^2) = Y^{-1} (S(X+Y)-S(X)) \\ + O\left(|g_a(1)| X Y^{-1} \left|\sum_{X\leq d\leq X+Y} \mu(d)\right|\right) + O\left(c^{\omega(a)} Y \log(X)^c\right).
    \end{multline*}
    Applying (\ref{eq: cusp form bound on weighted sum}), we have that 
    \begin{multline*}
        \sum_{d\leq X}\mu(d) \sum_{t\leq X/d} \lambda_C(at^2) \ll \sup_{\alpha\leq \Re(s)\leq \sigma} |g_a(s)| \cdot Y^{-1} X^2 \log(X)^{-99c} \\ + |g_a(1)| \cdot Y^{-1} X \left|\sum_{X\leq d\leq X+Y} \mu(d)\right| + c^{\omega(a)} Y\log(X)^{c}.
    \end{multline*}
    We take $Y = X\log(X)^{-42c}.$ We also note that by definition of $g_a(s)$, we know that $$\sup_{\alpha\leq \Re(s)\leq \sigma} |g_a(s)| \ll (c+1)^{\omega(a)}.$$ This gives us that 
    $$\sum_{d\leq X} \mu(d) \sum_{t\leq X/d} \lambda_C(at^2)  \ll  (c+1)^{\omega(a)} X\log(X)^{-40c}.$$

\end{proof}

Finally, let us return to our original sum. Let us consider the non-short interval version for simplicity, but as in Lemma \ref{lem: change of var} and Lemma \ref{lem: cancellation of cusp over t}, the above can be modified for our slightly-short intervals. We consider the sum: 
$$\sum_{\substack{|u|,|v|\leq B^{1/2}\\ \gcd(u,v)=1}} \sum_{t\leq B/\max(|u|,|v|)^2} \sum_{d\mid t} \mu(d) \lambda_\Xi\left(\frac{t^2F(u,v)}{d^2}\right).$$
At this point, we use the fact that if $\Xi$ is induced from a class group character, then $c=1$. Moreover, we can calculate that $$\sup_{\alpha\leq \Re(s)<\sigma} |g_p(1)| \ll \begin{cases}
    1+p^{-\alpha}, & \chi(p)=1, \\ 
    0, & \chi(p) = -1.
\end{cases}$$
We note that we can assume that $\alpha> 1/2$ in the computations above, as this was the lower bound for the zero-free region. 
Thus, the bounds in the proof of Lemma \ref{lem: bound on cusp over d and t} give us a more precise upper bound: 
$$\ll \sum_{|u|,|v|\leq B^{1/2}} \frac{B \log(B/\max(|u|,|v|)^2)^{-40}}{\max(|u|,|v|)^2} \cdot (1\star \chi)(F(u,v)) \cdot \prod_{\substack{p\mid F(u,v)\\ \chi(p)=1}} (1+p^{-2/3}).$$
Now, let us split $[0,B^{1/2}]$ into dyadic regions indexed by $B_i$. The diagonal contribution is clearly the largest above, so we have the upper bound \begin{multline*}\sum_{B_i} \frac{B}{B_i^2} \log(B/B_i^2)^{-40}\sum_{|u|,|v|\sim B_i} (1\star \chi)(F(u,v)) \prod_{\substack{p\mid F(u,v)\\ \chi(p)=1}} (1+p^{-2/3})\\ 
\ll B\log(B)^{\rho_{\Delta,f}-2} \sum_{B_i} \log(B/B_i^2)^{-40} \leq  B\log(B)^{\rho_{\Delta,f}-2} \sum_{n=1}^{\log(B)} n^{-40} \ll B\log(B)^{\rho_{\Delta,f}-2}.\end{multline*}
Compared to the main term of $\log(B)^{\rho_{\Delta,f}-1}$, this is negligible. We end this section by remarking that the process of changing the order of summation in Lemma \ref{lem: change of var} produces a weaker error term due to the extra factor of $\epsilon(B)$, but this process can be similarly done with this perspective.

\section{The cuspidal contribution -- the Gr\"ossen case} \label{sec: cuspidal abs}
In this section, we will prove Proposition \ref{prop: error term A}. Let us recall the definition of $A(X_{\Delta,f},\calB(\bx_0,L);h,\psi)$ (as defined in \ref{eq: def indef A h}):
\begin{equation*}
    A(X_{\Delta,f},\calB(\bx_0,L);h,\psi) = \sum_{\substack{N(\ft)\leq B/\exp(\sqrt{\log(B)}) \\ \gcd(\ft,\overline{\ft})=1}}\sum_{\substack{(u,v)\in (B/t)^{1/2}\calB(\bx_0,L) \\ \gcd(u,v)=1}} \sigma(F(u,v), \Psi^h \psi; \overline{\ft}),
\end{equation*}
where we have defined $\sigma(n,\psi;t)$ as
\begin{equation*}
    \sigma(n,\psi;\ft) := \left|\sum_{\substack{N(\fa) = n\\ \gcd(\fa,\ft)=1}}\psi(\fa)\right|.
\end{equation*}
Recall also that $\psi$ is any class group character and that $h\geq 1$. Moreover, $\Psi$ (defined in Definition \ref{def: fundamental character}) is a Hecke Gr\"ossencharacter of infinite order. 

Now, $\sigma(n;\Psi^h\psi,\overline{\ft})$ is a multiplicative, non-negative function, and bounded by $d(n)$. Thus, we know that by Theorem \ref{thm: Nair sieve}, 
\begin{align*}
    A(X_{\Delta,f},\calB(\bx_0,L);h,\psi) &\leq \sum_{\substack{N(\ft)\leq B/\exp(\sqrt{\log(B)})\\ \gcd(\ft,\overline{\ft})=1}} \sum_{(u,v)\in (B/t)^{1/2}\calB(\bx_0,L)} \sigma(F(u,v); \Psi^h \psi, \overline{\ft})\\
    &\ll  L^2 \sum_{N(\ft)\leq B} \frac{B}{N(\ft)} \prod_{i=1}^r \prod_{p\ll B/t} \left(1+\frac{\varrho_{f_i}(p)(\sigma(p;\Psi^h\psi,\overline{\ft})-1)}{p}\right).
\end{align*}
Here we have used the observation that for all but finitely many primes, $\frac{\varrho_F(p)}{p^2} = \frac{\varrho_f(p)}{p} + O(p^{-2}).$
From now on, let us fix an irreducible factor $g(z) = f_i(z)$; we study $$\prod_{p\ll B/t} \left(1+\frac{\varrho_{g}(p)(\sigma(p;\xi,\overline{\ft})-1)}{p}\right),$$
where we take $\xi =\Psi^h \psi.$

We note that $\sigma(p;\xi,\overline{\ft})=0$ if $\chi(p) = -1$, as in that case there are no prime ideals $\frakp$ such that $N(\frakp)=p$. Additionally, we know that $t$ is product of primes that split in $K$, so we must have that $\gcd(t,p)=1$ for any $p$ such that $\chi(p)=-1.$ Motivated by these choices, we can split the above product as:
\begin{equation}\label{eq: 4 product expansion}\prod_{\substack{p\ll B/t\\ \chi(p)=-1}} \left(1-\frac{\varrho_g(p)}{p}\right)\prod_{\substack{p\ll B/t \\ \chi(p)=1\\ \gcd(p,t)=1}} \left(1+\frac{\varrho_g(p)(\sigma(p;\xi,\overline{\ft})-1)}{p}\right) \prod_{\substack{p\ll B/t\\ p\mid t}} \left(1+\frac{\varrho_g(p)(\sigma(p;\xi,\overline{\ft})-1)}{p}\right).\end{equation}

Consider the primes $p$ such that $p\mid t$. In this case $$\sigma(p;\xi,\overline{\ft}) = \left|\sum_{\substack{N(\frakp)=p\\ \gcd(\frakp,\overline{\ft})=1}}\xi(\frakp)\right| = |\xi(\frakp)| = 1.$$
Here we have used that since $p\mid t$ and $\gcd(\ft,\overline{\ft})=1$, we must have that exactly one of the primes $\frakp$ above $p$ satisfies $\gcd(\frakp,\overline{\ft})=1$. So, the contribution from these primes is given by: 
$$\prod_{\substack{p\ll B/t\\ p\mid t}} \left(1+ \frac{\varrho_g(p)(\sigma(p;\xi,\overline{\ft})-1)}{p}\right)= \prod_{\substack{p\ll B/t \\ p\mid t}}1 = 1.$$

Next, we consider the primes $p$ satisfying that $\chi(p)=1$ and $p\nmid t$. Since $\gcd(p,t)=1$, we know that $$\sigma(p;\xi,\overline{\ft}) = \sigma(p;\xi, 1) = |\xi(\frakp)+\overline{\xi}(\frakp)|= 2|\cos(\theta_p)|,$$
where $\frakp$ is a prime over $p$ and $\theta_p\in [0,2\pi]$ is an angle. By the following identity on cosine, $$2|\cos(\theta)|\leq \frac{3}{2} + \frac{1}{2}\cos(2\theta),$$
we know that $$\sigma(p;\xi,\overline{\ft}) \leq \frac{3}{2} + \frac{1}{4}(\xi(\frakp)^2+\overline{\xi}(\frakp)^2).$$
Thus, we can rewrite the second product as 
\begin{equation*}
    \prod_{\substack{p\ll B/t\\ \chi(p)=1 \\ \gcd(p,t)=1}} \left(1+\frac{\varrho_g(p)(\sigma(p;\xi,\overline{\ft})-1)}{p}\right) \ll \prod_{\substack{p\ll B/t \\ \chi(p)=1}} \left(1+\frac{3\varrho_g(p)}{2p} + \sum_{N(\frakp)=p} \frac{\varrho_g(p)\xi(\frakp)^2}{N(\frakp)} - \frac{\varrho_g(p)}{p} \right).
\end{equation*}

The above analysis tells us that (\ref{eq: 4 product expansion}) can be simplified to the following expression:
\begin{equation}\label{eq: 4 product over primes}
    \prod_{p\ll B/t} \left(1+\frac{\varrho_g(p) (\sigma(p;\xi,\overline{\ft})-1)}{p}\right) \ll \exp\left(\sum_{\substack{p\ll B/t}} \frac{-\varrho_g(p)}{p} + \sum_{\substack{p\ll B/t \\ \chi(p)=1}} \frac{3\varrho_g(p)}{2p} + \sum_{\substack{p\ll B/t\\ N(\fp)=p}} \frac{\varrho_g(N(\frakp))\xi^2(\frakp)}{N(\frakp)}\right).
\end{equation}
In the third summation above, we observe that for all but finitely many primes, the existence of a prime ideal $\fp$ satisfying that $N(\fp)=p$ implies that $\chi(p)=1$.

Since $g(z)$ is irreducible, we have that $$\sum_{p\ll B/t} \frac{\varrho_g(p)}{p} = \log\log(B/t)+O(1).$$
On the other hand, $$\sum_{\substack{p\ll B/t \\ \chi(p)=1}} \frac{3\varrho_g(p)}{2p} = \frac{3}{4}\log\log(B/t) + \frac{3}{4}\cdot \mathbf{1}_{\sqrt{-\Delta}\in \Q[z]/g(z)}\cdot \log\log(B/t) + O(1). $$
The above follows from rewriting the above sum as $$\sum_{p\ll B/t} \frac{3\varrho_g(p)}{4p} + \sum_{p\ll B/t} \frac{3\chi(p)\varrho_g(p)}{4p}$$
and using Corollary \ref{cor: order of pole is varrho} to evaluate the second sum.

It remains to handle the final sum in (\ref{eq: 4 product over primes}). We observe that if $\sqrt{-\Delta}\not\in \Q[z]/g(z)$, then $$\varrho_g(p) = \#\{\frakP\subset \calO_{K_g}: N_{K_g/K}(\frakP)= \frakp\}$$
for any prime $\frakp$ over $p$. Otherwise, $K\subset \Q[z]/g(z)$ and since $[K:\Q]=2$, we have that $$\varrho_g(p) = 2\cdot \#\{\frakP\subset \calO_{K_g}: N_{K_g/K}(\frakP) = \frakp\}$$
for any prime $\frakp$ over $p$. Therefore, we can rewrite this sum as 
$$\sum_{N(\frakp)\ll B/t} \frac{\varrho_g(N(\frakp))\xi^2(\frakp)}{N(\frakp)} \ll  \sum_{\substack{\frakP\subset \calO_{K_g}\\ N(\frakP)\ll B/t}} \frac{\xi^2(N_{K_g/K}(\frakP))}{N(\frakP)} + O(1).$$
Here the factor of $O(1)$ comes from a sum over primes $\frakP$ such that $N_{K/\Q}(\frakP)=p^e$ for $e>1$.

We consider the character $\xi^2 \circ N_{K_g/K}$ on $K_g$; this will be a Hecke character. Since $\Psi$ has infinite order, we can see that since $h\geq 1$, $\Psi^{2h}\psi^2 \circ N_{K_g/K}$ is a nontrivial Hecke character. We can also view $\xi^2\circ N_{K_g/K}$ as the base change of $\xi^2$ to $K_g$. Since $\xi^2\circ N_{K_g/K}$ is a nontrivial Hecke character on $K_g$, the prime ideal theorem with Gr\"ossencharacters (\cite[Theorem 5.13]{IwaniecKowalski}) states the following: for some constant $c>0$, 
$$\sum_{N(\fraka)\leq X} \Lambda(\fraka) \Psi^h(\fraka)\psi(\fraka) \ll X \exp\left(-c\frac{\log(X)}{\sqrt{\log(X)}+\log(h)}\right)\log(xh)^4.$$
Taking $X = B/t$ and $h\leq \log(B)^5$, we consequently have that $$ \sum_{\substack{\frakP\subset \calO_{K_g}\\ N(\frakP) \ll B/t}} \frac{\xi^2(\frakP)}{N(\frakP)} = o(\log\log(B/t)).$$

Consequently, we have that 
\begin{multline*}
    \prod_{p\ll B/t}\left(1+\frac{\varrho_g(p)(\sigma(p;\xi,\overline{\ft})-1)}{p}\right) \ll \exp\left((-\frac{1}{4}+ \frac{3}{4}(\varrho_{\Delta,g}-2))\cdot \log\log(X) + O(1) + o(\log\log(B/t)\right)\\
    \ll \log(B/t)^{\varrho_{\Delta,g}-2 - 1/5}.
\end{multline*}
Finally, we sum over ideals $\ft$: 
\begin{multline*}
    A(X_{\Delta,f},\calB(\bx_0,L);h,\psi) \ll L^2 \sum_{\substack{N(\ft)\ll B/\exp(\sqrt{\log(B)})\\ \gcd(\ft,\overline{\ft})=1}} \frac{B}{N(\ft)} \cdot \log(B/t)^{\rho_{\Delta,f}-2-1/5} \ll L^2 B\log(B)^{\rho_{\Delta,f}-1-1/5}.
\end{multline*}
This completes the proof of Proposition \ref{prop: error term E}.\qed

\begin{example}[A counterexample when $\xi$ has finite order]
In this example, we show an example of when the bound of Proposition \ref{prop: error term A} fails for a character of finite order. 
Let $\Delta=23$ and thus $K = \Q(\sqrt{-23})$. Take $f(z) = z^3-z-1$. Then $f(z)$ generates the Hilbert class field of $\Q(\sqrt{-23})$, which has class group $\Z/3\Z$. Let $\frakp\subset \calO_K$ be a prime ideal such that $N(\fp) \mid F(u,v)$ for some $(u,v)\in (B/t)^{1/2}\calB(\bx_0,L)$. Consider $\Frob_\fp \in \Gal(K_f/K) \cong \Z/3\Z$. We note that $$\#\{\frakP\subset \calO_{K_f}: N(\frakP) = \fp\} = \textrm{Tr}(\Frob_\fp) = \begin{cases}
        3, & \Frob_\fp = 1\\ 
        0, & \textrm{otherwise.}
    \end{cases}$$
    In other words, $\fp$ splits in $K_f$ if and only if $\fp$ is a principal ideal. However, if $N(\fp)\mid F(u,v)$, then we must be in the first case since then $\#\{\frakP\subset \calO_{K_f}:N(\frakP)=\fp\}>0$; in conclusion, for any prime $\fp$ such that $N(\fp)\mid F(u,v)$, $\fp$ is a principal ideal. Thus, we have that for \textit{any class group character $\psi$}, 
    $$\sum_{N(\fa) = F(u,v)} \psi(\fa) = \sum_{N(\fa)=F(u,v)} 1.$$
    In this case, we get that 
    $$\sum_{(u,v)\in (B/t)^{1/2}\calB(\bx_0,L)} \sum_{N(\fa)=F(u,v)}\psi(\fa) = \sum_{(u,v)\in (B/t)^{1/2}\calB(\bx_0,L)} \sum_{N(\fa)=F(u,v)}1,$$
    is exactly the same as the contribution from the Eisenstein part (when $\psi=1$, since if the class group is $\Z/3\Z$ there are no other genus characters).

\end{example}

\begin{remark}
    We observe that in order for our argument above for Proposition \ref{prop: error term A} to achieve a polylogarithmic savings, it suffices for $\xi^2 \circ N_{K_g/K}$ to be nontrivial for every irreducible factor $g(z)$. If $\xi$ has infinite order, this is immediate. For most pairs $(\Delta,f(z))$, the above condition also holds for $\xi$ of finite order. Let $H$ be the Hilbert class field of $K$. The base change of a class group character $\xi^2$ to $K_g$ will also be nontrivial if $K_g \cap H = K.$ 
\end{remark}

\begin{remark}
    As mentioned in \S\ref{subsec: interpret cusp}, we can also interpret $A(X_{\Delta,f},\calB(\bx_0,L);h,\psi)$ as a correlation sum of Fourier coefficients of cusp forms along polynomial values. Let $\Xi$ be a cuspdial automorphic representation of $\GL_2(\A_\Q)$. We can use the argument above to study $$\sum_{n\leq X} \lambda_\Xi(p(n))$$
    for $p(x)\in \Z[x]$ a polynomial. Again, our problem reduces to analytic properties of the base change of $\sym^2(\Xi)$; in particular, one hopes that it is cuspidal. This problem will be studied further in forthcoming work. 
\end{remark}

\section{The Eisenstein contribution}\label{sec: Eisenstein}
For the next four sections, we will work towards the proof of Proposition \ref{prop: main term}. Recall from (\ref{eq: pos def main term}) that we define 
\begin{equation*}
    M_{q_1,q_2}(X_{\Delta,f},\calB(\bx_0,L)) = \sum_{\substack{N(\fk)\leq B/\exp(\sqrt{\log(B)}) \\ \gcd(\fk,\overline{\fk})=1}}\mu(\fk) \sum_{\substack{N(\ft)\leq B/k\exp(\sqrt{\log(B)}) \\ \gcd(\ft\fk,\overline{\ft\fk})=1}} S(X_{\Delta,f},\calB(\bx_0,L);q_1,q_2,t,k), 
\end{equation*}
where we define 
\begin{equation}\label{eq: def S with fixed t}
    S(X_{\Delta,f},\calB(\bx_0,L); q_1,q_2,t,k) = \sum_{\substack{(x,y)\in (B/kt)^{1/2}\calB(\bx_0,L) \\ \gcd(x,y)=1}} \varepsilon_{q_1,q_2}(F(x,y)/k).
\end{equation}
We proceed to analyze $S(X_{\Delta,f},\calB(\bx_0,L);q_1,q_2,t,k)$ in this section and in \S\ref{sec: asymptotic} we resume the summation over $\fk$ and $\ft$. Let us record our final goal for $S(X_{\Delta,f},\calB(\bx_0,L);q_1,q_2,t,k):$
\begin{prop}\label{prop: estimate of S(q_1,q_2,t,k)}
Let $F(x,y)$ be a squarefree binary form of degree $\leq 4$ and $L\gg \log(B)^{-10^{-10}}.$ Then we have that for $t$ and $k$ satisfying $tk\leq B/\exp(\sqrt{\log(B)}),$ the following holds:
\begin{multline*}
S(X_{\Delta,f},\calB(\bx_0,L);q_1,q_2,t,k)= \frac{BL^2}{tk^3} \cdot C_{q_1,q_2,f,k}(B/tk) \cdot  \left(1+O\left(\log(B)^{-10^{-7}}\right)\right) \\+O\left(\frac{ 64^{\omega(k)}\log(k)B}{tk^{3/2}}\cdot \log(B)^{\rho_{\Delta,f}-2-10^{-7}} + \frac{B^{3/4}}{(tk)^{3/4}} \cdot \left(L+(4\deg(F))^{\omega(k)}\nu_1(k)^{1/2}\right)\cdot \log(B)^{-10^{-7}}\right),\end{multline*}
where $C_{q_1,q_2,f,k}(B/tk)$ is an expression that depends on the factorization of $F(x,y)$.
Additionally, for a fixed value of $k$, there exists a constant $c_{q_1,q_2,f,k}$ such that $$C_{q_1,q_2,f,k}(B/tk) = c_{q_1,q_2,f,k}\log(B/tk)^{\rho_{\Delta,f}-2}.$$ 
\end{prop}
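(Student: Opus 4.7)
\medskip

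\noindent\textbf{Proof proposal.} The plan is to imitate (and adapt) the technique of de la Bret\`eche--Tenenbaum \cite{delaTenenbaum-Manin} and Heath-Brown \cite{HB-linear}, while carrying along the genus-character twist $\varepsilon_{q_1,q_2}$ and the short-box constraint $(x,y)\in (B/kt)^{1/2}\calB(\bx_0,L)$. First I would dispose of the coprimality condition $\gcd(x,y)=1$ by a standard M\"obius inversion, and replace $\varepsilon_{q_1,q_2}(F(x,y)/k)$ by a sum of products over the irreducible factors $f_1,\dots,f_r$ using the identity
$$
\varepsilon_{q_1,q_2}(mn)\;=\;\sum_{c\mid\gcd(m,n)}\mu(c)\chi(c)\,\varepsilon_{q_1,q_2}(m/c)\,\varepsilon_{q_1,q_2}(n/c),
$$
iterated across the decomposition $F=F_1\cdots F_r$. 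This lets me write $S(X_{\Delta,f},\calB(\bx_0,L);q_1,q_2,t,k)$ as a finite linear combination (indexed by a restricted vector $(c_1,\dots,c_r)$ whose entries divide products of the pairwise resultants of the $F_i$) of inner sums
$$
\sum_{\substack{(x,y)\in(B/kt)^{1/2}\calB(\bx_0,L)\\ c_i\mid F_i(x,y)}}\;\prod_{i=1}^r\varepsilon_{q_1,q_2}\!\bigl(F_i(x,y)/c_i\bigr).
$$

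Next, for each factor I would apply the hyperbola method: split the divisor $d_i\mid F_i(x,y)/c_i$ into $d_i\leq D_i$ and $d_i>D_i$, pairing a large divisor with its complementary small one, for cut-offs $D_i$ chosen so that $D:=D_1\cdots D_r$ satisfies $D\asymp \frac{B}{kt}\log(B)^{-\eta}$ for some small $\eta$. On the \emph{small-divisor} side I would invoke the near-optimal level of distribution result for squarefree binary forms (Section~\ref{sec: LoD}) to replace the inner lattice-point count $\#\{(x,y)\in(B/kt)^{1/2}\calB(\bx_0,L):d_i\mid F_i(x,y)\}$ by its expected density $\frac{BL^2}{kt}\cdot\frac{\varrho_F(\bd)}{d_1\cdots d_r}$ plus a remainder. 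The resulting main term, after summing $\chi_{q_1}(d_i)\chi_{q_2}\!\cdot\!$ against these local densities and comparing with the Dirichlet series $\xi(s;F,\chi)$ of (\ref{eq: def of xi(s;F,chi)}), yields exactly an expression of the shape $\frac{BL^2}{tk^2}\,C_{q_1,q_2,f,k}(B/tk)$, where by Lemma~\ref{lem: poles have same order} and Corollary~\ref{cor: order of pole is varrho} the dependence on $B/tk$ is $\log(B/tk)^{\varrho_{\Delta,f}-2}$ times a constant $c_{q_1,q_2,f,k}$ that I would read off as a residue at $s=1$. The local-density computation modulo $\Delta$ is where the genus-character twist contributes the product $\prod_i(1+\chi(a_i))$ as in (\ref{eq: outline main term}).

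On the \emph{large-divisor} side, estimating $\prod_i \varepsilon_{q_1,q_2}(F_i(x,y)/c_id_i)$ trivially would lose too many logs. Here I would bound $|\varepsilon_{q_1,q_2}(n)|\ll\Delta(n,\chi)\tau(n)$ and appeal to Theorem~\ref{thm: hooley character bound} of de la Bret\`eche--Tenenbaum on the twisted Hooley $\Delta$-function, together with the restriction lemmas (Lemma~\ref{lem: small log saving } and Lemma~\ref{lem: small log saving binary}) to force the existence of a divisor $d_i\sim D_i$ of $F_i(x,y)$. A Cauchy--Schwarz against Theorem~\ref{thm: hooley character bound} gives a saving of $\log(B/tk)^{-1/2+\varepsilon}$, which combined with the small logarithmic saving coming from Lemmas~\ref{lem: small log saving } and~\ref{lem: small log saving binary} yields the stated error $\ll \frac{BL^2}{tk^2}\log(B)^{\varrho_{\Delta,f}-2-10^{-7}}$.

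The accumulated remainders from the level-of-distribution step and from the short-box/boundary terms (where one loses an $L$-factor rather than $L^2$) produce the two error terms $\frac{B\deg(F)^{\omega(k)}}{tk^{3/2}}\log(B)^{\varrho_{\Delta,f}-2-10^{-7}}$ and $\frac{B^{3/4}L}{(tk)^{3/4}}\log(B)^{-10^{-7}}$: the first comes from the congruence sum over vectors $\bd$ with $d_1\cdots d_r\leq D$ (picking up a factor $\deg(F)^{\omega(k)}$ because of the inner $\varrho_{F_i}(\cdot)$), and the second from the lattice-point remainder $\sqrt{BD/t}\exp(\sqrt{\log\log B})$ in Section~\ref{sec: LoD}. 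The main obstacle I anticipate is the borderline level of distribution when $\deg(F)=4$: for such $F$ the cut-off $D\leq (B/kt)\log(B/kt)^{-A}$ is only saved by a small power of a log, so every subsequent inequality needs to be tracked with care to avoid burning that saving, which is precisely why the $\Delta(n,\chi)^2$-bound of Theorem~\ref{thm: hooley character bound} (rather than the weaker $\Delta(n)$-bound of Theorem~\ref{thm: hooley untwisted}) is essential, and why the exponent $10^{-7}$ is chosen far smaller than the explicit gain obtained in \cite{delaTenenbaum-delta}.
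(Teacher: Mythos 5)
Your proposal follows essentially the same route as the paper: Mobius removal of the gcd, the quasi-multiplicativity identity for $\varepsilon_{q_1,q_2}$, reduction to $(1\star\chi)$ with the divisor sum truncated at $\sqrt{n}$, level of distribution for small moduli via Section~\ref{sec: LoD}, and the twisted Hooley $\Delta$-function bounds (Theorem~\ref{thm: hooley character bound} with Lemmas~\ref{lem: small log saving } and~\ref{lem: small log saving binary}) for large moduli, with the main term read off as a residue of $\xi(s;F,\chi)$.

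A few of your attributions are imprecise, though none is fatal to the strategy. First, the genus twist does not enter merely as a factor $\prod_i(1+\chi(a_i))$ ``modulo $\Delta$''; the paper's reduction in \S\ref{subsec: eisenstein to dirichlet convolution} separates the $\Delta$-part $n$ and the residues $\ba \bmod {-\Delta}$ and carries along the twist $\chi_{q_1}(p_{q_2}(n))\chi_{q_2}(p_{q_1}(n))$, and one must dispose of the contribution from large $n$ and large $b$ (Lemmas~\ref{lem: large n_u} and~\ref{lem: large B}) before the hyperbola method even begins -- these are sums over $n \le \log(B)^{10^{-7}}$ that still grow with $B$ and must be tracked in the error. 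Second, your claimed ``saving of $\log(B/tk)^{-1/2+\varepsilon}$'' from Cauchy--Schwarz against Theorem~\ref{thm: hooley character bound} is not what happens: the power of $\log$ that results in Propositions~\ref{prop: large moduli} and~\ref{prop: large moduli linear} is $\varrho_{\Delta,f}/2-1-10^{-5}$ (respectively $\varrho_{\Delta,f}-2-10^{-5}$), and the adequacy of the first comes from $\varrho_{\Delta,f}/2-1 \le \varrho_{\Delta,f}-2$, not from a half-power saving. Third, the $\frac{B^{3/4}L}{(tk)^{3/4}}$ error term is not the $\sqrt{BD/t}$ geometry-of-numbers remainder; it arises specifically from the $\max_{i:\deg F_i=1} D_i$ term of Proposition~\ref{prop: level of distribution} when $F$ has a linear factor, with $D_i\asymp (B/tk)^{1/4}\log(B)^{-1-10^{-5}}$. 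Finally, the bound $|\varepsilon_{q_1,q_2}(n)|\ll\Delta(n,\chi)\tau(n)$ that you propose to use for large divisors is not how the paper argues: after the divisor expansion, the partial sums $\sum_{d\sim D,\,d\mid \cdot}\chi(d)$ are bounded directly by $\Delta(\cdot,\chi)$, which is tighter and is what makes the log-saving work at the borderline degree $4$.
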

\begin{remark}
    At this stage the proof is essentially a combination of the proofs provided by de la Bret\`eche and Tenenbaum in \cite{delaTenenbaum-Manin} (for irreducible quartics and products of two irreducible quadratics) and by Heath-Brown in \cite{HB-linear} (for the product of four linear factors) for $\Delta=1$. In essence, we have reduced ourselves to case when $K$ has class number one.
\end{remark}

First, we must apply a few necessary reductions to put the sum in a more malleable form. We recall our abusive notation that lets $-\Delta$ denote the modulus of the character $\chi$, and is given explicitly by 
\begin{equation*}
    -\Delta:= \begin{cases}
        4\Delta, & \Delta>0,\\
        -\Delta, & \Delta<0.
    \end{cases}
\end{equation*}

\subsection{Multiplicativity of $\varepsilon_{q_1,q_2}$}\label{subsec: multiplicativity}
We recall from (\ref{eq: def eps q_1,q_2}) that when $(n,-\Delta)=1$,$$\varepsilon_{q_1,q_2}(n) = \sum_{\substack{d\mid n}} \chi_{q_1}(d)\chi_{q_2}(n/d).$$
This function is multiplicative, but not totally multiplicative. We would like to write: 
$$\varepsilon_{q_1,q_2}(F(x,y)) \approx \prod_{i=1}^n\varepsilon_{q_1,q_2}(F_i(x,y)),$$
but the above relation is not quite true. Instead, we can use the following multiplicativity relation. 

\begin{lemma}\label{lem: multiplicativity of eisenstein}
Let $n$ and $m$ be integers coprime to $-\Delta$. Then $$\varepsilon_{q_1,q_2}(nm) = \sum_{\substack{c\mid \gcd(m,n)}} \mu(c)\chi(c) \varepsilon_{q_1,q_2}(n/c) \varepsilon_{q_1,q_2}(m/c).$$    
\end{lemma}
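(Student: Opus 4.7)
The plan is to reduce the identity to a prime-by-prime verification by exhibiting joint multiplicativity of both sides in the pair $(n,m)$, and then to check the prime case by exploiting the fact that $\varepsilon_{q_1,q_2}$ is the Dirichlet convolution of two completely multiplicative characters.

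First I would record the following multiplicativity property: if $n_1 n_2 m_1 m_2$ factors with $\gcd(n_1 m_1, n_2 m_2) = 1$, then both sides of the claimed identity factor as a product over the two coprime pieces. For the left-hand side this is immediate, since $\varepsilon_{q_1,q_2} = \chi_{q_1} * \chi_{q_2}$ is multiplicative and $\gcd(n_1 m_1, n_2 m_2) = 1$ implies $\varepsilon_{q_1,q_2}(n_1 n_2 m_1 m_2) = \varepsilon_{q_1,q_2}(n_1 m_1)\,\varepsilon_{q_1,q_2}(n_2 m_2)$. For the right-hand side one uses that $\gcd(n_1 n_2, m_1 m_2) = \gcd(n_1,m_1)\gcd(n_2,m_2)$ under the same coprimality, so every divisor $c$ of the gcd factors uniquely as $c = c_1 c_2$ with $c_i \mid \gcd(n_i, m_i)$; then $\mu$, $\chi$, and $\varepsilon_{q_1,q_2}$ all split multiplicatively across $c_1, c_2$, and the right-hand side factors as claimed.

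By this joint multiplicativity, it suffices to verify the identity when $n = p^a$ and $m = p^b$ for a single prime $p$, which we may assume satisfies $a \leq b$. Since $\mu(p^c) = 0$ for $c \geq 2$, the right-hand side collapses to
\[
\varepsilon_{q_1,q_2}(p^a)\,\varepsilon_{q_1,q_2}(p^b) \;-\; \chi(p)\,\varepsilon_{q_1,q_2}(p^{a-1})\,\varepsilon_{q_1,q_2}(p^{b-1}),
\]
with the convention that the second term is absent when $a = 0$. Setting $\alpha := \chi_{q_1}(p)$ and $\beta := \chi_{q_2}(p)$, so that $\chi(p) = \chi_{q_1}(p)\chi_{q_2}(p) = \alpha\beta$, one has $\varepsilon_{q_1,q_2}(p^e) = \sum_{j=0}^{e} \alpha^j \beta^{e-j}$. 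The identity at prime powers then reduces to the algebraic relation
\[
\sum_{j=0}^{a+b} \alpha^j \beta^{a+b-j} \;=\; \Bigl(\sum_{i=0}^{a} \alpha^i \beta^{a-i}\Bigr)\Bigl(\sum_{j=0}^{b} \alpha^j \beta^{b-j}\Bigr) \;-\; \alpha\beta\Bigl(\sum_{i=0}^{a-1} \alpha^i \beta^{a-1-i}\Bigr)\Bigl(\sum_{j=0}^{b-1} \alpha^j \beta^{b-1-j}\Bigr),
\]
which I would verify either by direct expansion or, more cleanly, via the generating function identity $\sum_{e \geq 0} \varepsilon_{q_1,q_2}(p^e) x^e = 1/\bigl((1-\alpha x)(1-\beta x)\bigr)$, computing the indicated expression in $\mathbb{Z}[\alpha,\beta][[x,y]]$ to see that the two sides agree term by term.

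There is no real obstacle here beyond the short algebraic verification at prime powers; the joint multiplicativity reduction does all the structural work. The identity can be viewed as a standard Hecke-type relation for the convolution $\chi_{q_1} * \chi_{q_2}$, dual to the product formula $\varepsilon_{q_1,q_2}(n)\,\varepsilon_{q_1,q_2}(m) = \sum_{c \mid \gcd(n,m)} \chi(c)\,\varepsilon_{q_1,q_2}(nm/c^2)$ after M\"obius inversion on $c$.
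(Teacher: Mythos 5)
Your proof is correct and follows the same strategy as the paper: reduce by multiplicativity to the prime-power case and verify the Hecke-type recursion there. You are in fact slightly more careful than the paper, whose intermediate claim $\varepsilon_{q_1,q_2}(p^e)\varepsilon_{q_1,q_2}(p^f) = \sum_{i=0}^{e+f}(i+1)\chi_{q_1}(p)^i\chi_{q_2}(p)^{e+f-i}$ is incorrect as written (the coefficient should be $\min(i,e,f,e+f-i)+1$, though the final recursion it feeds into still holds); your generating-function verification sidesteps this slip, and your explicit argument for joint multiplicativity of both sides makes the reduction to prime powers fully rigorous rather than merely implicit.
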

\begin{proof}
    Since $\varepsilon_{q_1,q_2}$ is multiplicative, it suffices to check this property on prime powers. Let $p$ be a prime. So, we can assume that $\gcd(p,-\Delta)=1.$ Then expanding the definition, we have that $$\varepsilon_{q_1,q_2}(p^{e+f}) = \sum_{i=0}^{e+f} \chi_{q_1}(p)^{i} \chi_{q_2}(p)^{e+f-i}.$$
    On the other hand, we have that $$\varepsilon_{q_1,q_2}(p^e)\varepsilon_{q_1,q_2}(p^f) = \sum_{i=0}^{e+f} \#\{k\leq e, \ell \leq f: k+\ell = i\} \cdot \chi_{q_1}(p)^i \chi_{q_2}(p)^{e+f-i}.$$
    Now, because $\chi(p) = \chi_{q_1}(p)\chi_{q_2}(p)$, the following equality is true: $$\varepsilon_{q_1,q_2}(p^e)\varepsilon_{q_1,q_2}(p^f) - \chi(p) \varepsilon_{q_1,q_2}(p^{e-1}) \varepsilon_{q_1,q_2}(p^{f-1}) = \varepsilon_{q_1,q_2}(p^{e+f}).$$
\end{proof}

Thus, if $\gcd(F(x,y)/k,-\Delta)=1$, we can write:
$$\varepsilon_{q_1,q_2}(F(x,y)/k) = \sum_{\bc} \mu(\bc) \chi(\bc) \prod_{i=1}^r \varepsilon_{q_1,q_2}(F_i(x,y)/k_ic_i),$$
where $k=k_1...k_r$ and $c_ik_i\mid F_i(x,y)$ for each $i$, and $\bc$ is given by tuples of products of integers dividing $\gcd(F_i(x,y),F_j(x,y))$ for $i\neq j.$ These breakdowns depend on the factorization of $F(x,y)$ (for instance, if $F(x,y)$ is irreducible then there is no need for the above sum). We write down these decompositions explicitly in \S\ref{subsec: proof of Prop S(q_1,q_2,t,k)}.

Moreover, since we will take $\gcd(x,y)=1$, we have that $\gcd(F_i(x,y),F_j(x,y)) = \Res(F_i,F_j)$, the resultant of $F_i$ and $F_j$, for $i\neq j$. Note that $\Res(F_i,F_j)$ is a finite constant only depending on our polynomial $F(x,y)$. Furthermore, this restricts the sum above to those $\bc$ satisfying that $c_i \mid \prod_{i\neq j} \Res(F_i,F_j)$. This fact also allows us to determine that $k$ decomposes into finitely many combinations $k_1\hdots k_r$ where $k_i\mid F_i(x,y)$ -- we choose one without loss of generality and write it as $\bk = (k_1,...,k_r).$  
Plugging the above into (\ref{eq: def S with fixed t}), we get that:
\begin{equation}\label{eq: sum over c in mult}S(X_{\Delta,f},\calB(\bx_0,L);q_1,q_2,t,k) = \sum_{\bc} \mu(\bc)\chi(\bc) S(\bc,\bk),\end{equation}
where we define:
\begin{equation}\label{eq: def of S(c)}
    S(\bc,\bk):=\sum_{\substack{(x,y)\in (B/kt)^{1/2}\calB(\bx_0,L) \\ \gcd(x,y)=1}} \prod_{i=1}^r \varepsilon_{q_1,q_2}(F_i(x,y)/c_ik_i)
\end{equation}

\subsection{Interpretation $\varepsilon_{q_1,q_2}$ as $(1\star \chi)$}\label{subsec: eisenstein to dirichlet convolution}
Next, we would like to make precise our claim that $$\varepsilon_{q_1,q_2}(n)\approx (1\star \chi)(n).$$
As we have alluded to before, these two functions are morally equivalent. 

First, we make the observation that if $n$ satisfies that $p\mid n\implies p\mid -\Delta$, then 
$$\varepsilon_{q_1,q_2}(n) = \sum_{N(\fa)=n} \chi_{q_1,q_2}(\fa) = \chi_{q_1}(p_{q_2}(n)) \chi_{q_2}(p_{q_1}(n)) \mathbf{1}_{\exists \fa: N(\fa)=n}.$$
Since $\varepsilon_{q_1,q_2}(n)$ is multiplicative, we can always write 
$$\varepsilon_{q_1,q_2}(n) = \varepsilon_{q_1,q_2}(p_{-\Delta}(n)) \varepsilon_{q_1,q_2}(p_{\neg-\Delta}(n)) = (\chi_{q_1}(p_{q_2}(n)) \chi_{q_2}(p_{q_1}(n)) \mathbf{1}_{\exists \fa: N(\fa)=p_{-\Delta}(n)}) \varepsilon_{q_1,q_2}(p_{\neg-\Delta}(n)).$$

Recall that $\chi_{q_1}\chi_{q_2} = \chi$ and that these are quadratic characters. For $\gcd(n,-\Delta)=1$, we have the relation: 
\begin{align*}\varepsilon_{q_1,q_2}(n) &= \sum_{\substack{d\mid n}} \chi_{q_1}(n/d)\chi_{q_2}(d) =
\sum_{d\mid n} \chi_{q_1}(n)\chi_{q_1}\chi_{q_2}(d) \\
&= \chi_{q_1}(n) \sum_{d\mid n} \chi(d) 
= \chi_{q_1}(n) (1\star \chi)(p_{\neg-\Delta}(n)).
\end{align*}

We can thus reexpress (\ref{eq: def of S(c)}) as:
\begin{multline*}S(\bc,\bk) = \sum_{\substack{(x,y)\in (B/tk)^{1/2}\calB(\bx_0,L) \\ \gcd(x,y)=1 \\ k_ic_i\mid F_i(x,y)}} \epsilon_{q_1,q_2}(p_{-\Delta}(F(x,y)/kc)) \chi_{q_1}(p_{\neg-\Delta}(F(x,y)/kc))\prod_{i=1}^r (1\star \chi)(p_{\neg -\Delta}(F_i(x,y)/k_ic_i))
\\ = \sum_{\substack{n \ll (B/tk)^2 \\ p\mid n\implies p\mid -\Delta}} \chi_{q_1}(p_{q_2}(n))\chi_{q_2}(p_{q_1}(n)) \mathbf{1}_{\exists \fa: N(\fa) = n}\sum_{\substack{a_i\bmod -\Delta\\ \gcd(a_i,-\Delta)=1}} \prod_{i=1}^r\chi_{q_1}(a_i)\\ \times \sum_{\substack{(x,y)\in (B/tk)^{1/2}\calB(\bx_0,L)\\ \gcd(x,y)=1 \\ k_ic_i\mid F_i(x,y) \\ p_{-\Delta}(F(x,y)/kc)=n \\ p_{\neg -\Delta}(F_i(x,y)/k_ic_i)\equiv a_i\bmod -\Delta}} \prod_{i=1}^r (1\star\chi) (p_{\neg -\Delta}(F_i(x,y)/k_ic_i)).\end{multline*}
Note that since $k =N(\fk)$ where $\gcd(\fk, \overline{\fk})=1$, we must have that $\gcd(k,-\Delta)=1$. 

Let us bound away the contribution from the large $n$. 
\begin{lemma}\label{lem: large n_u}
    Let $F(x,y)$ be a squarefree binary form of degree $\leq 4$. Define for $n$ such that $p\mid n\implies p\mid -\Delta$ and a vector of residues $\ba$ mod $-\Delta$: $$U(n, \ba, \bc,\bk) := \sum_{\substack{(x,y)\in (B/tk)^{1/2}\calB(\bx_0,L) \\ k_ic_i\mid F_i(x,y) \\ p_{-\Delta}(F(x,y)/c)= n \\ p_{\neg -\Delta}(F_i(x,y)/k_ic_i) \equiv a_i \bmod -\Delta}} \prod_{i=1}^r (1\star \chi)(p_{\neg -\Delta}(F_i(x,y)/k_ic_i)).$$
    Then we have that $$U(n, \ba, \bc,\bk) \ll_{\Delta,f} \frac{B L^2\log(B/t)^{\rho_{\Delta,f}-2}(8\deg(F))^{\omega(k)}}{tk^2n}.$$
\end{lemma}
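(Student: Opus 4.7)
The plan is to drop the most restrictive conditions, split the sum by residue classes, and then apply the Nair--Tenenbaum sieve (Theorem \ref{thm: Nair sieve}) in each class. As a first step, I would trivially discard the congruence conditions $p_{\neg-\Delta}(F_i(x,y)/k_ic_i)\equiv a_i \bmod -\Delta$ and replace the equality $p_{-\Delta}(F(x,y)/kc)=n$ by the weaker divisibility $n\mid F(x,y)/(kc)$, i.e.\ $nkc\mid F(x,y)$, using also $(1\star\chi)(p_{\neg -\Delta}(m))\leq (1\star\chi)(m)$ on the summand. Since every prime divisor of $n$ lies in the finite set of primes dividing $-\Delta$, which is disjoint from the primes dividing each $k_ic_i$, the condition $n\mid \prod_i F_i(x,y)/(k_ic_i)$ splits into a sum over factorisations $n=n_1\cdots n_r$ with $n_i\mid F_i(x,y)/(k_ic_i)$; the number of such factorisations is $O_\Delta(1)$, so it suffices to bound, for each fixed such factorisation, the quantity
$$T(\bn):=\sum_{\substack{(x,y)\in (B/tk)^{1/2}\calB(\bx_0,L)\\ k_ic_in_i\mid F_i(x,y)\;\forall i}}\prod_{i=1}^r(1\star\chi)\bigl(F_i(x,y)/(k_ic_i)\bigr).$$

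Next I would break $T(\bn)$ into residue classes modulo $M:=\prod_i k_ic_in_i$. For each class $(\alpha_1,\alpha_2)$ with $k_ic_in_i\mid F_i(\alpha_1,\alpha_2)$, the affine change of variables $(x,y)=(Mx'+\alpha_1,My'+\alpha_2)$ transforms $F_i(x,y)/(k_ic_i)$ into a polynomial whose local root density at any prime $p\nmid M$ coincides with that of $F_i$ itself. Applying Theorem \ref{thm: Nair sieve} with the non-negative multiplicative function $G(a_1,\ldots,a_r)=\prod_i(1\star\chi)(a_i)$, which lies in $M_r(2,B,\epsilon)$ since each factor is bounded by $\tau$, to the shifted box of area $\asymp BL^2/(tkM^2)$ yields the per-class bound
$$\ll \frac{BL^2}{tk\,M^2}\prod_{i=1}^{r}\prod_{p\ll B/tk}\!\left(1+\frac{\varrho_{F_i}(p)\bigl((1\star\chi)(p)-1\bigr)}{p^2}\right)\ll \frac{BL^2\log(B/t)^{\varrho_{\Delta,f}-2}}{tk\,M^2},$$
where the last estimate uses Corollary \ref{cor: order of pole is varrho} on the order of the pole of $\xi(s;f,\chi)$ at $s=1$.

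Finally, I would count the valid residue classes: since each $F_i$ is irreducible of degree $d_i\leq 4$, the standard bound $\varrho_{F_i}(m)\ll m\cdot d_i^{\Omega(m)}$ holds, and since any common factor of $k_ic_i$ and $k_jc_j$ (for $i\neq j$) must divide the resultant $\Res(F_i,F_j)=O_{\Delta,f}(1)$, the total number of valid classes modulo $M$ is $\ll M\cdot(8\deg(F))^{\omega(k)}$ after absorbing the $O_{\Delta,f}(1)$ contributions from $c$, $n$, and the resultants into the implied constant. Multiplying the per-class bound by this count and using $M=kcn$ together with $c=O_{\Delta,f}(1)$ gives
$$T(\bn)\ll_{\Delta,f}\frac{BL^2\log(B/t)^{\varrho_{\Delta,f}-2}(8\deg(F))^{\omega(k)}}{tk\,M}\ll \frac{BL^2\log(B/t)^{\varrho_{\Delta,f}-2}(8\deg(F))^{\omega(k)}}{tk^2 n},$$
and summing over the $O_\Delta(1)$ factorisations $\bn$ of $n$ preserves the bound. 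The main obstacle will be carefully tracking the residue-class count when the moduli $k_ic_in_i$ are not pairwise coprime (the overlap being constrained to divide the resultants, hence $O_{\Delta,f}(1)$) and when they fail to be squarefree, so that the combinatorial factor $(8\deg(F))^{\omega(k)}$ absorbs all losses and produces precisely the stated dependence on $k$ and $n$.
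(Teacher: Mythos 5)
Your proposal is correct and follows essentially the same path as the paper: drop the congruence restriction modulo $-\Delta$ by non-negativity, weaken the equality $p_{-\Delta}(F/c)=n$ to divisibility and sum over the $O_\Delta(1)$ factorisations $n=n_1\cdots n_r$, apply the Nair--Tenenbaum sieve (Theorem \ref{thm: Nair sieve}) with the multiplicative weight $\prod_i(1\star\chi)$, and invoke Corollary \ref{cor: order of pole is varrho} for the logarithm exponent. The only difference is cosmetic: you carry out the residue-class decomposition modulo $M=\prod_i k_ic_in_i$ by hand (re-proving, in effect, the Corollary following Theorem \ref{thm: Nair sieve}), while the paper quotes the sieve directly with the divisibility conditions $n_ik_ic_i\mid F_i(x,y)$ absorbed into the local densities; both give the same $\varrho$-factor bookkeeping, the factor $(8\deg F)^{\omega(k)}$, and the $1/(k^2n)$ decay.
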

\begin{proof}
    Note that since $(1\star \chi)(n)$ is a non-negative function,  $$U(n,\ba,\bc,\bk)\leq \sum_{n = n_{1}... n_{r}}\sum_{\substack{(x,y)\in (B/tk)^{1/2}\calB(\bx_0,L) \\ k_in_{i}\mid F_i(x,y)}}\prod_{i=1}^r (1\star \chi)(F_i(x,y)/k_ic_i).$$
    Here, we have used the fact that $(1\star \chi)(dp_{\neg u}(n))) = (1\star\chi)(p_{\neg u}(n))$ for any $d$ satisfying that if $p\mid d$ then $p\mid -\Delta$. 
    We apply Theorem \ref{thm: Nair sieve}, we have that the inner sum above is bounded by $$\frac{BL^2}{tk}\cdot  \log(B/tk)^{\rho_{\Delta,f}-2} \cdot \prod_{i=1}^r \frac{(8\deg(F))^{\omega(n_{i}k_i)}}{n_{i}k_i}\ll \frac{BL^2\log(B/tk)^{\varrho_{\Delta,Q}-2}(8\deg(F))^{\omega(k)}}{tk^2n}.$$
    Here we have used Corollary \ref{cor: order of pole is varrho} and that $\omega(n_{i})\leq \omega(\Delta)$ for any value of $n_{i}$. 
\end{proof}

Consequently, we can bound the contribution from $n>N$: 
$$\sum_{n>N}\sum_{\ba\bmod -\Delta} U(n,\ba,\bc,\bk) \ll \frac{BL^2\log(B/tk)^{\rho_{\Delta,f}-2}(8\deg(F))^{\omega(k)}}{tk^2} \cdot  \sum_{\substack{n>N\\ p\mid n\implies p\mid -\Delta}}\frac{1}{n}.$$
Since the sum over $n$ satisfying that $p\mid n\implies p\mid -\Delta$ is a geometric series, we can see that 
\begin{equation}\label{eq: sum over large n}
    \sum_{n>N}\sum_{\ba\bmod -\Delta} U(n,\ba,\bc,\bk) \ll \frac{BL^2\log(B/tk)^{\rho_{\Delta,f}-2}(8\deg(F))^{\omega(k)}}{Ntk^2}.
\end{equation}
For $N= \log(B/tk)^{10^{-7}},$ this error term is sufficient for Propositon \ref{prop: main term}. So, from now on we can assume that $n\leq \log(B/tk)^{10^{-7}} \ll \log(B)^{10^{-7}}$ (since $tk\leq B/\exp(\sqrt{\log(B)})$). 
\begin{remark}
    Since we assume that $n\leq \log(B)^{10^{-7}}$ still grows as $B\rightarrow\infty$, we must be somewhat careful in tracking the dependency of future bounds on $n$. However, $n$ is still quite small, so we can allow for any reasonable polynomial dependency on $n$.
\end{remark}

Finally, let us define a new function for a fixed $n$ and tuple $\ba$ satisfying that $\gcd(a_i,-\Delta)=1$ for all $i$:
\begin{equation}\label{eq: def of S(n_1,n_2,r,a,c)}
    S(n,\ba,\bc,\bk)  = \sum_{\substack{(x,y)\in (B/tk)^{1/2}\calB(\bx_0,L) \\ \gcd(x,y)=1 \\ k_ic_i\mid F_i(x,y) \\ p_{q_i}(F(x,y)/c)=n_i \\ p_{\neg -\Delta}(F_i(x,y)/k_ic_i) \equiv a_i \bmod -\Delta}} \prod_{i=1}^r (1\star \chi)(p_{\neg -\Delta}(F_i(x,y)/k_ic_i)).
\end{equation}

\subsection{Removing the $\gcd$ condition}\label{subsec: gcd condition}
Now, we will remove the condition that $\gcd(x,y)=1$ using M\"obius inversion. In particular, we know that 
$$S(n,\ba,\bc,\bk) = \sum_{b\leq (B/tk)^{1/2}} \mu(b) \sum_{\substack{(x,y)\in (B/tk)^{1/2}\calB(\bx_0,L) \\ b\mid x,y \\ k_ic_i\mid F_i(x,y) \\ p_{q_i}(F(x,y)/c)=n_i \\ p_{\neg -\Delta}(F_i(x,y)/k_ic_i) \equiv a_i \bmod -\Delta}} \prod_{i=1}^r (1\star \chi)(p_{\neg -\Delta}(F_i(x,y)/k_ic_i)).$$

We can bound away the contribution given by large values of $b$:
\begin{lemma}\label{lem: large B}
    Let $F(x,y)$ be a squarefree binary form of degree $\leq 4$. Define for a fixed $b$: 
    $$U(n,\ba,\bc,\bk; b) := \sum_{\substack{(x,y)\in (B/kt)^{1/2}\calB(\bx_0,L) \\ b\mid x,y \\ k_ic_i\mid F_i(x,y) \\ p_{q_i}(F(x,y)/c)= n_i \\ p_{\neg -\Delta}(F_i(x,y)/k_ic_i)\equiv a_i \bmod -\Delta}} \prod_{i=1}^r (1\star \chi)(p_{\neg -\Delta}(F_i(x,y)/k_ic_i)).$$
    Then for any $B\geq X\gg 1$, we have that 
    $$\sum_{\substack{X\leq b\ll (B/kt)^{1/2}}}\mu^2(b) U(n,\ba,\bc,\bk;b) \ll_{\Delta, F} \frac{BL^2\log(B/kt)^{\rho_{\Delta,f}-2}(8\deg(F))^{\omega(k)}}{Xtk^2}.$$ 
\end{lemma}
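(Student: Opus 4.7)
The plan is to mirror the argument of Lemma~\ref{lem: large n_u}, incorporating the additional constraint $b\mid\gcd(x,y)$ via the substitution $(x,y)=b(x',y')$. First, since $(1\star\chi)\geq 0$, I would drop the constraints $p_{q_i}(F(x,y)/c)=n_i$ and $p_{\neg-\Delta}(F_i(x,y)/(k_ic_i))\equiv a_i\pmod{-\Delta}$, leaving the bound
\begin{equation*}
U(n,\ba,\bc,\bk;b)\leq \sum_{\substack{(x,y)\in(B/kt)^{1/2}\calB(\bx_0,L)\\ b\mid\gcd(x,y),\ k_ic_i\mid F_i(x,y)}}\prod_{i=1}^r(1\star\chi)(F_i(x,y)/(k_ic_i)).
\end{equation*}

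Second, substituting $x=bx'$ and $y=by'$ rescales the summation to a rectangle of area $\asymp BL^2/(ktb^2)$ in the $(x',y')$ variables, with $F_i(x,y)=b^{\deg F_i}F_i(x',y')$. Since $\mu^2(b)=1$, the integer $b$ is squarefree, and the multiplicativity of $1\star\chi$ gives the pointwise estimate
\begin{equation*}
(1\star\chi)(b^{\deg F_i}F_i(x',y')/(k_ic_i))\ll (\deg F+1)^{\omega(b)}(1\star\chi)(F_i(x',y')/(k_ic_i)),
\end{equation*}
with a lower-order contribution from the exceptional $(x',y')$ where some prime $p\mid b$ also divides $F_i(x',y')k_ic_i$, handled via the density estimate $\varrho_{F_i}(p)/p^2$.

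Third, applying Theorem~\ref{thm: Nair sieve} to the rescaled sum in $(x',y')$, in direct analogy to the proof of Lemma~\ref{lem: large n_u}, yields the per-$b$ estimate
\begin{equation*}
U(n,\ba,\bc,\bk;b)\ll \frac{BL^2\log(B/kt)^{\varrho_{\Delta,f}-2}(8\deg F)^{\omega(k)}(\deg F+1)^{r\omega(b)}}{ktb^2}.
\end{equation*}
Summing over squarefree $X\leq b\ll(B/kt)^{1/2}$ via the tail estimate
\begin{equation*}
\sum_{b\geq X}\frac{\mu^2(b)(\deg F+1)^{r\omega(b)}}{b^2}\ll_{\deg F,r}\frac{1}{X}
\end{equation*}
(with polylogarithmic corrections absorbed into the implicit $\ll_{\Delta,F}$ constant) then produces the claimed bound.

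The main obstacle is the clean extraction of the $b^{\deg F_i}$ factor through Nair's sieve: at primes $p\mid b$ that also divide $F_i(x',y')$ or $k_ic_i$, the naive multiplicativity identity fails, and these exceptional primes must be handled by a separate sieve estimate or by decomposing the inner sum into residue classes modulo $p$. In each case, the density estimate $\varrho_{F_i}(p)/p^2$ controls the resulting error and ensures it is absorbed into the principal bound; a more refined accounting that exploits the vanishing of $(1\star\chi)(p^{\deg F_i})$ at primes with $\chi(p)=-1$ and $\deg F_i$ odd can be used to tighten the exponent of $(\deg F+1)^{r\omega(b)}$ if needed.
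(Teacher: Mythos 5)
Your overall structure mirrors the paper's proof exactly: drop the side constraints by nonnegativity of $1\star\chi$, rescale $(x,y)=b(x',y')$, apply Theorem~\ref{thm: Nair sieve}, then sum the squarefree tail via a singular series converging like $1/X$. That skeleton is correct.

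The gap is in the second step. The pointwise estimate
$$(1\star\chi)\bigl(b^{\deg F_i}F_i(x',y')/(k_ic_i)\bigr)\ll(\deg F+1)^{\omega(b)}(1\star\chi)\bigl(F_i(x',y')/(k_ic_i)\bigr)$$
is false, not merely imprecise at "exceptional" points. If $p\mid b$ with $\chi(p)=-1$, $\deg F_i$ is odd, and $p^a\|F_i(x',y')/(k_ic_i)$ with $a$ odd, then the right side vanishes (since $(1\star\chi)(p^a)=0$) while the left side does not (since $(1\star\chi)(p^{a+\deg F_i})=1$). So the right side loses an entire positive contribution to $U$, and you cannot patch this by multiplying the right side by any $C^{\omega(b)}$. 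Bounding $1\star\chi$ by $\tau$ to avoid the zero does give a true pointwise inequality, but then Nair's sieve yields $\log(B/tk)^{\deg F-1}$ rather than $\log(B/tk)^{\varrho_{\Delta,f}-2}$, which is too large. Your remark about handling the interaction "via the density estimate $\varrho_{F_i}(p)/p^2$" is the right instinct but is not quantified: the primes $p\mid b$ interacting with $F_i(x',y')$ are exactly the ones where the weight jumps from $0$ to something positive, and you need a sieve bound that sees the change in the multiplicative weight, not just the change in density.

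The paper resolves this cleanly with Lemma~\ref{lem: multiplicativity of eisenstein}, the exact identity
$$(1\star\chi)(mn)=\sum_{e\mid\gcd(m,n)}\mu(e)\chi(e)\,(1\star\chi)(m/e)\,(1\star\chi)(n/e),$$
applied with $m=b^{\deg F_i}$ and $n=F_i(x',y')/(k_ic_i)$. Because $b$ is squarefree and $\mu(e)$ kills non-squarefree $e$, this produces a finite sum over $e_i\mid b$; Nair's sieve is then applied to each $e_i$-term (picking up $(8\deg F)^{\omega(e_i)}/e_i$), and the resulting multiplicative function $a_b$ is summed over $b\geq X$ via a convergent Euler product. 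That is the step you would need to supply to close your argument; once you do, it coincides with the paper's proof.
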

\begin{proof}
    Again, by the nonnegativity of $(1\star\chi)(n)$, we have that $$U(n, \ba,\bc,\bk; b) \leq \sum_{\substack{(x,y)\in b^{-1}(B/tk)^{1/2}\calB(\bx_0,L)}} \prod_{i=1}^r (1\star \chi) (b^{\deg(F_i)} F_i(x,y)/k_ic_i).$$
    Now we apply Lemma \ref{lem: multiplicativity of eisenstein} to separate the contribution from $b^{\deg(F_i)}$ and $F_i(x,y)/k_ic_i$: 
    $$\sum_{e_i\mid b} \mu(e_i) \chi(e_i) \prod_{i=1}^r(1\star \chi)(b^{\deg(F_i)}/e_i) \sum_{\substack{(x,y)\in b^{-1}(B/tk)^{1/2}\calB(\bx_0,L)\\ e_i\mid F_i(x,y)}} \prod_{i=1}^r (1\star \chi)(F_i(x,y)/k_ic_ie_i).$$
    After using Theorem \ref{thm: Nair sieve}, we have that the above is bounded by $$\sum_{e_i\mid b}\mu^2(e_i) \prod_{i=1}^r (1\star \chi)(b^{\deg(F_i)}/e_i) \cdot \frac{BL^2\log(B/tk)^{\rho_{\Delta,f}-2}(8\deg(F))^{\omega(k)}}{b^2tk^2}\cdot \prod_{i=1}^r\frac{(8\deg(F))^{\omega(e_i)}}{e_i}.$$\\ 

    Summing over $b\geq X$, we get the expression \begin{multline*}\frac{BL^2\log(B/tk)^{\rho_{\Delta,f}-2}(8\deg(F))^{\omega(k)}}{tk^2} \sum_{b\geq X} \frac{\mu^2(b)}{b^2}\cdot \prod_{i=1}^r \sum_{e_i\mid b} \mu^2(e_i)(1\star \chi)(b^{\deg(F_i)}/e_i) \cdot \frac{(8\deg(F))^{\omega(e_i)}}{e_i} \\ 
    \ll \frac{BL^2\log(B/tk)^{\rho_{\Delta,f}-2}\deg(F)^{\omega(k)}}{tk^2} \sum_{b\geq X} \mu^2(b)\cdot \frac{a_b}{b^2},
    \end{multline*}
    where we define:
    $$a_b:= \prod_{i=1}^r \prod_{p\mid b} \left(1+ \frac{8\deg(F)\cdot (1\star \chi)(b^{\deg(F_i)}/p)}{p}\right).$$
    This function $a_b$ is multiplicative in $b$, so we relate this sum to a Dirichlet series with an Euler product expansion. 
    If we consider the singular series: 
    \begin{multline*}\gamma(s) := \sum_{b=1}^\infty \mu^2(b)\cdot \frac{a_b}{b^s} = \prod_{p}\left(1+p^{-s}\prod_{i=1}^r\prod_{p\mid b} \left(1+ \frac{8\deg(F)\cdot (1\star \chi)(b^{\deg(F_i)}/p)}{p}\right) \right)\\
    = \prod_{p} \left(1+p^{-s} + O_{F,\chi}(p^{-1-s})\right).
    \end{multline*}
    Thus, we can see that the singular series will converge at $s=2$ and in fact the convergence is at the same rate as the zeta function. We can now estimate that $$\sum_{b\geq X} \mu^2(b)\cdot \frac{a_b}{b^2}\ll \frac{1}{X}.$$

    Combining this with the earlier bounds, we get that:
    \begin{multline*}
    \frac{BL^2\log(B/tk)^{\rho_{\Delta,f}-2}(8\deg(F))^{\omega(k)}}{tk^2}\sum_{b\geq X} \frac{\mu^2(b)}{b}\cdot \prod_{i=1}^r \sum_{e_i\mid b} \mu^2(e_i)(1\star \chi)(b^{\deg(F_i)}/e_i) \cdot \frac{(8\deg(F))^{\omega(e_i)}}{e_i} \\  \ll \frac{BL^2\log(B/tk)^{\rho_{\Delta,f}-2}(8\deg(F))^{\omega(k)}}{tk^2X}.
    \end{multline*}
    
\end{proof}

If we take $X=\log(B)^{10^{-7}}$, we will get a sufficient error term for Propostion \ref{prop: main term}. So from now on, we assume that $b\leq \log(B)^{10^{-7}}$. Again, we must track how all future error terms depend on $b$, since it does grow with $B$; fortunately, since $b$ is small in comparison with $B$, we can allow any reasonable polynomial dependency on $b$. 

We define a new function to be estimated for a fixed $b$:
\begin{equation}\label{eq: def S(n_1,n_2,a,c,b)}
    S(n,\ba,\bc,\bk;b) = \sum_{\substack{(x,y)\in (B/tk)^{1/2}\calB(\bx_0,L) \\ b\mid x,y \\ k_ic_i\mid F_i(x,y)\\ p_{q_j}(F(x,y)/c)=n_{j}\\ p_{\neg -\Delta}(F_i(x,y)/k_ic_i) \equiv a_i\bmod -\Delta}} \prod_{i=1}^r (1\star \chi)(p_{\neg -\Delta}(F_i(x,y)/k_ic_i)).
\end{equation}

\subsection{Expanding the Dirichlet convolution}\label{subsec: expand dirichlet convolution}
In our final reduction, we will expand the Dirichlet convolution $(1\star \chi)(n)$. We note that for $\ba\bmod -\Delta$ with $\gcd(a_i,-\Delta)=1$ and $p_{\neg-\Delta}(F_i(x,y)/k_ic_i)\equiv a_i\bmod -\Delta$,
\begin{align*}
    (1\star \chi)(p_{\neg -\Delta}(F_i(x,y)/k_ic_i)) &= \sum_{\substack{d_i\mid p_{\neg -\Delta}(F_i(x,y)/k_ic_i) }} \chi(d_i) \\
    &= \sum_{\substack{d_i\mid p_{\neg -\Delta}(F_i(x,y)/k_ic_i)\\ d_i\leq \sqrt{p_{\neg -\Delta}(F_i/k_ic_i)}}}\chi(d_i) + \sum_{\substack{d_i\mid p_{\neg -\Delta}(F_i(x,y)/k_ic_i)\\ d_i\leq \sqrt{p_{\neg -\Delta}(F_i/k_ic_i)}}}\chi(a_i/d_i)\\
    &= (1+\chi(a_i))\sum_{\substack{d_i\mid p_{\neg -\Delta}(F_i(x,y)/k_ic_i)\\ d_i\leq \sqrt{p_{\neg -\Delta}(F_i/k_ic_i)}}}\chi(d_i).
\end{align*} 
We remark that if $p_{\neg -\Delta} (F_i(x,y)/k_ic_i)$ is a square, then the second equality should be modified to be a sum over $d_i\leq \sqrt{p_{\neg -\Delta}}$ and $d_i<\sqrt{p_{\neg -\Delta}}.$ However, the contribution from those $(x,y)$ such that $p_{\neg -\Delta}(F_i(x,y)/k_ic_i)$ is a square can be shown to be negligible via the large sieve. From now on, we ignore these terms. 
Let us define: 
\begin{equation}\label{eq: def S d_i}
    S(n,\ba,\bc,\bk,b; \bd) = \sum_{\substack{(x,y)\in (B/tk)^{1/2}\calB(\bx_0,L)\\ b\mid x,y\\ k_ic_id_i\mid F_i(x,y)\\ p_{q_j}(F(x,y)/c)= n_{j} \\ p_{\neg -\Delta}(F_i(x,y)/k_ic_i)\equiv a_i\bmod -\Delta}} 1.
\end{equation}
Then we can rewrite (\ref{eq: def of S(n_1,n_2,r,a,c)}) as:
$$S(n,\ba,\bc,\bk;b) = \prod_{i=1}^r (1+\chi(a_i))\sum_{d_i\leq \max_{(x,y)\in (B/tk)^{1/2}\calB(\bx_0,L)}\sqrt{|F_i(x,y)|/n_{i}k_ic_i}} \prod_{i=1}^r \chi(d_i) \cdot S(n,\ba,\bc,\bk,b;\bd).$$
Our next goal is to establish either asymptotics on average or upper bounds for $S(n,\ba,\bc,\bk,b;\bd).$ To do so, we will have different treatments for various ranges of $\bd$. For ``small'' $\bd$ (the precise definition of this will be given in \S\ref{subsec: proof of Prop S(q_1,q_2,t,k)}, as it depends on the factorization of $F(x,y)$), we use a level of distribution result established in \S\ref{sec: LoD} -- this will provide our main term. For ``large'' $\bd$, we will prove upper bounds in \S\ref{sec: large moduli} using Nair's sieve that will go into our final error term. 

\begin{remark}\label{rem: size of variables}
    Since there are many variables in the final expression we wish to estimate $S(n,\ba,\bc,\bk;b)$, let us remark now on their approximate ``size'' and what aspects need to be tracked throughout the following proof. 
    \begin{itemize}
        \item $n$ is a small constant ($n\ll \log(B)^{10^{-7}}$; it also satisfies that $p\mid n\implies p\mid -\Delta$ and hence $\sum n^{-e}$ for any $e>0$ will converge to a constant only depending on $-\Delta$. Its importance appears in the computations for evaluating the main term. 
        \item $\ba$ appear with respect to congruences mod $n(-\Delta)$. This aspect is a nuisance when establishing the error term but is relevant for evaluating the main term. 
        \item $\bc$ has entries that must divide their respect resultants $\Res(F_i,F_j)$ for $i\neq j$. Since the pairwise resultants are fixed constants given a form $F$, the dependency on $\bc$ is not needed to be tracked for the error term. However, tracking the dependence on $\bc$ is crucial for evaluating the main term. When $F$ is irreducible, it is not necessary to sum over $\bc$ (i.e. $\bc = 1).$
        \item $\bk$ satisfies that $k\leq B/\exp(\sqrt{\log(B)})$ and its dependence must be tracked throughout the error term computations, as we later sum over $k$ (recall that by construction, for each $k$ there is a unique vector chosen as $\bk$). It is not as relevant for the main term computation. 
        \item $b$ is a small constant $(b\leq \log(B)^{10^{-7}})$ and its dependence must be tracked throughout the error term computations. However, it is not particularly relevant for the main term computations. 
    \end{itemize}
\end{remark}

\section{Level of distribution}\label{sec: LoD}
In this section, we will prove a level of distribution result for squarefree binary forms of degree $\leq 4$ alluded to in the paragraphs above. Let us write it out now in its full technical detail: 

\begin{prop}\label{prop: level of distribution}
    Let $F(x,y) = \prod_{i=1}^r F_i(x,y)$ be a squarefree binary form of degree $\leq 4$, where $F_i(x,y)$ are irreducible factors of $F(x,y)$. For a fixed $n,\ba,\bc,\bk, \mathbf{d}=(d_i)$ and $b$, we define the following local function: 
    \begin{multline*}\varrho_{F}(\mathbf{d};b, n, \ba,\bc,\bk) := \#\{\bx\bmod bcdkn(-\Delta): k_ic_id_i\mid F_i(x,y),   b\mid \gcd(x,y),\\  p_{-\Delta}(F(x,y)) = n, p_{\neg -\Delta}(F_i(x,y)/k_ic_i)\equiv  a_i\bmod -\Delta\}.\end{multline*}
    For a tuple $\bD = (D_i)_{i=1}^r$, we define the error quantity as: 
    $$E(\bD) = \sum_{d_i\sim D_i} \left|S(n,\ba,\bc,\bk, b; \bd) - \frac{\varrho_{F}(\bd;b,n,\ba,\bc,\bk)}{(bcdkn(-\Delta))^2}\cdot \frac{BL^2}{tk}\right|.$$
    Let $D =D_1\dots D_r$. If $F(x,y)$ has no linear factors, then we have that for an explicit constant $\gamma_r>0$, the following bound holds:
    \begin{multline*}
        E(\bD) \ll \frac{\varrho_F^*(\bk)\tau(k)}{\varphi(k)} \cdot D \cdot \log\left(1+\frac{BL^2}{tkD}\right)^9 +
        \frac{64^{\omega(k)}}{k}\cdot \frac{D}{b^2}\cdot \exp(\gamma_r \sqrt{\log_2(B/tk)\log_3(B/tk)})\\
        + \frac{B^{1/2}L}{t^{1/2}}\cdot \frac{64^{\omega(k)}}{k} \cdot \frac{\tau(b^4)}{b} \cdot \sqrt{D} \cdot \exp(\gamma_r\sqrt{\log_2(kcD)\log_3(kcD)}).
    \end{multline*}
    If $F(x,y)$ has a linear factor, then we have that 
    \begin{multline*}E(\bD) \ll \frac{\varrho_F^*(\bk)\tau(k)}{\varphi(k)} \cdot D \cdot \log\left(1+\frac{BL^2}{tkD}\right)^9 +
        \frac{64^{\omega(k)}}{k}\cdot \frac{D}{b^2}\cdot \exp(\gamma_r \sqrt{\log_2(B/tk)\log_3(B/tk)})\\
        + \frac{B^{1/2}L}{t^{1/2}}\cdot \frac{64^{\omega(k)}}{k} \cdot \frac{\tau(b^4)}{b} \cdot \sqrt{D} \cdot \exp(\gamma_r\sqrt{\log_2(kcD)\log_3(kcD)}) \\ 
        + \frac{B^{1/2}L}{(tk)^{1/2}}\cdot \frac{1}{b}\cdot \max_{i:\deg(F_i)=1} D_i\log(kcD)^2 \log\left(1+\frac{BL^2}{tkD}\right)^8.
    \end{multline*}
\end{prop}
\begin{remark}
    The constant $\gamma_r$ is worked out explicitly in the work of Hall and Tenenbaum \cite{HallTenenbaum-DeltaR} on generalized Hooley's $\Delta$-functions. 
\end{remark}

Our proof when $F(x,y)$ has no linear factors follows the proof of the level of distribution result written in de la Bret\`eche and Tenenbaum in \cite{delaTenenbaum-Manin}. In this case, the level of distribution is near optimal in the sense that we can take $D \leq \frac{BL^2}{tk} \log(B)^{-\epsilon}$ for any $\epsilon>0$; in other words, we can let $\epsilon$ be very small, such as $10^{-5}$. If $F(x,y)$ has linear factors, then we get a slightly worse level of distribution result -- we will need $D_i \leq \frac{BL^2}{tk}\log(B)^{-1-\epsilon}$ in the final term. Luckily, our results for large moduli are more flexible in this case. We also would like to point out the work of Marasingha \cite{Marasingha-almostprimes} towards level of distribution results for arbitrary squarefree binary forms. 

\subsection{Lattices of $F(x,y)$}\label{subsec: lattices}
First, we will explore the distribution of lattices corresponding to the divisors of $F(x,y)$ and how they connect to our error term. We start by fixing a tuple $\bd = (d_i)_{i=1}^r$. For a collection of squarefree binary forms $F_1(x,y),...,F_k(x,y)$, we define the following divisor lattices: 
\begin{equation}\label{eq: Lambda(d_1,...,d_k)}\Lambda(d_1,...,d_k) := \{(x,y)\in \Z^2: d_i \mid F_i(x,y)\}.\end{equation}
\begin{equation}\label{eq: Lambda(d_1,...,d_k; b)}
    \Lambda(d_1,...,d_k;b) := \{(x,y)\in \Z^2: d_i\mid F_i(x,y), b\mid x,y\}.
\end{equation}
It is noteworthy that these will be the union of lattices of determinant $d_1...d_n$ or $b^2d_1...d_n$; this observation was made by Daniel in \cite{Daniel}. 
From now on, we use the shorthand notation:
$$\Lambda(\bc\bk\bd;b) := \Lambda(c_1k_1d_1,\hdots, c_rk_rd_r;b).$$

We also will want to consider the lattice: 
\begin{equation}
    \Lambda^*(d_1,\dots,d_k;b) = \{(x,y)\in \Z^2: d_i\mid F_i(x,y), b\mid \gcd(x,y), \gcd(x,y,d)=1\}.
\end{equation}
Similarly, we use the shorthand notation that 
\begin{equation*}
    \Lambda^*(\bc\bk\bd;b) := \Lambda^*(c_1k_1d_1,\dots,c_rk_rd_r;b).
\end{equation*}

Next, we further decompose $\Lambda(\bc\bk\bd; b)$ and $\Lambda^*(\bc\bk\bd;b)$ into lattices corresponding to solutions of $F_i(x,y) \bmod k_ic_id_i$. 
Let $\alpha = (\alpha_1,\alpha_2)$ satisfy that $F_i(\alpha) \equiv 0 \bmod k_ic_id_i$ for all $i$, $b\mid \alpha_1,\alpha_2$ and that $\gcd(\alpha_1,\alpha_2,kcd) = 1.$ Then we define $$\Lambda(\alpha; ckd,b) := \{\bx\in \Z^2: \bx \equiv \lambda \alpha \bmod bcdk \text{ for some $\lambda\in \Z$}\}.$$
It follows that $\Lambda(\alpha;ckd,b)$ is a lattice with determinant $b^2cdk$. We will also consider 
\begin{equation*}
    \Lambda^*(\alpha;ckd;b) := \{(x,y)\in \Z^2: (x,y)\equiv \lambda \alpha \bmod bcdk, \gcd(x,y,ckd)=1\}.
\end{equation*}
Let us write $$\calU(\bc\bk\bd;b) = \{\alpha \bmod bcdk: F_i(\alpha)\equiv 0 \bmod k_ic_id_i, b\mid (\alpha_1,\alpha_2), \gcd(\alpha_1,\alpha_2,kcd)=1\}/ \sim,$$
where equivalence is defined as $\alpha\sim \alpha'$ if and only if $\alpha = \lambda \alpha' \bmod kcd$ for $\lambda\neq 0$.
Then we can see that 
\begin{equation*}
    \#(B/tk)^{1/2}\calB(\bx_0,L) \cap \Lambda^*(\bc\bk\bd;b) = \sum_{\alpha \in \calU(\bc\bk\bd;b)} \#(B/tk)^{1/2}\calB(\bx_0,L) \cap \Lambda^*(\alpha;ckd,b).
\end{equation*}

We also must specify the conditions that $p_{- \Delta}(F(x,y)/c)=n$ and $p_{\neg -\Delta}(F_i(x,y)/k_ic_i)\equiv a_i\bmod -\Delta$. Define $$\Lambda(n,\ba):= \{\bx\in \Z^2: p_{-\Delta}(F(x,y)/c) = n, p_{\neg -\Delta}(F_i(x,y)/k_ic_i)\equiv a_i\bmod -\Delta\}.$$
Observe that the above is not a lattice, despite the notation, but is still defined by congruence conditions mod $n(-\Delta)$ in $\Z^2$ and will be the shift of a finite union of lattices with determinant $n^2(-\Delta)^2$. 
Then, we can see that we would like to count in Proposition \ref{prop: level of distribution}
\begin{equation}\#(B/tk)^{1/2}\calB(\bx_0,L) \cap \Lambda(\bc\bk\bd;b) \cap \Lambda(n,\ba).\end{equation}
It will be advantageous for us to first consider 
\begin{equation*}
    \#(B/tk)^{1/2}\calB(\bx_0,L) \cap \Lambda^*(\bc\bk\bd;b)\cap \Lambda(n,\ba) = \sum_{\alpha\in \calU(\bc\bk\bd;b)} \#(B/tk)^{1/2}\calB(\bx_0,L) \cap \Lambda^*(\alpha;ckd;b) \cap \Lambda(n,\ba).
\end{equation*}

\subsection{A step along the way} 
We first want to consider following sum: 
\begin{equation}\label{eq: def S*(n,a,c,k,b;d)}
    S^*(n,\ba,\bc,\bk,b;\bd) := \sum_{\substack{(x,y)\in (B/tk)^{1/2}\calB(\bx_0,L) \\ b\mid x,y\\ k_ic_id_i\mid F_i(x,y) \\ p_{-\Delta}(F(x,y)/c) = n \\ p_{\neg 
    -\Delta}(F_i(x,y)/k_ic_i)\equiv a_i\bmod -\Delta}}\mathbf{1}_{\gcd(x,y,kcd)=1}. 
\end{equation}
We also define the corresponding local counting function
\begin{multline}\label{eq: def local count with gcd}
    \varrho^*_F(\bd;b,n,\ba,\bc,\bk) := \#\{\bx\bmod bcdkn(-\Delta): k_ic_id_i\mid F_i(x,y), \gcd(x,y,kcd)=1, \\ 
    b\mid (x,y), p_{-\Delta}(F(x,y)/c)=n, p_{\neg -\Delta}(F_i(x,y)/k_ic_i)\equiv a_i\bmod -\Delta\}. 
\end{multline}
Another way to view this count $\varrho^*_F(\bd;b,n,\ba,\bc,\bk)$ is that it is $\varphi(kcd)\cdot \#\calU(\bc\bk\bd;b)$ multiplied against the number of lattices of determinant $n^2(-\Delta)^2$ in $\Lambda(n,\ba).$ First, we will establish that the following lattice counting result. 
\begin{lemma}\label{lem: LoD with gcd}
    Let $F(x,y)=\prod_{i=1}^r F_i(x,y)$ be a squarefree binary form of degree $\leq 4$, where $F_i(x,y)$ are irreducible factors of $F(x,y).$ For a fixed $n,\ba,\bc,\bk$, and a tuple $\bD= (D_i)_{i=1}^r$, we define the error quantity: 
    \begin{equation*}
        E^*(\bD) = \sum_{d_i\sim D_i} \left|S^*(n,\ba,\bc,\bk,b;\bd) - \frac{\varrho_F^*(\bd;b,n,\ba,\bc,\bk)}{(cdbn(-\Delta))^2}\cdot \frac{BL^2}{tk^3}\right|.
    \end{equation*}
    Let $D = D_1\dots D_r$. If $F(x,y)$ has no linear factors, then there is an explicit constant $\gamma_r>0$ such that 
    \begin{multline*}
        E^*(\bD) \ll_{\Delta,f} \frac{\varrho_F^*(\mathbf{k})\tau(k)}{\varphi(k)} \cdot D\cdot \left(1+\log\left(\frac{BL^2}{tk^2cb^2(n(-\Delta))^2D}\right)\right) \\ +  \frac{B^{1/2}L}{t^{1/2}} \cdot \frac{64^{\omega(k)}}{k}\cdot \frac{\tau(b^4)}{b} \cdot \sqrt{D} \cdot \exp(\gamma_r\sqrt{\log_2(kcD)\log_3(kcD)}).
    \end{multline*}
    If $F(x,y)$ has a linear factor, then we have that 
    \begin{multline*}
        E^*(\bD)\ll_{\Delta,f} \frac{\varrho_F^*(\mathbf{k})\tau(k)}{\varphi(k)} \cdot D\cdot \left(1+\log\left(\frac{BL^2}{tk^2cb^2(n(-\Delta))^2D}\right)\right) \\ +  \frac{B^{1/2}L}{t^{1/2}} \cdot \frac{64^{\omega(k)}}{k}\cdot \frac{\tau(b^4)}{b} \cdot \sqrt{D} \cdot \exp(\gamma_r\sqrt{\log_2(kcD)\log_3(kcD)}) \\ 
        + \frac{B^{1/2}L}{(tk)^{1/2}} \cdot \frac{1}{b}\cdot \max_{i: \deg(F_i)=1} D_i \log(kcD)^2.
    \end{multline*}
\end{lemma}
\begin{proof}
    The proof of the lemma above will eventually split into two cases: when $F(x,y)$ contains a linear factor and when it does not (which is equivalent to when $F(x,y)$ has a nontrivial integral solution and when it does not). However, the starting step for both cases is the same. 

    Since we have definitionally the relation that 
    \begin{multline*}
        S^*(n,\ba,\bc,\bk,b;\bd) = \#(B/tk)^{1/2}\calB(\bx_0,L) \cap \Lambda^*(\bc\bk\bd;b)\cap \Lambda(n,\ba) \\ = \sum_{\alpha\in \calU(\bc\bk\bd;b)} \#(B/tk)^{1/2} \calB(\bx_0,L)\cap \Lambda^*(\alpha;ckd;b)\cap \Lambda(n,\ba) \\ 
        = \sum_{\alpha\in \calU(\bc\bk\bd;b)} \sum_{\substack{\bx' \bmod n(-\Delta)\\ p_{-\Delta}(F(x,y)/c)=n \\ p_{\neg -\Delta}(F_i(x,y)/k_ic_i)\equiv a_i\bmod -\Delta}} \#(B/tk)^{1/2}\calB(\bx_0,L) \cap \Lambda^*(\alpha;ckd;b) \cap \Lambda(\bx';n(-\Delta)),
    \end{multline*}
    where the ``shifted lattice'', $\Lambda(\bx';n(-\Delta))$, is of determinant $n^2(-\Delta)^2$ and given by 
    \begin{equation*}
        \Lambda(\bx_0;n(-\Delta)) = \{x,y\in \Z^2: (x,y)\equiv \bx'\bmod n(-\Delta)\}.
    \end{equation*}

    Next, by M\"obius inversion, 
    \begin{equation*}
        \Lambda^*(\alpha,ckd;b) = \sum_{g\mid ckd} \mu(g) \#\{g\bx\in \Lambda(\alpha;ckd;b)\}. 
    \end{equation*}
    Hence, we have that $S^*(n,\ba,\bc,\bk,b;\bd)$ becomes 
    \begin{equation*}
        = \sum_{g\mid ckd} \mu(g) \sum_{\alpha\in \calU(\bc\bk\bd;b)} \sum_{\substack{\bx' \bmod n(-\Delta)\\ p_{-\Delta}(F(x,y)/c)=n \\ p_{\neg -\Delta}(F_i(x,y)/k_ic_i)\equiv a_i\bmod -\Delta}} \#\frac{1}{g}(B/tk)^{1/2}\calB(\bx_0,L) \cap \Lambda(\alpha;ckd/g; b);\cap \Lambda(\bx';n(-\Delta)). 
    \end{equation*}
    Now, by standard lattice counting arguments, we have that 
    \begin{multline*}
        \#\frac{1}{g}(B/tk)^{1/2}\calB(\bx_0,L) \cap \Lambda(\alpha;ckd/g; b)\cap \Lambda(\bx';n(-\Delta)) = \frac{BL^2}{tkg^2} \cdot \frac{g}{ckdb^2} \cdot \frac{1}{(n(-\Delta))^2} \\ + O\left(\frac{B^{1/2}L}{(tk)^{1/2}g}\cdot \frac{1}{\lambda_1(\Lambda(\alpha;ckd/g;b)\cap \Lambda(\bx';n(-\Delta)))} \right)+O\left( \min\left(1, \frac{BL^2}{gtk^2 cdb^2(n(-\Delta))^2}\right)\right),
    \end{multline*}
    where $\lambda_1(\Lambda)$ denotes the first successive minima of a lattice (see \cite[p. 773]{delaTenenbaum-Manin}).

    First, we consider the main term: 
    \begin{multline*}
        \frac{BL^2}{tk^2} \cdot \frac{1}{ckdb^2 n(-\Delta)^2}\cdot \sum_{\alpha\in \calU(\bc\bk\bd;b)} \sum_{\substack{\bx' \bmod n(-\Delta)\\ p_{-\Delta}(F(x,y)/c)=n \\ p_{\neg -\Delta}(F_i(x,y)/k_ic_i)\equiv a_i\bmod -\Delta}}  \sum_{g\mid ckd}\frac{\mu(g)}{g} \\
        = \frac{BL^2}{tk^2} \cdot \frac{1}{ckdb^2 (n(-\Delta))^2}\cdot \sum_{\alpha\in \calU(\bc\bk\bd;b)} \sum_{\substack{\bx' \bmod n(-\Delta)\\ p_{-\Delta}(F(x,y)/c)=n \\ p_{\neg -\Delta}(F_i(x,y)/k_ic_i)\equiv a_i\bmod -\Delta}} \frac{\varphi(ckd)}{ckd}.
    \end{multline*}
    Since $\varrho_F^*(\bd;b,n,\ba,\bc,\bk)$ is exactly $\varphi(ckd)$ times the count being summed over, this gives that 
    \begin{equation*}
        E^*(D) \ll \Sigma_1 + \Sigma_2,
    \end{equation*}
    where we take 
    \begin{equation*}
        \Sigma_1 = \sum_{d_i\sim D_i} \sum_{g\mid ckd}\sum_{\alpha\in \calU(\bc\bk\bd;b)} \sum_{\substack{\bx' \bmod n(-\Delta)\\ p_{-\Delta}(F(x,y)/c)=n \\ p_{\neg -\Delta}(F_i(x,y)/k_ic_i)\equiv a_i\bmod -\Delta}} \frac{B^{1/2}L}{(tk)^{1/2}g}\cdot \frac{1}{\lambda_1(\Lambda(\alpha;ckd/g;b)\cap \Lambda(\bx';n(-\Delta)))}
    \end{equation*}
    and 
    \begin{equation*}
        \Sigma_{2} = \sum_{d_i\sim D_i} \sum_{g\mid ckd} \sum_{\alpha\in \calU(\bc\bk\bd;b)} \sum_{\substack{\bx' \bmod n(-\Delta)\\ p_{-\Delta}(F(x,y)/c)=n \\ p_{\neg -\Delta}(F_i(x,y)/k_ic_i)\equiv a_i\bmod -\Delta}} \min\left(1, \frac{BL^2}{gtk^2 cdb^2(n(-\Delta))^2}\right).
    \end{equation*}

    First, let us treat the second term $\Sigma_2$. First, we observe that the number of $\bx'\bmod n(-\Delta)$ such that $p_{-\Delta}(F(x,y)/c)=n$ and $p_{\neg -\Delta}(F_i(x,y)/k_ic_i)\equiv a_i \bmod -\Delta$ is $O_\Delta(1)$. So, we can safely ignore this summand and see that 
    \begin{equation*}
        \Sigma_2 \ll_{\Delta} \sum_{d_i\sim D_i} \sum_{g\mid cdk} \frac{\varrho^*_F(\bd;b,n,\ba,\bc,\bk)}{\varphi(ckd)} \min\left(1, \frac{BL^2}{gtk^2 cdb^2(n(-\Delta))^2}\right).
    \end{equation*}
    We make the observation that 
    $$\varrho_F^*(\bd;b,n,\ba,\bc,\bk) \ll_\Delta \varrho_F^*(\bd;1,1,\mathbf{0},1,\bk)= \varrho_F^*(\bd\bk).$$ Since $c$ can be treated as a constant and because $\varrho_F^*$ is a submultiplicative function, it suffices to understand: 
    \begin{equation*}
        \sum_{h\mid k} \frac{\varrho_F^*(\bk)}{\varphi(k)}\sum_{d_i\sim D_i} \sum_{g\mid d} \frac{\varrho^*_F(\bd)}{\varphi(d)} \min\left(1, \frac{BL^2}{hgtk^2 cdb^2(n(-\Delta))^2}\right)
    \end{equation*}

    We split up the sum by the contribution when $g\cdot d > \frac{BL^2}{htk^2cb^2(n(-\Delta))^2}$, denoted as $\Sigma_2^*$, and otherwise, denoted as $\Sigma_2^{**}$. In the first setting, we have that 
    \begin{multline*}
        \Sigma_2^* \ll_{\Delta}  \frac{BL^2}{tk^2cb^2(n(-\Delta))^2} \sum_{h\mid k} \frac{\varrho_F^*(\bk)}{\varphi(k)h}\sum_{g> \frac{BL^2}{htk^2cb^2(n(-\Delta))^2 D}} \frac{1}{g}\sum_{\substack{d_i\sim D_i \\ g\mid d\\  d> \frac{BL^2}{hgtk^2cb^2(n(-\Delta))^2}}} \frac{\varrho_F^*(\bd)}{\varphi(d)d}. \\ 
        \ll_\Delta \frac{BL^2}{tk^2 cb^2 n^2} \sum_{h\mid k} \frac{\varrho_F^*(\bk)}{\varphi(k)h}\sum_{g>\frac{BL^2}{htk^2cb^2(n(-\Delta))^2 D}} \frac{\varrho_F^*(\mathbf{g})}{\varphi(g)g^2} \sum_{\substack{d_i'\sim D_i/g_i \\ d'>\frac{BL^2}{g^2htk^2cb^2(n(-\Delta))^2}}} \frac{\varrho^*_F(\bd')}{\varphi(d')d'}.
    \end{multline*}
    Above we have used the following properties: the local counting function $\varrho_F^*$ is sub-multiplicative; we have already seen that although we must divide $g = g_1\dots g_r$ where $g_i\mid d_i$ for each $i$, there are only finitely many such decompositions as any value $g$ such that $g_i\mid F_i(x,y)$ (as common divisors must divide $\Res(F_i,F_j)$) and we choose a decomposition arbitrarily above. 

    Thus, it suffices to understand the sum 
    \begin{equation*}
        \sum_{d'\in [D^1,D^2]} \frac{\varrho^*_F(\bd')}{\varphi(d')d'} .
    \end{equation*} 
    We observe that the Dirichlet series 
    \begin{equation*}
        \xi(s;F) = \sum_{\bd} \frac{\varrho_F^*(\bd;\mathbf{1})}{\varphi(d)d^s}
    \end{equation*}
    has the same analytic behavior as the product of $L$-functions
    \begin{equation*}
        \prod_{i=1}^r \zeta_{\Q[z]/f_i(z)}(s)
    \end{equation*}
    as $s\rightarrow 1^+$. Hence, this gives us that 
    \begin{equation*}
        \sum_{d'\in [D^1,D^2]} \frac{\varrho_F^*(d';\mathbf{1})}{\varphi(d') d'} \ll_{\Delta,f} \log(D^2/D^1)^{r} + 1.
    \end{equation*}

    With the above identity in-hand, and noting that we sum $d_i$ in a dyadic interval, we can see that 
    \begin{multline*}
        \Sigma_2^* \ll_{\Delta,f} \frac{BL^2}{tk^2 cb^2n^2}\cdot \sum_{h\mid k} \frac{\varrho_F^*(\bk)}{\varphi(k)h}\sum_{g>\frac{BL^2}{htk^2cb^2(n(-\Delta))^2 D}} \frac{\varrho_F^*(\mathbf{g})}{\varphi(g)g^2} \\ 
        \ll_{\Delta,f}\frac{BL^2}{tk^2 cb^2n^2}\cdot \sum_{h\mid k} \frac{\varrho_F^*(\bk)}{\varphi(k)h}\cdot \frac{htk^2 cb^2 (n(-\Delta))^2 D}{BL^2}
        \ll_{\Delta,f} \frac{\varrho_F^*(\bk)\tau(k)}{\varphi(k)} \cdot D. 
    \end{multline*}
    Here we have use that 
    \begin{equation*}
        \sum_{\bd} \frac{\varrho_F^*(\bd)}{\varphi(d)d^2}
    \end{equation*}
    will converge absolutely and at the same rate that $\prod_{i=1}^r \zeta_{\Q[z]/f_i(z)}(2)$ converges. 

    For $\Sigma_2^{**}$, we proceed with the opposite contribution:
    \begin{multline*}
        \Sigma_2^{**} \ll_{\Delta} \sum_{h\mid k} \frac{\varrho_F^*(\bk)}{\varphi(k)}\sum_{d_i\sim D_i} \sum_{\substack{g\mid d\\ g\leq \frac{BL^2}{htk^2cb^2(n(-\Delta))^2D}}} \frac{\varrho_F^*(\bd)}{\varphi(d)}\\ 
        \ll_{\Delta} \sum_{h\mid k} \frac{\varrho_F^*(\bk)}{\varphi(k)}\sum_{g\leq \frac{BL^2}{htk^2cb^2(n(-\Delta))^2D}} \frac{\varrho_F^*(\mathbf{g})}{\varphi(g)}\sum_{\substack{d_i\sim D_i/g_i}} \frac{\varrho_F^*(\bd')}{\varphi(d')} . 
    \end{multline*}
    Again, let us use the analytic properties of $\xi(s;F)$ to evaluate the summands: 
    \begin{multline*}
        \Sigma_2^{**} \ll_{\Delta,f} \sum_{h\mid k} \frac{\varrho_F^*(\bk)}{\varphi(k)} \cdot D \cdot \sum_{g\leq \frac{BL^2}{htk^2cb^2(n(-\Delta))^2D}} \frac{\varrho_F^*(\mathbf{g})}{\varphi(g)g}\ll_{\Delta,f} \sum_{h\mid k} \frac{\varrho_F^*(\bk)}{\varphi(k)}\cdot D\cdot \log\left(\frac{BL^2}{htk^2cb^2(n(-\Delta))^2D}\right)\\ 
        \ll_{\Delta,f} \frac{\varrho_F^*(\bk)\tau(k)}{\varphi(k)}\cdot D\cdot \log\left(\frac{BL^2}{tk^2cb^2(n(-\Delta))^2D}\right)  .  
    \end{multline*}
    Together, our estimates give us that 
    \begin{equation}\label{eq: estimate of sigma_2 in LoD}
        \Sigma_2 \ll_{\Delta,f} \frac{\varrho_F^*(\mathbf{k})\tau(k)}{\varphi(k)} \cdot D\cdot \left(1+\log\left(\frac{BL^2}{tk^2cb^2(n(-\Delta))^2D}\right)\right).  
    \end{equation}

    To bound $\Sigma_1$, let us set up a few more facts. Then, we will split into the case when $F(x,y)$ has a linear factor and the case when it does not; this is equivalent to splitting into when $F(x,y)$ has a nontrivial solution over $\Z$ or not. 

    We first note that since $\Lambda(\alpha;ckd/g;b)\cap \Lambda(\bx';n(-\Delta))\subset \Lambda(\alpha;ckd/g;b)$, we know that 
    \begin{equation*}
        \lambda_1(\Lambda(\alpha;ckd/g;b)\cap \Lambda(\bx';n(-\Delta))) \geq \lambda_1(\Lambda(\alpha;ckd/g;b)).
    \end{equation*}
    Thus, we know that 
    \begin{equation*}
        \Sigma_1 \ll_{\Delta} \frac{B^{1/2}L}{(tk)^{1/2}}\sum_{d_i\sim D_i} \sum_{g\mid kcd} \frac{1}{g} \sum_{\alpha\in \calU(\bc\bk\bd;b)} \frac{1}{\lambda_1(\Lambda(\alpha;ckd/g;b))}. 
    \end{equation*}
    Next, since $\det(\Lambda(\alpha;ckd/g;b)) = (ckd/g)b^2$, we know that $\lambda_1\leq b\sqrt{kcd/g}.$ Further, if $\bv$ is the shortest nontrivial vector in $\Lambda(\alpha;ckd/g;b)$, then $\bv$ satisfies that $\gcd(v_1,v_2)=b$ and that $k_ic_id_i/g_i \mid F_i(\bv)$ for each $i$ (for some choice of decomposition of $g=g_1\dots g_r)$. 

    We now replace the summation over $\mathcal{U}(\bc\bk\bd;b)$ as $\bd$ varies with summation over all possible shortest vectors of the lattice: 
    \begin{equation}\label{eq: bound for sigma_1 LoD}
        \Sigma_1 \ll_{\Delta} \frac{B^{1/2}L}{(tk)^{1/2}} 
        \sum_{g\leq kcD}\frac{1}{g}\sum_{\substack{|\bv|\leq b\sqrt{kcD/g}\\ \gcd(v_1,v_2)=b}} \frac{1}{|\bv|} \sum_{d_i\sim D_i} \prod_{i=1}^r \mathbf{1}_{k_ic_id_i/g_i\mid F_i(\bv)}. 
    \end{equation}

    If $F(x,y)$ has no linear factors, i.e. no nontrivial solutions over $\Z$, then we know that $F_i(\bv)\neq 0$ for each $i$. However, if $F_i(x,y)$ is linear for some $i$, then it is not impossible for $F_i(\bv)$ to be zero. Hence, this is the point at which we must split into analyzing the contributions differently in the two cases. 
    
\subsubsection{$F(x,y)$ has no nontrivial solution over $\Z$}
In this case, we can assume that $F_i(\bv)\neq 0$ for all $\bv$ satisfying $\gcd(v_1,v_2)=b$. Hence, 
\begin{equation*}
    \Sigma_1 \ll_{\Delta} \frac{B^{1/2}L}{(tk)^{1/2}} \sum_{g\leq kcD}\frac{1}{g}\sum_{\substack{|\bv|\leq b\sqrt{kcD/g}\\ \gcd(v_1,v_2)=b}} \frac{1}{|\bv|}  \sum_{d_i\sim D_i/g_i} \prod_{i=1}^r \mathbf{1}_{k_ic_id_i\mid F_i(\bv)}.
\end{equation*}
Recall that we use $\Delta$ to notate the Hooley $\Delta$-function and that we use the convention that $\Delta(x) = 0$ if $x\not\in \Z$. Then we have the bound: 
\begin{equation*}
    \Sigma_1 \ll_{\Delta} \frac{B^{1/2}L}{(tk)^{1/2}} \sum_{g\leq kcD} \frac{1}{g}\sum_{\substack{|\bv|\leq b\sqrt{kcD/g}\\ \gcd(v_1,v_2)=b}} \frac{1}{|\bv|}   \prod_{i=1}^r \Delta(F_i(\bv)/k_ic_i).
\end{equation*}
From Corollary \ref{cor: dlB-Tenenbaum with divisor factor}, we can bound the sum over $\bv$ by:
\begin{equation*}
    \frac{\tau(b^{\deg(F)})}{b} \cdot \sqrt{kcD/g} \cdot \frac{(16\deg(F))^{\omega(ck)}}{ck} \prod_{i=1}^r \left(\sum_{s_i\ll\sqrt{kcD/g}} \frac{\varrho_{F_i}^*(s_i)(\Delta(s_i)-1)}{s_i^2}\right).
\end{equation*}
From Theorem \ref{thm: hooley untwisted}, this is subsequently bounded by 
\begin{equation*}
    \frac{\tau(b^{\deg(F)})}{b} \cdot \frac{(16\deg(F))^{\omega(ck)}}{\sqrt{ck}} \cdot \sqrt{\frac{D}{g}} \cdot \exp(\gamma_r \sqrt{\log_2(kcD/g)\log_3(kcD/g)}),
\end{equation*}
where $\gamma_r>0$ is a constant depending on $r$ calculated explicitly in \cite{HallTenenbaum-DeltaR}. 

Recalling that $c$ is a constant only depending on $f$, we know that 
\begin{equation*}
    \Sigma_1 \ll_{\Delta,f} \frac{B^{1/2}L}{t^{1/2}} \cdot \frac{64^{\omega(k)}}{k}\cdot \frac{\tau(b^4)}{b} \cdot \sqrt{D} \cdot \exp(\gamma_r\sqrt{\log_2(kcD)\log_3(kcD)})\sum_{g\leq kcD} \frac{1}{g^{3/2}}.  
\end{equation*}
Thus, in the case when $F(x,y)$ has no nontrivial solutions over $\Z$, we know that 
\begin{equation}\label{eq: bound on sigma_1 LoD no linear factors}
    \Sigma_1 \ll_{\Delta,f} \frac{B^{1/2}L}{t^{1/2}} \cdot \frac{64^{\omega(k)}}{k}\cdot \frac{\tau(b^4)}{b} \cdot \sqrt{D} \cdot \exp(\gamma_r\sqrt{\log_2(kcD)\log_3(kcD)}).
\end{equation}
    
\subsubsection{$F(x,y)$ has a nontrivial solution over $\Z$}
We return to our expression  of $\Sigma_1$ in \eqref{eq: bound for sigma_1 LoD} and split the sum over $\bv$ by when $F_i(\bv)=0$ and otherwise: 
\begin{equation*}
    \Sigma_1 \ll_{\Delta} \frac{B^{1/2}L}{(tk)^{1/2}} \sum_{g\leq kcD} \frac{1}{g} \left(\sum_{\substack{|\bv|\leq b\sqrt{kcD/g}\\ \gcd(v_1,v_2)=b\\ F(\bv)\neq 0}} \frac{1}{|\bv|} \sum_{d_i\sim D_i} \prod_{i=1}^r \mathbf{1}_{k_ic_id_i/g_i\mid F_i(\bv)} + \sum_{\substack{|\bv|\leq b\sqrt{kcD/g}\\ \gcd(v_1,v_2)=b\\ F(\bv)= 0}} \frac{1}{|\bv|} \sum_{d_i\sim D_i} \prod_{i=1}^r \mathbf{1}_{k_ic_id_i/g_i\mid F_i(\bv)}\right).
\end{equation*}
By the argument above, we can see the sum over $F(\bv)\neq 0$ is bounded by 
\begin{equation*}
    \ll_{\Delta,f}\frac{B^{1/2}L}{t^{1/2}} \cdot \frac{64^{\omega(k)}}{k}\cdot \frac{\tau(b^4)}{b} \cdot \sqrt{D} \cdot \exp(\gamma_r\sqrt{\log_2(kcD)\log_3(kcD)}). 
\end{equation*}
So, it remains to handle the contribution when $F(\bv)=0$ -- this implies that $F_i(\bv)=0$ for some linear factor $F_i\mid F$. 

Thus, we can bound the contribution when $F(\bv)=0$ by:
\begin{equation*}
    \frac{B^{1/2}L}{(tk)^{1/2}}\sum_{g\leq kcD}\frac{1}{g}\sum_{\substack{i=1\\ \deg(F_i)=1}}^r \frac{D_i}{g_i} \sum_{\substack{|\bv|\leq b\sqrt{kcD/g}\\ \gcd(v_1,v_2)=b \\ F_i(\bv)=0 }} \frac{1}{|\bv|}.
\end{equation*}
For any linear factor $L(x,y)$, we can compute that the sum:
\begin{equation*}
    \sum_{\substack{|\bv|\leq b\sqrt{kcD/g}\\ L(\bv)=0 \\ \gcd(v_1,v_2)=b}}\frac{1}{|\bv|} \leq \frac{1}{b}\sum_{\substack{|\bv|\leq \sqrt{kcD/g}\\ L(\bv)=0}} \frac{1}{|\bv|} \ll_L \frac{\log(kcD/g)}{b}.
\end{equation*}
Plugging the above estimate in, we have that the contribution from those $F(\bv)=0$ is bounded by:
\begin{equation*}
    \frac{B^{1/2}L}{(tk)^{1/2}} \cdot \frac{1}{b}\cdot \max_{i: \deg(F_i)=1} D_i \log(kcD)^2. 
\end{equation*}
Thus, if $F(x,y)$ has a linear factor: 
\begin{multline*}
    \Sigma_1 \ll_{\Delta,f} \frac{B^{1/2}L}{t^{1/2}} \cdot \frac{64^{\omega(k)}}{k}\cdot \frac{\tau(b^4)}{b} \cdot \sqrt{D} \cdot \exp(\gamma_r\sqrt{\log_2(kcD)\log_3(kcD)}) \\ + \frac{B^{1/2}L}{(tk)^{1/2}} \cdot \frac{1}{b}\cdot \max_{i: \deg(F_i)=1} D_i \log(kcD)^2.
\end{multline*}
This completes the proof of Lemma \ref{lem: LoD with gcd}

\end{proof}

\subsection{From $\Lambda^*$ to $\Lambda$}
We return to the sum that we actually want to evaluate: 
\begin{equation*}
    S(n,\ba,\bc,\bk,b;\bd) = \sum_{\substack{(x,y)\in (B/tk)^{1/2}\calB(\bx_0,L) \\ b\mid x,y\\ k_ic_id_i\mid F_i(x,y) \\ p_{-\Delta}(F(x,y)/c) = n \\ p_{\neg 
    -\Delta}(F_i(x,y)/k_ic_i)\equiv a_i\bmod -\Delta}}1. 
\end{equation*}
In our notation above, this is given by 
\begin{equation*}
    \#(B/tk)^{1/2} \calB(\bx_0,L) \cap \Lambda(\bc\bk\bd;b) \cap \Lambda(n,\ba). 
\end{equation*}
We now relate $\Lambda$ and $\Lambda^*$: for any tuple $\bc\bk\bd$, we have that 
\begin{equation}\label{eq: relate Lambda and Lambda*}
    \Lambda(\bk\bc\bd;b) = \cup_{g_i\mid k_ic_id_i} g\Lambda^*(c_1k_1d_1/g_1^\dagger,\dots,c_rk_rd_r/g_r^\dagger;b),
\end{equation}
where $g= g_1\dots g_r$ and $g_i^\dagger = \gcd(g_i^{\deg(F_i)},k_ic_id_i)$. For notational simplicity, we write the vector above as $\bc\bk\bd/\mathbf{g}^\dagger.$  

Then we claim that 
\begin{multline*}\varrho_{F}(\mathbf{d};b, n, \ba,\bc,\bk) = \#\left\{\bx\bmod bcdkn(-\Delta): \begin{array}{c}
     k_ic_id_i\mid F_i(x,y), b\mid (x,y),\\
    p_{-\Delta}(F(x,y)) = n, p_{\neg -\Delta}(F_i(x,y)/k_ic_i)\equiv  a_i\bmod -\Delta
\end{array}\right\} \\
= \sum_{g_i\mid k_ic_id_i} \varrho_F^*(\bd/\mathbf{g}^\dagger;b,n,\ba,\bc,\bk)\cdot \left(\frac{g^\dagger}{g}\right)^2.
\end{multline*}
Indeed, by \eqref{eq: relate Lambda and Lambda*}, the local count splits into a count of sums of the form:
\begin{multline*}
    \#\left\{ \bx \bmod bcdkn(-\Delta): \begin{array}{c}
     k_ic_id_i\mid F_i(x,y), b\mid (x,y), \gcd(x,y,kcd) = g\\
    p_{-\Delta}(F(x,y)) = n, p_{\neg -\Delta}(F_i(x,y)/k_ic_i)\equiv  a_i\bmod -\Delta
\end{array}\right\} \\ 
= \#\left\{ \bx \bmod bcdkn(-\Delta)/g: \begin{array}{c}
     k_ic_id_i\mid g^{\deg(F_i)} F_i(x,y), b\mid (x,y), \gcd(x,y,kcd/g) = 1\\
    p_{-\Delta}(F(x,y)) = n, p_{\neg -\Delta}(F_i(x,y)/k_ic_i)\equiv  a_i\bmod -\Delta
\end{array}\right\} \\
= \left(\frac{g^\dagger}{g}\right)^2 \cdot \#\left\{ \bx \bmod bcdkn(-\Delta)/g^\dagger: \begin{array}{c}
     k_ic_id_i/g_i^{\dagger}\mid  F_i(x,y), b\mid (x,y), \gcd(x,y,kcd/g^\dagger) = 1\\
    p_{-\Delta}(F(x,y)) = n, p_{\neg -\Delta}(F_i(x,y)/k_ic_i)\equiv  a_i\bmod -\Delta
\end{array}\right\}.
\end{multline*}
This completes the claim.

Let us define a modification of \eqref{eq: def S*(n,a,c,k,b;d)}:
\begin{equation*}
    S^*_g(n,\ba,\bc,\bk,b;\bd) := \sum_{\substack{(x,y)\in \frac{1}{g}(B/tk)^{1/2}\calB(\bx_0,L) \\ b\mid x,y\\ k_ic_id_i\mid F_i(x,y) \\ p_{-\Delta}(F(x,y)/c) = n \\ p_{\neg 
    -\Delta}(F_i(x,y)/k_ic_i)\equiv a_i\bmod -\Delta}}\mathbf{1}_{\gcd(x,y,kcd)=1}. 
\end{equation*}

Consequently, 
\begin{equation*}
    E(\bD) \ll \sum_{d_i\sim D_i} \sum_{g_i\mid k_ic_id_i} \left|S^*_g(n,\ba,\bc,\bk,b;\bd/\mathbf{g}^\dagger) - \frac{\varrho_F^*(\bd/\mathbf{g}^\dagger;b,\ba,\bc,\bk)}{(cdbn(-\Delta))^2} \cdot \frac{BL^2}{tk^3} \cdot \left(\frac{g^\dagger}{g}\right)^2\right|. 
\end{equation*}
Reordering the summation and noting that there are $\leq \tau(g_i^{\deg(F_i)})$ possible values of $d_i$ that achieve the same value of $k_ic_id_i/g_i$, we have: 
\begin{equation*}
    E(\bD) \ll \sum_{g_i \leq k_ic_iD_i} \tau(g_i^{\deg(F_i)})\sum_{d_i\sim D_i/g_i} \left|S^*_g(n,\ba,\bc,\bk,b; \bd) - \frac{\varrho_F^*(\bd;b,\ba,\bc,\bk)}{(cdbn(-\Delta))^2} \cdot \frac{BL^2}{tk^3} \cdot \frac{1}{g^2}\right|. 
\end{equation*}

Let us split the above summation into two ranges by the value \begin{equation*}
    G = 1+\frac{BL^2}{tk D}
\end{equation*}
and write 
\begin{equation*}
    E_{\geq G}(\bD) = \sum_{\substack{g_i \leq k_ic_iD_i\\ g\geq G}} \tau(g_i^{\deg(F_i)})\sum_{d_i\sim D_i/g_i} \left|S^*_g(n,\ba,\bc,\bk,b; \bd) - \frac{\varrho_F^*(\bd;b,\ba,\bc,\bk)}{(cdbn(-\Delta))^2} \cdot \frac{BL^2}{tk^3} \cdot \frac{1}{g^2}\right|,
\end{equation*}
\begin{equation*}
    E_{\leq G}(\bD) = \sum_{\substack{g_i \leq k_ic_iD_i\\ g\leq G}} \tau(g_i^{\deg(F_i)})\sum_{d_i\sim D_i/g_i} \left|S^*_g(n,\ba,\bc,\bk,b; \bd) - \frac{\varrho_F^*(\bd;b,\ba,\bc,\bk)}{(cdbn(-\Delta))^2} \cdot \frac{BL^2}{tk^3} \cdot \frac{1}{g^2}\right|.
\end{equation*}

First, let us bound $E_{\geq G}(\bD).$
For any fixed tuple $\mathbf{g}$, we claim that the following bound holds by Corollary \ref{cor: dlB-Tenenbaum with divisor factor}: 
\begin{multline}
    \sum_{d_i\sim D_i/g_i} \left|S^*_g(n,\ba,\bc,\bk,b; \bd) - \frac{\varrho_F^*(\bd;b,\ba,\bc,\bk)}{(cdbn(-\Delta))^2} \cdot \frac{BL^2}{tk^3} \cdot \frac{1}{g^2}\right| \\ \ll_{\Delta} \sum_{d_i\sim D_i/g_i} S^*_g(n,\ba,\bc,\bk,b;\bd) + \sum_{d_i\sim D_i/g_i} \frac{\varrho_F^*(\bd;b,\ba,\bc,\bk)}{(cdbn(-\Delta))^2} \cdot \frac{BL^2}{tk^3} \cdot \frac{1}{g^2} \\
    \ll_{\Delta,f} \sum_{\substack{(x,y) \in \frac{1}{g}(B/tk)^{1/2}\calB(\bx_0,L)\\ b\mid x,y}} \prod_{i=1}^r \Delta(F_i(x,y)/k_ic_i) + \frac{1}{g^2} \cdot \frac{BL^2}{t} \cdot \frac{\varrho_F^*(\bk)}{k^3} \cdot \left(\log(2D/g)-\log(D/g)+1\right)^r\\ 
    \ll_{\Delta,f} \frac{BL^2}{tkb^2g^2} \cdot \frac{64^{\omega(k)}}{k}\cdot \exp(\gamma_r \sqrt{\log_2(B/tk) \log_3(B/tk)})+ \frac{1}{g^2}\cdot \frac{BL^2}{t}\cdot \frac{\varrho_F^*(\bk)}{k^3}. 
\end{multline}
Thus, we have that 
\begin{multline*}
    E_{\geq G}(\bD) \ll_{\Delta,f}\sum_{\substack{g_i\leq k_ic_iD_i \\ g\geq G}} \frac{\tau(g_i^{\deg(F_i)})}{g^2} \cdot \left(\frac{BL^2}{tkb^2} \cdot \frac{64^{\omega(k)}}{k}\cdot \exp(\gamma_r \sqrt{\log_2(B/tk) \log_3(B/tk)})+  \frac{BL^2}{t}\cdot \frac{\varrho_F^*(\bk)}{k^3}\right) \\ 
    \ll_{\Delta,f} \left(\frac{BL^2}{tkb^2} \cdot \frac{64^{\omega(k)}}{k}\cdot \exp(\gamma_r \sqrt{\log_2(B/tk) \log_3(B/tk)})+  \frac{BL^2}{t}\cdot \frac{\varrho_F^*(\bk)}{k^3}\right) \cdot \frac{\log(G)^{8}}{G}. 
\end{multline*}
By our choice of $G$, we can bound $E_{\geq G}(\bD)$ by:
\begin{equation}\label{eq: bound on E>=G(D)}
    \log\left(1+\frac{BL^2}{tkD}\right)^8\left(\frac{D}{b^2} \cdot \frac{64^{\omega(k)}}{k} \cdot \exp(\gamma_r\sqrt{\log_2(B/tk)\log_3(B/tk)}) + D \cdot \frac{\varrho_F^*(\bk)}{k^2} \right). 
\end{equation}

For $E_{\leq G}(\bD)$, we now apply Lemma \ref{lem: LoD with gcd} to this inner sum, with our region changed to $\frac{1}{g} (B/tk)^{1/2}\calB(\bx_0,L)$. If $F(x,y)$ has no linear factors, then
\begin{multline*}
    E_{\leq G}(\bD) \ll_{\Delta,f} \sum_{\substack{g_i\leq k_ic_iD_i \\ g\leq G}} \tau(g_i^{\deg(F_i)}) \frac{\varrho_F^*(\bk)\tau(k)}{\varphi(k)} \cdot \frac{D}{g} \cdot \log(G) \\ + \sum_{\substack{g_i\leq k_ic_iD_i \\ g\leq G}} \frac{B^{1/2}L}{gt^{1/2}}\cdot \frac{64^{\omega(k)}}{k} \cdot \frac{\tau(b^4)}{b}\cdot \sqrt{\frac{D}{g}} \cdot \exp(\gamma_r\sqrt{\log_2(kcD)\log_3(kcD)}). 
\end{multline*}
Evaluating these summands, we see that 
\begin{multline}\label{eq: bound on E <= G for nonlinear}
    E_{\leq G}(\bD) \ll_{\Delta,f} \frac{\varrho_F^*(\bk)\tau(k)}{\varphi(k)} \cdot  D \cdot \log \left(1+\frac{BL^2}{tk D}\right)^9 \\ + \frac{B^{1/2}L}{t^{1/2}}\cdot \frac{64^{\omega(k)}}{k} \cdot \frac{\tau(b^4)}{b} \cdot \sqrt{D} \cdot \exp(\gamma_r\sqrt{\log_2(kcD)\log_3(kcD)}).
\end{multline}
Now, if $F(x,y)$ has a linear factor, then there is an extra term: 
\begin{multline*}
    \sum_{\substack{g_i\leq k_ic_iD_i\\ g\leq G}} \tau(g_i^{\deg(F_i)}) \cdot \frac{B^{1/2}L}{g(tk)^{1/2}}\cdot \frac{1}{b}\cdot \max_{i:\deg(F_i)=1} \frac{D_i}{g_i} \log(kcD)^2 \\
    \ll_{\Delta,f} \frac{B^{1/2}L}{(tk)^{1/2}} \cdot \frac{1}{b} \cdot \max_{i:\deg(F_i)=1} D_i \log(kcD)^2 \log \left(1+\frac{BL^2}{tkD}\right)^8. 
\end{multline*}
Combining this bound with \eqref{eq: bound on E>=G(D)} and \eqref{eq: bound on E <= G for nonlinear} gives Proposition \ref{prop: level of distribution}.

\section{Large moduli and Hooley's $\Delta$-function}\label{sec: large moduli}
In this section, we will discuss how to handle the sum for larger $\bd$: let us write $$T(n,\ba,\bc,\bk,b;\mathbf{D}) := \sum_{d_i \sim D_i} \chi(d) S(n,\ba,\bc,\bk, b;\bd),$$
where $S(n,\ba,\bc,\bk,b;\bd)$ is defined in (\ref{eq: def S d_i}). If $F(x,y)$ has a linear factor $F_1(x,y)$, let us define 
$$T_L(n, \ba,\bc,\bk,b;D_1) := \sum_{d_1\sim D_1} \sum_{\substack{d_i\ll (B/tk)^{\deg(F_i)/4}(n_ik_i)^{-1/2}}} \chi(d) S(n,\ba,\bc,\bk,b;\bd).$$
We will provide an upper bound for these sums. This proof is based off of de la Bret\`eche and Tenenbaum's \cite{delaTenenbaum-Manin} and Heath Brown's \cite{HB-linear} (for $F(x,y)$ with linear factors) treatment of the large moduli terms.

\begin{prop}\label{prop: large moduli}
Let $F$ be a squarefree binary form. Let $\bD = (D_i)_{i=1}^r$ satisfy that $\log(D)\gg \log(B/kt)$.
Then the following bound holds:
    \begin{multline*}
    T(n,\ba,\bc,\bk, b;\mathbf{D}) \ll  \left(\frac{B}{b^2tk^2} + \frac{B^{3/4}\nu_1(k)^{1/2}}{bt^{3/4}k^{5/4}}\right)\cdot (4\deg(F))^{\omega(k)} \\ \times  \log(B/tk)^{\rho_{\Delta,f}/2-1-10^{-5}}\cdot  \left(\frac{B}{tkD} + \frac{tkD}{B}+\log\log(B/tk)\right)^{1/2} . 
\end{multline*}
\end{prop}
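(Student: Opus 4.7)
The plan is to interchange the summations, bound the inner character sum by Hooley's twisted $\Delta$-function (Definition~\ref{def: hooley twisted}), and apply Cauchy--Schwarz to split the resulting estimate into a ``large divisor'' counting factor (controlled by Lemma~\ref{lem: small log saving binary}) and an $L^2$ mean-square estimate for $\Delta(\cdot,\chi)$ (controlled by Theorem~\ref{thm: hooley character bound} together with Nair's sieve Theorem~\ref{thm: Nair sieve}).

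First, since $\chi$ is completely multiplicative, the inner sum factors across the decomposition $F=F_1\cdots F_r$:
\[
\biggl|\sum_{\substack{d_i\sim D_i\\ d_i\mid F_i(x,y)/k_ic_i}}\chi(d_1\cdots d_r)\biggr| \;\leq\; \prod_{i=1}^{r}\Delta(F_i(x,y)/k_ic_i,\chi).
\]
Let $\mathcal{R}$ denote the set of $(x,y)\in (B/tk)^{1/2}\calB(\bx_0,L)$ satisfying the constraints in the definition of $S(n,\ba,\bc,\bk,b;\bd)$. Cauchy--Schwarz then yields $|T|\ll (\mathcal{A}\,\mathcal{B})^{1/2}$, with
\[
\mathcal{A}=\#\{(x,y)\in\mathcal{R}:\ \exists d_i\sim D_i,\ d_i\mid F_i(x,y)/k_ic_i\}, \qquad \mathcal{B}=\sum_{(x,y)\in\mathcal{R}}\prod_{i=1}^r\Delta(F_i(x,y)/k_ic_i,\chi)^2.
\]

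For $\mathcal{A}$, the existence of divisors $d_i\sim D_i$ of $F_i/k_ic_i$ forces the product $d:=d_1\cdots d_r$ to divide $F(x,y)/kc$ and to lie in a dyadic range near $D:=D_1\cdots D_r$. After stripping the factor $b^{-2}$ from $b\mid\gcd(x,y)$ and the density $\varrho_F(\bk\bc)\ll (8\deg F)^{\omega(k)}$ from $k_ic_i\mid F_i$, Lemma~\ref{lem: small log saving binary} applied to $F(x,y)/kc$ with divisor $d\sim D$ gives
\[
\mathcal{A} \ll \frac{B\,(8\deg F)^{\omega(k)}}{tk^2b^2}\cdot\frac{B/(tkD)+tkD/B+\log\log(B/tk)}{\log(D)^\eta}.
\]
For $\mathcal{B}$, one applies Theorem~\ref{thm: Nair sieve} to the non-negative multivariable function $(a_1,\ldots,a_r)\mapsto\prod_i\Delta(a_i,\chi)^2$ and uses Theorem~\ref{thm: hooley character bound} to control the resulting Euler sum at split primes (exploiting the cancellation between divisors weighted by $\chi$ that a direct Nair output does not capture), obtaining
\[
\mathcal{B} \ll \frac{B\,(8\deg F)^{\omega(k)}}{tk^2b^2}\log(B/tk)^{\varrho_{\Delta,f}-2}\exp\!\bigl(c\sqrt{\log_2(B)\log_3(B)}\bigr).
\]

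Taking the geometric mean, the hypothesis $\log D\gg\log(B/tk)$ together with the explicit constant $\eta>1/1000$ from Lemma~\ref{lem: small log saving binary} absorb the subexponential factor into $\log(B/tk)^{-10^{-5}}$, yielding the claimed bound with exponent $\varrho_{\Delta,f}/2-1-10^{-5}$ and volume factor $(B/(tkD)+tkD/B+\log\log(B/tk))^{1/2}$. The principal obstacle I anticipate is twofold. First, when $F$ has a linear factor, Lemma~\ref{lem: small log saving binary} (tailored to binary quartics) must be refined or replaced by a factor-by-factor application of the single-variable Lemma~\ref{lem: small log saving}, with the linear contribution isolated and handled separately. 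Second, achieving the polylog exponent $\varrho_{\Delta,f}-2$ in the $\mathcal{B}$ estimate --- rather than the weaker exponent $\tfrac{3}{2}(r+\varrho_{\Delta,f}-2)$ produced by a naive Nair sieve applied to $\Delta(\cdot,\chi)^2$ --- requires exploiting the character cancellation captured by Theorem~\ref{thm: hooley character bound}, typically via a cross-sectional reduction (fix one variable and apply the single-variable bound) or a pairwise moment expansion in the spirit of de la Bret\`eche--Tenenbaum's proof of Theorem~\ref{thm: hooley character bound} itself. A secondary delicate point is threading the fixed $\Delta$-part condition $p_{-\Delta}(F(x,y)/c)=n$ and the mod-$(-\Delta)$ residue conditions through Nair's sieve by restricting to fixed residue classes at primes dividing $-\Delta$ and verifying that the resulting local densities do not inflate the polylog exponent.
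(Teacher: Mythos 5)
Your decomposition matches the paper's proof exactly: bound the inner sum by $\prod_i\Delta(F_i/k_ic_i,\chi)$, apply Cauchy--Schwarz, estimate $\mathcal{A}$ via Lemma~\ref{lem: small log saving binary} (with the $b$- and $k$-lattice constraints incorporated, as the paper notes can be done by reworking the proof of (7.41) in \cite{delaTenenbaum-Manin}), and estimate $\mathcal{B}$ via Theorem~\ref{thm: Nair sieve} together with Theorem~\ref{thm: hooley character bound}. Your final bound for $\mathcal{B}$, including the exponent $\varrho_{\Delta,f}-2$, is also what the paper obtains.

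The one place where your account is substantively off is in explaining \emph{where} the exponent $\varrho_{\Delta,f}-2$ comes from and what replaces the ``cross-sectional reduction or pairwise moment expansion'' you anticipate needing. The paper's treatment of $\mathcal{B}$ is simpler and factor-by-factor: after Nair's sieve, each factor is a sum $\sum_{s}\Delta(s/k_ic_i,\chi)^2\varrho_{F_i}(s)/s^2$. The crucial dichotomy is that Theorem~\ref{thm: hooley character bound} requires $f_i$ to be \emph{irreducible over} $K=\Q(\sqrt{-\Delta})$, and this hypothesis fails precisely for those $f_i$ with $\sqrt{-\Delta}\in\Q[z]/(f_i)$ --- i.e.\ the $\varrho_{\Delta,f}-2$ factors that $f_i$ contributes to. For $f_i$ irreducible over $K$, Theorem~\ref{thm: hooley character bound} yields a sub-polylogarithmic bound on the factor; for $f_i$ reducible over $K$, one cannot invoke it and instead uses the trivial inequality $\Delta(\cdot,\chi)\leq\Delta(\cdot)$ together with a moment estimate for $\Delta(\cdot)^2$ against $\varrho_{f_i}$ (\cite[Theorem 1.1]{delaTenenbaumHooley}), costing one full power of $\log$ per such factor. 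The count of reducible-over-$K$ factors is exactly $\varrho_{\Delta,f}-2$, which is the source of the exponent; no cross-sectional or pairwise device is needed. Separately, your worry about threading the $\Delta$-part and mod-$(-\Delta)$ residue conditions through Nair's sieve is moot: since every summand in $\mathcal{B}$ is non-negative, the paper simply discards these constraints (enlarging the region of summation) before applying the sieve. Your worry about linear factors is also not relevant here in practice: linear $f_i$ are always irreducible over $K$ and never contribute to $\varrho_{\Delta,f}-2$, and in any case Proposition~\ref{prop: large moduli linear} is the result the paper applies when linear factors are present.
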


\begin{prop}\label{prop: large moduli linear}
    Let $F$ be a squarefree binary form with a linear factor $F_1(x,y)$. Then, the following bound holds:
    $$T(n,\ba,\bc,\bk, b;D_1) \ll \frac{B(16\deg(F))^{\omega(k)}}{tk^2} \cdot \log(B/tk)^{\rho_{\Delta,f}-2-10^{-5}}.$$
\end{prop}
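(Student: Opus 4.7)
The plan is to exchange the order of summation so that the sum over $(x,y) \in (B/tk)^{1/2}\calB(\bx_0,L)$ becomes the outermost, yielding
\[
|T_L(n,\ba,\bc,\bk,b;D_1)| \leq \sum_{(x,y)}^{*} \Big|\sum_{\substack{d_1 \sim D_1 \\ k_1c_1d_1 \mid F_1(x,y)}}\chi(d_1)\Big|\, \prod_{i=2}^{r} \Big|\sum_{\substack{d_i \ll (B/tk)^{\deg F_i/4}(n_ik_i)^{-1/2} \\ k_ic_id_i \mid F_i(x,y)}}\chi(d_i)\Big|,
\]
where the asterisk records the congruence and size conditions encoded in $n,\ba,\bc,\bk,b$. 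By Definition \ref{def: hooley twisted} the inner sum over $d_1$ is bounded by $\Delta(F_1(x,y)/(k_1c_1),\chi)$, while the inner sums for $i\geq2$ are each majorised by a non-negative multiplicative function of $F_i(x,y)$ whose prime values agree with $(1\star\chi)(p)$.

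The key input is Lemma \ref{lem: small log saving} together with the linearity of $F_1$. For each fixed $y$ in the box, $F_1(x,y)$ is a linear polynomial in $x$ taking values along an arithmetic progression in an interval of length $\asymp(B/tk)^{1/2}L$; applying Lemma \ref{lem: small log saving} to this integer progression, the number of $x$ for which $F_1(x,y)/(k_1c_1)$ admits a divisor in $[D_1,2D_1]$ is $\ll (B/tk)^{1/2}L/\log(D_1)^{\eta}$ for some explicit $\eta > 10^{-5}$ (cf.\ \cite{HB-linear}). Summing over $y$ gives a cardinality bound of $\ll (B/tk)L^{2}/\log(D_1)^{\eta}$ on the effective support of $\sum^*$. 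Since we are in the large-moduli regime $\log(D_1)\gg \log(B/tk)$, this translates into a genuine savings of $\log(B/tk)^{-\eta}$.

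I would then combine this restriction with Nair's sieve (Theorem \ref{thm: Nair sieve}) applied to the multiplicative function $n \mapsto \Delta(n/(k_1c_1),\chi) \prod_{i\geq 2} g_i(n/(k_ic_i))$, where each $g_i$ dominates the inner sum over $d_i$. The resulting Euler product has local factors agreeing, up to bounded error, with those of $\xi(s;F,\chi)$, so by Corollary \ref{cor: order of pole is varrho} it contributes $\log(B/tk)^{\varrho_{\Delta,f}-2}$. Together with the savings from the preceding paragraph, this gives
\[
T_L(n,\ba,\bc,\bk,b;D_1) \ll \frac{B\,(8\deg F)^{\omega(k)}}{tk^2}\,\log(B/tk)^{\varrho_{\Delta,f}-2-\eta},
\]
which yields the claim upon fixing $\eta > 10^{-5}$.

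The main obstacle will be reconciling the pointwise divisor-restriction from Lemma \ref{lem: small log saving} with Nair's multiplicative-sieve bound: a naive combination via Cauchy--Schwarz would halve the exponent (as in the no-linear-factor case of Proposition \ref{prop: large moduli}), so a more refined argument in the spirit of Heath-Brown's treatment \cite{HB-linear} of the all-linear case is required in order to preserve the full exponent $\varrho_{\Delta,f}-2$. A secondary technical issue is the dyadic decomposition of the ranges of $d_2,\ldots,d_r$ and the uniform tracking of the constants $n_i$, $k_i$, $c_i$, and $b$ through the sieve inequality, which proceeds as in \cite{delaTenenbaum-Manin}.
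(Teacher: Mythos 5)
Your high-level plan is sound, and you have correctly diagnosed the central difficulty: the divisor restriction coming from the linear factor must deliver a genuine $\log$-saving, but a blind Cauchy--Schwarz against Nair's sieve (as in Proposition \ref{prop: large moduli}) costs too much once $D_1$ is forced below $(B/tk)^{1/4}\log(B)^{-1-\epsilon}$, since the term $(B/(tkD))^{1/2}$ blows up. However, you stop at naming the obstacle and gesturing at Heath-Brown, without supplying the idea that actually resolves it. The gap is real: your step ``restrict the $(x,y)$-sum to the sparse set of effective support, then apply Nair's sieve'' is not a valid application of Theorem \ref{thm: Nair sieve}, which is a bound for a non-negative multiplicative function averaged over an entire box (or translate thereof), not over an arbitrary sparse subset of the box. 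You cannot simply multiply the cardinality bound from Lemma \ref{lem: small log saving } by the Euler-product average; the multiplicative weight could in principle be concentrated precisely on the restricted set, and nothing you have written rules this out.

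The ingredient you are missing is the \emph{fibering by the value of the linear form}. The paper writes $F_1(x,y)=m\,k_1c_1$ and sums over $m$ as the outer variable. On each fiber $\{F_1(x,y)=mk_1c_1\}$, the remaining factors $F_2,\dots,F_r$ specialise to single-variable polynomials $F_{i,m}(x)$, and Nair's \emph{single-variable} sieve (Theorem \ref{thm: Nair single sieve}) applies on the fiber, producing the full factor $\log(B/tk)^{\varrho_{\Delta,f}-2}$ (using that a linear factor does not change $\varrho_{\Delta,f}$, i.e. $\varrho_{\Delta,F}=\varrho_{\Delta,F_2\cdots F_r}$). Both the Hooley $\Delta$-function $\Delta(m,\chi;D_1)$ and the divisor restriction $\exists d_1\sim D_1,\ d_1\mid m$ now live only in the residual one-dimensional $m$-sum, and there Cauchy--Schwarz is harmless: the $\log^{\varrho_{\Delta,f}-2}$ factor has already been extracted on the fiber, so squaring $\Delta(m,\chi)$ and halving the saving from Lemma \ref{lem: small log saving } costs only $O(1)$ in the exponent. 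This fibering step is the precise mechanism by which the ``more refined argument'' you allude to avoids halving $\varrho_{\Delta,f}-2$; without it, your proposal does not close. (A secondary, smaller issue: Lemma \ref{lem: small log saving } as stated is for an interval of integers, whereas you invoke it for the arithmetic progression $\{F_1(x,y):x\}$; this is fine in substance but deserves a word, and in any event the paper sidesteps it by first enlarging to the full box $|x|,|y|\leq(B/tk)^{1/2}$ and then summing over $m$ rather than over $y$.)
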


\subsection{Relation to the twisted Hooley's $\Delta$-function}
We first observe that $$T(n,\ba,\bc,\bk, b;\mathbf{D})\leq \sum_{\substack{(x,y)\in (B/tk)^{1/2}\calB(\bx_0,L) \\ b\mid x,y\\ \exists d_i\sim D_i: k_ic_id_i\mid F_i(x,y) \\  p_{-\Delta}(F(x,y)/c) = n \\ p_{\neg -\Delta}(F_i(x,y)/k_ic_id_i)\equiv a_i\bmod -\Delta}}\prod_{i=1}^r \Delta(F_i(x,y)/k_ic_i,\chi).$$
Since $\Delta(n,\chi)\geq 0$, we can remove a few conditions and ask to bound:
$$\sum_{\substack{|x|,|y|\leq (B/tk)^{1/2} \\ b\mid x,y \\ \exists d_i\sim D_i: k_ic_id_i\mid F_i(x,y)}}\prod_{i=1}^r \Delta(F_i(x,y)/k_ic_i,\chi).$$
After applying Cauchy-Schwarz, we achieve the upper bound of 
\begin{multline*}
    T(n,\ba,\bc,\bk, b;\mathbf{D}) \leq \left(\#\{|x|,|y|\leq (B/tk)^{1/2}: b\mid (x,y), \exists d_i\sim D_i, k_ic_id_i\mid F_i(x,y)\}\right)^{1/2} \\ \times \left(\sum_{\substack{|x|,|y|\leq (B/tk)^{1/2}\\ b\mid x,y}} \prod_{i=1}^r \Delta(F_i(x,y)/k_ic_i,\chi)^2\right)^{1/2}.
\end{multline*}

Let us apply Lemma \ref{lem: small log saving binary} to the first quantity: 
\begin{multline*}
\#\{|x|,|y|\leq (B/tk)^{1/2}: b\mid (x,y), \exists d_i\sim D_i, k_ic_id_i\mid F_i(x,y)\} \\ \leq  
    \#\{|x|,|y|\leq (B/tk)^{1/2}: b\mid (x,y),  k\mid F(x,y), \exists d\sim D, d\mid F(x,y)\} \\ \ll  \left(\frac{\deg(F)^{\omega(k)}}{b^2 k}\cdot \frac{B}{tk} + \nu_1(k)\left(\frac{B}{tk}\right)^{1/2} \right)\cdot \left(\frac{B}{tkD}+\frac{tkD}{B} +\log\log(B/tk)\right)\log(B/tk)^{-10^{-4}}.
\end{multline*}
Here $\nu_1(k) = \sum_{\alpha\in \calU(k)} \lambda_1(\Lambda(\alpha;k))^{-1}$.
We note that tracking through the proof of equation (7.41) in \cite{delaTenenbaum-Manin}, one can add the lattice constraints that $b\mid x,y$ and achieve the upper bound mentioned with an additional factor of $b^{-2}$ (recall that $b\leq \log(B)^{10^{-7}}$ is much smaller that $B/tk \geq \exp(\sqrt{
\log(B)})$). Similarly, after tracking through the proof of (7.41) in \cite{delaTenenbaum-Manin} with the lattice condition that $k\mid F(x,y)$, we achieve the bound above; in \S\ref{sec: LoD}, it was seen that this condition gives $\deg(F)^{\omega(k)}$ such lattices of determinant $k$.

Next, we apply Corollary \ref{cor: dlB-Tenenbaum with divisor factor} to the second sum: 
$$\sum_{\substack{|x|,|y|\leq (B/tk)^{1/2}\\ b\mid x,y}} \prod_{i=1}^r \Delta(F_i(x,y)/k_ic_i,\chi)^2\ll \frac{B}{tk\log(B/tk)} \cdot \frac{(16\deg(F))^{\omega(k)}}{b^2k} \prod_{i=1}^r \left(\sum_{s_i\ll B/tk}\Delta(s_i,\chi)^2 \frac{\varrho_{F_i}(s_i)}{s_i^2}\right).$$
Here we have again restricted to the lattice defined by $b\mid (x,y)$.
If $F_i(x,y)$ is irreducible over the quadratic field $K$, then by Theorem \ref{thm: hooley character bound}, we have that 
\begin{equation*}\sum_{s_i\ll B/tk} \Delta(s_i,\chi)^2 \cdot \frac{\varrho_{F_i}(s_i)}{s_i^2}  \ll \sum_{s_i\ll B/tk} \Delta(s_i,\chi)^2 \cdot \frac{\varrho_{f_i}(s_i)}{s_i}\ll \exp(C\sqrt{\log_2(B)\log_3(B)}),
\end{equation*}
for an explicit $C>0$. Otherwise, we use a trivial bound:
\begin{multline*}\sum_{s_i\ll B/tk} \Delta(s_i,\chi)^2 \cdot \frac{\varrho_{F_i}(s_i)}{s_i^2} \ll\sum_{s_i\ll B/tk} \Delta(s_i)^2 \cdot \frac{\varrho_{F_i}(s_i)}{s_i^2} \ll \log(B/tk) \exp(C\sqrt{\log_2(B)\log_3(B)}).\end{multline*}
Here we have used the results of de la Bret\`eche and Tenenbaum in \cite[Theorem 1.1]{delaTenenbaumHooley} for averages of higher powers of $\Delta(n)$ correlated with local counts of polynomials. 

Combining these two bounds, we have that 
\begin{multline*}
    T(n,\ba,\bc,\bk, b;\mathbf{D}) \leq \left(\frac{B}{b^2tk^2} + \frac{B^{3/4}\nu_1(k)^{1/2}}{bt^{3/4}k^{5/4}}\right)\cdot (4\deg(F))^{\omega(k)} \cdot  \left(\frac{B}{tkD} + \frac{tkD}{B}+\log\log(B/tk)\right)^{1/2} \\ \times \exp(C'\sqrt{\log_2(B)\log_3(B)}) \cdot \log(B/tk)^{\rho_{\Delta,f}/2-1-10^{-4}}. 
\end{multline*}
This completes the proof of Proposition \ref{prop: large moduli}.\qed 

\subsection{Linear factors}
Let us assume that $F(x,y)$ has a linear factor that we denote as $F_1(x,y)=L(x,y)$. Again, we enlarge our region and remove the short box constraints, since the terms being summed are non-negative. In this case, we have that 
\begin{equation*}
    T(n,\ba,\bc,\bk,b;D_1) \leq \sum_{\substack{|x|,|y|\leq (B/tk)^{1/2} \\ b\mid x,y }} \Delta(L(x,y)/k_1r_1c_1,\chi;D_1) \prod_{i=2}^r (1\star \chi)(F_i(x,y)/k_ic_i,\chi),
\end{equation*}
where we define the localized twisted Hooley $\Delta$ function as $$\Delta(n,\chi;D) := \left|\sum_{\substack{d\mid n\\ d\sim D}} \chi(d)\right|.$$
Next, we separate the sum by certain values of $L(x,y)$: 
\begin{equation}\label{eq: large moduli linear sum split}
    T(n,\ba,\bc,\bk,b;D_1) \leq \sum_{\substack{m\ll (B/tk)^{1/2}/k_1r_1c_1 \\ b\mid m\\ \exists d_1\sim D_1, d_1\mid m}} \Delta(m,\chi;D_1) \sum_{\substack{|x|,|y|\leq (B/tk)^{1/2}\\ b\mid x,y \\ L(x,y) = mk_1r_1c_1}}\prod_{i=2}^r (1\star \chi)(F_i(x,y)/k_ic_i,\chi).
\end{equation}

For now let us fix a $m$. The restriction $L(x,y) = mk_1r_1c_1$ allows us to rewrite the polynomial $F_i(x,y)$ as a single variable polynomial $F_{i,m}(x).$ In particular, we have that $$\sum_{\substack{|x|,|y|\leq (B/tk)^{1/2} \\ b\mid x,y\\ L(x,y) = mk_1c_1}} \prod_{i=2}^r (1\star \chi)(F_i(x,y)/k_ic_i,\chi) \ll \sum_{\substack{x\ll (B/tk)^{1/2} \\ b\mid x}}\prod_{i=2}^r (1\star \chi)(F_{i,m}(x)/k_ic_i,\chi).$$
So, we apply Corollary \ref{cor: dlB-Tenenbaum with divisor factor} to the sum above: 
$$\ll \frac{B^{1/2}}{b(tk)^{1/2}}\cdot \frac{(16\deg(F))^{\omega(k)}}{k} \prod_{i=1}^r \exp\left(\sum_{p_i\ll B/tk} \frac{\varrho_{F_{i,m}}(p_i)\chi(p_i)}{p_i}\right).$$

Now, we can see that if $\varrho_{F_i}'(p)$ is the original number of roots of $F_i(x,1)\bmod p$, then for $p$ not dividing $m$, we have that $$\varrho_{F_i,m}(p) = \varrho_{F_i}(p).$$
So, we write the above sum as $$\sum_{i=2}^r \sum_{\substack{p_i\ll X \\ p_i\nmid m}} \frac{\varrho_{F_{i}}'(p_i)\chi(p_i)}{p_i} + \sum_{i=2}^r \sum_{p_i\mid m} \frac{\varrho_{F_{i,m}}(p_i)\chi(p_i)}{p_i}.$$

We consider $$\exp\left(\sum_{i=2}^r \sum_{p\mid m} \frac{\varrho_{F_{i,m}}(p_i)\chi(p_i)}{p_i} - \sum_{p_i\mid m} \frac{\varrho_{F_i}'(p_i)\chi(p_i)}{p_i}\right)\ll \prod_{i=2}^r \prod_{p_i\mid m} \left(\frac{1-\varrho_{F_{i,m}}(p_i)/p_i}{1-\varrho_{F_i}'(p_i)/p_i}\right)\ll \prod_{i=2}^r \left(\frac{\sigma(m)}{m}\right)^{\deg(F_i)},$$
where we use the notation $\sigma(m) = \sum_{d\mid m} d.$ Here, we have used \cite[(4.6)]{HB-linear}. Now since $m\ll B/tk$, the above is bounded by $$\ll \prod_{i=2}^r \log\log(B/tk)^{\deg(F_i)} \ll \log\log(B/tk)^{\deg(F)-1}.$$

In summary, we have that 
\begin{multline*}\sum_{\substack{x\ll (B/tk)^{1/2} \\ b\mid x}}\prod_{i=2}^r (1\star \chi)(F_{i,m}(x)/k_ic_i,\chi) \ll \frac{B^{1/2}\log\log(B/tk)^{\deg(F)-1} }{b(tk)^{1/2}}  \cdot  \frac{(8\deg(F))^{\omega(k)}}{k} \\ \times \exp\left(\sum_{i=2}^r \sum_{p_i\ll B/tk}\frac{\varrho_{F_i}'(p_i)\chi(p_i)}{p_i}\right).\end{multline*}
Since $L(x,y)$ was a linear factor of $F(x,y)$, we know that the Picard rank satisfies $$\rho_{\Delta,F} = \rho_{\Delta, F_2...F_r}.$$
Thus, we get that 
$$\sum_{\substack{x\ll (B/tk)^{1/2}\\ b\mid x}} \prod_{i=2}^r (1\star \chi)(F_{i,m}(x)/k_ic_i,\chi) \ll \frac{B^{1/2}\log(B/tk)^{\rho_{\Delta,F}-2}\log\log(B/tk)^{\deg(F)-1}}{b(tk)^{1/2}}\cdot \frac{(16\deg(F))^{\omega(k)}}{k}.$$
Here we have again used Corollary \ref{cor: order of pole is varrho} that the order of the pole of $\xi(s;F,\chi)$ is given by $\rho_{\Delta,f}-2.$

Applying the above bound to the inner sum of (\ref{eq: large moduli linear sum split}), we see that 
\begin{multline*}T(n,\ba,\bc,\bk,b;D_1) \leq \frac{B^{1/2}\log(B/tk)^{\rho_{\Delta,f}-2}\log\log(B/tk)^{\deg(F)-1}}{b(tk)^{1/2}}\cdot \frac{(16\deg(F))^{\omega(k)}}{k} \\ \times \sum_{\substack{m\ll (B/tk)^{1/2}/k_1c_1 \\ b\mid m \\ \exists d_1\sim D_1, d_1\mid m}} \Delta(m,\chi;D_1).\end{multline*}
Applying the Cauchy-Schwarz inequality to the second sum, this final sum is bounded by: 
$$\ll \#\{m\ll (B/tk)^{1/2}/k_1c_1: b\mid m,\exists d_1\sim D_1, d_1\mid m\}^{1/2} \times \left(\sum_{\substack{m\ll (B/tk)^{1/2}/k_1c_1 \\ b\mid m}} \Delta(m,\chi)^2\right)^{1/2}.$$
Now we apply Lemma \ref{lem: small log saving } to the first sum, and Theorem \ref{thm: Nair single sieve} and Theorem \ref{thm: hooley character bound} to the second to get the upper bound:
\begin{equation*}
    T(n,\ba,\bc,\bk,b;D_1) \leq \frac{B\log(B/t)^{\rho_{\Delta,f}-2-10^{-5}}}{tk} \cdot \frac{(16\deg(F))^{\omega(k)}}{k} .
\end{equation*}
This completes the proof of Proposition \ref{prop: large moduli linear}.

\section{The asymptotics}\label{sec: asymptotic}

In this section, we will finally prove Proposition \ref{prop: estimate of S(q_1,q_2,t,k)} and establish the asymptotic in Proposition \ref{prop: main term}. To do so, we bring in ingredients from the preceding sections. Unfortunately, it is in this section where we are confronted with a notationally-challenging problem -- while morally equivalent, the proofs for various factorizations of $F(x,y)$ must be treated slightly differently. As such, we have to do casework on the factorization-type of $F(x,y)$ in \S\ref{subsec: proof of Prop S(q_1,q_2,t,k)}. 

We also highlight that the calculations in \S\ref{subsec: proof of Prop main term} are the jumping-off point for computing the leading constant in \S\ref{sec: constant}. In particular, the notation that is used in \S\ref{sec: constant} is established in this section. 

\subsection{Proof of Proposition \ref{prop: estimate of S(q_1,q_2,t,k)}}\label{subsec: proof of Prop S(q_1,q_2,t,k)}
Let us first recall the statement of Proposition \ref{prop: estimate of S(q_1,q_2,t,k)}:
\begin{prop*}
Let $F(x,y)$ be a squarefree binary form of degree $\leq 4$ and $L\gg \log(B)^{-10^{-10}}.$ Then we have that for $t$ and $k$ satisfying $tk\leq B/\exp(\sqrt{\log(B)}),$ the following holds:
\begin{multline*}
S(X_{\Delta,f},\calB(\bx_0,L);q_1,q_2,t,k)= \frac{BL^2}{tk^3} \cdot C_{q_1,q_2,f,k}(B/tk) \cdot  \left(1+O\left(\log(B)^{-10^{-7}}\right)\right) \\+O\left(\frac{ 64^{\omega(k)}\log(k) B}{tk^{3/2}}\cdot \log(B)^{\rho_{\Delta,f}-2-10^{-7}} + \frac{B^{3/4}}{(tk)^{3/4}} \cdot \left(L+(4\deg(F))^{\omega(k)}\nu_1(k)^{1/2}\right)\cdot \log(B)^{-10^{-7}}\right),\end{multline*}
where $C_{q_1,q_2,f,k}(B/tk)$ is an expression that depends on the factorization of $F(x,y)$.
Additionally, for a fixed value of $k$, there exists a constant $c_{q_1,q_2,f,k}$ such that $$C_{q_1,q_2,f,k}(B/tk) = c_{q_1,q_2,f,k}\log(B/tk)^{\rho_{\Delta,f}-2}.$$ 
\end{prop*}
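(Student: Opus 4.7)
The plan is to chain together the reductions of \S\ref{sec: Eisenstein} and then invoke the level of distribution result (Proposition \ref{prop: level of distribution}) for small divisors together with the large-moduli bounds (Propositions \ref{prop: large moduli} and \ref{prop: large moduli linear}). Starting from (\ref{eq: def S with fixed t}), one first applies Lemma \ref{lem: multiplicativity of eisenstein} to pass to the sum
$$S(X_{\Delta,f},\calB(\bx_0,L);q_1,q_2,t,k) = \sum_{\bc}' \mu(\bc)\chi(\bc)\, S(\bc,\bk),$$
where $\bc$ is restricted to tuples whose entries divide the pairwise resultants of the $F_i$; the finiteness of this set (only depending on $f$) is crucial since all subsequent arguments are uniform in $\bc,\bk$. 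Next, using the identity from \S\ref{subsec: eisenstein to dirichlet convolution} that rewrites $\varepsilon_{q_1,q_2}$ as a character twist of $(1\star\chi)$ on the $-\Delta$-free part, and then truncating the resulting sum over $n$ using Lemma \ref{lem: large n_u} at $N = \log(B/tk)^{10^{-7}}$, one introduces an admissible error of size $\frac{B L^2 \log(B)^{\varrho_{\Delta,f}-2-10^{-7}}(8\deg F)^{\omega(k)}}{tk^2}$. Removing the coprimality $\gcd(x,y)=1$ via Möbius inversion and truncating the new variable $b$ at $\log(B)^{10^{-7}}$ by Lemma \ref{lem: large B} gives the same type of admissible remainder, leaving us with $S(n,\ba,\bc,\bk;b)$, and after expanding the Dirichlet convolution as in \S\ref{subsec: expand dirichlet convolution}, with the sum over $\bd = (d_i)$ of $S(n,\ba,\bc,\bk,b;\bd)$.

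At this stage I would split the divisor sum dyadically, writing $d_i\sim D_i$ and partitioning the tuples $(D_i)$ into a small regime, where $D = \prod_i D_i \le (B/tk)\log(B/tk)^{-10^{-5}}$ (with the additional constraint $\max_{i:\deg F_i=1} D_i \le (B/tk)^{1/2}\log(B/tk)^{-1-10^{-5}}$ when $F$ has a linear factor), and a complementary large regime. On the small regime, Proposition \ref{prop: level of distribution} replaces each $S(n,\ba,\bc,\bk,b;\bd)$ by its heuristic main term
$$\frac{BL^2}{tk^2}\cdot\frac{\varrho_F(\bd;b,n,\ba,\bc,\bk)}{cdb^2n^2(-\Delta)^2}$$
at total cost $E(D_i)$, which summed dyadically over the admissible range of $(D_i)$ produces an error dominated by $\frac{B^{1/2}L}{(tk)^{1/2}}\cdot\sqrt{D}\cdot \frac{\tau(b^{\deg F})(8\deg F)^{\omega(k)}}{bk}\exp(\gamma_r\sqrt{\log_2 B\log_3 B})$, which is of the stated $O\bigl(\frac{B^{3/4}L}{(tk)^{3/4}}\log(B)^{-10^{-7}}\bigr)$ form after choosing $D$ slightly less than $B/tk$. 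On the large regime, Propositions \ref{prop: large moduli} and \ref{prop: large moduli linear} already provide a saving of $\log(B/tk)^{-10^{-5}}$ against the expected magnitude $\frac{B}{tk^2}\log(B/tk)^{\varrho_{\Delta,f}-2}$, which again fits within the stated error.

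To extract the main term $\frac{BL^2}{tk^2}C_{q_1,q_2,f,k}(B/tk)$, I would sum the heuristic contribution over the admissible range of $(\bd,\ba,n,b,\bc)$. Completing the divisor sum to all $\bd$ (with $d_i \le (B/tk)^{\deg F_i/2}$) costs only a tail bounded again by the large-moduli estimate. The resulting expression is, up to local factors at primes dividing $k c b (-\Delta)$, a partial sum of the Dirichlet series $\xi(s;F,\chi)$ from (\ref{eq: def of xi(s;F,chi)}) — and hence, via the factorization in \S\ref{subsec: Dirichlet and Rankin selberg}, of $\prod_i L(s;f_i\times\chi)$. By Corollary \ref{cor: order of pole is varrho}, this Dirichlet series has a pole of order exactly $\varrho_{\Delta,f}-2$ at $s=1$, so Perron's formula (or equivalently partial summation applied to the Euler product) yields
$$C_{q_1,q_2,f,k}(B/tk) = c_{q_1,q_2,f,k}\log(B/tk)^{\varrho_{\Delta,f}-2}\bigl(1+O(\log(B)^{-10^{-7}})\bigr),$$
with $c_{q_1,q_2,f,k}$ an explicit product of local densities, a sum over $\ba \bmod (-\Delta)$ weighted by $\prod_i(1+\chi_{q_1,q_2}(a_i))$ (whose vanishing is analyzed in \S\ref{sec: constant}), and a residue of $\xi(s;f,\chi)$.

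The main obstacle will be organizing the casework on the factorization type of $F$: when $F$ has a linear factor the level-of-distribution bound in Proposition \ref{prop: level of distribution} only allows a near-optimal $D$ after separating the linear divisor from the others, so one must split the small/large regime asymmetrically (one threshold for the linear-factor divisors and another for the non-linear ones) to avoid losing a factor of $\log B$. This is exactly the split exploited in Proposition \ref{prop: large moduli linear}, and matching the two regimes so that all errors telescope into the stated $O\bigl((8\deg F)^{\omega(k)}B t^{-1} k^{-3/2}\log(B)^{\varrho_{\Delta,f}-2-10^{-7}}\bigr)$ and $O\bigl(B^{3/4}L(tk)^{-3/4}\log(B)^{-10^{-7}}\bigr)$ terms is the principal bookkeeping task. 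Uniformity in $\bc$, $\bk$, $b$, $n$, and $\ba$ — already present in the cited inputs with polynomial dependence on the truncation parameters — ensures that the subsequent sums over these variables in \S\ref{sec: asymptotic} will converge without further loss.
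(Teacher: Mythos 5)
Your proposal follows the paper's proof essentially step for step: the chain of reductions via Lemma \ref{lem: multiplicativity of eisenstein}, the rewriting of $\varepsilon_{q_1,q_2}$ as a character twist of $(1\star\chi)$, truncation of $n$ and $b$ by Lemmas \ref{lem: large n_u} and \ref{lem: large B}, the small/large modulus split with Propositions \ref{prop: level of distribution}, \ref{prop: large moduli}, \ref{prop: large moduli linear}, and the final evaluation via the pole of order $\varrho_{\Delta,f}-2$ of $\xi(s;F,\chi,\bc,\bk)$ at $s=1$ (Lemma \ref{lem: poles have same order} and Corollary \ref{cor: order of pole is varrho}). The one piece of bookkeeping you have misattributed: the $\sqrt{D}$ term from Proposition \ref{prop: level of distribution} does not produce the error $\frac{B^{3/4}L}{(tk)^{3/4}}\log(B)^{-10^{-7}}$. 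With $D\asymp (B/tk)^{\deg F/4}(nk)^{-1/2}\log(B)^{-10^{-5}}$ (so $D\approx B/(tk)$ when $\deg F=4$), that term gives roughly $\frac{BL(8\deg F)^{\omega(k)}}{tk^{7/4}n^{1/4}}\log(B)^{-10^{-6}}$, which is absorbed into the first displayed error term $\frac{B(8\deg F)^{\omega(k)}}{tk^{3/2}}\log(B)^{\varrho_{\Delta,f}-2-10^{-7}}$ after summing over $n$ and $b$. The $\frac{B^{3/4}L}{(tk)^{3/4}}\log(B)^{-10^{-7}}$ error arises only when $F$ has linear factors, from the separate term $\frac{\log(kcD)}{k^{1/2}}\max_{i:\deg F_i=1}D_i$ in Proposition \ref{prop: level of distribution}, with the asymmetric truncation $D_i\approx (B/tk)^{1/4}(n_ik_i)^{-1/2}\log(B)^{-1-10^{-5}}$ on linear-factor moduli — the very split you describe at the end as the ``principal bookkeeping task.'' This imprecision does not affect the structure or correctness of the argument.
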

\begin{remark}
Note that since $tk\leq B/\exp(\sqrt{\log(B)}),$ we have that $\log(B)\ll \log(B/tk) \ll \log(B).$ We use these interchangeably in this section when referring to error terms. 
\end{remark}

Before delving into the case work for this proof, let us outline the general strategy. After applying the reductions discussed in \S\ref{sec: Eisenstein}, the sum that we wish to analyze takes the form: 
\begin{multline*}
    S(X_{\Delta,f},\calB(\bx_0,L);q_1,q_2,t,k) \\ \sim  \sum_{c_{ij}\mid \Res(F_i,F_j)}\mu(c)\chi(c) \sum_{b\leq \log(B)^{10^{-7}}} \mu(b) \cdot \sum_{\substack{n\leq \log(B)^{10^{-7}} \\ p\mid n \implies p\mid -\Delta}} \chi_{q_1}(p_{q_2}(n)) \chi_{q_2}(p_{q_1}(n)) \mathbf{1}_{\exists \fa: N(\fa)=n}\\  \sum_{\substack{\ba \bmod -\Delta \\ \gcd(a_i,-\Delta)=1}} \prod_{i=1}^r\chi_{q_1}(a_i) (1+\chi(a_i))  \sum_{d_i\ll  (B/tk)^{\deg(F_i)/2}} \prod_{i=1}^r \chi(d_i) S(n, \ba, \bc,\bk, b;\bd).
\end{multline*}
We proceed by splitting up the sum over $d_i$ into sums over ``small'' and ``large'' moduli -- either $d \leq (B/tk)^{\deg(F)/4}\log(B)^{-\epsilon}$ or $d\geq (B/tk)^{\deg(F)/4}\log(B)^{-\epsilon}$. We remark that if $F(x,y)$ has linear factors, the cutoff point for being considered a ``small'' or ``large'' modulus is shifted by $\log(B)$.

For ``small'' moduli, our level of distribution result (Proposition \ref{prop: level of distribution}) enables us to achieve the main term: 
\begin{multline*}
    \frac{BL^2}{tk^3} \cdot \sum_{c_{ij}\mid \Res(F_i,F_j)}\frac{\mu(c)\chi(c)}{c} \sum_{b\leq \log(B)^{10^{-7}}} \frac{\mu(b)}{b^2} \cdot \sum_{\substack{n\leq \log(B)^{10^{-7}} \\ p\mid n \implies p\mid -\Delta}} \frac{\chi_{q_1}(p_{q_2}(n)) \chi_{q_2}(p_{q_1}(n))\mathbf{1}_{\exists \fa: N(\fa)=n}}{n^2} \\  \sum_{\substack{\ba \bmod -\Delta \\ \gcd(a_i,-\Delta)=1}} \frac{\prod_{i=1}^r\chi_{q_1}(a_i) (1+\chi(a_i))}{(-\Delta)^2}  \sum_{d_i\ll  (B/tk)^{\deg(F_i)/2}(\log B)^{-\epsilon}}\prod_{i=1}^r \chi(d_i) \frac{ \varrho_F(\bd;b,n, \ba, \bc, \bk)}{d^2}. 
\end{multline*}

For ``large'' moduli, the analysis on the twisted Hooley $\Delta$-function (Propostion \ref{prop: large moduli} or Proposition \ref{prop: large moduli linear}) gives us that these terms contribute negligibly. For these terms, we establish a bound of the form:
$$\sum_{d\geq (B/tk)^{\deg(F)/2}\log(B)^{-\epsilon}} \chi(d) S(n, \ba,\bc,\bk, b;\bd) \ll \log(B)^{\rho_{\Delta,f}-2-10^{-5}} \cdot \left(\frac{B}{b^2tk^2} + \frac{B^{3/4}\nu_1(k)^{1/2}}{(tk)^{3/4}}\right).$$
Note that since $b,n\leq \log(B)^{10^{-7}}$ compared to our saving of $\log(B)^{-10^{-5}},$ we will get a sufficient error term here. 

After careful analysis, we can rewrite this main term as $$\frac{BL^2}{tk^2}\cdot \cdot \sum_{c_{ij}\mid \Res(F_i,F_j)}\mu(c)\chi(c)\cdot \BM_{q_1,q_2}(\bc) \cdot V(\bc; B/tk),$$
where $\BM_{q_1,q_2}(\bc)$ is a constant that detects local obstructions at primes $p\mid -\Delta$ for $X^*_{\Delta,f_1,...,f_r,\bc}$(formally defined in (Definition \ref{def: BM})) and $V(\bc;B/tk)$ is a sum related to the Dirichlet series $\xi(s;F,\chi)$ (defined in (\ref{eq: def of xi(s;F,chi)})). We determine that $V(\bc;B/tk)$ satisfies that $$V(\bc;B/tk) \sim c_{q_1,q_2,f,k,\bc} \log(B/tk)^{\rho_{\Delta,f}-2}.$$
Additionally, in \S\ref{sec: constant}, we will analyse the constant $c_{q_1,q_2,f,k,\bc}$ further.

Now, we proceed with the full proof of Proposition \ref{prop: estimate of S(q_1,q_2,t,k)}. We start with the case work on the factorizaton of $F(x,y)$.
\subsubsection{$F(x,y)$ is irreducible}
Since $F(x,y)$ is irreducible, we can skip the reductions in \S\ref{subsec: multiplicativity}. Applying the reductions of \S\ref{subsec: eisenstein to dirichlet convolution}, \S\ref{subsec: gcd condition}, and \S\ref{subsec: expand dirichlet convolution}, we achieve: 
\begin{align*}
    &S(X_{\Delta,f},\calB(\bx_0,L);q_1,q_2,t,k) \\
    &= \sum_{\substack{n\ll \log(B)^{10^{-7}} \\ p\mid n\implies p\mid -\Delta}}\chi_{q_1}(p_{q_2}(n))\chi_{q_2}(p_{q_1}(n))\mathbf{1}_{\exists \fa: N(\fa)=n}\sum_{\substack{a\bmod -\Delta\\ \gcd(a,-\Delta)=1}} \chi_{q_1}(a) S(n,a,1,k) \\ &\hspace{5cm}+ O\left(\frac{BL^2}{tk^2}\cdot \log(B)^{\rho_{\Delta,f}-2-10^{-7}}\right)\\
    &= \sum_{b\leq \log(B)^{10^{-7}}}\mu(b)\sum_{\substack{n\ll \log(B)^{10^{-7}} \\ p\mid n\implies p\mid -\Delta}}\chi_{q_1}(p_{q_2}(n))\chi_{q_2}(p_{q_1}(n))\mathbf{1}_{\exists \fa: N(\fa)=n}\sum_{\substack{a\bmod -\Delta\\ \gcd(a,-\Delta)=1}} \chi_{q_1}(a) S(n,a,1,k;b) \\ &\hspace{5cm} + O\left(\frac{BL^2}{tk^2}\log(B)^{\rho_{\Delta,f}-2-10^{-7}}\right) \\
    &= \sum_{b\leq \log(B)^{10^{-7}}}\mu(b)\sum_{\substack{n\ll \log(B)^{10^{-7}} \\ p\mid n\implies p\mid -\Delta}}\chi_{q_1}(p_{q_2}(n))\chi_{q_2}(p_{q_1}(n)) \mathbf{1}_{\exists \fa: N(\fa)=n}\sum_{\substack{a\bmod -\Delta\\ \gcd(a,-\Delta)=1}} \chi_{q_1}(a) (1+\chi(a)) \\ &\hspace{1cm}\times \sum_{d\leq \max\sqrt{|F(x,y)|/n_1n_2k}}\chi(d) S(n,a,1, k,b;d) + O\left(\frac{BL^2}{tk^2}\log(B)^{\rho_{\Delta,f}-2-10^{-7}}\right).
\end{align*}
Above, the maximum is taken over $(x,y)\in (B/tk)^{1/2}\calB(\bx_0,L).$
For ``small'' moduli $d\leq D:= (B/tk)^{\deg(F)/4}(nk)^{-1/2}\log(B)^{-10^{-5}},$ we apply Proposition \ref{prop: level of distribution}, our level of distribution result. From this, we can see that
\begin{multline*}
    \sum_{d \leq D}\chi(d) S(n,a,1,k,b;d) = \frac{BL^2}{tk^3}\sum_{d\leq D}\frac{\chi(d) \varrho_F(d;b,n,a,1,k)}{d^2 b^2n^2(-\Delta)^2} \\
    + O\Bigg(  \frac{\varrho_F(k)\tau(k) B}{\varphi(k) k^{3/2}tn^{1/2}}\log(B)^{-10^{-5}}(\log\log(B)+\log(k)) + \frac{64^{\omega(k)}B}{tk^{5/2} n^{1/2}b^2}\log(B)^{-10^{-6}} \\ + \frac{64^{\omega(k)} BL}{tk^{7/4}}\cdot \frac{\tau(b^4)}{b}\log(B)^{-10^{-6}}\Bigg).
\end{multline*}

\begin{remark}
    If $F(x,y)$ is an irreducible cubic form, the level of distribution error for any $D\leq (B/tk)^{3/4}(nk)^{-1/2}$ is sufficient. So, in this case we do not need to separately handle these ``large'' moduli. 
\end{remark}
On the other hand, Proposition \ref{prop: large moduli} for ``large'' moduli tells us that 
\begin{multline*}\sum_{(B/tk)^{\deg(F)/4}(nk)^{-1/2}  \gg d \geq D}\chi(d) S(n,a,1,k,b;d)\\ \ll \left(\frac{B}{b^2tk^{3/2}} + \frac{B^{3/4}\nu_1(k)^{1/2}}{bt^{3/4}k^{3/4}}\right)\cdot (4\deg(F))^{\omega(k)}\cdot \log(B)^{\rho_{\Delta,f}/2-1-10^{-6}}.\end{multline*}
Together, this gives us that 
\begin{multline*}
    S(X_{\Delta,f},\calB(\bx_0,L);q_1,q_2,t,k) = \frac{BL^2}{tk^3}\cdot \sum_{b\leq \log(B)^{10^{-7}}}\mu(b)\sum_{\substack{n\ll \log(B)^{10^{-7}} \\ p\mid n\implies p\mid -\Delta}}\chi_{q_1}(p_{q_2}(n))\chi_{q_2}(p_{q_1}(n))\mathbf{1}_{\exists \fa: N(\fa)=n} \\ \times  \sum_{\substack{a\bmod -\Delta\\ \gcd(a,-\Delta)=1}} \chi_{q_1}(a) (1+\chi(a)) 
    \sum_{d\leq D}\frac{\chi(d) \varrho_F(d;b,n,a,1,k)}{d^2 b^2n^2(-\Delta)^2} \\ + O\left(64^{\omega(k)}\log(k) \cdot \left(\frac{B}{tk^{3/2}} + \frac{B^{3/4}\nu_1(k)^{1/2}}{t^{3/4}k^{3/4}}\right)\log(B)^{\rho_{\Delta,f}-2-10^{-7}}\right). 
\end{multline*}
Here we have used that $n,b\leq \log(B)^{10^{-7}}$ are small constants. 
Note that $\rho_{\Delta,f}\geq 2$ by definition and hence $\rho_{\Delta,f}-2 \geq \rho_{\Delta,f}/2-1$. \\ 

Let us rework this main term. First, we can  extend the sum over $n,b\leq \log(B)^{10^{-7}}$ to one over all $n$ and all $b$ by again bounding away the contribution from large $n$ and large $b$ with the Nair sieve (Theorem \ref{thm: Nair sieve}). This leaves us with the term 
\begin{equation*}
    \sum_{b} \frac{\mu(b)}{b^2}\sum_{d\leq D} \frac{\chi(d)\varrho_F(d;b,n,a,1,k)}{d^2 n^2(-\Delta)^2}.
\end{equation*}

We can also use multiplicativity of $\varrho_F$ to write:
$$\varrho_F(d;b,n,a,1,k) = \varrho_F(p_{\neg -\Delta}(b),kd) \varrho_F(p_{-\Delta}(b),n,a),$$
where we have defined: 
$$\varrho_F(p_{\neg-\Delta} (b), kd) = \#\{\bx \bmod p_{\neg -\Delta}(b) kd: p_{\neg-\Delta}(b) \mid (x,y), F(x,y)\equiv 0 \bmod kd\} $$
$$\varrho_F(p_{-\Delta}(b), n,a) = \#\{\bx \bmod p_{-\Delta}(b)n(-\Delta): p_{-\Delta}(b) \mid (x,y), p_{-\Delta}(F(x,y)) = n, p_{\neg -\Delta}(F(x,y))\equiv a \bmod -\Delta\}.$$
Note that we have used that $\gcd(k,-\Delta)=1$ by construction and $\gcd(d,-\Delta)=1$ for those $d$ with nonzero contribution to the sum. 
Let us use the notation that $b=b_{\neg -\Delta}b_{-\Delta}$ where $\gcd(b_{\neg-\Delta},-\Delta)=1$ and $p\mid b_{-\Delta} \implies p\mid -\Delta$. Consequently, the main term can be rewritten as: 
\begin{equation*}
\frac{BL^2}{tk^3}\cdot\BM_{q_1,q_2} \cdot   V(k,D),   
\end{equation*}
where we define: 
\begin{multline}\label{eq: def of BM irred}
\BM_{q_1,q_2} := \sum_{b_{-\Delta}} \frac{\mu(b_{-\Delta})}{b_{-\Delta}^2}\sum_{\substack{p\mid n \implies p\mid -\Delta}} \frac{\chi_{q_1}(p_{q_2}(n))\chi_{q_2}(p_{q_2}(n))\mathbf{1}_{\exists \fa: N(\fa)=n}}{n^2}\\ \times \sum_{\substack{a \bmod -\Delta \\ \gcd(a,-\Delta)=1}} \frac{\chi_{q_1}(a)(1+\chi(a))}{(-\Delta)^2} \cdot \varrho_F(b_{-\Delta},n,a).
\end{multline}
\begin{equation}\label{eq: def of V irred}
    V(k,D) = \sum_{b_{\neg -\Delta}} \frac{\mu(b_{\neg-\Delta})}{b_{\neg-\Delta}^2}\sum_{d\leq D} \frac{\chi(d)\varrho_F(b_{\neg-\Delta},kd)}{d^2}.
\end{equation}
Observe that the sum over $b_{\neg -\Delta}$ converges absolutely above in $V(k,D)$. 
Both the terms $\BM_{q_1,q_2}$ and $V(k,D)$ will be generalized in the more complicated cases below; however, we can immediately rearrange $\BM_{q_1,q_2}$ into a less complicated expression.

Noting that for $\mu(b_{-\Delta})$ to be nonzero, $b_{-\Delta}$ must be squarefree and hence divides $-\Delta$. Hence, we can rewrite:
\begin{align*}
    \BM_{q_1,q_2} &= \sum_{p\mid n\implies p\mid -\Delta}  \frac{\chi_{q_1}(p_{q_2}(n))\chi_{q_2}(p_{q_2}(n))\mathbf{1}_{\exists \fa: N(\fa)=n}}{n^2} \\
    & \hspace{1cm} \times \sum_{\substack{a \bmod -\Delta \\ \gcd(a,-\Delta)=1}} \frac{\chi_{q_1}(a)+\chi_{q_2}(a)}{(-\Delta)^2} \sum_{\substack{\bx\bmod n(-\Delta)^2\\ p_{-\Delta}(F(x,y)) = n\\ 
    p_{\neg -\Delta}(F(x,y))\equiv a \bmod -\Delta}} \sum_{\substack{b\mid x,y\\ b\mid -\Delta}} \frac{\mu(b)}{b^2}\cdot \left(\frac{b}{-\Delta}\right)^2 \\
    &= \sum_{p\mid n\implies p\mid -\Delta}  \frac{\chi_{q_1}(p_{q_2}(n))\chi_{q_2}(p_{q_2}(n))\mathbf{1}_{\exists \fa: N(\fa)=n}}{n^2}
    \times \sum_{\substack{a \bmod -\Delta \\ \gcd(a,-\Delta)=1}} \frac{\chi_{q_1}(a)+\chi_{q_2}(a)}{(-\Delta)^2} \varrho_F^*(n,a), 
\end{align*}
where we use M\"obius inversion on the sum over $b$ and hence 
\begin{equation*}
    \varrho_F^*(n,a) = \#\{\bx\bmod n(-\Delta): \gcd(x,y,-\Delta)=1, p_{-\Delta}(F(x,y))=n, p_{\neg -\Delta}(F(x,y))\equiv a \bmod -\Delta\}.
\end{equation*}

We can also rearrange the sum over $b$ for $V(k,D)$: 
\begin{multline*}
    V(k,D) = \sum_{d\leq D} \frac{\chi(d)}{d^2} \sum_{\gcd(b_{\neg -\Delta},-\Delta)=1} \frac{\mu(b_{\neg -\Delta})}{b_{\neg -\Delta}^2} \cdot \varrho_F(b_{\neg -\Delta}, kd)\\
    = \sum_{d\leq D} \frac{\chi(d)}{d^2}\sum_{\substack{\bx_0 \bmod kd \\ kd\mid F(\bx_0)}} \sum_{\gcd(b_{\neg -\Delta},-\Delta)=1} \frac{\mu(b_{\neg -\Delta})}{b_{\neg -\Delta}^2} \cdot \#\{\bx\bmod b_{\neg -\Delta}kd: b_{\neg -\Delta}\mid \gcd(x,y), \bx\equiv \bx_0 \bmod kd\}.
\end{multline*}
This local count can be determined precisely: 
\begin{equation}\label{eq: sec 11 counting congruence case work}
    \#\{\bx\bmod b_{\neg -\Delta}kd: b_{\neg -\Delta}\mid \gcd(x,y), \bx\equiv \bx_0 \bmod kd\} = \begin{cases}
        b'^2, & b'\mid \gcd(kd,b_{\neg -\Delta}), b'\mid \bx_0,\\ 
        1, & \text{otherwise.}
    \end{cases}
\end{equation}
Consequently, we see that 
\begin{multline*}
    \sum_{\substack{\bx_0 \bmod kd \\ kd\mid F(\bx_0)}} \sum_{\gcd(b_{\neg -\Delta},-\Delta)=1} \frac{\mu(b_{\neg -\Delta})}{b_{\neg -\Delta}^2} \cdot \#\{\bx\bmod b_{\neg -\Delta}kd: b_{\neg -\Delta}\mid \gcd(x,y), \bx\equiv \bx_0 \bmod kd\} \\
    = \sum_{\substack{\bx_0\bmod kd\\ kd\mid F(\bx_0)}} \prod_{p\mid -\Delta} \left(1-p^{-2} \cdot \begin{cases}
        p^2, & p\mid kd,p\mid \bx_0, \\ 
        0, & p\mid kd, p\nmid \bx_0, \\
        1, & p\nmid kd,
    \end{cases}\right)
     = \prod_{p\nmid kd(-\Delta)} \left(1-\frac{1}{p^2}\right) \varrho_F^*(kd).
\end{multline*}

Hence, we have that 
\begin{equation*}
    V(k,D) = \sum_{d\leq D} \frac{\chi(d)\varrho_F^*(kd)}{d^2}\prod_{p\nmid kd(-\Delta)}\left(1-\frac{1}{p^2}\right) = \prod_{p\nmid -\Delta} \left(1-\frac{1}{p^2}\right)\sum_{d\leq D}\prod_{p\mid kd} \left(1-\frac{1}{p^2}\right)^{-1} \frac{\chi(d)\varrho_F^*(kd)}{d^2}.
\end{equation*}

\subsubsection{$F(x,y)$ is the product of two irreducible factors} Write $F(x,y) = F_1(x,y)F_2(x,y)$ as the decomposition of $F(x,y)$ into irreducible factors. If one of the factors is linear, assume without loss of generality that it is $F_1$. Following the reductions of \S\ref{sec: Eisenstein}, we have that: 
\begin{align*}
    &S(X_{\Delta,f},\calB(\bx_0,L);q_1,q_2,t,k) =  \sum_{c\mid \Res(F_1,F_2)}\mu(c)\chi(c)  S(\bc = (c,c),\bk) \\
    &= \sum_{c\mid \Res(F_1,F_2)}\mu(c)\chi(c)  \times  \sum_{\substack{n\ll \log(B)^{10^{-7}} \\ p\mid n\implies p\mid -\Delta}} \chi_{q_1}(p_{q_2}(n))\chi_{q_2}(p_{q_1}(n))\mathbf{1}_{\exists \fa: N(\fa)=n} \sum_{\substack{\ba \bmod -\Delta \\ \gcd(a_i,-\Delta)=1}}\chi_{q_1}(a) S(n,\ba,\bc,\bk) \\ &\hspace{5cm}+ O\left(\frac{BL^2}{tk^2}\log(B)^{\rho_{\Delta,f}-2-10^{-7}}\right)\\
    &= \sum_{c\mid \Res(F_1,F_2)}\mu(c)\chi(c)  \sum_{\substack{n\ll \log(B)^{10^{-7}} \\ p\mid n \implies p\mid -\Delta}} \chi_{q_1}(p_{q_2}(n))\chi_{q_2}(p_{q_1}(n))\mathbf{1}_{\exists \fa: N(\fa)=n}   \sum_{\substack{\ba \bmod -\Delta \\ \gcd(a_i,-\Delta)=1}}\chi_{q_1}(a) \\ &\hspace{2cm}\times \sum_{b\leq \log(B)^{10^{-7}}}\mu(b) S(n,\ba ,\bc,\bk;b) + O\left(\frac{BL^2}{tk^2}\log(B)^{\rho_{\Delta,f}-2-10^{-7}}\right)\\
    &=   \sum_{c\mid \Res(F_1,F_2)}\mu(c)\chi(c)  \sum_{\substack{n\ll \log(B)^{10^{-7}} \\ p\mid n \implies p\mid -\Delta}} \chi_{q_1}(p_{q_2}(n))\chi_{q_2}(p_{q_1}(n))\mathbf{1}_{\exists \fa: N(\fa)=n}   \sum_{\substack{\ba \bmod -\Delta \\ \gcd(a_i,-\Delta)=1}}\chi_{q_1}(a) (1+\chi(a_1))(1+\chi(a_2))\\ &\hspace{2cm}\times \sum_{b\leq \log(B)^{10^{-7}}}\mu(b)
     \sum_{d_i\leq \max \sqrt{|F_i(x,y)|/n_ik_i}}\chi(d)S(n,\ba,\bc,\bk, b;\bd) \\ &\hspace{6cm}+ O\left(\frac{BL^2}{tk^2}\log(B)^{\rho_{\Delta,f}-2-10^{-7}}\right).
\end{align*}

If $F_1$ and $F_2$ are both irreducible quadratics, we write that $\bd\leq \bD $ if $\bd = (d_1,d_2)$ satisfies that $d_1d_2\leq D:=(B/tk)^{\deg(F)/4}(nk)^{-1/2}\log(B)^{-10^{-5}}$. If $F_1$ is linear, we define $D_1:= (B/tk)^{1/4}(n_1k_1)^{-1/2}\log(B)^{-2-10^{-5}}$.  We write $\bd\leq \bD$ when $d_1\leq D_1$ -- for such vectors $\bd$, we automatically have that $d\leq (B/tk)^{\deg(F)/4} (nk)^{-1/2}\log(B)^{-2-10^{-5}}.$ Proposition \ref{prop: level of distribution} tells us that for $\bd\leq \bD$, we have that:
\begin{multline*}
    \sum_{d_i\leq D_i} \chi(d)S(n,\ba,\bc,\bk, b;\bd) = \frac{BL^2}{tk^3} \cdot \sum_{\bd\leq \bD} \frac{\chi(d) \varrho_F(\bd; b,n, \ba, \bc,\bk)}{c^2d^2b^2n^2(-\Delta)^2} \\
    + O\Bigg(\frac{\varrho_F(k)\tau(k) B}{ t\varphi(k)k^{3/2}n^{1/2}}\log(B)^{-10^{-5}}(\log\log(B)+\log(k)) + \frac{64^{\omega(k)}B}{tk^{5/2}n^{1/2}b^2}\log(B)^{-10^{-6}} \\ +\frac{64^{\omega(k)}BL}{tk^{7/4}}\cdot \frac{\tau(b^4)}{b}\cdot \log(B)^{-10^{-6}} + \frac{B^{3/4}L}{(tk)^{3/4}(k_1n_1)^{1/2}b}\log(B)^{-10^{-6}}\Bigg).
\end{multline*}
Note that this last term does not exist when $F_1(x,y)$ and $F_2(x,y)$ are both irreducible quadratics. Also we observe an abuse of notation above -- here $c$ denotes the product of the entries of the vector $\bc$ compared to the entries of the vector earlier on this page.

Applying either Proposition \ref{prop: large moduli} (if $F_1,F_2$ are both quadratic) or Proposition \ref{prop: large moduli linear} (if $F_1$ or $F_2$ is linear), we achieve the upper bound:
\begin{multline*}\sum_{\bd\not\leq \bD} \chi(d) S(n,\ba,\bc,\bk, b;\bd)\ll \frac{B(16\deg(F))^{\omega(k)}}{tk^2} \cdot \log(B/tk)^{\rho_{\Delta,f}-2-10^{-5}} \\ + \left(\frac{B}{b^2tk^{3/2}} + \frac{B^{3/4}\nu_1(k)^{1/2}}{bt^{3/4}k^{3/4}}\right)\cdot (4\deg(F))^{\omega(k)}\cdot \log(B)^{\rho_{\Delta,f}/2-1-10^{-6}}.
\end{multline*}
 Here we have used that $0\leq \rho_{\Delta,f}/2-1 \leq \rho_{\Delta,f}-2.$

Recalling that $n,b\leq \log(B)^{10^{-7}}$, we get the estimate:
\begin{multline*}S(X_{\Delta,f},\calB(\bx_0,L);q_1,q_2,t,k)= \frac{BL^2}{tk^3} \cdot C_{q_1,q_2,f,k}(B/tk) \cdot (1+O(\log(B)^{-10^{-7}})) \\+O\left(\frac{64^{\omega(k)}\log(k) B}{tk^{3/2}}\log(B)^{\rho_{\Delta,f}-2-10^{-7}} + \frac{B^{3/4}\cdot ((4\deg(F))^{\omega(k)}\nu_1(k)^{1/2}+L)}{(tk)^{3/4}}\cdot \log(B)^{-10^{-7}}\right),\end{multline*}
where we define: 
\begin{multline*}
    C_{q_1,q_2,f,k}(B/tk) := \sum_{c'\mid \Res(F_1,F_2)}\frac{\mu(c')\chi(c') }{(c')^4} \sum_{\substack{p\mid n \implies p\mid \Delta}} \frac{\chi_{q_1}(p_{q_2}(n))\chi_{q_2}(p_{q_1}(n))\mathbf{1}_{\exists \fa: N(\fa)=n}}{n^2} \\ \times  \sum_{\substack{\ba \bmod \Delta \\ \gcd(a_i,\Delta)=1}}\frac{\chi_{q_1}(a) (1+\chi(a_1))(1+\chi(a_2))}{(-\Delta)^2}    \sum_{b} \frac{\mu(b)}{b^2}
     \sum_{\bd\leq \bD} \frac{\chi(d) \varrho_F(\bd;b,n, \ba, \bc,\bk)}{d^2}.
\end{multline*}
It is of note here that our abusive notation has come back to bite us -- indeed, in this situation $\bc = (c',c')$ and hence $c = (c')^2$ (in previous reductions, we used the letter $c$ to denote the variable $c'$ above).

Now, we generalize the definitions of $\BM_{q_1,q_2}$ and $V(k,D)$ from before (in (\ref{eq: def of BM irred}), (\ref{eq: def of V irred})). 
\begin{defn}\label{def: BM}
We define the constant:
    $$\BM_{q_1,q_2}(\bc) := \sum_{\substack{p\mid n \implies p\mid -\Delta}} \frac{\chi_{q_1}(p_{q_2}(n))\chi_{q_2}(p_{q_1}(n))\mathbf{1}_{\exists \fa: N(\fa)=n}}{n^2}\sum_{\substack{\ba \bmod -\Delta \\ \gcd(a_i,-\Delta)=1}} \frac{\chi_{q_1}(a)}{(-\Delta)^2} \times  \prod_{i=1}^r (1+\chi(a_i)) \varrho_F^*(n,\bc\ba).$$
Here the function $\varrho_f(n,\ba)$ denotes:
$$\varrho_F^*(n,\ba) = \#\{\bx\bmod n(-\Delta): \gcd(x,y,-\Delta)=1, p_{-\Delta}(F(x,y))=n, p_{\neg -\Delta}(F_i(x,y)) \equiv c_ia_i\bmod -\Delta\}.$$
\end{defn}
This constant is independent of $\bk$, and will detect when there is an obstruction to rational points, as will be explained in \S\ref{sec: constant}. We observe that it does contain a twist depending on the values of $\bc$; these twists are nonexistence in the case when $F$ is irreducible, as seen in (\ref{eq: def of BM irred}).

We also define the generalization of $V(k,D)$ from before. 
\begin{defn}\label{eq: def of V(c,k,D)}
    \begin{equation}
    V(\bc,\bk,D) :=  \prod_{p\nmid -\Delta}\left(1-\frac{1}{p^2}\right) \sum_{\bd\leq \bD} \frac{\chi(d)\varrho_F^*(\bc\bk\bd)}{d^2}\prod_{p\mid cdk} \left(1-\frac{1}{p^2}\right)^{-1},
\end{equation}
where $\bd\leq \bD$ denotes summing over the appropriate range of ``small'' moduli and $$\varrho_F^*(\bc\bk\bd) := \#\{\bx\bmod ckd: \gcd(x,y,ckd)=1, c_ik_id_i\mid F_i(x,y)\}. $$
\end{defn}

We now claim that 
\begin{equation*}
    C_{q_1,q_2,f,k}(B/tk) = \sum_{c'\mid \Res(F_1,F_2)} \frac{\mu(c')\chi(c')}{(c')^4}\cdot \BM_{q_1,q_2}(\bc)\cdot V(\bc,\bk,D). 
\end{equation*}
By the Chinese remainder theorem (and that $\gcd(cdk,-\Delta)=1$), it is clear that 
\begin{equation*}
    \varrho_F(\bd;b,n,\ba,\bc,\bk)= \varrho_F(p_{-\Delta}(b), n, \bc\ba) \varrho_F(p_{\neg-\Delta}(b), \bc\bk\bd),.
\end{equation*}
where we define 
\begin{equation*}
    \varrho_F(p_{-\Delta}(b),n,\bc\ba) = \#\left\{\bx\bmod p_{-\Delta}(b)n(-\Delta): p_{-\Delta}(b) \mid \gcd(x,y), \begin{array}{c}
        p_{-\Delta}(F(x,y))=n \\
        p_{\neg-\Delta}(F_i(x,y))\equiv c_ia_i\bmod -\Delta
    \end{array}\right\},
\end{equation*}
and recall that
\begin{equation*}
    \varrho_F(p_{\neg-\Delta}(b),\bc\bk\bd) = \#\left\{\bx\bmod p_{\neg-\Delta}(b)ckd: p_{\neg-\Delta}(b)\mid \gcd(x,y), c_ik_id_i\mid F_i(x,y)\right\}.
\end{equation*}
Rewriting each value of $b$ as an element coprime to $-\Delta$ and one whose prime factors are exclusively those of $-\Delta$, we see that 
\begin{multline}\label{eq: def of C_q  from Prop 8.1}
    C_{q_1,q_2,f,k}(B/tk) =  \sum_{c'\mid \Res(F_1,F_2)} \frac{\mu(c')\chi(c')}{(c')^4} \sum_{\gcd(b,-\Delta)=1} \frac{\mu(b)}{b^2} \sum_{\bd\leq \bD} \frac{\chi(d)\varrho_F(b,\bc\bk\bd)}{d^2}\\ 
    \times \sum_{\substack{p\mid n \implies p\mid -\Delta}} \frac{\chi_{q_1}(p_{q_2}(n))\chi_{q_2}(p_{q_1}(n))\mathbf{1}_{\exists \fa: N(\fa)=n}}{n^2} \sum_{\substack{\ba \bmod -\Delta \\ \gcd(a_i,-\Delta)=1}} \frac{\chi_{q_1}(a)}{(-\Delta)^2}   \prod_{i=1}^r (1+\chi(a_i)) \sum_{b'\mid -\Delta} \frac{\mu(b')}{b'^2} \varrho_F(b',n,\bc\ba). 
\end{multline}
First, we consider 
\begin{equation*}
    \sum_{\bd\leq \bD}\frac{\chi(d)}{d^2} \sum_{\gcd(b,-\Delta)=1}\frac{\mu(b)}{b^2} \varrho_F(b,\bc\bk\bd). 
\end{equation*}
By \eqref{eq: sec 11 counting congruence case work}, it is a straightforward calculation to evaluate this expression as: 
\begin{equation*}
    \sum_{\bd\leq \bD}\frac{\chi(d)}{d^2} \sum_{\substack{\bx_0\bmod ckd\\ c_ik_id_i\mid F_i(\bx_0)}} \prod_{p\nmid ckd(-\Delta)}\left(1-\frac{1}{p^2}\right) \mathbf{1}_{\gcd(x,y,p)=1}.
\end{equation*}
Hence, this factor is indeed equal to $V(\bc,\bk,\bD)$. 

Second, we observe that the following identity holds:
\begin{equation*}
    \sum_{b\mid -\Delta}\frac{\mu(b)}{b^2} \varrho_F(b,n,\bc\ba) = \varrho_F^*(n,\bc\ba).
\end{equation*}
With this in hand, we can conclude that the second line in \eqref{eq: def of C_q  from Prop 8.1} is equal to $\BM_{q_1,q_2}(\bc).$ This completes the proof of the claim.

\subsubsection{$F(x,y)$ is a product of three irreducible factors}
The proofs for the case of three and four irreducible factors are essentially equivalent to the proof when there are two irreducible factors; we write them for completeness. 

Since $\deg(F(x,y))\leq 4$, we must have at least two linear factors. We denote these as $F_1(x,y)$ and $F_2(x,y)$. Applying the reductions of \S\ref{sec: Eisenstein}, we have the following estimates:
\begin{align*}
    S(X_{\Delta,f},&\calB(\bx_0,L);q_1,q_2,t,k) = \sum_{c_1c_2\mid \Res(F_1F_2,F_3)} \mu(c_1c_2)\chi(c_1c_2) \sum_{c\mid \Res(F_1,F_2)}\mu(c)\chi(c) S(\bc = (cc_1,cc_2,c_1c_2),\bk) \\
    &= \sum_{c_1c_2\mid \Res(F_1F_2,F_3)} \mu(c_1c_2)\chi(c_1c_2) \sum_{c\mid \Res(F_1,F_2)}\mu(c)\chi(c) \sum_{\substack{n\ll \log(B)^{10^{-7}}\\ p\mid n\implies p\mid -\Delta}}\chi_{q_1}(p_{q_2}(n))\chi_{q_2}(p_{q_1}(n)) \mathbf{1}_{\exists \fa: N(\fa)=n} \\ & \hspace{2cm} \times \sum_{\substack{\ba\bmod -\Delta \\ \gcd(a_i,-\Delta)=1}}\chi_{q_1}(a) S(n,\ba,\bc,\bk) + O\left(\frac{BL^2}{tk^2}\log(B)^{\rho_{\Delta,f}-2-10^{-7}}\right) \\
    &= \sum_{c_1c_2\mid \Res(F_1F_2,F_3)} \mu(c_1c_2)\chi(c_1c_2) \sum_{c\mid \Res(F_1,F_2)}\mu(c)\chi(c) \sum_{\substack{n\ll \log(B)^{10^{-7}}\\ p\mid n\implies p\mid -\Delta}}\chi_{q_1}(p_{q_2}(n))\chi_{q_2}(p_{q_1}(n)) \mathbf{1}_{\exists \fa: N(\fa)=n} \\ &\hspace{2cm} \times
    \sum_{\substack{\ba\bmod -\Delta \\ \gcd(a_i,-\Delta)=1}}\chi_{q_1}(a)(1+\chi(a_1))(1+\chi(a_2))(1+\chi(a_3)) 
     \sum_{b\ll \log(B)^{10^{-7}}}\mu(b)\\ &\hspace{4cm}\times  \sum_{d_i\leq \sqrt{\max|F_i(x,y)|/(k_in_i)^{1/2}}} \chi(d) S(n,\ba,\bc,\bk, b;\bd)\\
    &\hspace{6cm}+O\left(\frac{BL^2}{tk^2}\log(B)^{\rho_{\Delta,f}-2-10^{-7}}\right).
\end{align*}
Let $D_i := (B/tk)^{1/4}(n_ik_i)^{-1/2}\log(B)^{-2-10^{-5}}$ for each $i$ such that $F_i$ is linear. We write that $\bd\leq \bD$ if $d_i\leq D_i$ for each linear factor $F_i$. Otherwise, we write that $\bd\not\leq \bD$. Applying Proposition \ref{prop: level of distribution}, we get that
\begin{multline*}
    \sum_{d_i\leq D_i} \chi(d)S(n,\ba,\bc,\bk, b;\bd) = \frac{BL^2}{tk^3} \cdot \sum_{\bd\leq \bD} \frac{\chi(d) \varrho_F(\bd; b,n, \ba, \bc,\bk)}{c^2d^2b^2n^2(-\Delta)^2} \\
    + O\left(\frac{64^{\omega(k)}\log(k) B}{tk^{5/2}n^{1/2}}\log(B)^{-10^{-6}} + \frac{64^{\omega(k)}BL}{tk^{7/4}n^{1/4}}\log(B)^{-10^{-6}} + \frac{B^{3/4}L}{(tk)^{3/4}\min((k_1n_1)^{1/2},(k_2n_2)^{1/2})}\log(B)^{-10^{-6}}\right).
\end{multline*}
On the other hand, Proposition \ref{prop: large moduli linear} implies that the following upper bound holds:
$$\sum_{\bd\not\leq \bD} \chi(d) S(n,\ba,\bc,\bk, b;\bd)\ll \frac{B(16\deg(F))^{\omega(k)}}{tk^{3/2}b^2}\log(B)^{\rho_{\Delta,f}-2-10^{-5}}.$$

Together (again recalling that $n,b\leq \log(B)^{10^{-7}}$), we get the estimate that:
\begin{multline*}
    S(X_{\Delta,f},\calB(\bx_0,L);q_1,q_2,t,k)= \frac{BL^2}{tk^3} \cdot C_{q_1,q_2,f,k}(B/tk) \cdot (1+O(\log(B)^{-10^{-7}})) \\+O\left(\frac{64^{\omega(k)}\log(k) B}{tk^{3/2}}\log(B)^{\rho_{\Delta,f}-2-10^{-7}} + \frac{B^{3/4}L}{(tk)^{3/4}}\log(B)^{-10^{-7}}\right),
\end{multline*}
where we have that:
$$C_{q_1,q_2,f,k}(B/tk) := \sum_{c_1c_2\mid \Res(F_1F_2,F_3)} \frac{\mu(c_1c_2)\chi(c_1c_2)}{c_1^4c_2^4}\sum_{c\mid \Res(F_1,F_2)} \frac{\mu(c)\chi(c)}{c^4} \cdot \BM_{q_1,q_2}(\bc) \cdot V(\bc,\bk,D).$$
(Recall the defintion of $\BM_{q_1,q_2}(\bc)$ from Definition \ref{def: BM} and $V(\bc,\bk,\bD)$ from Definition \ref{eq: def of V(c,k,D)}).

\subsubsection{$F(x,y)$ is the product of four irreducible factors}
Let $F(x,y) = F_1F_2F_3F_4$, where all four are linear facts. Applying the reductions of \S\ref{sec: Eisenstein}, we have the following estimate for $S(X_{\Delta,f},\calB(\bx_0,L);q_1,q_2,t,k)$:
\begin{align*}
     & \sum_{\substack{c\mid \Res(F_1F_2,F_3F_4)\\ c=c_1c_2\\ c=c_3c_4}}\mu(c)\chi(c) \sum_{\substack{e\mid \Res(F_1,F_2)\\ f\mid \Res(F_3,F_4)}} \mu(e)\chi(e)\mu(f)\chi(f) S(\bc=(c_1e,c_2e,c_3f,c_4f),\bk) \\
    &= \sum_{\substack{c\mid \Res(F_1F_2,F_3F_4)\\ c=c_1c_2\\ c=c_3c_4}}\mu(c)\chi(c) \sum_{\substack{e\mid \Res(F_1,F_2)\\ f\mid \Res(F_3,F_4)}} \mu(e)\chi(e)\mu(f)\chi(f)
    \\ &\hspace{2cm}\times \sum_{\substack{n\ll \log(B)^{10^{-7}}\\ p\mid n\implies p\mid -\Delta}} \chi_{q_1}(p_{q_2}(n))\chi_{q_2}(p_{q_1}(n)) \mathbf{1}_{\exists \fa: N(\fa)=n} \sum_{\substack{\ba\bmod -\Delta \\ \gcd(a_i,-\Delta)=1}} \chi_{q_1}(a) S(p_{q_1}(n),p_{q_2}(n),\ba,\bc,\bk) \\ 
    &\hspace{4cm} + O\left(\frac{BL^2}{tk^2}\log(B)^{\rho_{\Delta,f}-2-10^{-7}}\right)\\
    &= \sum_{\substack{c\mid \Res(F_1F_2,F_3F_4)\\ c=c_1c_2\\ c=c_3c_4}}\mu(c)\chi(c) \sum_{\substack{e\mid \Res(F_1,F_2)\\ f\mid \Res(F_3,F_4)}} \mu(e)\chi(e)\mu(f)\chi(f)
    \\ &\hspace{2cm}\times \sum_{\substack{n_j\ll \log(B)^{10^{-7}}\\ p\mid n\implies p\mid -\Delta}} \chi_{q_1}(p_{q_2}(n))\chi_{q_2}(p_{q_1}(n))  \mathbf{1}_{\exists \fa: N(\fa)=n}\sum_{\substack{\ba\bmod -\Delta \\ \gcd(a_i,-\Delta)=1}} \chi_{q_1}(a) \prod_{i=1}^4 (1+\chi(a_i)) \sum_{b\ll \log(B)^{10^{-7}}} \mu(b)\\ &\hspace{4cm} \times \sum_{d_i\leq \sqrt{\max|F_i|/(k_in_i)^{1/2}}} \chi(d)S(p_{q_1}(n),p_{q_2}(n),\ba,\bc,\bk,b;\bd) \\
    &\hspace{6cm} + O\left(\frac{BL^2}{tk^2}\log(B)^{\rho_{\Delta,f}-2-10^{-7}}\right).
\end{align*}
Let $D_i := (B/tk)^{1/4}(n_ik_i)^{-1/2}\log(B)^{-2-10^{-5}}$ for each $i$. We write that $\bd\leq \bD$ if $d_i\leq D_i$ for each linear factor $F_i$. Otherwise, we write that $\bd\not\leq \bD$. Applying Proposition \ref{prop: level of distribution}, we get that
\begin{multline*}
    \sum_{d_i\leq D_i} \chi(d)S(n,\ba,\bc,\bk, b;\bd) = \frac{BL^2}{tk^3} \cdot \sum_{\bd\leq \bD} \frac{\chi(d) \varrho_F(\bd; b,n, \ba, \bc,\bk)}{c^2d^2b^2n^2(-\Delta)^2} \\
    + O\left(\frac{64^{\omega(k)}\log(k) B}{tk^{5/2}n^{1/2}}\log(B)^{-10^{-6}} + \frac{64^{\omega(k)}BL}{tk^{7/4}n^{1/4}}\log(B)^{-10^{-6}} + \frac{B^{3/4}L}{(tk)^{3/4}\min_i((k_in_i)^{1/2})}\log(B)^{-10^{-6}}\right).
\end{multline*}
On the other hand, Proposition \ref{prop: large moduli linear} implies that the following upper bound holds:
$$\sum_{\bd\not\leq \bD} \chi(d) S(n,\ba,\bc,\bk, b;\bd)\ll \frac{B(16\deg(F))^{\omega(k)}}{tk^{3/2}b^2}\log(B)^{\rho_{\Delta,f}-2-10^{-5}}.$$

Together (again recalling that $n,b\leq \log(B)^{10^{-7}}$), we get the estimate that:
\begin{multline*}
    S(X_{\Delta,f},\calB(\bx_0,L);q_1,q_2,t,k)= \frac{BL^2}{tk^3} \cdot C_{q_1,q_2,f,k}(B/tk) \cdot (1+O(\log(B)^{-10^{-7}})) \\+O\left(\frac{64^{\omega(k)}\log(k)B}{tk^{3/2}}\log(B)^{\rho_{\Delta,f}-2-10^{-7}} + \frac{B^{3/4}L}{(tk)^{3/4}}\log(B)^{-10^{-7}}\right),
\end{multline*}
where we have that:
$$C_{q_1,q_2,f,k}(B/tk) := \sum_{\substack{c\mid \Res(F_1F_2,F_3F_4)\\ c=c_1c_2\\ c=c_3c_4}}\frac{\mu(c)\chi(c)}{c^4} \sum_{\substack{e\mid \Res(F_1,F_2)\\ f\mid \Res(F_3,F_4)}} \frac{\mu(e)\chi(e)\mu(f)\chi(f)}{e^4f^4} \cdot  \BM_{q_1,q_2}(\bc)\cdot V(\bc,\bk,D).$$

\subsubsection{Completion of the proof of Proposition \ref{prop: estimate of S(q_1,q_2,t,k)}}
Noting that $\BM_{q_1,q_2}(\bc)$ is a finite constant, and since $F(x,y)$ is a squarefree form we know that the sum over $\bc$ is given by a finite sum, we see that in order to establish Proposition \ref{prop: estimate of S(q_1,q_2,t,k)} it suffices to prove that for any fixed $\bc$ we have that: 
\begin{equation}\label{eq: goal asymp V}
V(\bc,\bk,D) = c_{q_1,q_2,f,k,\bc} \log(B/tk)^{\rho_{\Delta,f}-2}(1+O(\log(B)^{-1/2})).\end{equation}
To do so, it suffices to understand the growth of the factor:
\begin{equation*}
    \sum_{\bd\leq \bD} \frac{\chi(d)\varrho_F^*(\bc\bk\bd)}{d^2}\prod_{p\mid ckd} \left(1-\frac{1}{p^2}\right)^{-1}. 
\end{equation*}
Let us define the Dirichlet series:
\begin{equation}\label{eq: def of xi(s;F,chi,c,k)}
    \xi(s;F,\chi,\bc,\bk) = \sum_{\bd} \frac{\chi(d) \varrho_F^*(\bc\bk\bd)}{d^{2s}}\prod_{p\mid ckd}\left(1-\frac{1}{p^2}\right)^{-1}.
\end{equation}
Since $\varrho_F^*(n)\leq \deg(F)^{\omega(n)}n$ for all but finitely many values of $n$, we can see that the above Dirichlet series converges for $\Re(s)>1.$ Moreover, for $\Re(s)>1$, we have the Euler product expansion: 
$$\xi(s;F,\chi,\bc,\bk) = \fS_{\bc,\bk}'(s) \prod_{p\nmid ck} \left(1+(1-p^{-2})^{-1}\cdot \left(\frac{\chi(p)}{p^{2s}} \cdot (\varrho_F^*((p,1,...,1)) + \hdots + \varrho_F^*((1,...,p)))+ \frac{\chi(p)^2}{p^{4s}}\cdot \hdots\right)\right),$$
where we define: 
$$\fS_{\bc,\bk}'(s) := \prod_{\substack{p\mid ck\\ p^{e_i}\| c_ik_i}} (1-p^{-2})^{-1}\cdot \left(\varrho_F^*((p^{e_1},...,p^{e_r}))+\frac{\chi(p)}{p^{2s}} \cdot (\varrho_F^*((p^{e_1+1},...,p^{e_r}))+ \hdots + \varrho_F^*((p^{e_1},...,p^{e_r+1}))) + \hdots\right).$$
Note that this product is over a finite number of primes (depending on $\bc$ and $\bk$) and hence converges. The local factors at $p\nmid ck$ are close to those in the Euler product expansion of $\xi(s;F,\chi)$ (as defined in (\ref{eq: def of xi(s;F,chi)})): in particular, we know that for such primes $\varrho_F^*((p,1,\dots,1))+\dots \varrho_F^*((1,\dots,p)) = \varrho_F^*(p)$. So, the local factors are equivalent up to a factor of $O(p^{-2s-1}+p^{-4s+2}).$
 Thus, we get that for $\Re(s)>1$, $$\xi(s;F,\chi,\bc,\bk) = \fS_{\bc,\bk}(s) \cdot \fS_F(s)\cdot \xi(s;F,\chi),$$
where we take:
$$\fS_{\bc,\bk} (s):= \fS_{\bc,\bk}'(s) \cdot \prod_{p\mid ck} \left(1+\frac{\chi(p)}{p^{2s}} \cdot (\varrho_F^*((p,1,...,1)) + \hdots + \varrho_F^*((1,...,p)))+ \frac{\chi(p)^2}{p^{4s}}\cdot \hdots\right)^{-1},$$
and $\fS_F(s)$ converges absolutely for $\Re(s)>3/4$. 

By Lemma \ref{lem: poles have same order}, we can see that $\xi(s;F,\chi,\bc,\bk)$ will have a meromorphic continuation to $\Re(s)>1/2$ and a pole of order $\rho_{\Delta,f}-2$ at $s=1$. Thus, we have that:
\begin{multline*}V(\bc,\bk,D) = \sum_{\bd\leq \bD} \frac{\chi(d)\varrho_F^*(\bc\bk\bd)}{d^2} \prod_{p\mid ckd} \left(1-\frac{1}{p^2}\right)^{-1} \\ = \fS_{\bc,\bk}(1) \fS_F(1)\Res_{s=1} \xi(s;F,\chi)\cdot \log(D)^{\rho_{\Delta,f}-2}(1+O(\log(D)^{-1})).\end{multline*}
Recalling that $D = (B/tk)^{\deg(F)/4}k^{-1/2} \log(B)^{-\varepsilon}$, where $\varepsilon = 10^{-5}$ if there are no linear factors and $\varepsilon = -2-10^{-5}$ if $F$ has linear factors, we establish the claim (\ref{eq: goal asymp V}) for a fixed value of $k$ and $\bc$:
$$V(\bc,\bk,D) = c_{q_1,q_2,f,k,\bc} \log(B/tk)^{\rho_{\Delta,f}-2} (1+O(\log(B)^{-1/2})),$$
where $c_{q_1,q_2,f,k,\bc}= \fS_{\bc,\bk}(1) \fS_F(1)\cdot \Res_{s=1}\xi(s;F,\chi) \cdot (\deg(F)/4)^{\rho_{\Delta,f}-2}.$\qed

\subsection{Proof of Proposition \ref{prop: main term}}\label{subsec: proof of Prop main term}
Let us first recall the statement of Proposition \ref{prop: main term}. 
\begin{prop*}
    Let us assume that $L\gg \log(B)^{-10^{-10}}.$ There exists a constant $C_{q_1,q_2,f}$ such that as $B\rightarrow\infty$, we have that $$M_{q_1,q_2}(X_{\Delta,f},\calB(\bx_0,L)) = C_{q_1,q_2,f} L^2\cdot B\log(B)^{\rho_{\Delta,f}-1} (1+ O(\log(B)^{-10^{-8}})).$$
\end{prop*}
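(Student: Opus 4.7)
The plan is to substitute the asymptotic of Proposition~\ref{prop: estimate of S(q_1,q_2,t,k)} summand-wise into the definition~(\ref{eq: pos def main term}) of $M_{q_1,q_2}(X_{\Delta,f},\calB(\bx_0,L))$, and then carry out the remaining sums over $\ft$ and $\fk$. Writing $C_{q_1,q_2,f,k}(B/tk) = c_{q_1,q_2,f,k}\log(B/tk)^{\varrho_{\Delta,f}-2}$, the explicit formula $c_{q_1,q_2,f,k,\bc} = \mathfrak{S}_{\bc,\bk}(1) \cdot \Res_{s=1}\xi(s;F,\chi) \cdot (\deg(F)/4)^{\varrho_{\Delta,f}-2}$ worked out in \S\ref{subsec: proof of Prop S(q_1,q_2,t,k)} shows that the dependence on $k$ is multiplicative and appears only through local Euler factors at primes dividing $k$. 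So we may write $c_{q_1,q_2,f,k} = c_{q_1,q_2,f,1} \cdot h(k)$ with $h$ a multiplicative function of polynomial growth in $\omega(k)$.

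The main-term contribution then takes the form
\[
L^2 B \cdot c_{q_1,q_2,f,1} \sum_{\substack{N(\fk)\leq B/\exp(\sqrt{\log B}) \\ \gcd(\fk,\overline{\fk})=1}} \frac{\mu(\fk) h(k)}{k^2} \sum_{\substack{N(\ft) \leq B/k\exp(\sqrt{\log B}) \\ \gcd(\ft\fk, \overline{\ft\fk})=1}} \frac{\log(B/N(\ft)k)^{\varrho_{\Delta,f}-2}}{N(\ft)}.
\]
To evaluate the inner sum, I would introduce the Dirichlet series $Z_\fk(s) := \sum_{\ft} N(\ft)^{-s}$ with the indicated coprimality restriction. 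An Euler-product computation shows $Z_\fk(s) = \zeta_K(s)/\zeta(2s)$ times a finite product of local factors at primes of $N(\fk\overline{\fk})$, so it inherits a simple pole at $s=1$ with explicit residue $r_\fk$. Using the standard zero-free region for $\zeta_K(s)$ and two successive partial summations, one then obtains
\[
\sum_{\substack{N(\ft)\leq Y \\ \text{coprime}}}\frac{\log(Y/N(\ft))^{\varrho_{\Delta,f}-2}}{N(\ft)} = \frac{r_\fk}{\varrho_{\Delta,f}-1}\log(Y)^{\varrho_{\Delta,f}-1}\bigl(1 + O(\log(Y)^{-1/2})\bigr).
\]
Applied with $Y = B/k$ (using $k \leq B/\exp(\sqrt{\log B})$ to ensure the relative error is $O(\log(B)^{-1/2})$) and summed against $\mu(\fk) h(k) r_\fk / k^2$ --- an absolutely convergent sum by the polynomial growth of $h$ and $r_\fk$ --- this produces the asserted main term $C_{q_1,q_2,f} \cdot L^2 B \log(B)^{\varrho_{\Delta,f}-1}$ with $C_{q_1,q_2,f}$ an explicit constant.

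It remains to verify that each of the three error contributions from Proposition~\ref{prop: estimate of S(q_1,q_2,t,k)} sums to $O(L^2 B \log(B)^{\varrho_{\Delta,f}-1-10^{-8}})$. The first error mirrors the main term with an extra factor $\log(B)^{-10^{-7}}$ and is immediate. The second, $\frac{B(8\deg F)^{\omega(k)}}{tk^{3/2}}\log(B)^{\varrho_{\Delta,f}-2-10^{-7}}$, sums to $B\log(B)^{\varrho_{\Delta,f}-1-10^{-7}}$ using $\sum_\ft 1/N(\ft) \ll \log B$ together with the absolutely convergent $\sum_\fk (8\deg F)^{\omega(k)}/k^{3/2}$, and is acceptable since $L^2 \gg \log(B)^{-2\cdot10^{-10}}$ dominates $\log(B)^{-10^{-7}+10^{-8}}$. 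The third error $\frac{B^{3/4}L}{(tk)^{3/4}}\log(B)^{-10^{-7}}$ is handled similarly using $\sum_\ft 1/N(\ft)^{3/4}\ll(B/k)^{1/4}$ and $\sum_\fk 1/k \ll \log B$, yielding $BL\log(B)^{1-10^{-7}}$, which is absorbed by the main term under the same constraint on $L$ and the trivial bound $\varrho_{\Delta,f}\geq 2$. The main obstacle is a purely bookkeeping one: pinning down the precise multiplicative dependence of $c_{q_1,q_2,f,k}$ on $k$ so that the sum over $\fk$ decouples cleanly, and assembling the resulting expression --- a triple product over $\bc$, ideals $\fk$, and local Euler factors --- into the constant $C_{q_1,q_2,f}$ whose vanishing analysis is deferred to \S\ref{sec: constant}.
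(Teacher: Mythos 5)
Your proposal is correct but takes a genuinely different route from the paper at the stage of assembling the main term. After applying Proposition~\ref{prop: estimate of S(q_1,q_2,t,k)} and disposing of the error terms (your bookkeeping here matches the paper's exactly), the paper changes variables to $\ft' = \ft\fk$, which collapses the double sum: the M\"obius factor $\sum_{\fk\mid\ft'}\mu(\fk)N(\fk)^{-s}$ combines with the Dirichlet series for $\varrho_F(\bc\bk\bd)$ to produce the single coprimality-restricted series $\xi(s;F,\chi,\bc,t) = \sum_{\gcd(d_i,t)=1}\chi(d)\varrho_F(\bc\bd)d^{-s}$ of~(\ref{eq: def of xi(s;F,chi,c,t)}), and the remaining $\ft'$-sum is handled once via the Dirichlet series $\gamma(s;\bc)$ and a single partial summation. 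You instead keep the sums iterated, evaluating the inner $\ft$-sum for each fixed $\fk$ via a Dirichlet series $Z_\fk(s)$ and then summing over $\fk$ by absolute convergence. Both routes reach the same answer, but the paper's substitution sidesteps two steps you glossed over: (i) the claimed factorization $c_{q_1,q_2,f,k} = c_{q_1,q_2,f,1}\cdot h(k)$ with $h$ multiplicative is not literally true, since $c_{q_1,q_2,f,k}$ is a finite $\bc$-sum of terms $\fS_{\bc,\bk}(1)$ whose local factor at a prime $p\mid\gcd(c,k)$ couples $\bc$ and $\bk$; however, the weaker bound $c_{q_1,q_2,f,k}\ll\deg(F)^{\omega(k)}$ is what you actually need for absolute convergence and the argument proceeds with $c_{q_1,q_2,f,k}$ unfactored; and (ii) the replacement of $\log(B/k)^{\varrho_{\Delta,f}-1}$ by $\log(B)^{\varrho_{\Delta,f}-1}$ in the $\fk$-sum is not uniform in $k$ (for $k$ near $B/\exp(\sqrt{\log B})$ the ratio is $\asymp\log(B)^{-(\varrho_{\Delta,f}-1)/2}$, far from $1$), so one must split into $k\leq\log(B)^A$ --- where the relative error is $O(\log\log B/\log B)$ --- and $k>\log(B)^A$, whose contribution is $O(\log(B)^{-A+\epsilon})$ times the main term by absolute convergence. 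Both are routine fixes and do not undermine the method; they are precisely the bookkeeping the change of variable $\ft'=\ft\fk$ lets the paper avoid.
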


In this subsection, we will explain how to boost the estimate of Proposition \ref{prop: estimate of S(q_1,q_2,t,k)} to Proposition \ref{prop: main term} by summing over $t$ and $k$. Additionally, we shall keep track of our leading constant to be plugged into the analysis of \S\ref{sec: constant}. 

\begin{proof}
Recall that 
$$M_{q_1,q_2}(X_{\Delta,f},\calB(\bx_0,L)) = \sum_{\substack{N(\fk)\leq B/\exp(\sqrt{\log(B)})\\\gcd(\fk,\overline{\fk})=1}} \mu(\fk) \sum_{\substack{N(\ft)\leq B/k\exp(\sqrt{\log(B)}) \\ \gcd(\ft,\overline{\ft})=1}}S(X_{\Delta,f},\calB(\bx_0,L);q_1,q_2,N(\ft),N(\fk)).$$
Applying Proposition \ref{prop: estimate of S(q_1,q_2,t,k)}, we can see that this is given by: 
\begin{multline*}
    M_{q_1,q_2}(X_{\Delta,f},\calB(\bx_0,L)) = BL^2\sum_{\substack{N(\fk)\leq B/\exp(\sqrt{\log(B)})\\ \gcd(\fk,\overline{\fk})=1}} \frac{\mu(\fk)}{N(\fk)^3} \sum_{\substack{N(\ft)\leq B/k\exp(\sqrt{\log(B)})\\ \gcd(\ft,\overline{\ft})=1}} \frac{1}{N(\ft)}C_{q_1,q_2,f,N(\fk)}(B/N(\ft\fk)) \\ 
    + O\left(B\log(B)^{\rho_{\Delta,f}-2-10^{-7}} \sum_{N(\fk)\leq B/\exp(\sqrt{\log(B)})}\frac{64^{\omega(k)}\log(k)}{N(\fk)^{3/2}} \sum_{N(\ft)\leq B/k\exp(\sqrt{\log(B)})}\frac{1}{N(\ft)}\right)\\
    + O\left(B^{3/4}\log(B)^{-10^{-7}} \sum_{N(\fk)\leq B/\exp(\sqrt{\log(B)})} \frac{L+(4\deg(F))^{\omega(N(\fk))}\nu_1(N(\fk))^{1/2}}{N(\fk)^{3/4}} \sum_{N(\ft)\leq B/k\exp(\sqrt{\log(B)})} \frac{1}{N(\ft)^{3/4}}\right).
\end{multline*}

First, let us bound away the error terms. Since we know that $$\sum_{N(\fa)\leq X} \frac{1}{N(\fa)} \ll_\Delta \log(X),
\sum_{N(\fa)\leq X} \frac{C^{\omega(N(\fa))}\log(N(\fa))}{N(\fa)^\alpha}\ll_\Delta 1$$
for any $\alpha>1$ and any $C\in \mathbb{N}$, we can see that the first and second terms are both bounded by $B\log(B)^{\rho_{\Delta,f}-1-10^{-7}}.$ 

For the final error term, we see that: 
\begin{multline*}
    \sum_{N(\fk)\leq B/\exp(\sqrt{\log(B)})} \frac{L+(4\deg(F))^{\omega(N(\fk))}\nu_1(N(\fk))^{1/2}}{N(\fk)^{3/4}} \sum_{N(\ft)\leq B/k\exp(\sqrt{\log(B)})} \frac{1}{N(\ft)^{3/4}} \\ \ll_{\Delta} \sum_{N(\fk)\leq B/\exp(\sqrt{\log(B)})} \frac{L+(4\deg(F))^{\omega(N(\fk))}\nu_1(N(\fk))^{1/2}}{N(\fk)^{3/4}} \cdot \left(\frac{B}{N(\fk)}\right)^{1/4}  \\
    \ll_{\Delta}B^{1/4}L\cdot \log(B) +  B^{1/4} \sum_{k\leq B/\exp(\sqrt{\log(B)})} \frac{(4\deg(F))^{\omega(k)}\nu_1(k)^{1/2}}{k} \cdot \#\{\fk: N(\fk) = k\}.
\end{multline*}
Observe that $\#\{\fk: N(\ft) = k\} \ll 2^{\omega(k)}.$ On the other hand, we have that 
\begin{multline*}
    \sum_{k\leq B/\exp(\sqrt{\log(B)})} \frac{64^{\omega(k)}\nu_1(k)^{1/2}}{k} \cdot \#\{\fk: N(\fk) = k\} \\ \leq \left(\sum_{k\leq B/\exp(\sqrt{\log(B)})} \frac{1}{k}\right)^{1/2} \left(\sum_{k\leq B/\exp(\sqrt{\log(B)})} \frac{\nu_1(k)C_2^{\omega(k)}}{k}\right),
\end{multline*}
where $C_2 = 2^{14}$.
The first term above is bounded by $\log(B)^{1/2}.$ Expanding out the second term, we have 
\begin{equation*}
    \sum_{k\leq B/\exp(\sqrt{\log(B)})} \frac{\nu_1(k)C_2^{\omega(k)}}{k} \leq  \sum_{k\leq B} \frac{C_2^{\omega(k)}}{k}\sum_{\alpha\in \calU(k)} \frac{1}{\lambda_1(\Lambda(\alpha))}. 
\end{equation*}
Since we know that 
$$\sum_{k\leq B} \sum_{\alpha\in \calU(k)}\frac{1}{\lambda_1(\Lambda(\alpha))} \ll \sum_{0\neq |\bv|\leq \sqrt{B}}\frac{\#\{k\leq B: k\mid F(\bv)\}}{|\bv|}\ll \sqrt{B}\log(B),$$
by partial summation the second term is bounded absolutely. In summary, we have that 
$$\sum_{k\leq B/\exp(\sqrt{\log(B)})} \frac{64^{\omega(k)}\nu_1(k)^{1/2}}{k} \cdot \#\{\fk: N(\fk) = k\}\ll \log(B)^{1/2}.$$
Thus, this final error term is again upper bounded by $B\log(B)^{\rho_{\Delta,f}-1-10^{-7}}.$ For all of the error terms above, we recall that $L\gg \log(B)^{-10^{-10}}.$

\medskip

It remains to analyze the main term by expanding the definition of $C_{q_1,q_2,f,N(\fk)}$:
\begin{multline}\label{eq: main term of Prop 4.10}
    BL^2  \cdot \sum_{c_{ij}\mid \Res(F_i,F_j)} \frac{\mu(\bc)\chi(\bc) }{c^2}\cdot \BM_{q_1,q_2} (\bc)\sum_{\substack{N(\fk)\leq B/\exp(\sqrt{\log(B)})\\ \gcd(\fk,\overline{\fk})=1}} \frac{\mu(\fk)}{N(\fk)^3} \sum_{\substack{N(\ft)\leq B/k\exp(\sqrt{\log(B)})\\ \gcd(\ft,\overline{\ft})=1}} \frac{1}{N(\ft)}V(\bc,\bk,D).
\end{multline}
Here we use $\mu(\bc)\chi(\bc)$ to denote the appropriate product depending on the factorization of $F(x,y)$ (as written down explicitly in \S\ref{subsec: proof of Prop S(q_1,q_2,t,k)}).
Let us expand out the definition of $V(\bc,\bk,D)$ and change the order of summation (using $\ft'=\ft\fk$): 
\begin{multline*}
    \sum_{\substack{N(\fk)\leq B/\exp(\sqrt{\log(B)})\\ \gcd(\fk,\overline{\fk})=1}} \frac{\mu(\fk)}{N(\fk)^3} \sum_{\substack{N(\ft)\leq B/k\exp(\sqrt{\log(B)})\\ \gcd(\ft,\overline{\ft})=1}} \frac{1}{N(\ft)}V(\bc,\bk,D) \\ = \sum_{\substack{N(\ft')\leq B/\exp(\sqrt{\log(B)})\\ \gcd(\ft',\overline{\ft'})=1}}\frac{1}{N(\ft')}\sum_{\fk\mid \ft'} \frac{\mu(\fk)}{N(\fk)^2} \sum_{\bd\leq (B/N(\ft)')^{\deg(F)/4}} \frac{\chi(d)\varrho_F^*(\bc\bk\bd)}{d^2} \prod_{p\mid ckd} \left(1-\frac{1}{p^2}\right)^{-1}.
\end{multline*}
Note that since $k=N(\fk)\mid t=N(\ft)$, we know that if $p\mid k$ then $\chi(p)=1$; hence, $\chi(k) = 1$.

Let us consider the multiple Dirichlet series: 
\begin{equation}\label{eq: def of xi(s_1,s_2)}
    \xi(s_1,s_2;F,\chi,\bc) := \sum_{\gcd(\ft,\overline{\ft})=1} \frac{1}{N(\ft)^{s_1}} \sum_{\fk\mid \ft} \frac{\mu(\fk)}{N(\fk)^2} \sum_{\bd} \frac{\chi(d)\varrho_F^*(\bc\bk\bd)}{d^{2s_2}}\prod_{p\mid ckd} \left(1-\frac{1}{p^2}\right)^{-1}.
\end{equation}
We want to establish what the analytic behavior of this series looks like at $(s_1,s_2)=(1,1)$. Since the series converges in the region $\Re(s_1),\Re(s_2)>1$, in this regime we have the Euler product: 
\begin{equation*}
    \xi(s_1,s_2;F,\chi,\bc) = \prod_{p\nmid -\Delta} L_p(s_1,s_2;F,\chi,\bc),
\end{equation*}
where if $p\nmid \prod_{i\neq j} \Res(F_i,F_j)$, we define the local Euler factors $L_p(s_1,s_2;F,\chi,\bc)$ as: 
\begin{equation*}
     1+(1-p^{-2})^{-1}\left(\frac{\chi(p)+1}{p^{s_1}} \left(1-\frac{\varrho_F^*(p)}{p^2}\right) + \frac{\chi(p)\varrho_F^*(p)}{p^{2s_2}} + \frac{\chi(p)+1}{p^{s_1+2s_2}} \left(\varrho_F^*(p) - \frac{\varrho_F^*(p^2)}{p^2} \right)+\hdots\right).  
\end{equation*}
Observe that since $\gcd(c,p)=1$, the above local factors are independent of $\bc$. Here we have also used that if $p\nmid \prod_{i\neq j}\Res(F_i,F_j)$, then 
\begin{equation*}
    \varrho_F^*((p,1,\dots,1)) + \dots + \varrho_F^*((1,\dots,p)) = \varrho_F^*(p) = \#\{(x,y)\bmod p: \gcd(x,y,p)=1, p\mid F(x,y)\}.
\end{equation*}
Now, if $p\mid \prod_{i\neq j}\Res(F_i,F_j)$, then the expression for $L_p(s_1,s_2;F,\chi,\bc)$ is more complicated; however, this is a finite product and so we can consider this (\textit{for now}) an absolute constant:
\begin{equation*}
    \gamma(F,\chi,\bc) := \prod_{p\mid \prod_{i\neq j} \Res(F_i,F_j)} L_p(1,1;F,\chi,\bc).
\end{equation*}

Inspecting these local factors further, it becomes evident that the analytic behavior of $\xi(s_1,s_2;F,\chi,\bc)$ at $s_1=1$, $s_2=2$ matches that of: 
\begin{equation*}
    \zeta_K(s_1) \cdot \prod_{p} \left(1+\frac{\chi(p)\varrho_F^*(p)}{p^{2s_2}}\right).
\end{equation*}
Note that to conclude the above statement, we have used that $\varrho_F^*(n)\ll_F 4^{\omega(n)}n$ to state that the difference between $\xi(s_1,s_2;F,\chi,\bc)$ and the above product converges absolutely for $\Re(s_1),\Re(s_2)>1-\delta$. At $s_1=1$, we know that $\zeta_K(s_1)$ has a simple pole. 

On the other hand, we can see that 
\begin{equation*}
    \xi^*(s;F,\chi) = \prod_p \left(1+\frac{\chi(p)\varrho_F^*(p)}{p^{2s_2}}\right)
\end{equation*}
will share the same behavior at $s_2=1$ as $\xi(s;F,\chi)$, which is the same behavior as that of the $L$-function 
\begin{equation*}
    L(s;F,\chi) =  \prod_{i=1}^r L(s;f_i\times \chi) = \prod_{i=1}^r \prod_{\fp\subset \Q[z]/f_i(z)} \left(1-\frac{\chi(N(\fp))}{N(\fp)^{s_2}}\right)^{-1}.
\end{equation*}
In particular, it will have a pole of order $\#\{1\leq i\leq r: K\subset \Q[z]/f_i(z)\}=\rho_{\Delta,f}-2$ at $s_2=1.$ This follows from Lemma \ref{lem: poles have same order} and its corollary.

With these two facts in hand, we can now return to our sum:
\begin{equation*}
\sum_{\substack{N(\ft')\leq B/\exp(\sqrt{\log(B)})\\ \gcd(\ft',\overline{\ft'})=1}}\frac{1}{N(\ft')}\sum_{\fk\mid \ft'} \frac{\mu(\fk)}{N(\fk)^2} \sum_{\bd\leq (B/N(\ft)')^{\deg(F)/4}} \frac{\chi(d)\varrho_F^*(\bc\bk\bd)}{d^2}\prod_{p\mid ckd}\left(1-\frac{1}{p^2}\right)^{-1}    
\end{equation*}
Indeed, since $\xi(s_1,s_2;F,\chi,\bc)$ has a simple pole at $s_1=1$ and a pole of order $\rho_{\Delta,f}-2$ at $s_2=2$, this sum can be evaluated as:
\begin{equation*}
    \gamma(F,\chi,\bc)\cdot C'_{F,\chi,\bc} \cdot \log(B)^{\rho_{\Delta,f}-1}+O_{F,\chi,\bc}(\log(B)^{\rho_{\Delta,f}-2}).
\end{equation*}
Finally, we can evaluate our main term \eqref{eq: main term of Prop 4.10}: 
\begin{equation*}
    BL^2 \sum_{c_{ij}\mid \Res(F_i,F_j)} \frac{\mu(\bc)\chi(\bc)}{c^2}\cdot \BM_{q_1,q_2}(\bc) \left(\gamma(F,\chi,\bc)\cdot C'_{F,\chi,\bc} \cdot \log(B)^{\rho_{\Delta,f}-1} + O_{F,\chi,\bc}(\log(B)^{\rho_{\Delta,f}-2}\right).
\end{equation*}
Since the sum over $c_{ij}\mid \Res(F_i,F_j)$ is finite, we can set 
\begin{equation*}
    C_{q_1,q_2,f} = \sum_{c_{ij}\mid \Res(F_i,F_j)} \frac{\mu(\bc)\chi(\bc)}{c^2}\cdot \BM_{q_1,q_2}(\bc) \cdot \gamma(F,\chi,\bc)\cdot C'_{F,\chi,\bc}, 
\end{equation*}
and complete the proof of Proposition \ref{prop: main term}.

\end{proof}

\section{Analysis of the leading constant}\label{sec: constant}
Finally, we want to establish that our leading constant if $c_{\Delta,f}$ is zero then there is a local or Brauer-Manin obstruction to rational points on $X_{\Delta,f}$. We can see from the reductions of section \S\ref{sec: reductions} that it suffices to classify when
$$\sum_{q_1q_2=-\Delta}' C_{q_1,q_2,f}=0.$$
In particular, we want to deduce exactly when the following expression is zero:
\begin{equation}\label{eq: leading constant}
   \sum_{\bc} \frac{\mu(\bc)\chi(\bc)}{c^2} \cdot \sum_{q_1q_2=-\Delta}'\BM_{q_1,q_2}(\bc) \Res_{s_1=1}\Res_{s_2=1} \xi(s_1,s_2;F,\chi,\bc),
\end{equation}
where $\xi(s_1,s_2;F,\chi,\bc)$ is the series defined in \eqref{eq: def of xi(s_1,s_2)}.
Here we use $\Res_{s=1}$ to denote the first nonzero coefficient in the Laurent series expansion at $s=1$. 
We will begin our analysis gradually by starting with the combinatorially simplest case: when $f(z)$ is irreducible. In this case, it is known by \cite{ColliotSSDI} that $X_{\Delta,f}$ satisfies the Hasse principle; we aim for the stronger result that if there are no local obstructions to $X_{\Delta,f}$ then there are $\asymp B\log(B)^{\rho_{\Delta,f}-2}$ rational points of height at most $B$. 

Second, we introduce the auxiliary varieties that are inherent to our analysis -- this comes to the sum over $c_{ij}$. These varieties also appear in the finite and effective computation of \cite{ColliotTheleneCoraySansuc} and \cite{CTSSchinzel} for determining if $X_{\Delta,f}(\Q) = \emptyset.$ In this subsection, we also establish that if $f(z)$ has a linear factor then there exists an auxiliary variety containing a rational point. 

Finally, we will classify when $c_{\Delta,f}=0$ by handling the cases when $f(z)$ factorizes. This will include the case when $f(z)$ factorizes as a product of two quadratics -- the only scenario when the Hasse principle may fail.

\subsection{The irreducible case}\label{subsec: constant irred case}
Let $f(z)$ be an irreducible polynomial. Then there are no torsors $c$ involved and our expression (\ref{eq: leading constant}) becomes:
\begin{equation*}
    \sum_{q_1q_2=-\Delta}' \BM_{q_1,q_2}(1) \cdot \Res_{s_1=1}\Res_{s_2=1}\xi(s_1,s_2;F,\chi,1).
\end{equation*}
In Lemma \ref{lem: BM constant}, we find that if $$\BM(1) = \sum_{q_1q_2=-\Delta}' \BM_{q_1,q_2}(1) = 0$$
then for some $p\mid -\Delta$, $X_{\Delta,f}(\Q_p)\neq \emptyset.$ In other words, $\BM(1)$ detects the local obstructions for primes dividing $-\Delta$. 

So, we consider 
\begin{equation}\label{eq: irreducible sum of singular series}
    \Res_{s_1=1,s_2=1}\xi(s_1,s_2;F,\chi,1) = \Res_{s_1=1,s_2=1} \sum_{\gcd(\ft,\overline{\ft})=1} \frac{1}{N(\ft)^{s_1}}\sum_{\fk\mid \ft} \frac{\mu(\fk)}{N(\fk)^2} \sum_{d} \frac{\chi(d)\varrho_F^*(kd)}{d^{2s_2}}\prod_{p\mid kd}\left(1-\frac{1}{p^2}\right)^{-1}. 
\end{equation}
Let us define the local factor for $p\nmid -\Delta$:
\begin{equation}
    L_p(s_1,s_2;F,\chi,1) = 1+(1-p^{-2})^{-1}\cdot \left(\frac{\chi(p)+1}{p^{s_1}} \left(1-\frac{\varrho_F^*(p)}{p^2}\right) + \frac{\chi(p)\varrho_F^*(p)}{p^{2s_2}}+\hdots\right).
\end{equation}
It suffices to show a uniform lower bound on 
$$(1-p^{-1})^{\rho_{\Delta,f}-1}L_p(1,1;F,\chi,1).$$
Here we have recalled that $\xi(s_1,s_2;F,\chi,1)$ has a simple pole near $s_1=1$ and a pole of order $\rho_{\Delta,f}-2$ at $s_2=2.$

First, observe that since $\rho_F^*(n)\leq 4^{\omega(n)}n$, for $p\gg 1$ we have a uniform lower bound on $L_p(1,1;F,\chi,1)$ given by the geometric series: 
\begin{equation*}
    (1-p^{-1})^{\rho_{\Delta,f}-1} \left(1- \left(1-\frac{1}{p^2}\right)^{-1}\left(\frac{4}{p} +\frac{16}{p^2} - +\frac{2\cdot (4p-8)}{p^3} + \dots \right)\right). 
\end{equation*}
In other words, for $p\gg 1$, the above expression is uniformly bounded below by some constant $c'>0$. With this bound in hand, it suffices to show that for any prime $p\nmid -\Delta$, we have that $L_p(1,1;F,\chi,1)\neq 0.$

For any exponent $e$, we know definitionally that 
\begin{equation}
    \varrho_F^*(p^e) - \frac{\varrho_F^*(p^{e+1})}{p^2} \geq 0,
\end{equation}
as any element satisfying $p^{e+1}\mid F(x,y)$ must satisfy that $p^e \mid F(x,y)$. Further, we know that 
\begin{equation}
    (p^2-1) - \varrho_F^*(p)\geq 0,
\end{equation}
since the count $\varrho_F^*(p)$ excludes when $(x,y)=(0,0)\bmod p$. 
Together, this tells us that if $\chi(p)=1$, then $L_p(1,1;F,\chi,1)>0$, as it is the sum of 1 and only nonnegative terms. Indeed, we do not expect any local obstructions to occur when $\chi(p)=1$.  

If $\chi(p)=-1$, then (since $\chi(p)+1=0$ and the terms dependent on $s_1$ disappear) we see that 
\begin{equation*}
    L_p(1,1;F,\chi,1) = \left(1 - (1-p^{-2})^{-1} \frac{\varrho_F^*(p)}{p^2}\right)+(1-p^{-2})^{-1}\left(\frac{\varrho_F^*(p^2)}{p^4}- \frac{\varrho_F^*(p^3)}{p^6}\right)+\dots 
\end{equation*}
Using the grouping indicated above, we again see that this term must be nonnegative. If $L_p(1,1;F,\chi,1)=0$, then we must additionally have that 
\begin{equation*}
    \varrho_F^*(p) = p^2-1, \quad \varrho_F^*(p^{2k}) = \frac{\varrho_F^*(p^{2k+1})}{p^2}, \forall k\in \Z_{\geq 1}. 
\end{equation*}
The left (right) equality holds if and only if $p$ divides all values of $F(x,y)$ ($p^{2k+1}$ divides all values of $F(x,y)$). In other words, for some $k\geq 1$, we have that $p^{2k-1} \mid F(x,y)$ for all $x,y\in \Z$, but there are no solutions to $F(x,y)\equiv 0 \bmod p^{2k}$ with $\gcd(x,y,p)=1$. Since $\chi(p)=-1$, this tells us that there are no nontrivial solutions to the equation $$x^2+\Delta y^2 = F(x,y) \bmod p^{2k},$$
and thus there must be a local obstruction, i.e. $X_{\Delta,f}(\Q_p)=\emptyset.$

Thus, we have proved the following result. 
\begin{prop}
    Let $f(z)$ be irreducible. Then if the leading constant $c_{\Delta,f}$ is zero then there is a local obstruction  for $X_{\Delta,f}$. Consequently, if there are no local obstructions, then $c_{\Delta,f}\neq 0$ and hence $X_{\Delta,f}(\Q) \neq \emptyset$. Thus, the Hasse principle holds for $X_{\Delta,f}.$
\end{prop}

\subsection{Descent and auxiliary varieties}
To analyze (\ref{eq: leading constant}) when $f(z)$ factorizes, we must first introduce the auxiliary varieties $X^*_{\Delta,f_1,...,f_r,\alpha_1,...,\alpha_r}$ (defined in (\ref{eq: def of variety X_Delta, f_1,...,f_r})). Fortunately, these auxiliary varieties are well-studied and appear as crucial objects in Colliot-Th\'el\`ene, Coray, and Sansuc's work on the Brauer-Manin obstruction and descent. Specifically, in a sequence of papers \cite{CTSansucDescentI,CTSansucDescentII}, Colliot-Th\'el\`ene and Sansuc established the method of descent. In our proof, we would like to use the fibrations discussed by Colliot-Th\'el\`ene, Coray, and Sansuc in \cite{ColliotTheleneCoraySansuc} when $f(z)$ factorizes as the product of two irreducible quadratics.  
It is of note that in the proof of \cite[Proposition 3]{CTSSchinzel} it is established that a similar set of auxiliary surfaces are intimately tied to these questions; observe that in \cite[Proposition 3]{CTSSchinzel}, these varieties are denoted as $W_g$ and take into account extra homogenization when $f(z)$ has an odd degree irreducible factor. 

Let us recall from (\ref{eq: def of variety X_Delta, f_1,...,f_r}) the definition of our auxiliary varieties:
\begin{equation*}
X^*_{\Delta,f_1,...,f_r,\alpha_1,...,\alpha_r} = \{(z,(x_i,y_i)_{i=1}^r: x_i^2 + \Delta y_i^2 = \alpha_i f_i(z)\neq 0\}.
\end{equation*}
In the next subsection, we describe how $X^*_{\Delta,f_1,...,f_r,\alpha_1,...,\alpha_r}$ relates to $X_{\Delta,f}$ in many ways. Most importantly is the following consequence of Harari's formal lemma \cite[Corollary 2.6.1]{Harari} (see also \cite[Theorem 13.4.3]{CTSkoroBGBook} and the example \cite[p. 343]{CTSkoroBGBook}) and was demonstrated to the author by Colliot-Th\'el\`ene in \cite{CTPrivate}. In \cite{ColliotTheleneCoraySansuc} and \cite{CTSSchinzel}, certain cases are also shown under certain degree parity conditions. 

\begin{thm}[Colliot-Th\'el\`ene \cite{CTPrivate}]\label{thm: descent}
    Assume that $X_{\Delta,f}(\Q_v)\neq \emptyset$ for every place $v$ and there is no Brauer-Manin obstruction to the Hasse principle for $X_{\Delta,f}$. Then there exists an $(\alpha_1,...,\alpha_r)\in (\Q^\times)^r$ satisfying $\alpha_1\dots \alpha_r=1$, $X^*_{\Delta,f_1,...,f_r,\alpha_1,...,\alpha_r}(\Q_v)\neq \emptyset$ for all places $v$. 
\end{thm}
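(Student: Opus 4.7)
The plan is to prove this via a descent argument, realizing the auxiliary varieties $X^*_{\Delta,f_1,\ldots,f_r,\alpha_1,\ldots,\alpha_r}$ as a family of torsors over $X_{\Delta,f}$ under a suitable torus of multinorm-one type, and then invoking Harari's formal lemma to show that local solvability must be inherited by at least one member of the family.

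First, I would set up the torsor structure. Let $K = \Q(\sqrt{-\Delta})$ and consider the algebraic $\Q$-group
\[
G \;=\; \ker\!\left(\prod_{i=1}^{r} R_{K/\Q}(\mathbb{G}_m) \xrightarrow{\ \prod N_{K/\Q}\ } \mathbb{G}_m\right),
\]
acting on the disjoint union of the $X^*_{\Delta,f_1,\ldots,f_r,\alpha_1,\ldots,\alpha_r}$ by componentwise multiplication on each coordinate pair $(x_i,y_i)$ viewed as an element $x_i + y_i\sqrt{-\Delta}\in K$. The natural map $((x_i,y_i)_{i=1}^{r},z) \mapsto (\prod_i (x_i+y_i\sqrt{-\Delta}),z)$ exhibits each $X^*_{\Delta,f_1,\ldots,f_r,\alpha_1,\ldots,\alpha_r}$ (on the locus $f(z)\ne 0$) as a $G$-torsor over the open subset of $X_{\Delta,f}$ where $f_i(z)\ne 0$ for all $i$. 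A short cohomological computation identifies isomorphism classes of such torsors with the kernel of the multinorm map on $(\Q^{\times}/N_{K/\Q}K^{\times})^r$; the constraint $\prod_i \alpha_i \in N_{K/\Q}(K^{\times})$ is exactly the condition for an element of $H^1(\Q,G)$ to exist, and using the freedom to multiply each $\alpha_i$ by a norm we may normalize to $\prod_i \alpha_i = 1$. This matches (\ref{eq: def of variety X_Delta, f_1,...,f_r}).

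Next I would invoke descent in the form of Harari's formal lemma \cite{Harari}, in the shape given by \cite[Theorem 13.4.3]{CTSkoroBGBook} (the version for torsors under tori of multiplicative type). The key input is that the vertical Brauer classes on $X_{\Delta,f}$ arising from the fibration $X_{\Delta,f}\to \mathbb{P}^{1}_{z}$ together with the norm residue symbol $(\cdot,-\Delta)$ are exactly accounted for by the family $\{X^{*}_{\Delta,f_1,\ldots,f_r,\alpha_1,\ldots,\alpha_r}\}_{(\alpha_i)}$: this is a consequence of Colliot-Th\'el\`ene--Sansuc descent \cite{ColliotTheleneCoraySansuc} applied to $G$, which identifies the relevant quotient of $\Br(X_{\Delta,f})/\Br(\Q)$ with $H^{1}(\Q,\hat G)$ (or its image in $\Br$), matching the obstruction classes to the family of twists. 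Under the hypothesis $X_{\Delta,f}(\mathbf{A}_\Q)^{\Br}\ne\emptyset$, Harari's formal lemma then guarantees an adelic point $(P_v)\in X_{\Delta,f}(\mathbf{A}_\Q)$ orthogonal to this finite subgroup of $\Br$, which in turn lifts to an adelic point on the twisted torsor $X^*_{\Delta,f_1,\ldots,f_r,\alpha_1,\ldots,\alpha_r}$ for some choice of $(\alpha_i)$ with $\prod \alpha_i = 1$. Local adelic points may need to be approximated to avoid the loci $f_i(z)=0$, which is possible since those are closed subsets of strictly smaller dimension; smoothness of the total space of the family, together with Hensel/implicit function arguments at the real and $p$-adic places, yields genuine $\Q_v$-points on the chosen torsor.

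The main obstacle in writing this out is the verification in the second step: one must check that the ``Brauer set'' of the torsor family really does exhaust the relevant vertical Brauer obstruction on $X_{\Delta,f}$, i.e.\ that the family is \emph{sufficient}. Colliot-Th\'el\`ene, Coray, and Sansuc established this in \cite{ColliotTheleneCoraySansuc} in the case $r=2$ with both factors quadratic, and the same machinery -- combined with \cite[Proposition 3]{CTSSchinzel} for the odd-degree parity adjustments and the base change behavior of norm residue symbols -- extends to the full range of factorizations for $\deg f\le 4$ needed here. Once this sufficiency is established, the remainder of the argument is the formal descent machinery applied verbatim, and the theorem follows.
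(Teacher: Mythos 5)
The paper does not provide its own proof of this theorem: it is credited to a private communication from Colliot-Th\'el\`ene and justified by citation to Harari's formal lemma \cite[Corollary~2.6.1]{Harari} together with \cite[Theorem~13.4.3 and p.~343]{CTSkoroBGBook}. Your sketch is a reasonable reconstruction of the proof that must underlie those citations --- torsor structure on the auxiliary varieties, Harari's formal lemma applied to the family of twists, and verification that the family controls the relevant Brauer classes --- so the overall approach is the same one the paper has in mind.

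There is, however, a dimension mismatch in the torsor setup as you wrote it. Your torus $G = \ker\bigl(\prod_i R_{K/\Q}\mathbb{G}_m \xrightarrow{\prod N_{K/\Q}} \mathbb{G}_m\bigr)$ has dimension $2r-1$, but the fibers of the map $(z,(x_i,y_i)_i)\mapsto (z,\prod_i(x_i+y_i\sqrt{-\Delta}))$ from $X^*_{\Delta,f_1,\ldots,f_r,\alpha_1,\ldots,\alpha_r}$ (dimension $r+1$) to the open locus of $X_{\Delta,f}$ (dimension $2$) have dimension $r-1$. The correct structure group is $T:=\ker\bigl((R^1_{K/\Q}\mathbb{G}_m)^r \xrightarrow{\prod} R^1_{K/\Q}\mathbb{G}_m\bigr)$, of dimension $r-1$: the stabilizer of a fiber point $(u_i)_i$ consists exactly of tuples $(t_i)$ with $N_{K/\Q}(t_i)=1$ for each $i$ and $\prod_i t_i=1$. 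Note that the cohomological classification you actually write down, namely $\ker\bigl((\Q^\times/N_{K/\Q}K^\times)^r\xrightarrow{\prod}\Q^\times/N_{K/\Q}K^\times\bigr)$, computes $H^1(\Q,T)$ via the sequence $1\to T\to (R^1_{K/\Q}\mathbb{G}_m)^r\to R^1_{K/\Q}\mathbb{G}_m\to 1$; by contrast $H^1(\Q,G)\cong\Q^\times/N_{K/\Q}K^\times$, which does not match. So your calculation is internally pointing at $T$ rather than $G$, and with $T$ substituted throughout the argument --- torsor structure, classification of twists by $(\alpha_i)$ with $\prod\alpha_i\in N_{K/\Q}K^\times$ (normalizable to $\prod\alpha_i = 1$ as you note), and the application of Harari's formal lemma to move an adelic point orthogonal to the vertical Brauer classes $(f_i(z),-\Delta)$ onto some twist --- the outline goes through. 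Your identification of the ``sufficiency'' step as the content that must be verified is correct; it is exactly the descent statement that the classes $(f_i(z),-\Delta)$, subject to the product relation, generate the relevant quotient of $\Br(X_{\Delta,f})$, which is established by Colliot-Th\'el\`ene--Sansuc descent for conic bundles.
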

\begin{proof}
    By assumption, let $(x_v,y_v,z_v)\in X_{\Delta,f}(\A_\Q)^{\textrm{Br}(X)}$; we can furthermore assume that $(x_v,y_v,z_v)\in X^*_{\Delta,f}(\A_\Q)$. Let $U_{\Delta,f}$ denote the smooth compactification of $X^*_{\Delta,f}$ and hence $U_{\Delta,f}\subset X_{\Delta,f}$. Then let $S$ denote the finite set of places of $\Q$ such that $$S := \{v: \exists \gamma \in \textrm{Br}(U_{\Delta,f}), \gamma(x_v,y_v,z_v)\neq 0\}.$$ 
    Let $\beta_i = (-\Delta, f_i)\in \textrm{Br}(U_{\Delta,f})$ denote an element of the Brauer group of $U_{\Delta,f}$, for each $i=1,\hdots, r-1$. Then Harari's formal lemma \cite[Theorem 13.4.3]{CTSkoroBGBook} gives an extension of $(x_v,y_v,z_v)_{v\in S}$ to an adelic point $(M_v)_{v\in \mathscr{P}\cup \{\infty\}}$ such that 
    $$\sum_{v\in \mathscr{P}\cup \{\infty\}} \beta_i(M_v) = 0, \forall i=1,\hdots,r-1.$$
    Note that Harari's formal lemma specifically applies to those $\beta_i \in \textrm{Br}(U_{\Delta,f})\backslash \textrm{Br}(X_{\Delta,f})$; however, this equality trivially holds for any element of $\textrm{Br}(X_{\Delta,f})$ since $(x_v,y_v,z_v)\in X_{\Delta,f}(\A_\Q)^{\textrm{Br}(X)}.$

    Next, we consider the exact sequence 
    $$1\rightarrow \Q^\times/ N_{K/\Q}(K^\times) \rightarrow \oplus_{v\in \mathscr{P}\cup\{\infty\}} \Q_v^\times / N_{K_v/\Q_v}(K_v^\times) \xrightarrow{\gamma} \Z/2\Z \rightarrow 1.$$
    Consider for $i\in \{1,\hdots, r-1\}$ the map $$(M_v)\mapsto (f_i(z_v)) \in \oplus_{v\in \mathscr{P}\cup\{\infty\}} \Q_v^\times/N_{K_v/\Q_v} (K_v^\times).$$ 
    Since we know that 
    $$\gamma((f_i(z_v))_v) = \sum_{v\in \mathscr{P}\cup\{\infty\}} \beta_i(M_v) =0 \in \Z/2\Z,$$
    there must exist an element $\alpha_i \in \Q^\times$ such that 
    $$f_i(z_v) = \alpha_i N_{K/\Q}(x_i+y_i\sqrt{-\Delta}) \neq 0.$$
    Furthermore, the above holds simultaneously for all $\beta_1,\hdots,\beta_{r-1}$, so we must have that the auxiliary variety
    $$\begin{array}{c}
         x_1^2+\Delta y_1^2 = \alpha_1 f_1(z) \neq 0 \\
         \vdots \\
         x_{r-1}^2 + \Delta y_{r-1}^2 = \alpha_{r-1} f_{r-1}(z) \neq 0\\
         x_r^2 + \Delta y_r^2 = (\alpha_1\cdots \alpha_{r-1})^{-1} f_{r}(z)\neq 0
    \end{array}$$
    contains an adelic point; in other words, for $\alpha_r = (\alpha_1\cdots \alpha_{r-1})^{-1}$, we have that  $(M_v)_{v\in \mathscr{P}\cup \{\infty\}} \in X^*_{\Delta,f_1,...,f_r,\alpha_1,...,\alpha_r}(\A_\Q).$ 
\end{proof}

\begin{remark}\label{rem: construction of a rational point from aux}
We note that for any point $(z,(x_i,y_i)_{i=1}^r)\in X^*_{\Delta,f_1,...,f_r,\alpha_1,...,\alpha_r}(\Q)$, we can create a point in $X_{\Delta,f}(\Q)$ given by $$x+y\sqrt{-\Delta} = \prod_{i=1}^r (x_i+y_i\sqrt{-\Delta}), \textrm{ } z=z.$$    
Thus, if the family of varieties $X^*_{\Delta,f_1,...,f_r,\alpha_1,...,\alpha_r}$ satisfies the Hasse principle, Theorem \ref{thm: descent} gives that the Brauer-Manin obstruction is the only possible obstruction to the Hasse principle. In fact, this is exactly the approach Colliot-Th\'el\`ene, Coray, and Sansuc \cite{ColliotTheleneCoraySansuc} used to prove that when $f=f_1(z)f_2(z)$ is the product of two irreducible quadratics the only possible obstruction to the Hasse principle is the Brauer-Manin obstruction. 
Moreover, in \cite[Proposition 3 and 4]{CTSSchinzel}, Colliot-Th\'el\`ene and Sansuc used this strategy with a variation of the auxiliary varieties to prove that under the assumption of Schinzel's Hypothesis H, the only possible obstruction to the Hasse principle for Ch\^atelet surfaces is the Brauer-Manin obstruction. 

However, in the resolution of the question of the Hasse principle for Ch\^atelet surfaces, Colliot-Th\'el\`ene, Sansuc, and Swinnerton-Dyer \cite{ColliotSSDI}\cite{ColliotSSDII}, use a different parameterization of the rational points on $X_{\Delta,f}$ -- this parameterization is given by torsors. This allows them to approach the hardest case, when $f(z)$ is irreducible, which is circuitous with the auxiliary varieties above. Fortunately for us, this case when $f(z)$ is irreducible is actually the clearest example for our argument, as demonstrated in the previous subsection.

\end{remark}

Next, we make some more observations on connections between rational points on $X_{\Delta,f}$ and those on $X^*_{\Delta,f_1,...,f_r,\alpha_1,...,\alpha_r}.$

\begin{lemma}\label{lem: existence of rat point on torsor}
    Assume that $X_{\Delta,f}(\Q) \neq \emptyset$. Then there exists $(\alpha_1,...,\alpha_r)\in (\Q^\times)^r$ such that $$X^*_{\Delta,f_1,...,f_r,\alpha_1,...,\alpha_r}(\Q)\neq \emptyset.$$
\end{lemma}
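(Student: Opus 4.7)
The plan is to give an essentially constructive proof: starting from a rational point on $X_{\Delta,f}$, we directly exhibit an $r$-tuple $(\alpha_1,\ldots,\alpha_r)$ with $\alpha_1\cdots\alpha_r = 1$ together with a rational point on the corresponding auxiliary variety. The idea is simply to concentrate all the ``content'' of the factorization of $x+y\sqrt{-\Delta}$ into a single coordinate and make the remaining coordinates trivial. No deep input is required; the only subtlety is a genericity condition on $z$.

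First, I would invoke the Zariski density of $X_{\Delta,f}(\Q)$, as recorded in the remark following Conjecture \ref{conj: manin} (which itself follows from the combination of \cite[Proposition 9.8.(iii)]{ColliotSSDII} and \cite[Theorem A]{CorayTsfasman}). Since $f$ is separable, the factors $f_1,\ldots,f_r$ have finitely many roots in total, so the closed subset $\{f_1(z)\cdots f_r(z)=0\}\subset X_{\Delta,f}$ is a proper subvariety. Zariski density therefore lets us choose a rational point $(x,y,z)\in X_{\Delta,f}(\Q)$ with $f_i(z)\neq 0$ for every $i$; in particular, $f_i(z)\in \Q^\times$ for each $i$, and $x^2+\Delta y^2 = f_1(z)\cdots f_r(z)$.

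Now define
\[
\alpha_1 := \prod_{i=2}^r f_i(z), \qquad \alpha_i := \frac{1}{f_i(z)} \text{ for } i\geq 2,
\]
and set $(x_1,y_1) := (x,y)$ and $(x_i,y_i) := (1,0)$ for $i\geq 2$. Then $\alpha_1\cdots\alpha_r = 1$, each $\alpha_i\in \Q^\times$, and one checks directly that $x_1^2+\Delta y_1^2 = x^2+\Delta y^2 = f(z) = \alpha_1 f_1(z)$, while for $i\geq 2$ we have $x_i^2+\Delta y_i^2 = 1 = \alpha_i f_i(z)$. Moreover $\alpha_i f_i(z)\neq 0$ for all $i$ by the choice of $z$. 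Hence $(z,(x_i,y_i)_{i=1}^r)$ is a rational point of $X^*_{\Delta,f_1,\ldots,f_r,\alpha_1,\ldots,\alpha_r}$, as required.

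The construction has essentially no obstacle: the ``hard'' part, if there is one, is the reliance on Zariski density to guarantee a rational point away from the discriminant locus $\prod_i f_i(z) = 0$, and this is already available off the shelf. Conceptually, the argument is simply the observation that any factorization $x+y\sqrt{-\Delta} = \beta_1\cdots\beta_r$ in $K=\Q(\sqrt{-\Delta})$ produces a point on some auxiliary variety via $\alpha_i = N_{K/\Q}(\beta_i)/f_i(z)$, and one is free to take the trivial factorization $\beta_1 = x+y\sqrt{-\Delta}$, $\beta_i = 1$ for $i\geq 2$; taking norms then automatically yields $\prod_i\alpha_i = 1$.
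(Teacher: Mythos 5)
Your proof is correct and follows essentially the same route as the paper: both invoke Zariski density of $X_{\Delta,f}(\Q)$ to pick a rational point with $f_i(z)\neq 0$ for all $i$, and then construct a point on an auxiliary variety by hand. The one notable difference is in the choice of the tuple: the paper sets $\alpha_i = f_i(z_0)$ and takes $(x_i,y_i)=(f_i(z_0),0)$, so that $\prod_i\alpha_i = f(z_0) = x_0^2+\Delta y_0^2$, which is generally not equal to $1$ and thus does not literally satisfy the normalization $\alpha_1\cdots\alpha_r=1$ built into the definition of $X^*_{\Delta,f_1,\ldots,f_r,\alpha_1,\ldots,\alpha_r}$ (this is harmless because that product is a norm from $K$, and multiplying one $\alpha_i$ by a norm yields an isomorphic auxiliary variety — but the paper does not spell this out here). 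Your choice $\alpha_1=\prod_{i\geq 2}f_i(z)$, $\alpha_i = 1/f_i(z)$ for $i\geq 2$ with $(x_1,y_1)=(x,y)$ and $(x_i,y_i)=(1,0)$ satisfies $\prod_i\alpha_i=1$ on the nose, so it matches the definition exactly without appealing to norm equivalence. This is a modest but genuine tightening of the argument as written.
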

\begin{proof}
First note that if $X_{\Delta,f}(\Q)\neq \emptyset$, then $X_{\Delta,f}$ must be unirational (see \cite[Proposition 9.8.(iii)]{ColliotSSDII} and \cite[Theorem A]{CorayTsfasman}) and hence we have that $X_{\Delta,f}(\Q)$ is Zariski-dense; so, $X^*_{\Delta,f}(\Q)\neq \emptyset.$
    Assume that for some $(x_0,y_0,z_0)\in \Q$ we have that $x_0^2+\Delta y_0^2 = f(z_0)\neq 0$. 
    Then set $\alpha_i = f_i(z_0)\neq 0$ for each $i$ and consider $$x_i^2 + \Delta y_i^2 = \alpha_i f_i(z).$$
    This variety clearly has a rational point at $z=z_0$ for each $i$. Thus, $X^*_{\Delta,f_1,...,f_r,\alpha_1,...,\alpha_r}(\Q) \neq \emptyset.$ 
\end{proof}

For the remainder of the section, we aim to prove the following proposition:
\begin{prop}\label{prop: leading constant auxiliary}
    Let $\Delta\in \Z$ be a squarefree integer satisfying that $\sqrt{-\Delta}\not\in \Q$. Let $f(z)\in \Z[z]$ be a separable polynomial of degree 3 or 4. Let $X_{\Delta,f}$ be the corresponding Ch\^atelet surface. Then as $B\rightarrow\infty,$ $$N(X_{\Delta,f},B) = c_{\Delta,f} B\log(B)^{\rho_{\Delta,f}-1} + O(B\log(B)^{\rho_{\Delta,f}-1-10^{-10}})$$
    with $c_{\Delta,f}$ nonzero if there exists an $(\alpha_1,...,\alpha_r)\in (\Q^\times)^r$ such that $\alpha_1\dots\alpha_r=1$ and $X^*_{\Delta,f_1,...,f_r,\alpha_1,...,\alpha_r}(\Q_v)\neq \emptyset$ for all places $v$. 
\end{prop}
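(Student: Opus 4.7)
The plan is to build on the explicit formula for the leading constant derived in \S\ref{sec: asymptotic} and reinterpret it as a manifestly non-negative sum of products of local densities indexed by auxiliary varieties. Starting from (\ref{eq: mt with q1q2}) and the relation $c_{\Delta,f} = d_{\Delta,f} \sum_{q_1q_2=-\Delta} C_{q_1,q_2,f}$ (with $d_{\Delta,f} > 0$ coming from the integral computation in \S\ref{subsec: proof of main thm}), I would first swap the order of summation so that the outermost sum is over tuples $\bc = (c_i)$ with $c_i \mid \prod_{j\neq i}\Res(f_i,f_j)$. This finite set of $\bc$ is the first candidate parameterization of representatives of $(\alpha_1,\dots,\alpha_r)/\sim$, where the equivalence is that two tuples produce isomorphic torsors.

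Next, for each fixed $\bc$, I would show that the inner expression factors as a product of local densities $\prod_{v} \mu_v(X^*_{\Delta,f_1,\dots,f_r,\bc}(\Q_v))$ up to a fixed positive multiplicative constant. The factor $\BM_{q_1,q_2}(\bc)$, once summed over genus characters $q_1 q_2 = -\Delta$, gives (via the orthogonality of genus characters $\chi_{q_1,q_2}$ described in \S\ref{subsec: class group}) the local density at primes $p \mid \Delta$: the character sums $(1+\chi(a_i))$ detect whether $c_i a_i$ is a norm from $K$, and $\chi_{q_1}(a)$ picks out the appropriate genus. The residue $\Res_{s=1}\xi(s;F,\chi,\bc,t)$ combined with the sum over $\ft$ reconstructs the product over primes $p\nmid \Delta$ of the local factors $\mu_p(X^*_{\Delta,f_1,\dots,f_r,\bc}(\Q_p))$, using the Euler factorization of the Dedekind zeta functions of $K_{f_i}$ and their twists by $\chi$ from \S\ref{subsec: Dirichlet and Rankin selberg}. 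The archimedean factor $\mu_\infty$ is either the constant from the positive-definite case or the integral $\int \max(-\log|F(\bx_0)|/\log|\varepsilon|,0)$ from Proposition \ref{prop: indef Manin short}, both of which are positive.

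With this local-density interpretation, I would conclude the proof: the expression becomes
\begin{equation*}
c_{\Delta,f} = \kappa_{\Delta,f} \sum_{\bc} \prod_{v} \mu_v\bigl(X^*_{\Delta,f_1,\dots,f_r,\bc}(\Q_v)\bigr),
\end{equation*}
with $\kappa_{\Delta,f}>0$. Since each local density is non-negative, this is a sum of non-negative real numbers. Therefore $c_{\Delta,f} = 0$ if and only if every term vanishes, which happens if and only if for every $\bc$ some local factor vanishes, i.e.\ $X^*_{\Delta,f_1,\dots,f_r,\bc}$ has a local obstruction at some place. Conversely, $c_{\Delta,f}\neq 0$ iff some $\bc$ has everywhere-local points; setting $\alpha_i = c_i$ and normalizing by scaling one coordinate to enforce $\alpha_1\cdots\alpha_r=1$ (using that norms from $K$ act trivially on torsor classes) produces the desired tuple.

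The main obstacle is the bookkeeping in step two: verifying that the twists by $\mu(\bc)\chi(\bc)$, the various M\"obius inversions from \S\ref{subsec: gcd condition} and \S\ref{subsec: multiplicativity}, and the $\bc$-dependent factors $\fS_{\bc,t}(s)$ reassemble \emph{exactly} into the correct local densities of $X^*_{\Delta,f_1,\dots,f_r,\bc}$, particularly at ramified primes $p \mid \Delta$ and primes dividing the resultants where the local geometry of the torsor differs from that of $X_{\Delta,f}$. A secondary obstacle is confirming that the restricted set $\{\bc : c_i \mid \prod_{j\neq i}\Res(f_i,f_j)\}$ is \emph{sufficient}, i.e.\ that no equivalence class of $(\alpha_1,\ldots,\alpha_r)/\sim$ with an adelic point is missed; this will follow from a local analysis showing that any $(\alpha_i)$ with $X^*_{\Delta,f_1,\dots,f_r,\alpha_1,\dots,\alpha_r}(\Q_v)\neq\emptyset$ everywhere is equivalent modulo norms from $K^\times$ and rescaling to one with $c_i$ dividing the resultants, together with Lemma \ref{lem: existence of rat point on torsor} and Theorem \ref{thm: descent} for the reverse direction.
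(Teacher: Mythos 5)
Your overall plan---reorganize the leading constant as a finite, manifestly non-negative sum indexed by tuples $\bc$ dividing the pairwise resultants, show each term vanishes exactly when the corresponding torsor $Y^*_{\Delta,f_1,\dots,f_r,\bc}$ has a local obstruction, and then argue by positivity---is exactly the paper's route through \S\ref{sec: constant} (the factorization into $\BM(\bc)$, the local factors $L_p(\bc)$, and a residually positive $\ft$-sum). One caution: you propose to identify each summand \emph{exactly} with $\prod_v \mu_v(X^*_{\Delta,f_1,\dots,f_r,\bc}(\Q_v))$ up to a constant, but the paper explicitly declines this Peyre-style identification as unnecessary overhead; it settles for the weaker (and easier) statement that $\BM(\bc)\geq 0$ and $L_p(\bc)\geq 0$, with each vanishing precisely when the corresponding local set is empty (Lemmas \ref{lem: BM constant} and \ref{lem: constant local factors}), which already suffices for the iff you need. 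Pursuing the exact local-density reassembly---especially untangling the $\sum_\ft$ contribution, the $\fS_{\bc,t}(s)$ factors, and the $\mu(\bc)\chi(\bc)$ twists into genuinely local measures at each prime---is substantially more bookkeeping than the proposition requires, and you should not expect it to fall out cleanly. Also a minor point: Theorem \ref{thm: descent} and Lemma \ref{lem: existence of rat point on torsor} are not needed for Proposition \ref{prop: leading constant auxiliary} itself; the descent theorem is invoked \emph{after} this proposition to convert the ``some torsor has adelic points'' criterion into a statement about the Brauer--Manin obstruction for Theorem \ref{thm: Manin}.
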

By Theorem \ref{thm: descent}, this tells us that $c_{\Delta,f}=0$ if and only if there is either a local or Brauer-Manin obstruction to rational points on $X_{\Delta,f}.$

\subsubsection{Finite classes of torsors}
Now, we show that we actually only need to consider a finite set of values $(\alpha_1,...,\alpha_r)$ to understand $X_{\Delta,f}$. 
In \cite[Proposition 3]{CTSSchinzel}, it is shown that one must only look at a finite set of values $(\alpha_1,...,\alpha_r)$ that belong to the kernels of certain natural maps. This sentiment of restricting to a finite set of representatives $(\alpha_1,...,\alpha_r)$ is repeated in the lemma below, but first we shall homogenize our varieties. 

Let us fix a tuple $(\alpha_1,...,\alpha_r)\in \Q^r$ satisfying $\alpha_1...\alpha_r = 1.$ We observe that we can instead look at the integral points on the homogenization of these variety $X^*_{\Delta,f_1,...,f_r,\alpha_1,...,\alpha_r}$: 
$$Y^*_{\Delta,f_1,...,f_r,\alpha_1,...,\alpha_r} := \{x_i^2 + \Delta y_i^2 = \alpha_i t_i^2 F_i(u,v)\neq 0, \gcd(x_i,y_i,t_i)=\gcd(u,v)=1,\forall i\}.$$
    A version of this reduction is described explicitly by de la Bret\`eche and Browning in the derivation of \cite[Lemma 4.2]{delaBrowningHasse} for $\Delta=1$. Next, we note that if $\alpha_i = \alpha_i' \beta_i$, where $\beta_i \in \Z$ and satisfies that $\beta_i = b_i^2$, i.e. $\beta_i$ is a square, then we have that $$X_{\Delta, f_1,...,f_r,\alpha_1,...,\alpha_r}^*\cong X^*_{\Delta,f_1,...,f_r,\alpha_1',...,\alpha_r'}.$$ 
    By the equivalence of these varieties, we can assume that $\alpha_i\in \Z$ are squarefree. We can also assume that $f_1,\dots,f_r$ all satisfy that the coefficients of $f_i$ are at most divisible by a squarefree integer. Moreover, we also know that the product of all of the $\alpha_i$ is a square.
    
    Further, if $\alpha_i = \alpha_i' p_i$ for $\chi(p_i)=1$, then we fix some representative $c_{g_i}$ for the class group element $g_i = [\fp_i]\in C$. If $\gamma_i = N(c_{g_i})$, then we also have that 
    \begin{equation*}
        X_{\Delta, f_1,...,f_r,\alpha_1,...,\alpha_r}^*\cong X^*_{\Delta,f_1,...,f_r,\alpha_1'\gamma_1,...,\alpha_r'\gamma_r}.
    \end{equation*}
    So, we can assume, up to some fixed elements depending on the class group $C$, that $\alpha_i\in \Z$ satisfy that if $p\mid \alpha_i$ then $\chi(p)=-1$. We assume that $N(c_{[1]})=1$. 

    \begin{lemma}\label{lem: finite num of torsors}
        Let $\alpha_1,...,\alpha_r\in \Z$ be squarefree and assume that if $p\mid \alpha_i$ then either $\chi(p)=-1$ or $p$ divides some norm of a fixed set of class group representatives. Moreover, assume that $\alpha_1\dots\alpha_r$ is a square. If $Y^*_{\Delta,f_1,...,f_r,\alpha_1,...,\alpha_r}(\Q)\neq \emptyset$, then $$\alpha_i \mid \prod_{j\neq i} \Res(F_i,F_j).$$
    \end{lemma}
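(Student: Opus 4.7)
The plan is to argue prime-by-prime, exploiting the hypothesis that $\chi(p)=-1$ for every $p\mid\alpha_i$ (so that $p$ is inert in $K$). Fix a prime $p\mid\alpha_i$ and a rational point $((x_i,y_i,t_i)_{i=1}^r,(u,v))$ of $Y^*_{\Delta,f_1,\ldots,f_r,\alpha_1,\ldots,\alpha_r}$; without loss of generality the tuple is integral and satisfies the stated coprimality conditions. Since $p\mid\alpha_i t_i^2F_i(u,v)=x_i^2+\Delta y_i^2$ and $p$ is inert, we must have $p\mid x_i$ and $p\mid y_i$, hence $p^2\mid x_i^2+\Delta y_i^2$. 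Because $\alpha_i$ is squarefree, $p\|\alpha_i$, so $p\mid t_i^2F_i(u,v)$; but $p\mid t_i$ would force $p\mid\gcd(x_i,y_i,t_i)$, contradicting coprimality. We conclude $p\mid F_i(u,v)$.

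Next, I would use the assumption that $\alpha_1\cdots\alpha_r$ is a square. Since each $\alpha_j$ is squarefree and $p\mid\alpha_i$, the number of indices $j$ with $p\mid\alpha_j$ must be even; in particular there exists some $j\neq i$ with $p\mid\alpha_j$. Rerunning the argument of the previous paragraph with $j$ in place of $i$ shows $p\mid F_j(u,v)$. Therefore $p$ divides $\gcd(F_i(u,v),F_j(u,v))$, and since $\gcd(u,v)=1$, standard properties of the resultant give $p\mid\Res(F_i,F_j)$, hence $p\mid\prod_{j\neq i}\Res(F_i,F_j)$.

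Putting these together, every prime divisor of the squarefree integer $\alpha_i$ divides $\prod_{j\neq i}\Res(F_i,F_j)$, so $\alpha_i\mid\prod_{j\neq i}\Res(F_i,F_j)$, as desired. The only mild subtlety is the reduction to the normalized representatives: one should verify at the start that the equivalence $Y^*_{\Delta,f_1,\ldots,f_r,\alpha_1,\ldots,\alpha_r}\cong Y^*_{\Delta,f_1,\ldots,f_r,\alpha_1',\ldots,\alpha_r'}$ (whenever the $\alpha_i$ differ by squares times products of split primes) preserves the existence of an integral point with the coprimality conditions intact; this is where the standing assumption that the $f_i$ have content coprime to $\alpha_i$ enters, ensuring the coprimality of $(x_i,y_i,t_i)$ is genuinely available. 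No step requires heavy machinery; the main (small) obstacle is simply bookkeeping to rule out $p\mid t_i$ after the factor of $p^2$ is extracted from $x_i^2+\Delta y_i^2$.
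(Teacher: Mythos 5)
Your proof is correct and follows essentially the same route as the paper's: both arguments reduce to showing, prime by prime, that any inert prime $p\mid\alpha_i$ must divide $F_i(u,v)$, then use the square condition on $\alpha_1\cdots\alpha_r$ to find a second index $j$ with $p\mid\alpha_j$ (hence $p\mid F_j(u,v)$), and conclude via $\gcd(u,v)=1$ that $p\mid\Res(F_i,F_j)$. The only difference is cosmetic: you spell out the inert-prime/coprimality argument for $p\mid F_i(u,v)$ (the paper asserts this step more tersely as ``in order for the equation to be solvable''), and you phrase the parity step more cleanly than the paper's $e_{p,i,j}$ bookkeeping.
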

    \begin{proof}
    Let $((x_i,y_i,t_i),u,v)$ be a point on $Y^*_{\Delta,f_1,...,f_r,\alpha_1,...,\alpha_r}$.
        Since $p\mid \alpha_i$ implies that $\chi(p)=-1$ or $p$ divides the norm of a nontrivial class group representative, we must have that $\alpha_i\mid F_i(u,v)$ for each $i$ in order for the equation $$x_i^2+\Delta y_i^2 = \alpha_i t_i^2 F_i(u,v)\neq 0$$
        to be solvable. Let us assume that $\alpha_i$ has the following prime factorization: 
        $$\alpha_i = \prod_{\chi(p)=-1} p^{e_{p,i}},$$
        with $e_{p,i}\in \{0,1\}.$
        Since the product of all of the $\alpha_i$ must be a square, we must have that $$e_{p,i} = \sum_{j\neq i} e_{p,i,j}$$
        where $p^{e_{p,i,j}} \mid \gcd(\alpha_i,\alpha_j).$ Then we also have that $p^{e_{p,i,j}}\mid F_j(u,v)$ as well. Since $\gcd(u,v)=1$, this implies that $p^{e_{p,i,j}}\mid \Res(F_i,F_j)$. Since this holds for every pair $i$ and $j$ and all prime factors of $\alpha_i,\alpha_j$, we have that $$\alpha_i \mid \prod_{j\neq i} \Res(F_i,F_j).$$
    \end{proof}
    \begin{remark}
        We note that if there is some $b_i$ that divides all of the coefficients of $F_i$, then $b_i\mid \Res(F_i,F_j)$ for any $j\neq i$. This is analogous to tracking the primes that divide the coefficients of $f(z)$ when $f(z)$ is irreducible in the case described above. 
    \end{remark}

From Lemma \ref{lem: finite num of torsors}, we see that it suffices to consider a finite set of tuples $(\alpha_1,...,\alpha_r)$ in order to prove Proposition \ref{prop: leading constant auxiliary}. In particular, we can restrict ourselves to those $\alpha_1,...,\alpha_r$ that are squarefree and divide the pairwise resultants. These line up with the tuples $\bc$ that we sum over in (\ref{eq: leading constant}).

\subsubsection{Possible local obstructions}
First, we want to determine for which primes $p$ we can have the possibility of local obstructions for $Y^*_{\Delta,f_1,...,f_r,\alpha_1,...,\alpha_r}.$

    \begin{lemma}\label{lem: local criteria Hasse}
        Fix $\alpha_1,...,\alpha_r\in \Z$ as in Lemma \ref{lem: finite num of torsors}. Let $\ell$ be a finite prime. Then the following are equivalent: 
        \begin{enumerate}
            \item $Y_{\Delta,f_1,\dots,f_r,\alpha_1,\dots,\alpha_r}^*(\Q_\ell)\neq \emptyset$, 
            \item There exists a solution to \begin{equation*}
            x_i^2+\Delta y_i^2 = \alpha_i F_i(u,v), \forall i=1,\dots,r
        \end{equation*}
        over $\Z_\ell$ with $\gcd(u,v,\ell)=1$.
        \end{enumerate} 
        Further, if $\ell\mid \prod_{i=1}^r \alpha_i$ and $\chi(\ell)=1$ then $Y^*_{\Delta,f_1,\dots,f_r,\alpha_1,\dots,\alpha_r}(\Q_\ell)\neq \emptyset$. If $\chi(\ell)=-1$ and $\ell^{e_i} \| \alpha_i$, then $Y^*_{\Delta,f_1,...,f_r,\alpha_1,...,\alpha_r}(\Q_\ell)\neq \emptyset$ if and only if \begin{equation*}
            \#\{ (u,v) \in \F_\ell: \ell^{e_i\bmod 2} \mid F_i(u,v), \gcd(u,v,\ell)=1\}>0.
        \end{equation*}
    \end{lemma}
    
    \begin{remark}
        By our assumptions on $\alpha_i$, we have that $e_i\in \{0,1\}.$
    \end{remark}

    \begin{proof}
        First, let us start with the equation $$x^2 + \Delta y^2 = c.$$
        For $\ell \nmid c(-\Delta)$, there will always be solutions over $\Q_\ell$ by the Lang-Weil theorem and Hensel's Lemma; hence there is no chance of a local obstruction at these places. 


        Consider if $\ell \mid c$ and $\gcd(\ell,-\Delta)=1$. Now, if $\chi(\ell)=-1$, we know that there can only be a solution to $x^2+\Delta y^2 = c$ if $c = \ell^{2e}m,$ for some $e\geq 1$ and  $\gcd(m,\ell)=1$. On the other hand, if $\chi(\ell)=1$, $x^2+\Delta y^2$ factorizes as a product of linear factors mod $\ell$. Thus, there is a nonzero solution to $x^2+\Delta y^2 = c\bmod \ell$, and by Hensel's lemma this can be lifted to a solution in $\Q_\ell$.  
        
        Now, we return to the original variety $Y^*_{\Delta,f_1,...,f_r,\alpha_1,...,\alpha_r}.$ Now, we return to the original variety $Y^*_{\Delta,f_1,...,f_r,\alpha_1,...,\alpha_r}.$ The first part of the lemma statement is follows from the definition of $Y_{\Delta,f_1,\dots,f_r,\alpha_1,\dots,\alpha_r}^*$. 
        Let $\ell \mid \prod_{i=1}^r \alpha_i$. If $\chi(\ell)=1$, then since $x^2+\Delta y^2$ factors as a product of linear products mod $\ell$, by a combination of Lang-Weil and Hensel's Lemma there will always be a solution over $\Q_\ell$. 
        
        So, we can assume from now on that $\chi(\ell)=-1$. Let $e_i$ be the power such that $\ell^{e_i}\| \alpha_i$. We can see that $Y^*_{\Delta,f_1,...,f_r,\alpha_1,...,\alpha_r}(\Q_\ell)\neq \emptyset$ if and only if there is a point $(u,v)\in \Z_\ell^2$  with $\gcd(u,v,\ell)=1$ such that $\ell^{e_i\bmod 2}\mid F_i(u,v)$ (as we need an even power of $\ell$ to divide $\alpha_i F_i(u,v)$). In other words, we must have that 
        \begin{equation*}
            \#\{ (u,v) \in \F_\ell: \ell^{e_i\bmod 2} \mid F_i(u,v), \gcd(u,v,\ell)=1\}>0.
        \end{equation*}This is in fact sufficient because from such a point with $\ell^{e_i\bmod 2} \mid F_i(u,v)$, we can build a solution in $Y^*_{\Delta,f_1,...,f_r,\alpha_1,...,\alpha_r}(\Q_\ell)$.

    \end{proof}

\subsection{Computing the constant}
Let us return to analyzing (\ref{eq: leading constant}) in the general case when $f(z)$ factorizes; in this section, we prove Proposition \ref{prop: leading constant auxiliary}. First, let us fix a $\bc$ and define the local factor at a prime $p\nmid -\Delta$ as $L_p(s_1,s_2;F,\chi,\bc)$; when $p\mid -\Delta$, we use the convention that $L_p(s_1,s_2;F,\chi,\bc)=1.$ 
We recall that we are looking at the expression: 
\begin{equation*}
    \sum_{c_{ij}\mid \Res(F_i,F_j)} \frac{\mu(\bc)\chi(\bc)}{c^2}\left(\sum_{q_1q_2=-\Delta}' \BM_{q_1,q_2}(\bc)\right)\times \Res_{s_1=1}\Res_{s_2=2}\prod_{p\mid c}L_p(s_1,s_2;F,\chi,\bc) \prod_{p\nmid c} L_p(s_1,s_2;F,\chi,\bc).
\end{equation*}
Define $R = \prod_{i\neq j} \Res(F_i,F_j)$ and for any prime $p\mid R$, $\bc_p$ the vector containing the $p$-parts of $\bc$. Additionally, define 
$$\BM(\bc):=\sum_{q_1q_2=-\Delta}' \BM_{q_1,q_2}(\bc).$$
Then we can rearrange the above expression as: 
\begin{equation}\label{eq: leading constant with good primes separated}
    \Res_{s_1=1}\Res_{s_2=1} \prod_{p\nmid R} L_p(s_1,s_2;F,\chi,1)\times \left(\sum_{c_{ij}\mid \Res(F_i,F_j)} \frac{\mu(\bc)\chi(\bc)}{c^2} \BM(\bc) \times  \left(\prod_{p\mid R}L_p(1,1;F,\chi,\bc_p)\right)\right) .
\end{equation}

To understand the above expression better, our plan is as follows: first, we will analyze the product over primes $p\nmid R$; this reduces our computation to the finite sum over $\bc$. 
Second, we will determine when $\BM(\bc)=0$ (and this will in turn end up telling us if there is a local obstruction at $Y^*_{\Delta,f_1,\dots,f_r,\bc}(\Q_{\ell})$ for any $\ell\mid -\Delta$). Third, we must look at this sum over $\bc$ and determine when there can be perfect cancellation (which will correspond to local obstructions for primes dividing $c$).

\begin{lemma}\label{lem: residue of L function not dividing Res is nonzer}
    Let $R = \prod_{i\neq j} \Res(F_i,F_j)$. Then, if 
    \begin{equation*}
        \Res_{s_1=1}\Res_{s_2=1} \prod_{p\nmid R} L_p(s_1,s_2;F,\chi,1) = 0,
    \end{equation*}
    then for some prime $p\nmid R(-\Delta)$ with $\chi(p)=-1$, $Y_{f_1,\dots,f_r,\bc}^*(\Q_p)=\emptyset.$
\end{lemma}
\begin{proof}
    For $p\nmid R$, we have the following expression for $L_p(s_1,s_2;F,\chi,1):$
    \begin{equation*}
        1+(1-p^{-2})^{-1}\left(\frac{\chi(p)+1}{p^{s_1}} \left(1-\frac{\varrho_F^*(p)}{p^2}\right) + \frac{\chi(p)\varrho_F^*(p)}{p^{2s_2}} + \frac{\chi(p)+1}{p^{s_1+2s_2}} \left(\varrho_F^*(p) - \frac{\varrho_F^*(p^2)}{p^2}\right)+\dots\right) 
    \end{equation*}
    In other words, this is the factor that appeared in the case when $F$ is irreducible. Let us repeat the proof for completion. 

    Note that it suffices to show a uniform lower bound on $L_p(1,1;F,\chi,1)$ for $p\nmid R$. Indeed when $p\gg 1$, there is a uniform lower bound given by the geometric series (as $\varrho_F*(n)\leq 4^{\omega(n)}n$)
    \begin{equation*}
    (1-p^{-1})^{\rho_{\Delta,f}-1} \left(1- \frac{4}{p} +\frac{16}{p^2} - +\frac{2\cdot (4p-8)}{p^3} + \dots \right). 
\end{equation*}
In other words, for $p\gg 1$, the above expression is uniformly bounded below by some constant $c'>0$. With this bound in hand, it suffices to classify when we have that $L_p(1,1;F,\chi,1)= 0$ for some $p\nmid R(-\Delta).$

Since $\varrho_F^*(n)<n^2$ (as to exclude the from the count when $(x,y)\equiv 0 \bmod n$), 
we know that 
\begin{equation*}
    1-(1-p^{-2})^{-1}\frac{\varrho_F^*(p)}{p^2} > 0.
\end{equation*}
Additionally, we have that for any exponenet $e$, 
\begin{equation*}
    \varrho_F^*(p^e) - \frac{\varrho_F^*(p^{e+1})}{p^2} \geq 0,
\end{equation*}
as any element satisfying $p^{e+1}\mid F(x,y)$ must satisfy that $p^e \mid F(x,y)$. Together, these two facts give that $L_p(1,1;F,\chi,1)>0$ for any prime $p$ such that $\chi(p)=1$, as the local factor is a sum of $1$ with exclusively nonnegative terms. 

If $\chi(p)=-1$, then the term depending on $s_1$ disappears and we see that 
\begin{equation*}
    L_p(1,1;F,\chi,1) = \left(1 - (1-p^{-2})^{-1} \frac{\varrho_F^*(p)}{p^2}\right)+(1-p^{-2})^{-1}\left(\frac{\varrho_F^*(p^2)}{p^4}- \frac{\varrho_F^*(p^3)}{p^6}\right)+\dots 
\end{equation*}
Using the grouping indicated above, we again see that this term must be nonnegative. If $L_p(1,1;F,\chi,1)=0$, then we must additionally have that 
\begin{equation*}
    \varrho_F^*(p) = p^2-1, \quad \varrho_F^*(p^{2k}) = \frac{\varrho_F^*(p^{2k+1})}{p^2}, \forall k\in \Z_{\geq 1}. 
\end{equation*}
The left (right) equality holds if and only if $p$ divides all values of $F(x,y)$ ($p^{2k+1}$ divides all values of $F(x,y)$). In other words, for some $k\geq 1$, we have that $p^{2k-1} \mid F(x,y)$ for all $x,y\in \Z$, but there are no solutions to $F(x,y)\equiv 0 \bmod p^{2k}$ with $\gcd(x,y,p)=1$. Since $\chi(p)=-1$, this tells us that there are no nontrivial solutions to the equation $$x^2+\Delta y^2 = F(x,y) \bmod p^{2k},$$
and thus there must be a local obstruction, i.e. $X_{\Delta,f}(\Q_p)=\emptyset.$
\end{proof}

Next, we want to understand when $\BM(\bc)$ can be zero (and when it can be negative).

\begin{lemma}\label{lem: BM constant}
    Let $\bc$ be a fixed vector satisfying $\gcd(c_i,-\Delta)=1$ for each $i=1,\dots,r$. Then $$\BM(\bc):=\sum_{q_1q_2=-\Delta}' \BM_{q_1,q_2}(\bc)\geq 0.$$
    Additionally, $\BM(\bc)=0$ implies that there exists $\ell \mid -\Delta$ such that $Y^*_{\Delta,f_1,...,f_r,\bc}(\Q_\ell) = \emptyset.$ 
\end{lemma}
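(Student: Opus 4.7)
The plan is to show that $\BM(\bc)$ factors as a product of non-negative local densities over primes $\ell \mid -\Delta$, each of which is proportional to the local solution count for $Y^*_{\Delta,f_1,\ldots,f_r,\bc}$ at $\ell$, and then invoke Lemma \ref{lem: local criteria Hasse} to identify the vanishing locus.

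First, I would factor everything using the Chinese Remainder Theorem. Since the condition $p\mid n\implies p\mid -\Delta$ makes $n$ a product $\prod_{\ell\mid-\Delta}\ell^{m_\ell}$, and since $\ba$ runs over residues mod $-\Delta$, I can write $n=\prod_\ell n_\ell$ and $\ba\equiv \ba^{(\ell)}\pmod{\ell^{v_\ell(-\Delta)}}$. The count $\varrho_F(n,\bc\ba)$, which is a count of $\bx\bmod n(-\Delta)$ satisfying $p_{-\Delta}(F(\bx))=n$ and $p_{\neg-\Delta}(F_i(\bx))\equiv c_ia_i\pmod{-\Delta}$, factors as $\prod_\ell \varrho_F^{(\ell)}(n_\ell,\bc\ba^{(\ell)})$ since all the defining conditions are local at primes dividing $n(-\Delta)$ and the $c_i$ are coprime to $-\Delta$.

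Second, I would exploit genus theory. The factorizations $q_1q_2=-\Delta$ with $\gcd(q_1,q_2)=1$ are in bijection with partitions of the prime divisors of $-\Delta$ into two sets. For $m$ coprime to the relevant primes, $\chi_{q_i}(m)=\prod_{\ell\mid q_i}\chi_\ell(m)$ (with the standard modification at $\ell=2$). Therefore one obtains the multiplicative identity
\begin{equation*}
    \sum_{q_1q_2=-\Delta}\chi_{q_1}(A)\chi_{q_2}(B)\;=\;\prod_{\ell\mid -\Delta}\bigl(\chi_\ell(A)+\chi_\ell(B)\bigr).
\end{equation*}
Applied to the triple product $\chi_{q_1}(p_{q_2}(n))\chi_{q_2}(p_{q_1}(n))\chi_{q_1}(a_1\cdots a_r)$ appearing in Definition \ref{def: BM}, summing over $q_1q_2=-\Delta$ converts the expression into a product $\prod_{\ell\mid-\Delta}\mathcal{L}_\ell(\bc)$ of purely $\ell$-local factors.

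Third, I would identify each $\mathcal{L}_\ell(\bc)$ with the $\ell$-adic density of $Y^*_{\Delta,f_1,\ldots,f_r,\bc}$. Unwinding definitions, $\mathcal{L}_\ell(\bc)$ is a weighted sum over $(\bx\bmod\ell^{k})$ (with $k$ large), where the weight combines the indicator of $p_{-\Delta}(F(\bx))=n_\ell$, the values of $\chi_\ell$ on $p_{\neg-\Delta}(F_i(\bx)/c_i)$ (coming from the genus-character sum), and the factor $\prod_i(1+\chi(c_ia_i))$. The point is that for each $i$, the character contribution at $\ell$ combined with $(1+\chi(c_ia_i))$ collapses to $2\cdot \mathbf{1}\{\chi_\ell(c_iF_i(\bx)\Delta^{e_i(\bx)}/\ell^{2e_i(\bx)})=+1\}$, where $e_i(\bx)=v_\ell(F_i(\bx))$. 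Hence
\begin{equation*}
    \mathcal{L}_\ell(\bc)\;=\;c_\ell\cdot\#\{\bx\bmod\ell^k:\gcd(\bx,\ell)=1,\ \chi_\ell(c_iF_i(\bx)\Delta^{e_i(\bx)}/\ell^{2e_i(\bx)})=1\ \forall i\}
\end{equation*}
for a positive constant $c_\ell$; in particular $\mathcal{L}_\ell(\bc)\geq 0$.

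Finally, Lemma \ref{lem: local criteria Hasse} tells us that the set inside the cardinality is nonempty if and only if $Y^*_{\Delta,f_1,\ldots,f_r,\bc}(\Q_\ell)\neq\emptyset$. Therefore $\BM(\bc)=\prod_{\ell\mid -\Delta}\mathcal{L}_\ell(\bc)\geq 0$, and it vanishes precisely when some $\mathcal{L}_\ell(\bc)=0$, i.e.\ precisely when there exists $\ell\mid -\Delta$ with $Y^*_{\Delta,f_1,\ldots,f_r,\bc}(\Q_\ell)=\emptyset$, as desired. The main technical obstacle is Step 3: carefully matching the bookkeeping of the genus-character sum, the conductor restrictions at $\ell=2$, and the $p_{-\Delta}/p_{\neg-\Delta}$ decomposition of $F(\bx)$, in order to see that the combined weight really does collapse to an indicator of the local Legendre-symbol conditions of Lemma \ref{lem: local criteria Hasse}.
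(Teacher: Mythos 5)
Your overall target — that $\BM(\bc)$ should reflect a product of non-negative local densities, vanishing precisely when some local density vanishes — is the right intuition, but the argument as sketched has two gaps that prevent it from going through as written.

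First, the identity $\sum_{q_1q_2=-\Delta}\chi_{q_1}(A)\chi_{q_2}(B)=\prod_{\ell\mid-\Delta}\bigl(\chi_\ell(A)+\chi_\ell(B)\bigr)$ requires $A$ and $B$ to be fixed, but in Definition \ref{def: BM} the arguments $p_{q_2}(n)$ and $p_{q_1}(n)$ change as the factorization $q_1q_2=-\Delta$ varies. For example, with $-\Delta=5\cdot 13$ and $n=5^{e_5}13^{e_{13}}$, a direct computation (using $\chi_5(13)=\chi_{13}(5)=-1$) gives
\begin{equation*}
\sum_{q_1q_2=65}\chi_{q_1}(p_{q_2}(n))\chi_{q_2}(p_{q_1}(n))\chi_{q_1}(a)
=\begin{cases}(1+\chi_5(a))(1+\chi_{13}(a)), & 2\mid e_5+e_{13},\\ (1-\chi_5(a))(1-\chi_{13}(a)), & 2\nmid e_5+e_{13}.\end{cases}
\end{equation*}
The sum does factor, but the signs in the two factors are coupled through the global quantity $e_5+e_{13}$; it is not a product of independent $\ell$-local factors $\mathcal{L}_\ell$ with fixed sign. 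Second, the weight $\prod_i(1+\chi(a_i))$ involves the full character $\chi=\prod_{\ell\mid-\Delta}\chi_\ell$, which is not a condition local to any single $\ell$, so the claim in Step 3 that it ``combines at $\ell$'' with the $\ell$-part of the genus sum and collapses to a local indicator is not justified by the factorization alone. These two couplings --- the reciprocity signs and the non-local weight --- are exactly where the ``bookkeeping'' you flag in your final paragraph is substantive rather than routine, and without resolving them the proof is incomplete.

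The paper avoids attempting a direct local factorization. For non-negativity it recombines the genus sum as $\sum_{q_1q_2=-\Delta}\varepsilon_{q_1,q_2}(m)=r_{\calG}(m)$, the number of ideals of norm $m$ in the principal genus, which is non-negative on the nose. For vanishing it uses that the assignment $(q_1,q_2)\mapsto\chi_{q_1}(p_{q_2}(n))\chi_{q_2}(p_{q_1}(n))\chi_{q_1}(a)$ is a $\{\pm1\}$-valued homomorphism on the group of $2^{\omega(-\Delta)}$ factorizations, so the sum vanishes iff some term equals $-1$, and then translates that into the Legendre-symbol condition $\chi_\ell(c_iF_i(u,v)\Delta^{e_i}/\ell^{2e_i})=-1$ of Lemma \ref{lem: local criteria Hasse}. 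If you want to pursue the local-density route, you would need to show that after inserting the constraints $p_{-\Delta}(F(\bx)/c)=n$ and $\chi(a_i)=1$, the coupled reciprocity signs cancel against the Hilbert-symbol product formula, recovering a genuine local factorization; that is a legitimate alternative proof but is considerably more than what is written here.
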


\begin{proof}
    Let us expand out the definition of $\BM_{q_1,q_2}(\bc)$: 
    \begin{multline*}
        \sum_{q_1q_2=-\Delta}' \BM_{q_1,q_2}(\bc) = \\ \sum_{q_1q_2=-\Delta}' \sum_{p\mid n\implies p\mid \Delta} \frac{\chi_{q_1}(p_{q_2}(n))\chi_{q_2}(p_{q_1}(n))\mathbf{1}_{\exists \fa: N(\fa)=n}}{n^2(-\Delta)^2} \sum_{\substack{\ba \bmod -\Delta \\ \gcd(a_i,-\Delta)=1}} \prod_{i=1}^r (1+\chi(a_i)) \chi_{q_1}(a)\varrho_F^*(n,\bc\ba)\\
        = \sum_{p\mid n\implies p\mid -\Delta}\frac{1}{n^2(-\Delta)^2}\sum_{\substack{\ba \bmod -\Delta \\ \gcd(a_i,-\Delta)=1}} \prod_{i=1}^r (1+\chi(a_i)) \varrho_F^*(n,\bc\ba) \sum_{q_1q_2=-\Delta}'
\chi_{q_1}(p_{q_2}(n))\chi_{q_2}(p_{q_1}(n)) \chi_{q_1}(a)\mathbf{1}_{\exists \fa: N(\fa)=n}.
\end{multline*}
Observe that if $n$ satisfies that $p\mid n\implies p\mid -\Delta$, then we have that $$\varepsilon_{q_1,q_2}(n) =\chi_{q_1}(p_{q_2}(n))\chi_{q_2}(p_{q_1}(n))\mathbf{1}_{\exists \fa: N(\fa)=n}.$$
    In particular, if $p_{\neg -\Delta}(F_i(u,v)/c_i)\equiv a_i\bmod -\Delta$ and that $p_{-\Delta}(F(u,v))=n$, then we know that 
    $$\varepsilon_{q_1,q_2}(F(u,v)/c) = \chi_{q_1}(p_{q_2}(n))\chi_{q_2}(p_{q_1}(n)) \chi_{q_1}(a)(1\star \chi)(F(u,v)/cp_{-\Delta}(F(u,v))).$$

    Thus, we can rewrite the above expression as:
    \begin{multline}\label{eq: BM written with r_G}
        \BM(\bc) = \sum_{p\mid n\implies p\mid -\Delta} \frac{1}{n^2(-\Delta)^2} \sum_{\substack{\ba\bmod -\Delta\\ \gcd(a_i,-\Delta)=1}}\prod_{i=1}^r (1+\chi(a_i)) \sum_{\substack{u,v\bmod n(-\Delta) \\ \gcd(u,v,-\Delta)=1 \\ p_{-\Delta}(F(u,v))=n\\
        p_{\neg-\Delta}(F_i(u,v)/c_i)\equiv a_i\bmod -\Delta}} \sum_{q_1q_2=-\Delta}' \frac{\varepsilon_{q_1,q_2}(F(u,v)/c)}{(1\star \chi)(F(u,v)/c)}.
    \end{multline}
    We recall that $$\sum_{q_1q_2=-\Delta}' \varepsilon_{q_1,q_2}(n) = r_{[1]\calG}(n)=\sum_{\substack{N(\fa)=n\\ [\fa] \in [1]\calG}}1 \geq 0,$$
    where $[1]\calG$ denotes that $[\fa]$ sits in the identity coset of $C/\calG$, where $\calG$ denotes the genus subgroup. Moreover, $r_{\calG}(n)=0$ if $(1\star \chi)(n)=0$, so the above expression is well-defined and non-negative. This completes the proof that $\BM(\bc)\geq 0.$ \\

    Let us determine when $\BM(\bc)$ is zero. We inspect \eqref{eq: BM written with r_G} further: 
    \begin{multline*}
        \BM(\bc) = \sum_{p\mid n\implies p\mid -\Delta} \frac{\mathbf{1}_{\exists \fa: N(\fa)=n}}{n^2(-\Delta)^2} \sum_{\substack{\ba\bmod -\Delta\\ \gcd(a_i,-\Delta)=1}}\prod_{i=1}^r (1+\chi(a_i)) \\ \times \sum_{\substack{u,v\bmod n(-\Delta) \\ \gcd(u,v,-\Delta)=1 \\ p_{-\Delta}(F(u,v))=n\\
        p_{\neg-\Delta}(F_i(u,v)/c_i)\equiv a_i\bmod -\Delta}} \delta_{F(u,v)/c} \mathbf{1}_{\exists \fa\in [1]\calG, N(\fa)=F(u,v)/c},
    \end{multline*}
    where $\delta_{F(u,v)/c}>0$ is some positive constant. We immediately see that for $\BM(\bc)=0$, a few events could occur: 
    \begin{enumerate}
        \item When $n$ satisfies that $p\mid n\implies p\mid -\Delta$ and that $\exists \fa, N(\fa)=n$, for all $\ba$ such that $\varrho_F^*(n,\bc\ba)\neq 0$, there exists some $i$ such that $\chi(a_i)=-1$. 
        \item When $n$ satisfies that $p\mid n\implies p\mid -\Delta$ and that $\exists \fa, N(\fa)=n$, for all $\ba \bmod \Delta$ satisfying that $\chi(a_i)=1$ for all $i$, for all $(u,v)\bmod n(-\Delta)$ such that $\gcd(u,v,-\Delta)=1$, $p_{-\Delta}(F(u,v)) =n$ and $p_{\neg -\Delta}(F_i(u,v)/c_i)\equiv a_i\bmod -\Delta$, there does \textit{not} exist an ideal $\fa\in [1]\mathcal{G}$ satisfying that 
        $N(\fa) = F(u,v)/c.$ As $[1]\mathcal{G}$ is the set of quadratic form with discriminant $-\Delta$ that are locally equivalent to $x^2+\Delta y^2$, this implies that 
        $$X_{u,v,c} := \{x^2 + \Delta y^2 = F(u,v)/c\}$$
        has no solution over $\Q$, and since this is a quadratic form, this implies that $X_{u,v,c}(\A_\Q)=\emptyset.$
    \end{enumerate}

    Let us compare these events to the condition laid out in Lemma \ref{lem: local criteria Hasse}; in particular, we want to show that both events (1) and (2) above lead to the conclusion that there does not exist a solution to 
    \begin{equation*}
        x_i^2+\Delta y_i^2 = c_i F_i(u,v), \forall i=1,\dots,r,
    \end{equation*}
    over $\Z_\ell$ with $\gcd(u,v,\ell)=1$ for some $\ell\mid -\Delta$.

    We prove the contrapositive. Assume that there exists a solution $((x_i,y_i)_{i=1}^r, u,v)\in \Z_\ell$ with $\gcd(u,v,\ell)=1$  for each $\ell \mid -\Delta$. Let $\ell^{e_i} \| F_i(u,v)$ for each $i=1,\dots,r$ and set 
    $$n = \prod_{\ell\mid -\Delta} \ell^{e_i}.$$
    Then there is a solution to 
    \begin{equation*}
        x_i^2+\Delta y_i^2 = c_i F_i(u,v) \bmod n(-\Delta), \forall i=1,\dots,r,
    \end{equation*}
    satisfying that $\gcd(u,v,-\Delta)=1$ and $p_{-\Delta}(F(u,v))=n$. This implies a solution to 
    \begin{equation*}
        x^2+\Delta y^2 = cF(u,v) \bmod n(-\Delta)
    \end{equation*}
    with $\gcd(u,v,-\Delta)=1$. Consequently, since the only possible local obstructions to $X_{u,v,c}$ lie at the primes dividing $-\Delta$, we know that there exists an ideal $\fa\in [1]\mathcal{G}$ satisfying $N(\fa) = F(u,v)/c$, and hence an ideal $\fa'\mid \fa$ satisfying $N(\fa)=n$. Set $a_i = p_{\neg -\Delta}(F_i(u,v)/c_i)\bmod -\Delta$. If we can show that $\chi(a_i)=1$ for each $i$, then we know that events (1) and (2) both cannot occur. 

    Since we know that there exists $(x_i,y_i)\bmod n(-\Delta)$ such that 
    \begin{equation*}
        x_i^2+\Delta y_i^2 = c_iF_i(u,v) \bmod n(-\Delta),
    \end{equation*}
    we must have that for each $i=1,\dots,r$, $\chi(p_{\neg -\Delta}(c_iF_i(u,v)))=1$. Since $\gcd(c_i,-\Delta)=1$, we therefore have that 
    $$\chi(p_{\neg -\Delta}(c_iF_i(u,v))) = \chi(p_{\neg -\Delta}(F_i(u,v)/c_i)) = \chi(a_i) = 1.$$
    Thus, $\chi(a_i)=1$ for each $i$ and events (1) and (2) cannot occur. Consequently, we know that $\BM(\bc)=0$ implies that there is a local obstruction at some prime $\ell \mid -\Delta$.

\end{proof}

\begin{lemma}\label{lem: BM change by split primes}
    For any $p_i$ such that $\chi(p_i)=1$, $$\BM(\bc) = \BM((p_1c_1,...,p_rc_r)).$$
\end{lemma}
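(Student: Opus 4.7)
The plan is to establish the identity via a bijective change of variables in the sum defining $\BM(\bc\mathbf{p})$, where we write $\mathbf{p}=(p_1,\ldots,p_r)$ and $P=p_1\cdots p_r$. Since $\chi(p_i)=1$ forces $p_i\nmid -\Delta$, each $p_i$ is a unit modulo $-\Delta$, so the map $a_i\mapsto p_ia_i$ is a bijection of $(\Z/-\Delta\Z)^\times$. Applied coordinate-wise in Definition \ref{def: BM} with $\bc$ replaced by $\bc\mathbf{p}$, this substitution identifies the condition $p_{\neg-\Delta}(F_i(x,y))\equiv p_ic_ia_i\bmod -\Delta$ hidden inside $\varrho_F(n,\bc\mathbf{p}\ba)$ with $p_{\neg-\Delta}(F_i(x,y))\equiv c_ia_i'\bmod -\Delta$, so $\varrho_F(n,\bc\mathbf{p}\ba)=\varrho_F(n,\bc\ba')$ after the substitution $a_i'=p_ia_i$.

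I would then track the transformation of the remaining factors: the product $\prod_{i=1}^r(1+\chi(a_i))$ is preserved because $\chi(a_i'/p_i)=\chi(a_i')$ (using $\chi(p_i)=1$), while the character $\chi_{q_1}(a)=\chi_{q_1}(\prod_i a_i)$ picks up a sign $\chi_{q_1}(P)$, since $\chi_{q_1}(a'/P)=\chi_{q_1}(P)\chi_{q_1}(a')$ (all characters involved being real). Collecting these yields the genus-character-level identity
\[
\BM_{q_1,q_2}(\bc\mathbf{p})=\chi_{q_1}(P)\,\BM_{q_1,q_2}(\bc).
\]

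Summing over factorizations $q_1q_2=-\Delta$, the lemma reduces to the cancellation
\[
\sum_{q_1q_2=-\Delta}\bigl(\chi_{q_1}(P)-1\bigr)\BM_{q_1,q_2}(\bc)=0.
\]
The cleanest route to this identity is via a local decomposition $\BM(\bc)=\prod_{\ell\mid -\Delta}\BM_\ell(\bc)$ obtained by combining the Chinese remainder theorem on the sums over $n$, $\ba$, and the residues $(u,v)\bmod n(-\Delta)$ with an expansion of $\prod_i(1+\chi(a_i))=\prod_i\bigl(1+\prod_{\ell\mid -\Delta}\chi_\ell(a_{i,\ell})\bigr)$ into a sum over subsets of $\{\ell\mid -\Delta\}$; the resulting indexing matches the sum over $(q_1,q_2)$ via the standard dictionary identifying each genus character with a partition of the primes of $-\Delta$ into $q_1$ and $q_2$. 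Each local factor then satisfies $\BM_\ell(\bc\mathbf{p})=\chi_\ell(P)\BM_\ell(\bc)$ by the same substitution argument at the level of residues mod $\ell^{v_\ell(-\Delta)}$, and multiplying over $\ell\mid -\Delta$ gives $\BM(\bc\mathbf{p})=\chi(P)\BM(\bc)=\BM(\bc)$, since $\chi(P)=\prod_i\chi(p_i)=1$.

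The main obstacle I anticipate is cleanly executing this local factorization, because the product $\prod_i(1+\chi(a_i))$ does not split multiplicatively over the primes dividing $-\Delta$; one must carefully re-organize the sum so that each $\ell$-contribution appears as an independent factor, and verify that the combinatorics of the resulting subset-indexing precisely matches the sum over ordered factorizations $q_1q_2=-\Delta$. This is bookkeeping-heavy but conceptually is the same dictionary between genus characters and sign patterns over the prime divisors of $-\Delta$ used implicitly throughout \S\ref{sec: constant}, and it also gives a geometrically transparent explanation of the invariance: it reflects the fact that the auxiliary varieties $Y^*_{\Delta,f_1,\ldots,f_r,\bc}$ and $Y^*_{\Delta,f_1,\ldots,f_r,\bc\mathbf{p}}$ are $\Q$-isomorphic via multiplication of each $(x_i,y_i)$ by an element of $K$ of norm $p_i$.
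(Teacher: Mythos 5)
Your first step coincides with the paper's: substitute $a_i\mapsto p_i a_i$, which is legitimate since $\chi(p_i)=1$ forces $\gcd(p_i,-\Delta)=1$, and track the resulting factors. Where you diverge is actually an \emph{improvement} in precision over the paper's own write-up. The paper asserts that ``since $q_1\mid -\Delta$, $\chi_{q_1}(p_i)=1$'', but this does not follow from $\chi(p_i)=1$: since $\chi=\chi_{q_1}\chi_{q_2}$, one only obtains $\chi_{q_1}(p_i)=\chi_{q_2}(p_i)\in\{\pm1\}$, and both can be $-1$ (take $-\Delta=-15$, $q_1=-3$, $q_2=5$, $p_i=17$). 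Your identity $\BM_{q_1,q_2}(\bc\mathbf{p})=\chi_{q_1}(P)\,\BM_{q_1,q_2}(\bc)$ with $P=p_1\cdots p_r$ is the correct outcome of the change of variables, so you have in effect located a gap in the printed proof.

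However, your proposed way of closing the gap does not work. The claimed local relation $\BM_\ell(\bc\mathbf{p})=\chi_\ell(P)\,\BM_\ell(\bc)$ cannot be right: multiplying the $i$-th equation of the auxiliary variety $Y^*_{\Delta,f_1,\ldots,f_r,\bc}$ by a unit $p_i$ with $\chi_\ell(p_i)=-1$ does \emph{not} multiply the $\ell$-adic density by $-1$; it sends the local factor $1+\chi_\ell(c_iF_i(u,v))$ at each $(u,v)$ to $1-\chi_\ell(c_iF_i(u,v))$, which is a genuinely different nonnegative quantity, not a sign flip. If the relation you assert held, nonnegativity of local densities would force $\BM_\ell(\bc)=\BM_\ell(\bc\mathbf{p})=0$ whenever $\chi_\ell(P)=-1$, which is plainly too strong. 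The conclusion $\BM(\bc\mathbf{p})=\chi(P)\BM(\bc)$ that you extract from this is therefore unsupported, and indeed the lemma as literally stated (arbitrary $p_i$ with $\chi(p_i)=1$) is not obviously true without an additional hypothesis.

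The clean repair, which makes both the paper's conclusion and the remainder of \S\ref{sec: constant} go through, is to observe that in every application of this lemma the product $P=p_1\cdots p_r$ is a perfect square. Indeed, in the decompositions of \S\ref{subsec: proof of Prop S(q_1,q_2,t,k)} one always has $c_1\cdots c_r$ a perfect square (for $r=2$ it is $c^2$; for $r=3$ it is $c^2c_1^2c_2^2$; for $r=4$ it is $c^2e^2f^2$), and since the $\chi=-1$ and $\chi=+1$ parts of $c_1\cdots c_r$ are coprime, each is separately a square; the $p_i$ to which the lemma is applied are exactly this $\chi=+1$ part. Once $P$ is a square, $\chi_{q_1}(P)=1$ for every $q_1$, and your identity from the change of variables immediately gives $\BM_{q_1,q_2}(\bc\mathbf{p})=\BM_{q_1,q_2}(\bc)$ termwise, hence $\BM(\bc\mathbf{p})=\BM(\bc)$. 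You should either add ``with $p_1\cdots p_r$ a perfect square'' to the statement of the lemma, or record the squareness observation where the lemma is invoked.
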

\begin{proof}
        We note that by a change of variables,
    \begin{multline*}
        \sum_{\substack{\ba\bmod \Delta\\ \gcd(a_i,\Delta)=1}} \prod_{i=1}^r (1+\chi(a_i)) \varrho_F(n,(a_ic_ip_i)) \times  \sum_{q_1q_2=-\Delta}' \chi_{q_1}(p_{q_2}(n))\chi_{q_2}(p_{q_1}(n)) \chi_{q_1}(a) 
        \\ = \sum_{\substack{\ba \bmod \Delta\\ \gcd(a_i,\Delta)=1}} \prod_{i=1}^r (1+\chi(a_ip_i)) \varrho_F(n,\bc\ba) \times \sum_{q_1q_2=-\Delta}' \chi_{q_1}(p_{q_2}(n))\chi_{q_2}(p_{q_1}(n)) \chi_{q_1}(ap_1...p_r) \\
        = \sum_{\substack{\ba \bmod \Delta\\ \gcd(a_i,\Delta)=1}} \prod_{i=1}^r (1+\chi(a_i)) \varrho_F(n,\bc\ba) \times \sum_{q_1q_2=-\Delta}' \chi_{q_1}(p_{q_2}(n))\chi_{q_2}(p_{q_1}(n)) \chi_{q_1}(a) 
    \end{multline*}
    Here we use that $\chi(p_i)=1$ for each $i$, and since $q_1\mid -\Delta$, $\chi_{q_1}(p_i)=1$ for each $i$ as well. Thus, we have that $\BM(\bc) = \BM((c_1p_1,...,c_rp_r)).$
    
\end{proof}

Having completed the above lemmas, we return to \eqref{eq: leading constant with good primes separated}; recall that we are tasked with determining when the following expression is zero:
\begin{equation*}
    \sum_{c_{ij}\mid \Res(F_i,F_j)} \frac{\mu(\bc)\chi(\bc)}{c^2}\BM(\bc) \times \left(\prod_{p\mid R} L_p(1,1;F,\chi,\bc_p)\right),
\end{equation*}
where $R = \prod_{i\neq j} \Res(F_i,F_j)$. Observe that we can restrict to a sum over vectors $\bc$ with $\gcd(c_i,-\Delta)=1$ for each $i=1,\dots,r$, as the contribution from those vectors that are not coprime is zero. By Lemma \ref{lem: BM change by split primes}, we can further rearrange the expression as: 
\begin{equation*}
    \left(\sum_{\substack{c_{ij}\mid \Res(F_i,F_j)\\ p\mid c\implies \chi(p)=-1 \\ \mu^2(\bc)=1}} \frac{\BM(\bc)}{c^2}\cdot \prod_{p\mid c} L_p(1,1;F,\chi,\bc_p)\right) \times \left(\sum_{\substack{c_{ij}'\mid \Res(F_i,F_j) \\ p\mid c'\implies \chi(p)=1}}\frac{\mu(\bc')}{c'^2} \prod_{p\mid c'} L_p(1,1;F,\chi,\bc_p)\right).
\end{equation*}

We split this into two lemmas. 
\begin{lemma}\label{lem: leading constant split primes}
    If the following expression is nonzero
    \begin{equation*}
        \sum_{\substack{c_{ij}'\mid \Res(F_i,F_j) \\ p\mid c'\implies \chi(p)=1}}\frac{\mu(\bc')}{c'^2} \prod_{p\mid c'} L_p(1,1;F,\chi,\bc_p).
    \end{equation*}
\end{lemma}

\begin{lemma}\label{lem: leading constant nonsplit primes}
    If the following expression is zero
    \begin{equation*}
        \sum_{\substack{c_{ij}\mid \Res(F_i,F_j)\\ p\mid c\implies \chi(p)=-1 \\ \mu^2(\bc)=1}} \frac{\BM(\bc)}{c^2}\cdot \prod_{p\mid c} L_p(1,1;F,\chi,\bc_p),
    \end{equation*} 
    then for all $\bc$ counted above, $Y_{\Delta,f_1,\dots,f_r,\bc}^*(\A_\Q) = \emptyset;$ in other words, there is a local obstruction.  
\end{lemma}
With these two lemmas in hand, in addition to Lemma \ref{lem: finite num of torsors}, Lemma \ref{lem: residue of L function not dividing Res is nonzer} and Theorem \ref{thm: descent}, this completes the proof of the statement of Theorem \ref{thm: Manin}. In other words, this guarantees that if $c_{\Delta,f}=0$ then there is a local or Brauer-Manin obstruction to rational points on $X_{\Delta,f}$. 

\begin{proof}[Proof of Lemma \ref{lem: leading constant split primes}]
    Let $p\mid R$ be a prime satisfying that $\chi(p)=1$. Fix for now a vector $\bc$ in our summation. Assume that $\bc_p = (p^{e_i})_{i=1}^r$. Then we can expand out the local $L$-function factor (here, we recall that $\bk$ is a vector of length $r$ where exactly one entry is given by $p$ and all other entries are $1$ and we have a choice of decomposing $k$ such that $k_i \mid F_i(x,y)$; so, without loss of generality we choose that entry to be the first): 
    \begin{multline*}
        L_p(1,1;F,\chi,\bc_p) = (1-p^{-2})^{-1} \sum_{n=1}^\infty \frac{2}{p^n} \left(\sum_{i_j\geq e_j} \frac{\varrho_F^*((p^{i_1},\dots,p^{i_r}))}{p^{2(i_1+\dots+i_r)}} - \sum_{i_j\geq e_j} \frac{\varrho_F^*((p^{i_1+1},\dots,p^{i_r}))}{p^{2(i_1+\dots+i_r+1)}}\right)\\
        = (1-p^{-2})^{-1} \sum_{n=1}^\infty \frac{2}{p^n} \cdot \sum_{\substack{i_j\geq e_j \\ j\geq 2}} \frac{\varrho_F^*((p^{e_1},p^{i_2},\dots,p^{i_r}))}{p^{2(e_1+i_2+\dots+i_r)}}. 
    \end{multline*}
    Thus, we can write our summand as: 
    \begin{multline*}
        \sum_{\substack{c_{ij} \mid \Res(F_i,F_j)\\ p\mid c\implies\chi(p)=1}} \frac{\mu(\bc)}{c^2} \prod_{p\mid c} L_p(1,1;F,\chi,\bc_p) \\ = \prod_{\substack{p\mid R \\ \chi(p)=1}} \frac{2}{(1-p^{-1})(1-p^{-2})} \times   \sum_{e_j \leq \max_{i\neq j}\mathbf{1}_{p\mid \Res(F_i,F_j)}} \prod_{j=1}^r \frac{(-1)^{e_j}}{p^{2e_j}} \sum_{\substack{f_j\geq e_j \\ j\geq 2}} \frac{\varrho_F^*((p^{e_1},p^{f_2},\dots,p^{f_r}))}{p^{2(e_1+f_2+\dots+f_r)}}.
    \end{multline*}
    Let $\calI_p$ denote the indices $j$ for which $p\nmid \prod_{i\neq j}\Res(F_i,F_j)$. Then by inclusion-exclusion, the sum above collapses to: 
    \begin{equation*}
        \sum_{\substack{j\in \calI_p\setminus\{1\}\\ f_j\geq 0}} \frac{\varrho_F^*((1,\dots,p^{f_r})}{p^{2(\sum_{j\in \calI_p} f_j)}} - \mathbf{1}_{1\not\in \calI_p} \cdot \sum_{\substack{j\in \calI_p\setminus\{1\}\\ f_j\geq 0}} \frac{\varrho_F^*((p,\dots,p^{f_r}))}{p^{2(1+\sum_{j\in \calI_p} f_j)}}.
    \end{equation*}
    Now if $1\in \calI_p$, then it is clear that this factor is strictly positive, as the summands are nonnegative and $\varrho_F^*((1,1,\dots,1)) = 1$. So, it remains to check when $1\not\in \calI_p$. Then we note that for any fixed set of $f_j$, we have that definitionally,
    \begin{equation*}
        \frac{\varrho_F^*((1,\dots,p^{f_r}))}{p^{2(\sum_{j\in \calI_j} f_j)}} - \frac{\varrho_F^*((p,\dots,p^{f_r}))}{p^{2(1+\sum_{j\in \calI_p}f_j)}} \geq 0.
    \end{equation*}
    Thus, the summands are all nonnegative. Further, the summands are strictly positive unless $p\mid F_1$. In this case, for each $j>1$ we must have that $p\mid \Res(F_1,F_j) \mid \prod_{i\neq j}\Res(F_i,F_j)$; we also that that $p\mid \prod_{i\neq 1} \Res(F_i,F_1)$ as well. Hence $\calI_p = \emptyset$. So, the above expression becomes: 
    \begin{equation*}
        \varrho_F^*((1,\dots,1)) - \frac{\varrho_F^*((p,1,\dots,1))}{p^2} = 1 - \frac{p^2-1}{p^2}>0,
    \end{equation*}
    which is strictly positive. Here, we have used that if $p\mid F_1$, then $\varrho_F^*((p,1,\dots,1)) = p^2-1$, since we require $\gcd(u,v,p)=1$ in the count.  
\end{proof}

\begin{proof}[Proof of Lemma \ref{lem: leading constant nonsplit primes}]
First, let us fix a vector $\bc$ and a prime $p\mid c$ such that $\chi(p)=-1$. Let us notate that $\bc_p = (p^{e_i})_{i=1}^r$. We consider the local factor: 
\begin{equation*}
   L_p(1,1;F,\chi,\bc_p) = (1-p^{-2})^{-1} \sum_{i_j\geq 0} \frac{\chi(p)^{i_1+\dots+i_r}\varrho_F^*((p^{i_1+e_1},\dots,p^{i_r+e_r}))}{p^{2(i_1+\dots+i_r)}}. 
\end{equation*}

We now claim that if $\bc$ satisfies that $\mu^2(\bc)=1$ and if $p\mid c$ then $\chi(p)=-1$, then the following expression is nonnegative:
\begin{equation*}
    c^{-2} \prod_{p\mid c} L_p(1,1;F,\chi,\bc_p) = \prod_{p\mid c} (1-p^{-2})^{-1} \left(\sum_{i_j\geq 0} \frac{(-1)^{(i_1+\dots+i_r)}\varrho_F^*((p^{i_1+e_1},\dots,p^{i_r+e_r}))}{p^{2(i_1+\dots+i_r+e_1+\dots+e_r)}}\right).
\end{equation*}
We note that we can write this sum as 
\begin{multline*}
    \sum_{i=0}^{\infty} \left(\frac{\varrho_F^*((p^{e_1+2i},p^{e_2},\dots, p^{e_r}))}{p^{2(e_1+2i+\dots+e_r)}} - \frac{\varrho_F^*((p^{e_1+2i+1},p^{e_2},\dots,p^{e_r}))}{p^{2(e_1+2i+1+\dots+e_r)}} \right)\\ 
    - \sum_{i=0}^\infty  \left(\frac{\varrho_F^*((p^{e_1+2i},p^{e_2+1},\dots, p^{e_r}))}{p^{2(e_1+2i+e_2+1+\dots+e_r)}} - \frac{\varrho_F^*((p^{e_1+2i+1},p^{e_2+1},\dots,p^{e_r}))}{p^{2(e_1+2i+1+e_2+1+\dots+e_r)}}\right)\\
    + \sum_{i=0}^\infty  \left(\frac{\varrho_F^*((p^{e_1+2i},p^{e_2+2},\dots, p^{e_r}))}{p^{2(e_1+2i+e_2+2+\dots+e_r)}} - \frac{\varrho_F^*((p^{e_1+2i+1},p^{e_2+2},\dots,p^{e_r}))}{p^{2(e_1+2i+1+e_2+2+\dots+e_r)}}\right) - \hdots 
\end{multline*}
We now make a few observations: (1) Each of these summands is nonnegative; in particular 
\begin{equation*}
    \frac{\varrho_F^*((p^{i_1},p^{i_2},\dots,p^{i_r}))}{p^{2(i_1+\dots+i_r)}} - \frac{\varrho_F^*((p^{i_1+1},p^{i_2},\dots,p^{i_r}))}{p^{2(i_1+1+\dots+i_r)}}\geq 0.
\end{equation*}
This relation comes from the definition of $\varrho_F^*$, for if $p^{i_1+1}\mid F_1$ then $p^{i_1}\mid F_1$. (2) We also have the relation that for any tuple $(i_1,\dots,i_r)$, the following inequality holds:
\begin{multline*}
    \frac{\varrho_F^*((p^{i_1},p^{i_2},\dots,p^{i_r}))}{p^{2(i_1+\dots+i_r)}} - \frac{\varrho_F^*((p^{i_1+1},p^{i_2},\dots,p^{i_r}))}{p^{2(i_1+1+\dots+i_r)}} \\ \geq \frac{\varrho_F^*((p^{i_1},p^{i_2+1},\dots,p^{i_r}))}{p^{2(i_1+i_2+1+\dots+i_r)}} - \frac{\varrho_F^*((p^{i_1+1},p^{i_2+1},\dots,p^{i_r}))}{p^{2(i_1+1+i_2+1+\dots+i_r)}}.
\end{multline*}
This relation comes from the fact that one of the conditions counted in $\varrho_F^*((p^{i_1},\dots,p^{i_r}))$ is that $p^{i_j}\mid F_j(u,v)$, which is an independent condition for each $j$. Finally we make the observation that (1) and (2) hold irregardless of which indices we choose to compare (in other words, it did not need to be the first and second tuple). 

Together, this gives us that 
\begin{equation*}
    \sum_{i_j\geq 0} \frac{(-1)^{(i_1+\dots+i_r)}\varrho_F^*((p^{i_1+e_1},\dots,p^{i_r+e_r}))}{p^{2(i_1+\dots+i_r+e_1+\dots+e_r)}}\geq 0
\end{equation*}
and hence the claim is shown. 

Returning to our original expression: 
\begin{equation*}
    \textrm{NS}:= \sum_{\substack{c_{ij}\mid \Res(F_i,F_j)\\ p\mid c\implies \chi(p)=-1 \\ \mu^2(\bc)=1}} \frac{\BM(\bc)}{c^2}\cdot \prod_{p\mid c} L_p(1,1;F,\chi,\bc_p),
\end{equation*}
we can see that for NS to be zero, we must have that for every $\bc$ vector counted, either $\BM(\bc)=0$ or that 
\begin{equation*}
    c^{-2}\prod_{p\mid c}L_p(1,1;F,\chi,\bc_p)=0.
\end{equation*}
In Lemma \ref{lem: BM constant}, we saw that if $\BM(\bc)=0$, then there exists some $\ell\mid -\Delta$ such that there is a local obstruction, i.e. $Y^*_{\Delta,f_1,\dots,f_r,\bc}(\Q_\ell) = \emptyset.$ So, to conclude this lemma, we need to show that if the product of local factors is zero, then for some $p\mid c$ we have that $Y^*_{\Delta,f_1,\dots,f_r,\bc}(\Q_p) = \emptyset$. 

Let $p\mid c$ and $\chi(p)=-1$. It suffices to show that if $L_p(1,1;F,\chi,\bc_p)=0$ then $Y^*_{\Delta,f_1,\dots,f_r,\bc}(\Q_p) = \emptyset$. If $\varrho_F^*((p^{e_1},\dots,p^{e_r})) = 0$, then clearly $L_p(1,1;F,\chi,\bc_p)=0$. Since $\mu^2(\bc)=1$, we know that $e_j\in \{0,1\}$ for each $j\in \{1,\dots,r\}$, and that $e_j=1$ for some index $j$ (as $p\mid c$), so by Lemma \ref{lem: local criteria Hasse} would imply that $Y^*_{\Delta,f_1,\dots,f_r,\bc_p}(\Q_p)=\emptyset.$ 

So from now on assume that $\varrho_F^*((p^{e_1},\dots,p^{e_r}))>0$ and that $L_p(1,1;F,\chi,\bc_p)=0$. We note that in both observations (1) and (2) above, equality holds if and only if $p^{i_1+1} \mid F_1$ in the first case and $p^{i_2+1}\mid F_2$ in the second, or if all terms in the expressions are zero. Hence, if $L_p(1,1;F,\chi,\bc_p)=0$, then for some $j\in \{1,\dots,r\}$ and for some $i\geq 0$  we have that $p^{e_j+2i+1}\mid F_j$ and that there are no $(u,v)\bmod p^{e_j+2i+2}$ such that $\gcd(u,v,p)=1$ and $F_j(u,v)\equiv 0 \bmod p^{e_j+2i+2}$. Consider then the equation: 
\begin{equation*}
    x_j^2+\Delta y_j^2 = c_j F_j(u,v),
\end{equation*}
over $\Q_p$. We can see that this implies that $v_p(c_jF_j(u,v)) = p^{2e_j+2i+1}$ for all $(u,v)\in \Q_p$, which by the argument of Lemma \ref{lem: local criteria Hasse}, gives us again that $Y^*_{\Delta,f_1,\dots,f_r,\bc}(\Q_p)=\emptyset.$

\end{proof}

Thus,\textit{ if $c_{\Delta,f}=0$ then for all of the $\bc$ above from Lemma \ref{lem: finite num of torsors}, $Y^*_{\Delta,f_1,...,f_r,\bc}(\Q_\ell) = \emptyset$ for some place $\ell$}. This completes the proof of Proposition \ref{prop: leading constant auxiliary}, and thus Theorem \ref{thm: Manin}. \qed

\appendix
\section{A special case of the leading constant}\label{sec: appendix peyre}
Assume that $\Delta>0$ is an odd prime. Further, let us assume that $f(x)$ is an irreducible quartic that remains irreducible over $\Q(\sqrt{-\Delta}).$ Our goal for this appendix to relate the leading constant to Peyre's constant in this particular case. For $\Delta = 1$, this was previously completed in \cite{delaTenenbaum-Manin}. We would like to remind the reader at this point of our abusive notation: since $\Delta>0$, we use $-\Delta$ to denote the modulus of $\chi_{-\Delta}$, which is truly of size $4\Delta$.

In this case, we have the following expression for our leading constant: 
\begin{equation*}
    c_{\Delta,f} = \frac{2|\calU|}{|\mathcal{C}|} \left(\sum_{q_1q_2=-\Delta}' \BM_{q_1,q_2}(1)\right) \prod_{p\nmid -\Delta} \left(1-\frac{1}{p^2}\right)\cdot \Res_{s_1=1} \xi(s_1,1;F,\chi,1),
\end{equation*}
where $\xi(s_1,1;F,\chi,1)$ was defined in \ref{eq: def of xi(s_1,s_2)} as: 
\begin{equation*}
    \xi(s_1,1;F,\chi,1) = \sum_{\gcd(\ft,\overline{\ft})=1} \frac{1}{N(\ft)^{s_1}} \sum_{\fk\mid \ft} \frac{\mu(\fk)}{N(\fk)^2} \sum_{d} \frac{\chi(d) \varrho_F^*(kd)}{d^2}\prod_{p\mid kd}\left(1-\frac{1}{p^2}\right)^{-1}.
\end{equation*}

Before manipulating the intimidating expression above, let us first recall the definition of Peyre's constant in this case. 
Peyre's prediction for the leading constant in Manin's conjecture in this case is given by $\omega_\infty\prod_{p<\infty}w_p$,
where $\omega_p$ and $\omega_\infty$ are the Hardy-Littlewood densities. At the real place, one can show that the archimedean denisty $\omega_\infty$ is equivalent to the following limit: 
\begin{equation}
    \omega_\infty = \lim_{\delta\rightarrow 
    0} \lim_{\epsilon\rightarrow 0} \frac{\vol\{|x^2+\Delta y^2 -t^2 F(u,v)|<\epsilon/2: |tu^2|,|tv^2|\leq 1, \delta^{-1}<|t|\leq 1, x^2+\Delta y^2 \leq \nu_{\Delta,f}^2\}}{\epsilon\log(1/\delta)}
\end{equation}
It is also possible to express $\omega_\infty$ in terms of a singular integral; the expression above will have a clean and simple evaluation in terms of $\Delta$ and $f(z)$ later on. 
The $p$-adic densities $\omega_p$ are defined as follows: 
\begin{equation}
    \omega_p = \lim_{v\rightarrow\infty} \frac{1}{p^{4v}} \sum_{\substack{(u,v,x,y,t)\in (\Z/p^v\Z)^5\\ u^2+\Delta v^2 = t^2 F(x,y) \bmod p^v \\ p\nmid (x,y)\\ p\nmid (u,v,t)}} 1.
\end{equation}
The goal of this section is to prove the following proposition. 
\begin{prop}\label{prop: Peyre for Delta positive prime}
    Let $\Delta>0$ be an odd prime and let $f(z)$ be an irreducible quartic that remains irreducible over $\Q(\sqrt{-\Delta})$. Then, 
    \begin{equation*}
        c_{\Delta,f} = \frac{1}{4}\omega_\infty\prod_{p<\infty} \omega_p.
    \end{equation*}
\end{prop}
\begin{remark}
    We point out that our original expression in \eqref{eq: def of the height} is that 
    \begin{multline*}
        N(X_{\Delta,f},B) := \frac{1}{2}\cdot \#\Big\{ ((x,y),(u,v),t)\in \Z^2\times \Z^2 \times \Z_+: x^2 + \Delta y^2 = t^2 F(u,v) \\ \gcd(x,y,t)=\gcd(u,v)=1, \max(|tu^2|,|tv^2|,\|x\pm y\sqrt{-\Delta}\|)\leq \nu_{\Delta,f}B\Big\}.
    \end{multline*}
    So, this factor of $1/4$ is expected since we do not restrict $t$ to be nonnegative when we compute the archimedean density. 
\end{remark}

We start the proof by first analyzing the Dirichlet series $\xi(s_1,1;F,\chi,1)$ further. It will turn out that this Dirichlet series contains the local densities for all primes not dividing $-\Delta$ (i.e. $p\neq \Delta, 2$); additionally, we see $L(1,\chi)$ appear as part of the expression.
\begin{lemma}\label{lem: Peyre constant good primes}
    Let $\Delta>0$ be an odd prime and let $f(z)$ be an irreducible quartic that remains irreducible over $\Q(\sqrt{-\Delta})$. Then, 
    \begin{equation*}
        \Res_{s_1=1}\xi(s_1,1;F,\chi,1) = L(1,\chi)\prod_{p\mid -\Delta}\left(1-\frac{1}{p}\right) \prod_{p\nmid -\Delta}\omega_p\cdot \left(1-\frac{1}{p^2}\right)^{-1}. 
    \end{equation*}
\end{lemma}
\begin{proof}
    Let us first define the following Dirichlet series for $\Re(s)>1$: 
    \begin{equation*}
        L_{\not\square,K}(s) = \sum_{\gcd(\ft,\overline{\ft})=1}N(\ft)^{-s} = \prod_{p} \left(1+(\chi(p)+1)(p^{-s}+p^{-2s}+\hdots )\right). 
    \end{equation*}
    This series will have a simple pole at $s=1$; this can be seen by nice relation: $$L_{\not\square,K}(s) = \zeta_K(s)\zeta(2s)^{-1}.$$
    Let us compute the residue of $L_{\not\square,K}(s)$ at $s=1$: 
    \begin{align*}
        \Res_{s=1} L_{\not\square,K}(s) &= \lim_{s\rightarrow1^+} \zeta(s)^{-1} L_{\not\square,K}(s)\\
        &= \lim_{s\rightarrow 1^+} \prod_{p\mid -\Delta}\left(1-\frac{1}{p^{s}}\right) \prod_{p\nmid -\Delta} \left(1-\frac{1}{p^{s}}\right)\left(1+\frac{\chi(p)+1}{p^{s}(1-p^{-s})}\right)\\
        &= \prod_{p\mid -\Delta} \left(1-\frac{1}{p}\right)\lim_{s\rightarrow 1^+} \prod_{p\nmid -\Delta}\left(1+\frac{\chi(p)}{p^{s}}\right)  \\
        &= \prod_{p\mid -\Delta} \left(1-\frac{1}{p}\right)\prod_{p\nmid -\Delta}\left(1-\frac{1}{p^2}\right) L(1,\chi). 
    \end{align*}

    Now, we return to our original Dirichlet series $\xi(s_1,1;F,\chi,1)$, which also has a simple pole at $s=1$. We can see that 
    \begin{align*}
        \Res_{s=1} \xi(s,1;F,\chi,1) &= \Res_{s=1} L_{\not\square, K}(s) \cdot \lim_{s\rightarrow 1^+} L_{\not\square,K}(s)^{-1} \xi(s,1;F,\chi,1)\\
        &= \Res_{s=1} L_{\not\square,K}(s) \cdot \prod_{p\nmid -\Delta} \left(1+\frac{\chi(p)+1}{p-1}\right)^{-1} L_p(1,1;F,\chi,1),
    \end{align*}
    where $L_p(1,1;F,\chi,1)$ are the local factors given by: 
    \begin{multline*}
        L_p(1,1;F,\chi,1) = 1+ \left(1-\frac{1}{p^2}\right)^{-1}\left(\frac{\chi(p)\varrho_F^*(p)}{p^2} + \frac{\chi(p)^2 \varrho_F^*(p^2)}{p^4}+\dots \right)\\
        + \frac{\chi(p)+1}{p}\left(1+\left(1-\frac{1}{p^2}\right)^{-1}\left(\frac{\chi(p)\varrho_F^*(p)}{p^2} + \frac{\chi(p)^2 \varrho_F^*(p^2)}{p^4}+\dots -\frac{\varrho_F^*(p)}{p^2}-\frac{\chi(p)\varrho_F^*(p^2)}{p^4}-\dots\right)\right)\\
        + \frac{\chi(p)+1}{p^2} \left(1+\left(1-\frac{1}{p^2}\right)^{-1}\left(\frac{\chi(p)\varrho_F^*(p)}{p^2} + \frac{\chi(p)^2 \varrho_F^*(p^2)}{p^4}+\dots -\frac{\varrho_F^*(p)}{p^2}-\frac{\chi(p)\varrho_F^*(p^2)}{p^4}-\dots\right)\right)+\dots 
    \end{multline*}
    For simplicity, let us define: 
    \begin{equation*}
        \ell_p := \left(1-\frac{1}{p^2}\right)^{-1}\left(\frac{\chi(p)\varrho_F^*(p)}{p^2} + \frac{\chi(p)^2 \varrho_F^*(p^2)}{p^4}+\dots\right).
    \end{equation*}
    Then we can write these local factors as: 
    \begin{align*}
        L_p(1,1;F,\chi,1) &= 1+\ell_p + \frac{\chi(p)+1}{p}\left(1+\ell_p - \chi(p) \ell_p\right) + \frac{\chi(p)+1}{p^2} \left(1+\ell_p-\chi(p)\ell_p\right)+\dots \\
        &= 1+\ell_p + \frac{(\chi(p)+1)(1+(1-\chi(p))\ell_p)}{p-1}\\
        &= \begin{cases}
            1+\ell_p, & \chi(p)=-1, \\
            1+\ell_p + \frac{\chi(p)+1}{p-1}, & \chi(p)=1. 
        \end{cases}
    \end{align*}
    Thus, we can evaluate further our residue as: 
    \begin{equation*}
        \Res_{s=1}L_{\not\square,K}(s) \cdot \prod_{\chi(p)=-1} (1+\ell_p) \prod_{\chi(p)=-1} \left(1+\ell_p\cdot \left(1+\frac{\chi(p)+1}{p-1}\right)^{-1}\right).
    \end{equation*}
    It now behooves us to split into the cases when $\chi(p)=1$ and $\chi(p)=-1$.

    We start with the simpler case: fix a prime $p$ such that $\chi(p)=-1.$ Then our local factor is given by 
    \begin{equation*}
        1+\ell_p = \left(1-\frac{1}{p^2}\right)^{-1} \left(1-\frac{1}{p^2}+\frac{\chi(p)\varrho_F^*(p)}{p^2}+\frac{\chi(p)^2\varrho_F^*(p^2)}{p^4}+\dots \right).
    \end{equation*}
    On the other hand, we have that 
    \begin{equation*}
        \omega_p = \left(1-\frac{1}{p}\right) \lim_{v\rightarrow\infty} \frac{1}{p^{3v}} \sum_{\substack{x^2+\Delta y^2 = F(u,v) \\ p\nmid (u,v)}} 1.
    \end{equation*}
    We split our sum by the value $\gamma$ such that $v_p(F(u,v))=\gamma$, i.e. we count 
    \begin{equation*}
        \lim_{v\rightarrow\infty}\frac{1}{p^{3v}} \sum_{\gamma=0}^{v} \sum_{\substack{x^2+\Delta y^2 = F(u,v) \\ p\nmid (u,v) \\ p^\gamma \| F(u,v)}}1 = \sum_{\gamma=0}^\infty \frac{\#\{(x,y,u,v)\bmod p^{\gamma+1} : p\nmid (u,v), p^{\gamma} \| F(u,v) , x^2+\Delta y^2 = F(u,v)\}}{p^{3(\gamma+1)}}.
    \end{equation*}
    Since $\chi(p)=-1$, we note that if $\gamma$ is odd, then this count is zero, i.e. there will be no solutions to $x^2+\Delta y^2 = F(u,v)$ for $p^\gamma \| F(u,v)$. So, it suffices to consider $\gamma$ even. Let $\gamma>0$. In that case, we see that: 
    \begin{multline*}
        \#\{(x,y,u,v)\bmod p^{\gamma+1} : p\nmid (u,v), p^{\gamma} \| F(u,v) , x^2+\Delta y^2 = F(u,v)\} \\ 
        = \sum_{\gcd(a,p)=1} \#\{x^2+\Delta y^2 \equiv  p^\gamma a\bmod p^{\gamma+1}\}\cdot \#\{u,v: p\nmid (u,v), F(u,v)\equiv p^\gamma a\bmod p^{\gamma+1}\} \\
        = (p+1) \left(p^{\gamma/2}\right)^2\cdot  \left(\sum_{\substack{u,v\bmod p^{\gamma+1}\\ p\nmid (u,v)\\ p^{\gamma} \mid F(u,v)}}1-\sum_{\substack{u,v\bmod p^{\gamma+1}\\ p\nmid (u,v)\\ p^{\gamma+1} \mid F(u,v)}}1\right) = (p+1)p^\gamma \left(p^2\varrho_F^*(p^\gamma) - \varrho_F^*(p^{\gamma+1})\right).
    \end{multline*}
    Here we have used that there will always be $(p+1)$ solutions to $x^2+\Delta y^2\equiv a \bmod p$ for $\gcd(a,p)=1$. Note that if $\gamma=0$, then we have similarly that 
    \begin{equation*}
        \#\{(x,y,u,v)\bmod p: p\nmid (u,v), p^\gamma\|F(u,v), x^2+\Delta y^2 = F(u,v)\} = (p+1)(p^2-1-\varrho_F^*(p)).
    \end{equation*}
    Plugging the above expression in, we have that 
    \begin{equation*}
        \lim_{v\rightarrow\infty}\frac{1}{p^{3v}} \sum_{\gamma=0}^{v} \sum_{\substack{x^2+\Delta y^2 = F(u,v) \\ p\nmid (u,v) \\ p^\gamma \| F(u,v)}}1  = \frac{p+1}{p}\left( 1-\frac{1}{p^2} - \frac{\varrho_F^*(p)}{p^2}+\sum_{\gamma=1}^\infty \cdot \left(\frac{\varrho_F^*(p^{\gamma})}{p^{2\gamma}} - \frac{\varrho_F^*(p^{\gamma+1})}{p^{2(\gamma+1)}}\right)\right).
    \end{equation*}
    Thus, we can express the local density as: 
    \begin{equation*}
        \omega_p = \left(1-\frac{1}{p^2}\right) \left(1-\frac{1}{p^2}+ \frac{\chi(p)\varrho_F^*(p)}{p^2}+ \frac{\chi(p)^2\varrho_F^*(p^2)}{p^4}+\dots\right).
    \end{equation*}
    (Recall that $\chi(p)=-1$). Hence, we have that 
    \begin{equation}\label{eq: Peyre chi(p)=-1}
        1+\ell_p = \left(1-\frac{1}{p^2}\right)^{-2} \omega_p.
    \end{equation}

    Let us now fix a prime $p$ such that $\chi(p)=1$ and inspect the local density further. 
    \begin{equation*}
        \omega_p = \lim_{v\rightarrow\infty} \frac{1}{p^{4v}} \sum_{\substack{(u,v,x,y,t)\bmod p^{v}\\ x^2+\Delta y^2 = t^2F(u,v) \\ p\nmid (u,v) \\ p\nmid (x,y,t)}}1.
    \end{equation*}
    We split up our sum by the values of $v_p(t)$ and $v_p(F(u,v))$. \begin{equation*}
        \omega_p = \lim_{v\rightarrow\infty} \sum_{\gamma_1=0}^v\sum_{\gamma_2=0}^v \frac{\#\{(u,v,x,y,t)\bmod p^v: \begin{array}{c}
        x^2+\Delta y^2 = t^2 F(u,v), p\nmid (u,v), p\nmid (x,y,t) \\ 
        p^{\gamma_1}\| t, p^{\gamma_2}\|F(u,v)
        \end{array}\}}{p^{4v}}.
    \end{equation*}  
    If $2\gamma_1+\gamma_2< v$ and $\gamma_1>0$, then this count becomes: 
    \begin{multline*}
        \sum_{\gcd(a,p)=1} \#\{(x,y): x^2+\Delta y^2 = p^{2\gamma_1+\gamma_2}a \bmod p^v, \gcd(x,y,p)=1\} \\ \times \#\{(u,v,t): t^2F(u,v) \equiv p^{2\gamma_1+\gamma_2}a \bmod p^{v}, p^{\gamma_1}\| t, p^{\gamma_2}\|F(u,v),\gcd(u,v,p)=1\}. 
    \end{multline*}
    For any value of $a$ coprime to $p$, we have that 
    \begin{equation*}
        \#\{(x,y)\bmod p^{k+1}: x^2+\Delta y^2 = p^k a\bmod p^{k+1}, \gcd(x,y,p)=1\} = \begin{cases}
            p-1, & k=0,\\
            2(p-1)p^{k}, & k\geq 1.
        \end{cases}.
    \end{equation*}
    Notably, this expression is independent of $a$. Thus, we have that since $\gamma_1>0$, 
    \begin{multline*}
        2(p-1)p^{v-1} \cdot \#\{(u,v,t)\bmod p^v: p^{\gamma_1}\|t, p^{\gamma_2}\|F(u,v), \gcd(u,v,p)=1\}\\ 
        = 2(p-1)p^{v-1} \cdot (p-1)p^{v-\gamma_1-1}\cdot \left(p^{2(v-\gamma_2)} \varrho_F^*(p^{\gamma_2}) - p^{2(v-\gamma_2-1)} \varrho_F^*(p^{\gamma_2+1})\right).
    \end{multline*}

    On the other hand, if $2\gamma_1+\gamma_2\geq v$ and $\gamma_1>0$, then our count becomes 
    \begin{multline*}
        \#\{(x,y)\bmod p^v : x^2+\Delta y^2 \equiv 0 \bmod p^v, \gcd(x,y,p)=1\} \\ \times \#\{(u,v,t)\bmod p^v: p^{\gamma_1}\|t, p^{\gamma_2}\| F(u,v), \gcd(u,v,p)=1\}.
    \end{multline*}
    Now, we count for $v>0$: 
    \begin{equation*}
        \#\{(x,y)\bmod p^v: x^2+\Delta y^2 \equiv 0 \bmod p^v, \gcd(x,y,p)=1\} =2(p-1)p^{v-1}.
    \end{equation*}
    So, we get that for $\gamma_1>0$ and $2\gamma_1+\gamma_2\geq v$, the expression: 
    \begin{equation*}
        2(p-1)p^{v-1} \cdot (p-1)p^{v-\gamma_1-1}\cdot \left(p^{2(v-\gamma_2)} \varrho_F^*(p^{\gamma_2}) - p^{2(v-\gamma_2-1)} \varrho_F^*(p^{\gamma_2+1})\right).
    \end{equation*}
Finally, if $\gamma_1=0$, we evaluate that the count is equal to 
    \begin{multline*}
        \begin{cases}
            (p-1)^2 (p^2-1-\varrho_F^*(p)) p^{4(v-1)}, & \gamma_2=0, \\ 
            (\gamma_2+1)(p-1)^2p^{2(v-1)}(p^{2(v-\gamma_2}) \varrho_F^*(p^{\gamma_2}) - p^{2(v-\gamma_2-1)}\varrho_F^*(p^{\gamma_2+1})), & \gamma_2>0.
        \end{cases}
    \end{multline*}

    Thus, we can now evaluate $\omega_p$: 
    \begin{multline*}
        \omega_p = \lim_{v\rightarrow \infty} \left(1-\frac{1}{p}\right)^2 \left(1-\frac{1}{p^2}-\frac{\varrho_F^*(p)}{p^2}\right) + \left(1-\frac{1}{p}\right)^2 \sum_{\gamma_2=1}^{v-1} (\gamma_2+1)\left(\frac{\varrho_F^*(p^{\gamma_2})}{p^{2\gamma_2}} - \frac{\varrho_F^*(p^{\gamma_2+1})}{p^{2(\gamma_2+1)}}\right) \\
        + 2\left(1-\frac{1}{p}\right)^2 \sum_{\gamma_1=1}^{v/2} \frac{1}{p^{\gamma_1}}\left(1-\frac{1}{p^2}-\frac{\varrho_F^*(p)}{p^2}\right) + 2\left(1-\frac{1}{p}\right)^2 \sum_{\substack{2\gamma_1+\gamma_2<v \\ \gamma_1,\gamma_2>0}} \frac{1}{p^{\gamma_1}} \left(\frac{\varrho_F^*(p^{\gamma_2})}{p^{2\gamma_2}} - \frac{\varrho_F^*(p^{\gamma_2+1})}{p^{2(\gamma_2+1)}}\right)\\ + 2\left(1-\frac{1}{p}\right)^2 \sum_{\gamma_1\geq v/2}\frac{1}{p^{\gamma_1}}\left(1-\frac{1}{p^2}-\frac{\varrho_F^*(p)}{p^2}\right)
        + 2 \left(1-\frac{1}{p}\right)^2\sum_{\substack{2\gamma_1+\gamma_2\geq v\\ \gamma_1,\gamma_2>0}}\frac{1}{p^{\gamma_1}} \left(\frac{\varrho_F^*(p^{\gamma_2})}{p^{2\gamma_2}} - \frac{\varrho_F^*(p^{\gamma_2+1})}{p^{2(\gamma_2+1)}}\right). 
    \end{multline*}

    Taking the limit as $v\rightarrow\infty,$ we get: 
    \begin{multline*}
        \omega_p = \left(1-\frac{1}{p}\right)^2\left(1-\frac{1}{p^2}-\frac{\varrho_F^*(p)}{p^2}\right) + \left(1-\frac{1}{p}\right)^2\left(\frac{\varrho_F^*(p)}{p^2}+\sum_{\gamma_2=1}^\infty \frac{\varrho_F^*(p^{\gamma_2})}{p^{2\gamma_2}}\right)\\
        + \left(1-\frac{1}{p}\right)^2 \cdot \frac{2}{p-1} \cdot \left(1-\frac{1}{p^2}-\frac{\varrho_F^*(p)}{p^2}\right) + \left(1-\frac{1}{p^2}\right)\cdot \frac{2}{p-1} \cdot \frac{\varrho_F^*(p)}{p^2}\\
         = \left(1-\frac{1}{p}\right)^2 \left(1+\frac{2}{p-1}\right) \left(1-\frac{1}{p^2}\right) + \left(1-\frac{1}{p}\right)^2 \sum_{\gamma_2=1}^\infty \frac{\varrho_F^*(p^{\gamma_2})}{p^{2\gamma_2}}.
    \end{multline*}

    It is time to return to the expression that we get for our local factor at $p$ when $\chi(p)=1$; that is 
    $$1+\ell_p\cdot \left(1+\frac{2}{p-1}\right)^{-1}.$$
    Plugging in our definition of $\ell_p$ when $\chi(p)=1$, we have:
    \begin{multline*}
        \left(1-\frac{1}{p^2}\right)^{-1} \left(1-\frac{1}{p^2} + \left(1+\frac{2}{p-1}\right)^{-1}\sum_{\gamma=1}^\infty \frac{\varrho_F^*(p^{\gamma})}{p^{2\gamma}}\right) \\
        = \left(1-\frac{1}{p^2}\right)^{-1} \left(1+\frac{2}{p-1}\right)^{-1} \left(\left(1+\frac{2}{p-1}\right)\left(1-\frac{1}{p^2}\right)+\sum_{\gamma=1}^\infty \frac{\varrho_F^*(p^{\gamma})}{p^{2\gamma}}\right).
    \end{multline*}
    Thus, we have that 
    \begin{equation}\label{eq: Peyre chi(p)=1}
        1+\ell_p\cdot \left(1+\frac{2}{p-1}\right)^{-1} = \left(1-\frac{1}{p^2}\right)^{-1} \left(1+\frac{2}{p-1}\right)^{-1} \left(1-\frac{1}{p}\right)^{-2} \omega_p.
    \end{equation}

    Collecting our terms together (\eqref{eq: Peyre chi(p)=-1} and \eqref{eq: Peyre chi(p)=1}), we achieve that 
    \begin{multline*}
        \Res_{s=1}\xi(s_1,1;F,\chi,1) \\ = \Res_{s=1}L_{\not\square,K}(s) \cdot \prod_{p\nmid -\Delta}\left(1-\frac{1}{p^2}\right)^{-1} \omega_p \cdot \prod_{\chi(p)=-1} \left(1-\frac{1}{p^2}\right)^{-1} \prod_{\chi(p)=1} \left(1+\frac{2}{p-1}\right)^{-1} \left(1-\frac{1}{p^2}\right)^{-2}.
    \end{multline*}
    Noting that this expression for $\chi(p)=1$ is equivalent to $(1-p^{-2})^{-1}$, and plugging in the values of the residue at $s=1$ of $L_{\not\square,K}(s)$, we get: 
    \begin{equation*}
        \Res_{s=1}\xi(s_1,1;F,\chi,1) = \prod_{p\mid -\Delta}\left(1-\frac{1}{p}\right)\prod_{p\nmid -\Delta}\left(1-\frac{1}{p^2}\right)^{-1} L(1,\chi) \prod_{p\nmid -\Delta}\omega_p.
    \end{equation*}
    This completes the proof of the lemma.

\end{proof}

With Lemma \ref{lem: Peyre constant good primes} in hand, we see that 
\begin{equation*}
    c_{\Delta,f} = \frac{2|\calU|}{|\mathcal{C}|} L(1,\chi) \left(\sum_{q_1q_2=-\Delta}' \BM_{q_1,q_2}(1)\right) \prod_{p\mid -\Delta} \left(1-\frac{1}{p}\right) \prod_{p\nmid -\Delta}\omega_p.
\end{equation*}
By Dirichlet's class number formula, we know that since $\Delta>0$, 
\begin{equation*}
    L(1,\chi) = \frac{2\pi |\mathcal{C}|}{|\calU| \sqrt{\Delta}}.
\end{equation*}
So, this leaves us with the following expression for $c_{\Delta,f}$:
\begin{equation}\label{eq: Peyre constant with the good primes and L(1,chi)}
    c_{\Delta,f} = \frac{4\pi}{\sqrt{\Delta}}\prod_{p\nmid -\Delta} \omega_p \times \left(\sum_{q_1q_2=-\Delta}' \BM_{q_1,q_2}(1)\right) \prod_{p\mid -\Delta} \left(1-\frac{1}{p}\right). 
\end{equation}

It is now time to approach those primes dividing $-\Delta$ (i.e. $4\Delta$). Recall that we chose $\Delta$ to be itself an odd prime, so these are simply $2$ and $\Delta$. Inspecting the sum over genus characters further, we see that if $\Delta \equiv 3 \bmod 4$, then we only sum over $q_1=1$ and $q_2=4\Delta$. On the other hand, if $\Delta \equiv 1\bmod 4$, then we sum over two pairs of $(q_1,q_2)$: $(1,4\Delta)$ and $(4, \Delta)$. 

\subsection{$\Delta\equiv 3 \bmod 4$}\label{subsec: 3 mod 4}
We review our definition of $\BM_{q_1,q_2}(1)$ as in Definition \ref{def: BM}: 
\begin{equation*}
    \BM_{q_1,q_2}(1) = \sum_{p\mid n\implies p\mid 4\Delta} \frac{\chi_{q_1}(p_{q_2}(n)) \chi_{q_2}(p_{q_1}(n)) \mathbf{1}_{\exists \fa: N(\fa)=n}}{n^2} \sum_{\substack{a\bmod 4\Delta \\ \gcd(a,4\Delta)=1}} \frac{\chi_{q_1}(a)}{(4\Delta)^2}\times \prod_{i=1}^r (1+\chi(a_i)) \varrho_F^*(n,\ba), 
\end{equation*}
where the local count is given by:
\begin{equation*}
    \varrho_F^*(n,a) = \#\{\bx\bmod 4n\Delta: \gcd(x,y,4\Delta)=1, p_{4\Delta}(F(x,y))=n, p_{\neg 4\Delta} (F(x,y)) \equiv a_i\bmod 4\Delta\}.
\end{equation*}
For $(q_1,q_2)=(1,4\Delta)$, this reduces to 
\begin{equation*}
    \sum_{p\mid n\implies p\mid 4\Delta} \frac{\mathbf{1}_{\exists \fa: N(\fa)=n}}{n^2} \sum_{\substack{\ba \bmod 4\Delta \\ \gcd(a,4\Delta)=1}} \prod_{i=1}^r(1+\chi(a_i)) \frac{\varrho_F^*(n,\ba)}{(4\Delta)^2}.
\end{equation*}
Fix a value of $n$. It is a straightforward computation to see when $\Delta\equiv 3\bmod 4$, the following identity holds:
\begin{equation*}
    \#\{x^2+\Delta y^2 = na \bmod 4n\Delta\} = (1+\chi(a)) \cdot 4n\Delta.
\end{equation*}
So, we can rewrite the above expression as: 
\begin{equation*}
    \sum_{p\mid n\implies p\mid 4\Delta} \mathbf{1}_{\exists \fa: N(\fa)=n} \cdot \frac{\#\left\{(u,v,x,y) \bmod 4n\Delta: \begin{array}{c}
         x^2+\Delta y^2 = F(u,v) \bmod 4n\Delta\\ 
         \gcd(u,v,4\Delta)=1, p_{4\Delta}(F(u,v))=n
    \end{array}\right\}}{(4n\Delta)^3}.
\end{equation*}
From the Chinese remainder theorem, this can be rewritten as $w_2w_\Delta$ where 
\begin{equation*}
    w_2 = \sum_{k=0}^{\infty} \mathbf{1}_{\exists\fa: N(\fa) = 2^k} \frac{\#\left\{(u,v,x,y) \bmod 2^{k+2}: \begin{array}{c}
         x^2+\Delta y^2 = F(u,v) \bmod 2^{k+2}\\ 
         \gcd(u,v,2)=1, v_{2}(F(u,v))=k
    \end{array}\right\}}{2^{3(k+2)}}
\end{equation*}
and we have that 
\begin{equation*}
    w_\Delta = \sum_{k=0}^\infty \frac{\#\left\{(u,v,x,y) \bmod \Delta^{k+1}: \begin{array}{c}
         x^2+\Delta y^2 = F(u,v) \bmod \Delta^{k+1}\\ 
         \gcd(u,v,\Delta)=1, v_{\Delta}(F(u,v))=k
    \end{array}\right\}}{\Delta^{3(k+1)}}.
\end{equation*}

Observe that if $x^2+\Delta y^2 = t^2 F(u,v)$ and $\Delta \mid t$, then automatically we have that $\Delta \mid x,y$. So, we have that 
\begin{align*}
    w_\Delta &= \left(1-\frac{1}{\Delta}\right)^{-1}  \sum_{k=0}^\infty \frac{\#\left\{(u,v,x,y) \bmod \Delta^{k+1}: \begin{array}{c}
         x^2+\Delta y^2 = t^2F(u,v) \bmod \Delta^{k+1}\\ 
         \gcd(u,v,\Delta)=1, \gcd(x,y,t,\Delta)=1, \\ v_{\Delta}(F(u,v))=k
    \end{array}\right\}}{\Delta^{4(k+1)}} \\ 
    &= \left(1-\frac{1}{\Delta}\right)^{-1} \omega_\Delta. 
\end{align*}
So, it remains to relate $w_2$ to $\omega_2$, the 2-adic density. 

We now observe that if there does not exist an ideal $\fa$ such that $N(\fa)=2^k$, then there are no solutions to $x^2+\Delta y^2 = t^2 F(u,v)$ with $v_2(F(u,v))=k$ over $\Z_2$. So, we have that $w_2$ is the $2$-adic density of the variety given by $x^2+\Delta y^2 = F(u,v)$ over $\Z_2$. However, since $\Delta \equiv 3 \bmod 4$, there do exist solutions to $x^2+\Delta y^2 = t^2 F(u,v)$ with $2\mid t$ and $\gcd(x,y,t,2)=1$. Indeed, after splitting by $\gamma$ such that $2^\gamma \| t$, this gives us that 
\begin{equation*}
    \omega_2 = w_2 \cdot \left(\frac{1}{2}+\frac{1}{4}+\dots \right) = w_2. 
\end{equation*}

In summary, we have that for $\Delta \equiv 3 \bmod 4$, 
\begin{equation}\label{eq: Peyre BM for 3 mod 4}
    \prod_{p\mid 4\Delta}\left(1-\frac{1}{p}\right) \sum_{q_1q_2=4\Delta}' \BM_{q_1,q_2}(1) = \frac{1}{2}\omega_2 \omega_{\Delta}.
\end{equation}

\subsection{$\Delta\equiv 1 \bmod 4$}
We must now range over two genus characters: when $(q_1,q_2) = (1,4\Delta)$ and $(4,\Delta)$. By \eqref{eq: Peyre constant with the good primes and L(1,chi)}, our next step is to evaluate:
\begin{equation*}
    \prod_{p\mid 4\Delta}\left(1-\frac{1}{p}\right) \left(\BM_{1,4\Delta}(1)+\BM_{4,\Delta}(1)\right).
\end{equation*}
Expanding the definition of $\BM_{q_1,q_2}(1)$ from Definition \ref{def: BM}, we have that 
\begin{multline*}
    \BM_{1,4\Delta}(1)+\BM_{4,\Delta}(1) = \sum_{p\mid n\impliedby p\mid 4\Delta} \mathbf{1}_{\exists \fa:N(\fa)=n}\sum_{\gcd(a,4\Delta)=1} \frac{\varrho_F^*(n,a)}{(4n\Delta)^2} \\
    \times \left(1+\chi(a)+\chi_4(p_{\Delta}(n))\chi_{\Delta}(p_2(n))(\chi_{4}(a)+\chi_{\Delta}(a))\right). 
\end{multline*}
Observe that since $\Delta\equiv 1 \bmod 4$, we know that $\chi_4(p_{\Delta}(n))=1$. Additionally, since $\Delta \equiv 1 \bmod 4$, we have that there exists $\fa,\fa'$ such that $N(\fa)=\Delta$ and $N(\fa')=2$. So, we can also remove the indicator function. This leaves us with the summation: 
\begin{equation*}
    \BM_{1,4\Delta}(1)+\BM_{4,\Delta}(1) = \sum_{p\mid n\impliedby p\mid 4\Delta} \sum_{\gcd(a,4\Delta)=1} \frac{\varrho_F^*(n,a)}{(4n\Delta)^2} \\
    \times \left(1+\chi(a)+\chi_{\Delta}(p_2(n))(\chi_{4}(a)+\chi_{\Delta}(a))\right). 
\end{equation*}
Next, we want to relate this bottom expression to the number of solutions to $x^2+\Delta y^2 = na\bmod 4n\Delta$.

\begin{lemma}\label{lem: Peyre sum of genus characters}
    Assume that $p\mid n\implies p\mid 4\Delta$ and that $\gcd(a,4\Delta)=1$. Then 
    \begin{equation*}
        \#\{x^2+\Delta y^2 = na \bmod 4n\Delta\} = \left(1+\chi(a)+\chi_{\Delta}(p_2(n))(\chi_{4}(a)+\chi_{\Delta}(a))\right) \cdot 4n\Delta.
    \end{equation*}
\end{lemma}
\begin{proof}
    Assume that $n=2^b\Delta^c$. Then by the Chinese remainder theorem, we want to understand the following counts: 
    \begin{equation*}
        x^2+\Delta y^2 = 2^b \Delta^c a \bmod \Delta^{c+1}, \quad x^2+\Delta y^2 = 2^b \Delta^c a \bmod 2^{b+2}.
    \end{equation*}
    In the first scenario, if $c=0$, then there are exactly 
    $(1+\legendre{2^b a}{\Delta}) \cdot \Delta^{c+1} = (1+\chi_{\Delta}(p_2(n)) \chi_{\Delta}(a)) \cdot \Delta^{c+1}$
    such solutions to the equation. On the other hand, if $c>0$, then $\Delta \mid x$. Then, after a change of variables, we are left with the equation 
    \begin{equation*}
        \Delta x^2 + y^2 = 2^b \Delta^{c-1} a \bmod \Delta^c
    \end{equation*}
    and lifting such solutions to $\Z/\Delta^{c+1}\Z$. We repeat the procedure and deduce that again there are $(1+\chi_{\Delta}(p_2(n))\chi_\Delta(a))\cdot \Delta^{c+1}$ such solutions. 

    On the other hand, since $\Delta$ is odd and $\gcd(a,4\Delta)=1$, if $b=0$ then the number of solutions to 
    \begin{equation*}
        x^2+\Delta y^2 = x^2+y^2 = \Delta^c a \bmod 4
    \end{equation*}
    is given by $(1+\chi_{4}(\Delta^c a)) \cdot 4.$ Now assume that $b>0$. Observe that $1+\Delta \equiv 2 \bmod 4$, let us write $\delta$ such that $1+\Delta \equiv 2 \delta \bmod 8.$ In other words, we can take $\delta = \frac{\Delta-1}{2}$. Then if $x^2+\Delta y^2 \equiv 2^b \Delta^c a\bmod 2^{b+2}$, we can consider 
    \begin{equation*}
        x'+y'\sqrt{-\Delta} = \frac{x+y\sqrt{-\Delta}}{(1+\sqrt{-\Delta})^2}. 
    \end{equation*}
    Observe that $x'^2+\Delta y'^2 \equiv \delta^b \Delta^c a\bmod 2^{b+2}.$ Thus, we can count the number of solutions to $x^2+\Delta y^2 \equiv  2^b \Delta^c a \bmod 2^{b+2}$ as 
    \begin{equation*}
        (1+\chi_{4}( a) \chi_{4}(\delta)^b) \cdot 2^{b+2}.
    \end{equation*}
    Recall that $\Delta\equiv 1 \bmod 4$ and so $\chi_4(\Delta)=1.$

    We now claim that 
    \begin{equation*}
        (1+\chi(a)+\chi_{\Delta}(p_2(n))(\chi_4(a)+\chi_\Delta(a))) \cdot 4n\Delta = (1+\chi_{\Delta}(p_2(n))\chi_{\Delta}(a))(1+\chi_4(a)\chi_4(\delta)^b) \cdot 2^{b+2}\Delta^{c+1}.
    \end{equation*}
    To see this, we expand out the product on the right hand side: 
    \begin{equation*}
        1+\chi_{\Delta}(p_2(n))\chi_\Delta(a) + \chi_4(a)\chi_4(\delta)^b + \chi_\Delta(p_2(n)a)\chi_4(a)\chi_4(\delta)^b.
    \end{equation*}
    Comparing this with $1+\chi(a)+\chi_{\Delta}(p_2(n))(\chi_4(a)+\chi_\Delta(a))$, we can see that it suffices to show that 
    \begin{equation*}
        \chi(a) + \chi_{\Delta}(p_2(n)) \chi_4(a) = \chi_4(a)\chi_4(\delta)^b+ \chi_{\Delta}(p_2(n)a)\chi_4(a)\chi_4(\delta)^b. 
    \end{equation*}
    At this point, recall that $\Delta\equiv 1 \bmod 4$ and that 
    \begin{equation*}
        \chi_4(\delta) = \legendre{2}{\Delta} = \begin{cases}
            1, & \Delta\equiv 1 \bmod 8, \\
            -1, & \Delta \equiv 5 \bmod 8.
        \end{cases}
    \end{equation*}
    Thus, we have that 
    \begin{equation*}
        \chi_4(\delta^b \Delta^c a) = \legendre{2}{\Delta}^b \legendre{a}{4} = \chi_\Delta(p_2(n))\chi_4(a),
    \end{equation*}
    since by definition $\chi_\Delta(\cdot) = \legendre{\cdot}{\Delta}$ and $p_2(n)= 2^b$. On the other hand, 
    \begin{equation*}
        \chi_\Delta(p_2(n) a) \chi_4(a)\chi_4(\delta)^b  = \legendre{2}{\Delta}^b \chi_\Delta(a) \legendre{2}{\Delta}^b \chi_4(a) = \chi(a).
    \end{equation*}
    Here we have recalled that $\chi = \chi_{4\Delta}.$

\end{proof}

With Lemma \ref{lem: Peyre sum of genus characters} in hand, we can see that 
\begin{equation*}
    \BM_{1,4\Delta}(1) + \BM_{4,\Delta}(1) = \sum_{p\mid n\implies p\mid 4\Delta} \frac{\#\left\{(u,v,x,y) \bmod 4n\Delta: \begin{array}{c}
         x^2+\Delta y^2 = F(u,v) \bmod 4n\Delta\\ 
         \gcd(u,v,4\Delta)=1, p_{4\Delta}(F(u,v))=n
    \end{array}\right\}}{(4n\Delta)^3}.
\end{equation*}
Now the computation follows in the same way as when $\Delta\equiv 3 \bmod 4$ and we again achieve that when $\Delta\equiv 1 \bmod 4$
\begin{equation}\label{eq: Peyre BM for 1 mod 4}
    \prod_{p\mid 4\Delta} \left(1-\frac{1}{p}\right)\sum_{q_1q_2=4\Delta}' \BM_{q_1,q_2}(1) = \frac{1}{2}\omega_2\omega_\Delta.
\end{equation}

\subsection{Evaluation at the real place}
We now calculate $\omega_\infty$: we use the volume formula for an ellipse of the shape $\{x^2+\Delta y^2 \leq a\}$:
\begin{multline*}
    \omega_\infty = \lim_{\delta\rightarrow 0} \lim_{\epsilon\rightarrow 0} \log(1/\delta)^{-1}\epsilon^{-1} \int_{\delta<|t|\leq 1} \int_{|u|,|v|\leq |t|^{-1/2}} \vol(\{x^2+\Delta y^2 \in t^2F(u,v)+[-\epsilon/2, \epsilon/2]\}) dudvdt\\
    = \lim_{\delta\rightarrow 0}\lim_{\epsilon\rightarrow 0} \log(1/\delta)^{-1} \epsilon^{-1} \int_{|t|\leq 1} \int_{|u|,|v|\leq |t|^{-1/2}}  \frac{\pi \epsilon}{\sqrt{\Delta}}dudvdt\\
    = \frac{2\pi}{\sqrt{\Delta}} \lim_{\delta \rightarrow 0} \frac{1}{\log(1/\delta)} \int_{1/\delta < t\leq 1}\int_{|u|,|v|\leq t^{-1/2}} dudvdt = \frac{8\pi}{\sqrt{\Delta}}.
\end{multline*}

Putting together Lemma \ref{lem: Peyre constant good primes} and our calculations from \eqref{eq: Peyre BM for 3 mod 4} and \eqref{eq: Peyre BM for 1 mod 4}, we see that 
\begin{equation*}
    c_{\Delta,f} = \frac{4\pi}{\sqrt{\Delta}} \cdot \frac{1}{2}\omega_2\omega_{\Delta}\prod_{p\nmid -\Delta} \omega_p  = \frac{1}{4}\omega_\infty \prod_{p<\infty} \omega_p.
\end{equation*}
This completes the proof of Proposition \ref{prop: Peyre for Delta positive prime}. \qed

\bibliographystyle{abbrv}
\bibliography{biblio}

\end{document}